\newcommand{\bd}{\begin{displaymath}}
\newcommand{\ed}{\end{displaymath}}
\newcommand{\bcd}{\begin{CD}}
\newcommand{\ecd}{\end{CD}}
\newcommand{\on}{\operatorname}
\title[Tamely ramified geometric Langlands in positive characteristic]{tamely ramified geometric Langlands correspondence in positive characteristic}
\author{Shiyu Shen}
\address{The Institute of Science and Technology Austria\\Klosterneuburg 3400, Austria}
\email{shiyu.shen@ist.ac.at}
\thanks{
Shiyu Shen has received funding from NSF grant DMS-1502125 and the European Union's Horizon 2020 research and innovation program under the Marie Skłodowska-Curie grant agreement No. 101034413.
}
\numberwithin{equation}{section}
\newtheorem{theorem}{Theorem}[section]
\newtheorem{examples}[theorem]{Examples}
\newtheorem{lemma}[theorem]{Lemma}
\newtheorem{prop}[theorem]{Proposition}
\newtheorem{corollary}[theorem]{Corollary}
\theoremstyle{definition}
\newtheorem{definition}[theorem]{Definition}
\theoremstyle{remark}
\newtheorem{remark}[theorem]{Remark}
\begin{document}
\maketitle

\begin{abstract}
    We prove a version of the tamely ramified geometric Langlands correspondence in positive characteristic for $GL_n(k)$, where $k$ is an algebraically closed field of characteristic $p> n$. Let $X$ be a smooth projective curve over $k$ with marked points, and fix a parabolic subgroup of $GL_n(k)$ at each marked point. We denote by $\on{Bun}_{n,P}$ the moduli stack of (quasi-)parabolic vector bundles on $X$, and by $\mathcal{L}oc_{n,P}$ the moduli stack of parabolic flat connections such that the residue is nilpotent with respect to the parabolic reduction at each marked point. We construct an equivalence between the bounded derived category $D^{b}(\on{QCoh}({\mathcal{L}oc_{n,P}^{0}}))$ of quasi-coherent sheaves on an open substack $\mathcal{L}oc_{n,P}^{0}\subset\mathcal{L}oc_{n,P}$, and the bounded derived category $D^{b}(\mathcal{D}^{0}_{{\on{Bun}}_{n,P}}\on{-mod})$ of $\mathcal{D}^{0}_{{\on{Bun}}_{n,P}}$-modules, where $\mathcal{D}^0_{\on{Bun}_{n,P}}$ is a localization of $\mathcal{D}_{\on{Bun}_{n,P}}$ the sheaf of crystalline differential operators on $\on{Bun}_{n,P}$. Thus we extend the work of Bezrukavnikov-Braverman \cite{BB} to the tamely ramified case. We also prove a correspondence between flat connections on $X$ with regular singularities and meromorphic Higgs bundles on the Frobenius twist $X^{(1)}$ of $X$ with first-order poles.
\end{abstract}

\section{Introduction}\label{section: introduction} 
\subsection{Geometric Langlands in positive characteristic}
Let $X$ be a smooth projective curve over $\mathbb{C}$. Let $G$ be a reductive group over $\mathbb{C}$ and let $\breve{G}$ be its Langlands dual group. The geometric Langlands correspondence (GLC), as proposed by Beilinson and Drinfeld in \cite{BD}, is a conjectural equivalence between the (appropriately defined) category of $\mathcal{D}$-modules on the moduli stack $\on{Bun}_G$ of $G$-bundles on $X$, and the (appropriately defined) category of quasi-coherent sheaves on the moduli stack $\mathcal{L}oc_{\breve{G}}$ of $\breve{G}$-local systems on $X$. A precise statement of this conjecture can be found in \cite{AG15}.

In \cite{BB}, a generic version of the GLC in positive characteristic is established for $G=GL_n(k)$. The $\mathcal{D}$-modules are interpreted in terms of crystalline differential operators. Using the Azumaya property of crystalline differential operators and a twisted version of the Fourier-Mukai transform, the authors prove a generic version of the GLC over the open subset of the Hitchin base where the spectral curves are smooth. In the case of $G=GL_n(k)$, the results of \cite{BB} are generalized in various directions. In \cite{Nevins}, the mirabolic version of this correspondence is established. In \cite{Travkin}, the author proved the quantum version of this correspondence. In \cite{Grochenig}, the equivalence in \cite{BB} is extended to the Hitchin base of reduced and irreducible spectral curves. The results of \cite{BB} were extended to arbitrary reductive groups in \cite{CZ15} and \cite{CZ17}.

\subsection{Tamely ramified geometric Langlands correspondence}

The main purpose of this paper is to establish the tamely ramified version of the GLC proved in \cite{BB}, i.e. we allow the flat connections to have regular singularities. The term ``tamely ramified" comes from analogy with the local Langlands program. See \cite{Frenkel} Section 8 for a discussion of the tamely ramified GLC over $\mathbb{C}$. Let $k$ be an algebraically closed field of characteristic $p$, and let $X$ be a smooth projective curve over $k$. We will work on the case of $G=GL_n(k)$ and assume $p> n$. Let $D=q_1+q_2+\cdots+q_m$ be an effective reduced divisor on $X$, and let $P_D=(P_1, P_2,\dots,P_m)$ be an ordered $m$-tuple of parabolic subgroups of $GL_n(k)$. We assume that we are in one of the following three cases:
\begin{enumerate}
    \item $g_X\geq 2$,
    \item $g_X=1$, $m\geq 2$ and at least two $P_i$ are proper parabolic subgroups; or $m=1$, $P$ is a Borel subgroup and $n\geq 3$,
    \item $g_X=0$, $m\geq 4$ and all $P_i$ are Borel subgroups.
\end{enumerate}

Compared to the unramified version of the GLC, instead of considering $\on{Bun}_n$ and $\mathcal{L}oc_n$, we consider the moduli stack $\on{Bun}_{n,P_D}$ of (quasi-)parabolic vector bundles (vector bundles of rank $n$ with a $P_i$-reduction at each $q_i$), and the moduli stack $\mathcal{L}oc_{n,P_D}$ of flat connections on parabolic vector bundles with regular singularities at $q_1, q_2, \dots, q_m$ such that the residue at each $q_i$ is nilpotent with respect to the $P_i$-reduction. Note that the cotangent bundle $T^{*}\on{Bun}_{n,P_D}$ is isomorphic to the moduli stack $\mathcal{H}iggs_{n,P_D}$ of parabolic Higgs bundles such that the residue of the Higgs field is nilpotent with respect to the parabolic reduction at each $q_i$. We denote by $\mathcal{D}_{\on{Bun}_{n,P_D}}$ the sheaf of crystalline differential operators on $\on{Bun}_{n,P_D}$\footnote{Defined in the sense of \cite{BB}, Section 3.13. See Section \ref{Section: differential operators on stacks} and Section \ref{subsection: statement of the main theorem}.}. We will define a localization $\mathcal{D}^0_{\on{Bun}_{n,P_D}}$ of $\mathcal{D}_{\on{Bun}_{n,P_D}}$ and an open substack $\mathcal{L}oc^0_{n,P_D}$ of $\mathcal{L}oc_{n,P_D}$ (see Section \ref{subsection: statement of the main theorem} for precise definitions). We will construct an $\mathcal{O}_{\mathcal{L}oc_{n,P_D}^{0}}\boxtimes \mathcal{D}^{0}_{{\on{Bun}}_{n,P_D}}$-module $\mathcal{P}$ (see Section \ref{subsection: proof of main theorem}) and consider the Fourier-Mukai functor with kernel $\mathcal{P}$
\[
    \Phi_{\mathcal{P}}: D^{b}(\on{QCoh}({\mathcal{L}oc_{n,P_D}^{0}}))\longrightarrow D^{b}(\mathcal{D}^{0}_{{\on{Bun}}_{n,P_D}}\on{-mod})
\]
from the bounded derived category of quasi-coherent sheaves on $\mathcal{L}oc_{n,P_D}^{0}$ to the bounded derived category of $\mathcal{D}^{0}_{{\on{Bun}}_{n,P_D}}$-modules.
The main theorem of the paper is the following:
\begin{theorem}\label{Thm: main theorem intro}
    $\Phi_{\mathcal{P}}$ is an equivalence of derived categories. 
\end{theorem}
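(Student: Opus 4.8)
The plan is to follow the strategy of Bezrukavnikov--Braverman \cite{BB}, adapting each step to the parabolic setting. The starting point is the Azumaya property of crystalline differential operators: $\mathcal{D}_{\on{Bun}_{n,P_D}}$, and hence its localization $\mathcal{D}^0_{\on{Bun}_{n,P_D}}$, should be an Azumaya algebra over the Frobenius twist $T^{*}\on{Bun}_{n,P_D}^{(1)}\cong\mathcal{H}iggs_{n,P_D}^{(1)}$, with the center identified via the $p$-curvature map with functions on (an open piece of) $\mathcal{H}iggs_{n,P_D}^{(1)}$. First I would make this precise over the open substack, so that $D^{b}(\mathcal{D}^0_{\on{Bun}_{n,P_D}}\on{-mod})$ is realized as the bounded derived category of modules over an Azumaya algebra on $\mathcal{H}iggs_{n,P_D}^{(1),0}$. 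The numerical hypotheses (1)--(3) enter here and below: they guarantee that the stable locus of $\on{Bun}_{n,P_D}$ is large, that $\on{Bun}_{n,P_D}$ is smooth of the expected dimension, and that the generic parabolic spectral curve is smooth, so that the relevant open loci are nonempty.

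Second, I would set up the parabolic Hitchin fibration $h\colon \mathcal{H}iggs_{n,P_D}\to B$, where $B$ parametrizes characteristic polynomials of Higgs fields with first-order poles along $D$ and nilpotent residue at each $q_i$, together with the parabolic spectral correspondence: over the open locus $B^{0}\subset B$ where the spectral curve $X_b$ is smooth (or integral), a parabolic Higgs bundle with nilpotent residue corresponds to a torsion-free rank-one sheaf on $X_b$, so the Hitchin fibers over $B^{0}$ are (compactified) Jacobians. This defines $\mathcal{H}iggs_{n,P_D}^{0}$, and correspondingly $\mathcal{L}oc_{n,P_D}^{0}$ and $B^{0}$. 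Simultaneously, by the $p$-curvature construction a parabolic flat connection with the prescribed residue condition has $p$-curvature a parabolic Higgs field on $X^{(1)}$ with first-order poles and nilpotent residue, so $\mathcal{L}oc_{n,P_D}$ fibers over $B^{(1)}$, and over $B^{0,(1)}$ its fibers are torsors under the dual Hitchin fibers; autoduality of compactified Jacobians of integral curves makes the $GL_n$ parabolic Hitchin system essentially self-dual, which is what makes a fiberwise Fourier--Mukai transform available.

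Third, I would construct the kernel $\mathcal{P}$ by combining the fiberwise Mukai/Poincaré kernel on the family of Jacobians over $B^{0,(1)}$ with a splitting of the pullback of the Azumaya algebra $\mathcal{D}^0_{\on{Bun}_{n,P_D}}$ along $\mathcal{L}oc_{n,P_D}^{0}\to\mathcal{H}iggs_{n,P_D}^{(1),0}$: tensoring a splitting module by the Poincaré sheaf produces $\mathcal{P}$ as an $\mathcal{O}_{\mathcal{L}oc_{n,P_D}^{0}}\boxtimes\mathcal{D}^0_{\on{Bun}_{n,P_D}}$-module. Then $\Phi_{\mathcal{P}}$ factors, over the Hitchin base, as the classical Fourier--Mukai equivalence on Hitchin fibers followed by the Morita equivalence given by the splitting; one checks $\Phi_{\mathcal{P}}$ is an equivalence fiberwise over $B^{0,(1)}$ and then globalizes using flat base change over the Hitchin base and the projection formula, together with the necessary boundedness and finiteness statements for the derived categories of the stacks involved.

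The step I expect to be the main obstacle is the splitting of the Azumaya algebra after pullback to $\mathcal{L}oc_{n,P_D}^{0}$, i.e. showing that the relevant Brauer class vanishes. In the unramified case this splitting (on the formal neighborhood of a Hitchin fiber, and then on the good locus) is the technical heart of \cite{BB}; in the tamely ramified case one must in addition control the contribution of the parabolic divisor $D$ and the nilpotent-residue condition to the Brauer class, and handle the stackiness of $\on{Bun}_{n,P_D}$ (non-separatedness and automorphisms of unstable objects) both when comparing Brauer groups and when passing from a fiberwise equivalence to a global one. A secondary technical point is proving the parabolic spectral correspondence with the correct normalization of the twist by $D$, so that the residue of the Higgs field is nilpotent exactly when the spectral sheaf is a pushforward of a line bundle in the appropriate sense; this is essentially the auxiliary correspondence between regular-singular flat connections on $X$ and meromorphic Higgs bundles on $X^{(1)}$ with first-order poles stated in the abstract, and I would establish it first and use it throughout.
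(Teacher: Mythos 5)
Your outline reproduces the high-level Bezrukavnikov--Braverman skeleton (Azumaya property, spectral correspondence, fiberwise Fourier--Mukai, splitting of the Brauer class) but misses the central new technical point of the tamely ramified case, and as a result one of your intermediate claims is actually false. You assert that ``over the open locus $B^{0}\subset B$ where the spectral curve $X_b$ is smooth (or integral), a parabolic Higgs bundle with nilpotent residue corresponds to a torsion-free rank-one sheaf on $X_b$, so the Hitchin fibers over $B^{0}$ are (compactified) Jacobians.'' But for $GL_n$ with a non-Borel parabolic $P$, \emph{no} parabolic spectral curve is smooth: the nilpotent-residue condition forces $\Sigma_b$ to have a singularity at the point $q'$ above $q$ on the zero section, whose type is controlled by the Jordan type of the Richardson orbit of $P$. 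The paper's essential move is therefore not to work with (compactified) Jacobians of the singular spectral curve but to pass to the \emph{normalization} $\widetilde{\Sigma}_b\to\Sigma_b$ and show that the nilpotent-residue condition is exactly what forces the spectral sheaf of a parabolic Higgs bundle to be the pushforward of a \emph{line bundle} on the normalization (Theorem \ref{thm: spectral data single}), so that $h_P^{-1}(b)\cong\on{Pic}(\widetilde{\Sigma}_b)$. Getting this in families requires a simultaneous normalization $\widetilde{\Sigma}\to B_P^{0}$, built by iterated blow-ups (leading to Theorem \ref{thm: spectral data family}). Without this step, your ``fiberwise'' Fourier--Mukai has no commutative group stack to run on, and your appeal to autoduality of compactified Jacobians of integral curves is both unnecessary and wrong here: the Picard stack of the smooth normalization is very nice in the sense of \cite{BB}, while a compactified Jacobian is not a group stack at all.

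A second gap is in how you propose to obtain the kernel. You suggest producing $\mathcal{P}$ by tensoring a Poincar\'e kernel on the family of Jacobians with a splitting module for the pullback of $\mathcal{D}^0_{\on{Bun}_{n,P_D}}$ to $\mathcal{L}oc^0_{n,P_D}$, and you correctly flag that proving the Brauer class vanishes is the main obstacle. But the paper does not prove such a vanishing statement directly; instead it equips $\mathcal{D}^0_{\on{Bun}_{n,P}}$ with a \emph{tensor structure} (in the sense of Ogus--Vologodsky), checked via the Abel--Jacobi map and the tautological 1-form on the normalization (Proposition \ref{prop: 1-form}). This makes the gerbe of splittings $\mathcal{Y}_{\mathcal{D}^0_{\on{Bun}_{n,P}}}$ a commutative group stack extension of $\on{Pic}(\widetilde{\Sigma}^{(1)}/(B_P^0)^{(1)})$ by $B\mathbb{G}_m$. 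The kernel is then the Poincar\'e bundle on $\mathcal{Y}^{\vee}\times\mathcal{Y}$, and the heart of the proof is the identification $(\mathcal{Y}^{\vee})_1\cong\mathcal{L}oc^0_{n,P}$ as $\on{Pic}$-torsors (Proposition \ref{prop: main thm torsor dual}), established by identifying both sides with splittings of the Azumaya algebra $(\widetilde{\theta}_b^{(1)})^*\mathcal{D}_{\widetilde{\Sigma}_b}$ on the normalized spectral curve (Proposition \ref{prop: key}); on the connection side this goes through the characteristic-$p$ non-abelian Hodge correspondence with nonzero residues (Theorem \ref{thm: nah}) and a careful computation of residues and $p$-curvature after pulling back to $\widetilde{\Sigma}_b$. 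Your plan of ``check fiberwise, then globalize by flat base change'' glosses over exactly this identification, which is where the tame ramification and the parabolic condition actually enter the argument.
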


There are natural functors from both sides of the equivalence: the Hecke functor $\on{H}_{P_D}^0$ (see Section \ref{subsection: Hecke})
\[
    \on{H}_{P_D}^0: D^{b}(\mathcal{D}^0_{\on{Bun}_{n,P_D}}\on{-mod})\longrightarrow D^{b}(\mathcal{D}^0_{\on{Bun}_{n,P_D}}\boxtimes \mathcal{D}_{X\backslash D}\on{-mod})
\]
and the functor $\on{W}^0_{P_D}$
\[
    \on{W}^0_{P_D}: D^{b}(\mathcal{O}_{\mathcal{L}oc^0_{n,P_D}}\on{-mod})\longrightarrow D^{b}(\mathcal{O}_{\mathcal{L}oc^0_{n,P_D}}\boxtimes \mathcal{D}_{X\backslash D}\on{-mod})
\]
defined by tensoring with the universal flat connection. Let $\Phi_{\mathcal{P},X\backslash D}$ be the Fourier-Mukai equivalence induced by the pull-back of $\mathcal{P}$:
\[
\Phi_{\mathcal{P},X\backslash D}: D^{b}(\mathcal{O}_{\mathcal{L}oc^0_{n,P_D}}\boxtimes \mathcal{D}_{X\backslash D}\on{-mod})\xrightarrow{\simeq} D^{b}(\mathcal{D}^0_{\on{Bun}_{n,P_D}}\boxtimes \mathcal{D}_{X\backslash D}\on{-mod}).
\]
The equivalence in Theorem \ref{Thm: main theorem intro} satisfies the following Hecke eigenvalue property:
\begin{theorem}\label{thm: hecke intro}
There is an isomorphism of functors:
        \[\on{H}_{P_D}^0\circ \Phi_{\mathcal{P}}\cong\Phi_{\mathcal{P},X\backslash D}\circ\on{W}^0_{P_D} .\]
\end{theorem}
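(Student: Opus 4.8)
The plan is to reduce the identity to a statement about Fourier--Mukai kernels, and ultimately to the classical Hecke eigenproperty of a Poincar\'e sheaf on an abelian fibration. Since $\Phi_{\mathcal{P},X\backslash D}$ is an equivalence, the asserted isomorphism is equivalent to
\[
\Phi_{\mathcal{P},X\backslash D}^{-1}\circ\on{H}^{0}_{P_D}\circ\Phi_{\mathcal{P}}\;\cong\;\on{W}^{0}_{P_D}
\]
as functors $D^{b}(\mathcal{O}_{\mathcal{L}oc^{0}_{n,P_D}}\on{-mod})\to D^{b}(\mathcal{O}_{\mathcal{L}oc^{0}_{n,P_D}}\boxtimes\mathcal{D}_{X\backslash D}\on{-mod})$. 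Both sides are $\mathcal{O}_{\mathcal{L}oc^{0}_{n,P_D}}$-linear integral transforms, so it suffices to identify their kernels on $\mathcal{L}oc^{0}_{n,P_D}\times(X\backslash D)$, the one on the right being the universal flat connection. It is enough to treat $\on{H}^{0}_{P_D}$ as the elementary (length one) modification attached to the standard representation of $GL_n$, the one matching the rank-$n$ universal connection; other Hecke functors follow by composition. Then I would argue fibrewise over the common base over which $\mathcal{L}oc^{0}_{n,P_D}$ and $\mathcal{H}iggs^{(1)}_{n,P_D}\cong T^{*}\on{Bun}^{(1)}_{n,P_D}$ project, namely a suitable open locus of the Frobenius-twisted Hitchin base.

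Next I would make the geometric input explicit. Over $X\backslash D$ the parabolic structure plays no role, so an elementary Hecke modification at a varying point $x\in X\backslash D$ is governed by the usual BNR/spectral correspondence: the fibre of $\mathcal{H}iggs_{n,P_D}$ over a Hitchin point $b$ is a torsor under a component $\mathcal{A}_{b}$ of the relative Picard stack of the spectral curve $X_{b}$, and modifying along a point $\tilde{x}\in X_{b}$ lying over $X\backslash D$ is the translation by $\mathcal{O}(\tilde{x})\in\mathcal{A}_{b}$. One has to check that this description is compatible with the identification $\mathcal{H}iggs_{n,P_D}\cong T^{*}\on{Bun}_{n,P_D}$ and with the nilpotency condition on residues defining $\mathcal{L}oc_{n,P_D}$, and that Hecke with support in $X\backslash D$ preserves the open and localized loci carrying the superscript $0$, so that every functor in the statement is literally defined.

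The heart of the argument is the interaction of $\mathcal{P}$ with these abelian-fibration structures. By its construction in Section~\ref{subsection: proof of main theorem}, $\mathcal{P}$ is fibrewise over the Hitchin base a Poincar\'e-type kernel realizing the Fourier--Mukai duality between the Higgs-side Hitchin fibre and its counterpart on $\mathcal{L}oc_{n,P_D}$; in particular it intertwines the translation action of $\mathcal{A}_{b}$ on $\mathcal{H}iggs_{n,P_D}$ with tensoring, on $\mathcal{L}oc^{0}_{n,P_D}$, by the corresponding degree-zero line bundle, a restriction of the Poincar\'e bundle. Applying this to translation by the universal point of the spectral curve, $\Phi_{\mathcal{P}}$ carries the elementary Hecke functor to tensoring by the family of line bundles on $\mathcal{L}oc^{0}_{n,P_D}\times(X\backslash D)$ cut out by that universal point; unwinding the spectral correspondence in families, this family is the pushforward to $X$ of the tautological sheaf on the universal spectral curve, i.e. the rank-$n$ bundle underlying the universal flat connection. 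Matching the $\mathcal{D}_{X\backslash D}$-module structures on the two sides uses the relative (over $X\backslash D$) version of the correspondence between flat connections with regular singularities and meromorphic Higgs bundles on $X^{(1)}$ announced in the abstract: the $p$-curvature of the universal flat connection is exactly this tautological Higgs datum on $X^{(1)}$. Assembling these identifications gives
\[
\on{H}^{0}_{P_D}\circ\Phi_{\mathcal{P}}\;\cong\;\Phi_{\mathcal{P}}\circ(\text{translation by the universal spectral point})\;\cong\;\Phi_{\mathcal{P},X\backslash D}\circ\on{W}^{0}_{P_D},
\]
which is the claimed isomorphism.

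The main obstacle I anticipate is precisely this bookkeeping: verifying that the unramified Hecke modification corresponds, under the identifications used to build $\mathcal{P}$ (including both Frobenius twists and the nilpotent-residue conditions), to the translation action on the correct component of the relative Picard stack; that this action preserves the loci labelled $0$ so that all functors are defined; and that the universal flat connection genuinely matches the tautological object on the universal spectral curve after twisting. These compatibilities are geometric but delicate, and they are what turn the otherwise formal Fourier--Mukai manipulation into a proof in the tamely ramified setting.
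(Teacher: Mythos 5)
Your outline reconstructs precisely the argument of Bezrukavnikov--Braverman, Theorem~5.4, which is exactly what the paper invokes (the body version, Theorem~\ref{thm: hecke}, is proved by the single sentence ``The proof is the same as the proof of Theorem 5.4 in \cite{BB}''): reduce to an identification of Fourier--Mukai kernels, realize the unramified Hecke modification as translation by the Abel--Jacobi image of the point of $X\backslash D$ on the relative Picard stack of spectral curves, and use that the Poincar\'e kernel $\mathcal{P}$ intertwines translation with tensoring by the corresponding Poincar\'e line bundle, whose pushforward along the spectral cover recovers the universal flat connection. This is essentially the paper's approach, merely spelled out in more detail; the ``delicate compatibilities'' you flag (Frobenius twists, nilpotent residue conditions, preservation of the degree-$0$ locus) are exactly the points that the paper leaves to the reader by deferring to \cite{BB}.
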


Now let $(E,\nabla)$ be a $k$-point of $\mathcal{L}oc^0_{n,P_D}$. We denote by $\mathcal{M}_{E,\nabla}$ the image of $(E,\nabla)$ under $\Phi_{\mathcal{P}}$. By Theorem \ref{thm: hecke intro}, $\mathcal{M}_{E,\nabla}$ satisfies
\begin{equation*}\label{eq: hecke}
       \on{H}^0_{P_D}(\mathcal{M}_{E,\nabla})\cong\mathcal{M}_{E,\nabla}\boxtimes E. 
\end{equation*}

\subsection{Summary of the proof}
We fix a $k$-point $q\in X$ and a parabolic subgroup $P$ of $GL_n(k)$. For the purpose of simplifying notations, our exposition will be restricted to the case of $D=q$ and $P_D=P$ from now on. The only proof that will be different in the more general setting is the proof of Proposition \ref{prop: the open subset is non=empty} in the case of $X=\textbf{P}^1$, $m\geq 4$ and all $P_i$ are Borel subgroups. We discuss this case in Remark \ref{Remark: P1}.

Our proof of Theorem \ref{Thm: main theorem intro} is based on the same strategy as used in \cite{BB}, but some new ingredients come into play. Note that in \cite{BB}, the geometric Langlands correspondence is established over the open subset of the Hitchin base where the spectral curves are smooth. Compared to the unramified case in \cite{BB}, one of the main difficulties in the tamely ramified case is that unless $P$ is a Borel subgroup of $GL_n(k)$, there are no smooth spectral curves. We resolve this situation by considering the normalization of the spectral curves. It is observed in \cite{SS} that under generic restrictions on the spectral curves, a fiber of the Hitchin map 
\[
    h_P: \mathcal{H}iggs_{n,P}\longrightarrow B_P
\]
is isomorphic to the Picard stack of the normalization of the corresponding spectral curve. In Section \ref{subsection: spectral data}, we extend this observation to a family version. More precisely, we prove:
\begin{theorem}\label{thm: spectra data intro}
    There exists a Zariski open dense subset $B_P^0\subset B_P$ and a flat family of smooth projective curves $\widetilde{\Sigma}\longrightarrow B_P^0$ such that 
    \[
        \mathcal{H}iggs_{n,P}\times_{B_P}B^0_P\cong \on{Pic}(\widetilde{\Sigma}/B_P^0).
    \]
    For each $b\in B_P^0(k)$, $\widetilde{\Sigma}_b$ is the normalization of the spectral curve $\Sigma_b$.
\end{theorem}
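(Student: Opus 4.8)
The plan is to build the family of normalized spectral curves over a suitable open locus of the Hitchin base $B_P$, and then identify the corresponding Hitchin fibers with the relative Picard stack. I would start by recalling the description of $B_P$ and the universal spectral curve: a point $b\in B_P$ determines a spectral curve $\Sigma_b\subset T^*X(D)$ (the total space of the line bundle $\Omega_X(D)$) cut out by a characteristic polynomial whose coefficients are sections of $\Omega_X(D)^{\otimes i}$ subject to the parabolic (nilpotent residue) constraints at $q$. Assembling these over all of $B_P$ gives the universal spectral curve $\Sigma\to B_P$ inside $(T^*X(D))\times B_P$. The first task is to pin down the open subset $B_P^0$: I would take it to be the locus where $\Sigma_b$ is reduced and irreducible, with singularities only of the expected type coming from the forced tangency/ramification at the points lying over $q$ (this is exactly the ``generic restrictions'' of \cite{SS}, now imposed fiberwise). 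One must check $B_P^0$ is Zariski open and dense — openness is standard upper-semicontinuity of the relevant discriminant-type conditions, and density requires exhibiting one spectral curve of the desired form, which is where hypotheses (1)--(3) on $(g_X,m,P_i)$ enter; I expect this nonemptiness/density statement to be handled by the separate Proposition \ref{prop: the open subset is non=empty} referenced in the excerpt.

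Next I would construct $\widetilde{\Sigma}\to B_P^0$ as the \emph{relative normalization} of $\Sigma\times_{B_P}B_P^0$ over $B_P^0$. The key point to verify is that this relative normalization is a \emph{flat} family of \emph{smooth projective} curves, i.e. that normalization commutes with base change to each fiber and that each fiber $\widetilde{\Sigma}_b$ is smooth. Smoothness of each fiber follows once we know the only singularities of $\Sigma_b$ are the controlled ones at the preimages of $q$ and that these are resolved by normalization (for $P$ a Borel these are ordinary nodes or simple tangencies; in general they are the singularities analyzed in \cite{SS}). Flatness over the (reduced, irreducible) base will follow from the fact that $\widetilde{\Sigma}\to B_P^0$ has constant fiber dimension $1$ with $\widetilde{\Sigma}$ itself reduced (indeed normal), via the standard criterion that a finite-type morphism to a regular — or at least reduced — base with equidimensional fibers from a Cohen–Macaulay total space is flat; alternatively, one checks the arithmetic genus of $\widetilde{\Sigma}_b$ is constant in $b$ (it drops from $p_a(\Sigma_b)$ by the total $\delta$-invariant, which is constant on $B_P^0$ by construction), and constancy of $p_a$ for a family of curves over a reduced base with reduced fibers gives flatness. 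That the formation of the normalization commutes with base change on $B_P^0$ is the genuinely delicate step and I expect it to be \textbf{the main obstacle}: in general normalization does not commute with base change, and one needs the geometric input that the singularities are equisingular across $B_P^0$ — concretely, one can reduce to the local picture near the finitely many branch points over $q$, where the family of singularities is (\'etale-locally) constant, so simultaneous normalization applies in the sense of Teissier; this is precisely what makes $B_P^0$, rather than all of $B_P$, the right base.

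Finally, with $\widetilde{\Sigma}\to B_P^0$ in hand I would produce the isomorphism $\mathcal{H}iggs_{n,P}\times_{B_P}B_P^0\cong \on{Pic}(\widetilde{\Sigma}/B_P^0)$. The map goes by the usual spectral correspondence: a parabolic Higgs bundle $(E,\phi)$ with characteristic data $b$ yields a torsion-free rank-one sheaf on $\Sigma_b$, namely the pushforward-eigensheaf; the nilpotent-residue condition at $q$ translates, after normalization, into the statement that the corresponding sheaf on $\widetilde{\Sigma}_b$ is actually a line bundle (this is exactly the fiberwise assertion of \cite{SS}, which identifies the fiber $h_P^{-1}(b)$ with $\on{Pic}(\widetilde{\Sigma}_b)$). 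To upgrade the fiberwise statement to a family isomorphism I would invoke the universal property: construct the morphism $\mathcal{H}iggs_{n,P}\times_{B_P}B_P^0\to\on{Pic}(\widetilde{\Sigma}/B_P^0)$ using the universal Higgs bundle and the universal normalized spectral curve, construct the inverse by pushing a universal line bundle on $\widetilde{\Sigma}$ down to $X\times B_P^0$ (equipped with its tautological Higgs field and its induced parabolic structure at $q$, where the nilpotence is automatic from the normalization), and check the two compositions are the identity — which, since both sides are flat over $B_P^0$ and the maps are compatible with base change, can be verified on $k$-points, reducing it back to the already-known fiberwise equivalence of \cite{SS}. The compatibility of the parabolic structure with the family construction, and checking the inverse map lands in the nilpotent-residue locus, are the routine-but-necessary bookkeeping steps.
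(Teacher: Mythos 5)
Your overall strategy — carve out an open locus $B_P^0$ where the spectral curves are equisingular, produce a simultaneous resolution, and then match Hitchin fibers with the relative Picard stack by reducing to the fiberwise result of \cite{SS} — is the right shape, and you have correctly pinpointed the main obstacle: normalization does not commute with base change, so one must construct a specific flat family of smooth curves rather than simply take the relative normalization. However, the route you propose for overcoming that obstacle differs from the paper's, and in a way that leaves the hardest steps unaddressed. The paper does not invoke Teissier-style simultaneous normalization as a black box. Instead it exploits the very explicit local form of the singularity guaranteed by the definition of $B_P^0$: condition (2.\ref{open: second}) forces $\hat{f}_b$ to factor as $f_1\cdots f_r$ with $f_i = y^{\lambda_i} + a_i(x,y)x$, $a_i$ a unit, with separated branches, and the paper then constructs $\widetilde{\Sigma}$ by a tower of $\lambda_1$ blow-ups of $B_P^0\times T^*X(q)$ along the (constant) center $B_P^0\times q'_k$, tracking at each stage that the dominant $y$-power in each local factor drops by exactly one. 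This produces a concrete flat family $\Sigma^{\lambda_1} = \widetilde{\Sigma}$ whose fibers are visibly smooth and are the normalizations, with flatness coming for free from constancy of the genus. Your appeal to ``the family of singularities is (\'etale-locally) constant'' is stronger than what actually holds — the units $a_i$ vary with $b$ — and the correct statement (constant topological type / $\delta$-invariant) is exactly what the explicit factorization and blow-up structure make transparent.

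The second place where your proposal glosses over the real work is the lift of the family of spectral sheaves to a family of line bundles on $\widetilde{\Sigma}$. You say you will ``construct the morphism using the universal Higgs bundle and the universal normalized spectral curve,'' but for the forward direction one must exhibit, for an $S$-family of spectral sheaves $\mathcal{F}$ on $\Sigma_b$, a sheaf $\widetilde{\mathcal{F}}$ on $\widetilde{\Sigma}_b$ with $\sigma_*\widetilde{\mathcal{F}} = \mathcal{F}$; reducing to $k$-points only lets you \emph{check} a morphism is an isomorphism, not produce it. This is precisely where the blow-up tower earns its keep: the paper lifts $\mathcal{F}$ one blow-up at a time, and at each step the existence of the lift reduces to vanishing of a coherent sheaf $\mathcal{G}=u_{t-1}\mathcal{F}_{t-1}/(u_{t-1}\mathcal{F}_{t-1}\cap y\mathcal{F}_{t-1})$, which \emph{can} be checked on closed points because the corresponding action is unique when it exists (the fibers are torsion-free). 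If you want to follow your relative-normalization route instead, you would need to supply a comparable argument that the lift exists in families, and that $\sigma^*\mathcal{F}/\mathrm{torsion}$ (or whatever candidate you pick) actually pushes forward to $\mathcal{F}$ — none of which is automatic from the fiberwise theorem of \cite{SS}. In short: your plan is a reasonable alternative in outline, but the paper's explicit blow-up construction is doing essential work both for the flatness/smoothness of $\widetilde{\Sigma}$ and for the family-level sheaf lift, and your proposal would need to replace those two steps with genuine arguments rather than gesture at general theory.
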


In Section \ref{section:nah}, we establish a correspondence between flat connections on $X$ with regular singularity at $q$ and $\Omega_{X^{(1)}}(q)$-twisted Higgs bundles on the Frobenius twist $X^{(1)}$ of $X$, which can be thought of as a characteristic $p$ version of the non-abelian Hodge correspondence in \cite{Simpson}. Let $\underline{a}$ be an unordered $n$-tuple of elements in $k$. We denote by $\mathcal{H}iggs_{n,\underline{a}}(X^{(1)})$ the moduli stack of $\Omega_{X^{(1)}}(q)$-twisted Higgs bundles $(E,\phi)$ on $X^{(1)}$ such that the tuple of eigenvalues of the residue $\on{res}_{q}(\phi)$ of the Higgs field at $q$ is $\underline{a}$. Let $B_{\underline{a}}^{(1)}$ be the image of $\mathcal{H}iggs_{n,\underline{a}}(X^{(1)})$ under the Hitchin map $h^{(1)}$. We fix a set-theoretic section $\sigma$ of the Artin-Schreier map $k\longrightarrow k$ that maps $t$ to $t^p-t$. We denote by $\mathcal{L}oc_{n,\sigma(\underline{a})}$ the moduli stack of flat connections $(E,\nabla)$ with regular singularity at $q$ such that the tuple of eigenvalues of $\on{res}_{q}({\nabla})$ is $\sigma(\underline{a})$. The $p$-curvature of $(E,\nabla)$ (see Section \ref{section: hitchin morphism for flat connections}) defines the Hitchin map $h'$ for flat connections with regular singularity at $q$:
\[
    h': \mathcal{L}oc_{n,\sigma(\underline{a})}\longrightarrow B_{\underline{a}}^{(1)}.
\]
We will define an open substack (see Section \ref{subsection: Statement of the theorem}) \[\mathcal{L}oc^r_{n,\sigma(\underline{a})}\subset \mathcal{L}oc_{n,\sigma(\underline{a})}
\]
and prove the following theorem:
\\
\begin{theorem}\label{thm: nah intro}
\mbox{}
   \begin{enumerate}
    \item $\mathcal{L}oc^r_{n,\sigma(\underline{a})}$ is a $\on{Pic}(\Sigma^{(1)}/B^{(1)}_{\underline{a}})$-torsor,
  
    \item $\mathcal{L}oc_{n, \sigma(\underline{a})}\cong \mathcal{L}oc_{n, \sigma(\underline{a})}^r\times^{\on{Pic}(\Sigma^{(1)}/B^{(1)}_{\underline{a}})}\mathcal{H}iggs_{n,\underline{a}}(X^{(1)})$. 
    \end{enumerate}
\end{theorem}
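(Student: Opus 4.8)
The plan is to prove both statements at once, by combining the Azumaya property of crystalline differential operators in the logarithmic setting with the spectral description of the resulting $p$-curvature data, following the strategy of \cite{BB}. Let $\widetilde{\mathcal{D}}_{X,q}$ denote the sheaf of crystalline differential operators on $X$ with logarithmic singularities along $q$, refined by the condition that the residue lies in the closure of the orbit determined by the eigenvalue tuple $\sigma(\underline{a})$ (a filtered/parahoric variant of $\mathcal{D}_X$, in the spirit of \cite{BB}, Section 3). As in \cite{BB}, the $p$-curvature yields a central embedding $\mathcal{O}_{\mathbf{T}}\hookrightarrow F_{X*}\widetilde{\mathcal{D}}_{X,q}$, where $F_X\colon X\to X^{(1)}$ is the relative Frobenius and $\mathbf{T}$ is the total space of $\Omega_{X^{(1)}}(q)$ over $X^{(1)}$, and $F_{X*}\widetilde{\mathcal{D}}_{X,q}$ is an Azumaya algebra of rank $p^2$ over $\mathbf{T}$; here $p>n$ is used. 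The identity $\sigma(a)^p-\sigma(a)=a$ shows that the $p$-curvature of an object of $\mathcal{L}oc_{n,\sigma(\underline{a})}$ is an $\Omega_{X^{(1)}}(q)$-twisted Higgs field on $X^{(1)}$ whose residue at $q$ has eigenvalues $\underline{a}$; hence $h'$ indeed takes values in $B^{(1)}_{\underline{a}}$, and for $b\in B^{(1)}_{\underline{a}}$ the $p$-curvature cuts out scheme-theoretically the spectral curve $\Sigma^{(1)}_b\subset\mathbf{T}$. Consequently an object of $\mathcal{L}oc_{n,\sigma(\underline{a})}$ over $b$ is the same as a module over the restriction $\mathcal{A}_b$ of $F_{X*}\widetilde{\mathcal{D}}_{X,q}$ to a neighborhood of $\Sigma^{(1)}_b$, a rank-$p^2$ Azumaya algebra on $\Sigma^{(1)}_b$, whose underlying $\mathcal{O}_{\Sigma^{(1)}_b}$-module is rank-one torsion-free.

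Next I would run the Morita/spectral dictionary. I would define $\mathcal{L}oc^r_{n,\sigma(\underline{a})}$ as the locus where the $\mathcal{A}_b$-module is, \'etale-locally on $\Sigma^{(1)}_b$, invertible times the standard module, equivalently where its spectral sheaf is a line bundle on $\Sigma^{(1)}_b$. The group-stack $\on{Pic}(\Sigma^{(1)}/B^{(1)}_{\underline{a}})$ acts on $\mathcal{L}oc_{n,\sigma(\underline{a})}$ over $B^{(1)}_{\underline{a}}$ by tensoring the spectral sheaf with a line bundle pulled back from $\Sigma^{(1)}$; this action preserves $\mathcal{L}oc^r_{n,\sigma(\underline{a})}$, and on it the action is simply transitive, since any two $\mathcal{A}_b$-invertible sheaves with the same Azumaya twist differ by a genuine line bundle. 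Together with nonemptiness of $\mathcal{L}oc^r_{n,\sigma(\underline{a})}\to B^{(1)}_{\underline{a}}$ --- i.e. that $\mathcal{A}_b$ splits over $\Sigma^{(1)}_b$, equivalently that its Brauer class vanishes --- this gives the torsor statement (1). For (2), the Beauville--Narasimhan--Ramanan correspondence identifies the Hitchin fiber $(h^{(1)})^{-1}(b)$ inside $\mathcal{H}iggs_{n,\underline{a}}(X^{(1)})$ with the moduli of rank-one torsion-free sheaves on $\Sigma^{(1)}_b$, a $\on{Pic}(\Sigma^{(1)}_b)$-space, and a point of $\mathcal{L}oc^r_{n,\sigma(\underline{a})}$ over $b$ furnishes a splitting of $\mathcal{A}_b$, under which $\mathcal{A}_b$-modules with rank-one torsion-free spectral sheaf become rank-one torsion-free sheaves on $\Sigma^{(1)}_b$, compatibly with the $\on{Pic}$-actions. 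This is exactly the claim that $\mathcal{L}oc_{n,\sigma(\underline{a})}$ is the contracted product of the torsor $\mathcal{L}oc^r_{n,\sigma(\underline{a})}$ with the $\on{Pic}(\Sigma^{(1)}/B^{(1)}_{\underline{a}})$-space $\mathcal{H}iggs_{n,\underline{a}}(X^{(1)})$. Passing from this fiberwise picture to the family statement uses flatness and projectivity of $\Sigma^{(1)}\to B^{(1)}_{\underline{a}}$ and fppf descent for the Azumaya algebra, following \cite{BB}, \cite{Grochenig}, and the spectral-data analysis of \cite{SS} and Theorem \ref{thm: spectra data intro}.

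The main obstacle is concentrated in two places. First, one must set up the logarithmic, residue-constrained sheaf of crystalline differential operators correctly and verify its Azumaya property over the appropriate (possibly twisted) form of $\mathbf{T}$: the prescription on the residue forces one to work with a filtered/parahoric refinement of $\mathcal{D}_X$ rather than $\mathcal{D}_X$ itself, and one has to check rank-$p^2$ local triviality in that setting, including over points above $q$ --- a technical but essentially formal extension of \cite{BB}, Section 3. Second, and more seriously, one must establish the nonemptiness of $\mathcal{L}oc^r_{n,\sigma(\underline{a})}$ over $B^{(1)}_{\underline{a}}$, i.e. the existence of a flat connection with the prescribed residue whose $p$-curvature is regular with spectral curve $\Sigma^{(1)}_b$, equivalently the vanishing of the Brauer class of $\mathcal{A}_b$ on $\Sigma^{(1)}_b$. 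This is a genuinely geometric input depending on $X$ and the marked points, and is precisely where the hypotheses (1)--(3) on $(g_X,m,P_D)$ are indispensable; it is the content of (the relevant analogue of) Proposition \ref{prop: the open subset is non=empty}. By comparison, the Morita bookkeeping, the $\on{Pic}$-action computations, and the globalization over $B^{(1)}_{\underline{a}}$ are routine once those two points are settled.
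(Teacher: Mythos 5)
Your strategy runs through a claimed Azumaya property of a ``logarithmic'' (or parahoric/filtered) sheaf of crystalline differential operators over the whole total space $\mathbf{T}$ of $\Omega_{X^{(1)}}(q)$, including the fiber over $q$. This is precisely where the argument breaks, and the paper explicitly avoids this route. The algebra $D_X(-q)$ generated by $\mathcal{O}_X$ and $\mathcal{T}_X(-q)$ is \emph{not} Azumaya over the fiber of $\mathbf{T}$ above $q$: in a local coordinate $x$ at $q$, the fiber at $x^p=0$, $\iota(x\partial_x)=\lambda$ is $k\langle x,y\rangle/(x^p,\ y^p-y-\lambda,\ yx-xy-x)$, which has the proper two-sided ideal $(x)$ (the quotient by it is $k[y]/(y^p-y-\lambda)\neq 0$), so it is not simple, hence not a matrix algebra. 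Imposing a residue constraint does not repair this. This is not a ``technical but essentially formal extension of [BB] Section 3''; it is the central obstruction, and the paper instead proves surjectivity of $h'$ (Proposition~\ref{prop: surjective on k-points}) by a patching argument: a splitting of the honest Azumaya algebra over $\Sigma^{(1)}\setminus(\pi^{(1)})^{-1}(q)$ via Tsen's theorem (Theorem~\ref{thm: Tsen}), an explicit solution of a $p$-curvature differential equation on the formal disk at $q$, and glueing by Beauville--Laszlo. You should also note that the relevant spectral sheaf in the definition of $\mathcal{L}oc^r_{n,\sigma(\underline{a})}$ is an invertible sheaf on $\Sigma'_b = X\times_{X^{(1)}}\Sigma^{(1)}_b$, not on $\Sigma^{(1)}_b$ directly (these are equivalent viewpoints up to a rank-$p$ pushforward, but the formulation should be made precise).

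There is a second, independent gap: you attribute the nonemptiness of $\mathcal{L}oc^r_{n,\sigma(\underline{a})}\to B^{(1)}_{\underline{a}}$ to the hypotheses (1)--(3) on $(g_X,m,P_D)$ and to (an analogue of) Proposition~\ref{prop: the open subset is non=empty}. This is a misattribution. Proposition~\ref{prop: surjective on k-points} holds for \emph{every} $b\in B^{(1)}_{\underline{a}}(k)$, with no hypotheses on $g_X$ or the parabolics, as the construction by glueing shows. The hypotheses (1)--(3) enter only in Proposition~\ref{prop: the open subset is non=empty}, which concerns nonemptiness of the smooth-normalization open locus $B_P^0$ inside the parabolic Hitchin base; that statement is used downstream for Theorem~\ref{thm: spectra data intro} and for the main equivalence, not for Theorem~\ref{thm: nah intro}. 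The remainder of your plan --- the Morita bookkeeping, the $\on{Pic}$-action, the identification with the contracted product via the Beauville--Narasimhan--Ramanan correspondence, Cartier descent for zero $p$-curvature --- agrees in spirit with the paper (Lemma~\ref{lemma: properties of DMOC}, the maps $\Phi$ and $\Psi$, Proposition~\ref{prop: pseudo-torsor}), but it relies on the two false premises above, so as written the proof does not go through.
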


Note that for an arbitrary reductive group $G$, a similar construction is used in \cite{CZ15} to establish the characteristic $p$ version of the non-abelian Hodge correspondence for flat connections without singularities. 

One of the key steps in our proof of Theorem \ref{thm: nah intro} is to show that the map 
\[h':  \mathcal{L}oc^r_{n,\sigma(\underline{a})}\longrightarrow B_{\underline{a}}^{(1)}
\]
is surjective. Since we consider flat connections with singularity at $q$, we cannot apply the Azumaya property of differential operators on $X$ directly. Instead, we construct a flat connection on $X\backslash q$ using the Azumaya property, construct a flat connection on the formal disk around $q$ by explicitly solving a differential equation for the connection form, and glue them together using the Beauville-Laszlo theorem \cite{BL95}.

Note that $\mathcal{L}oc_{n,(\underline{0})}$ is the moduli stack of flat connections with regular singularity and nilpotent residue at $q$. Restricting the isomorphism in Theorem $\ref{thm: nah intro}$(2) to $(B_P^0)^{(1)}$ and combining Theorem $\ref{thm: spectra data intro}$, we deduce that $\mathcal{L}oc_{n,P}^{0}\coloneqq\mathcal{L}oc_{n,P}\times_{B_P^{(1)}}(B_P^{0})^{(1)}$ is a $\on{Pic}(\widetilde{\Sigma}^{(1)}/(B_P^{0})^{(1)})$-torsor. 

It is proved in \cite{BB} that for a smooth algebraic stack $Z$ that is good in the sense of \cite{BD}(i.e. $Z$ satisfies $\on{dim}T^{*}Z=2\on{dim}Z$), there is a natural sheaf of algebras $\mathcal{D}_Z$ on $T^{*}Z^{(1)}$ that satisfies $\pi^{(1)}_{*}\mathcal{D}_Z\cong \on{Fr}_{*}D_Z$, and the restriction of $\mathcal{D}_Z$ to the maximal smooth open substack $(T^{*}Z^{0})^{(1)}\subseteq (T^{*}Z)^{(1)}$ is an Azumaya algebra of rank $p^{2\on{dim} Z}$ (see Section \ref{Section: differential operators on stacks} for a review of this construcion). Here $\pi^{(1)}: T^{*}Z^{(1)}\longrightarrow Z^{(1)}$ is the projection and $\on{Fr}: Z\longrightarrow Z^{(1)}$ is the relative Frobenius. The stack $\on{Bun}_{n,P}$ ``almost'' satisfies those two properties, and we can still construct a sheaf of algebras $\mathcal{D}_{\on{Bun}_{n,P}}$ that satisfies $\pi^{(1)}_{*}\mathcal{D}_{\on{Bun}_{n,P}}\cong \on{Fr}_{*}D_{\on{Bun}_{n,P}}$. See Section \ref{subsection: definition of the algebra} for details.

The restriction $\mathcal{D}^{0}_{\on{Bun}_{n,P}}$ of $\mathcal{D}_{\on{Bun}_{n,P}}$ to  \[\mathcal{H}iggs_{n,P}^{(1)}\times_{B_P^{(1)}}(B_P^{0})^{(1)}\cong \on{Pic}(\widetilde{\Sigma}^{(1)}/(B_P^{0})^{(1)})
\]
is an Azumaya algebra. We associate with $\mathcal{D}^{0}_{\on{Bun}_{n,P}}$ its stack of splittings $\mathcal{Y}_{\mathcal{D}^{0}_{\on{Bun}_{n,P}}}$, which is a $\mathbb{G}_m$-gerbe over the Picard stack $\on{Pic}(\widetilde{\Sigma}^{(1)}/(B_P^{0})^{(1)})$. In Section \ref{subsection: group structure on Azumaya algebra}, we show that $\mathcal{D}^{0}_{\on{Bun}_{n,P}}$ has a tensor structure, therefore $\mathcal{Y}_{\mathcal{D}^{0}_{\on{Bun}_{n,P}}}$ has the structure of a commutative group stack, and there is a short exact sequence
\[
    0\longrightarrow B\mathbb{G}_m\longrightarrow \mathcal{Y}_{\mathcal{D}^0_{\on{Bun}_{n,P}}} \longrightarrow \on {Pic}(\widetilde{\Sigma}^{(1)}/(B_P^{0})^{(1)})\longrightarrow 0. 
\]
By taking dual, we get another short exact sequence:
\[
    0\longrightarrow \on {Pic}(\widetilde{\Sigma}^{(1)}/(B_P^{0})^{(1)})\longrightarrow \mathcal{Y}_{\mathcal{D}^0_{\on{Bun}_{n,P}}}^{\vee}\xrightarrow{\pi} \mathbb{Z}\longrightarrow 0. 
\]
In Section \ref{subsection: proof of main theorem}, we prove that $(\mathcal{Y}_{\mathcal{D}^0_{\on{Bun}_{n,P}}}^{\vee})_1\coloneqq \pi^{-1}(1)$ is isomorphic to $\mathcal{L}oc_{n,P}^{0}$ as  $\on{Pic}(\widetilde{\Sigma}^{(1)}/(B_P^{0})^{(1)})$-torsors, therefore we can apply a twisted version of the Fourier-Mukai transform (reviewed in Section \ref{subsection: FM}) to prove the equivalence in Theorem \ref{Thm: main theorem intro}. For the proof of this isomorphism, we show that the tautological 1-form $\theta^{(1)}$ on $T^{*}(X\backslash q)^{(1)}$ extends to a 1-form $\widetilde{\theta}^{(1)}$ on $\widetilde{\Sigma}^{(1)}$, and both $\on{Pic}(\widetilde{\Sigma}^{(1)}/(B_P^{0})^{(1)})$-torsors are isomorphic to the moduli stack of rank one flat connections on $\widetilde{\Sigma}$ with $p$-curvature $\widetilde{\theta}^{(1)}$. 

\subsection{Structure of the article}
In Section \ref{section: Spectral data of parabolic Higgs bundles}, we first review some basic constructions related to the Hitchin fibration. Then we define the Zariski open dense subset $B_P^0\subset B_P$ and establish the correspondence between parabolic Higgs bundles and the Picard stack of the normalization of spectral curves over $B_P^0$. In Section \ref{section: Azumaya property of differential operators }, we first review some properties of crystalline differential operators in positive characteristic, including the Azumaya property and the Cartier descent. Then we describe the correspondence between modules over an Azumaya algebra and twisted sheaves associated to its $\mathbb{G}_m$-gerbe of splittings. Finally we review the definition of tensor structures on Azumaya algebras over group stacks. In Section \ref{section:nah}, we first construct the Hitchin map for flat connections with regular singularities. Then we prove the non-abelian Hodge correspondence between $\mathcal{L}oc_{n,\sigma(\underline{a})}$ and $\mathcal{H}iggs_{n,\underline{a}}(X^{(1)})$.  
In Section \ref{section: main theorem}, we first define the sheaf of algebras $\mathcal{D}_{\on{Bun}_{n,P}}$ and construct a tensor structure on $\mathcal{D}^{0}_{\on{Bun}_{n,P}}$. Then we review the Fourier-Mukai transforms on commutative group stacks and use this framework to prove the main theorem. Finally we discuss the Hecke eigenvalue property of this equivalence.

\subsection{Notations and definitions}
Unless otherwise mentioned, $k$ is an algebraically closed field of characteristic $p>0$. We consider the general linear group $GL_n(k)$ and assume $p> n$. Let $\mathfrak{gl}_n(k)$ be the Lie algebra of $GL_n(k)$. We denote by $\mathcal{N}$ the nilpotent cone in $\mathfrak{gl}_n(k)$. Let $P$ be a parabolic subgroup of $GL_n(k)$. The Lie algebra of $P$ decomposes as $Lie(P)\cong l\oplus n_{P}^+$. We denote by $\mathcal{O}_P$ the Richardson orbit corresponding to $P$, which is the unique nilpotent orbit in $\mathfrak{gl}_n(k)$ such that the intersection with $n_{P}^+$ is open dense in $n_{P}^+$. Let $X$ be a smooth projective algebraic curve over $k$. Let $g_X$ be the genus of $X$. We fix a $k$-point $q\in X$. For any $k$-scheme $S$, we denote by $\iota_q: S\longrightarrow S\times X$ the base change of $q: \on{Spec}(k)\longrightarrow X$. We denote by $p_X$ the projection from $S\times X$ to $X$, and by $p_S$ the projection to $S$. 

\begin{definition}
	An $S$-family of (quasi-)parabolic vector bundles on $X$ is a vector bundle $E$ of rank $n$ on $S\times X$ with a $P$-reduction along $S\times q$. We denote the moduli stack of such objects by $\on{Bun}_{n,P}$. To be more precise, $\on{Bun}_{n,P}$ classifies triples $(E, E_P,\tau)$, where $E_P$ is a $P$-bundle on $S\times q$ and $\tau$ is an isomorphism
	\[
	    \tau: E_P\times_{P}k^n\xrightarrow{\simeq} \iota^{*}_{q} E.
	\]
	
	Let $\on{Bun}_n$ be the moduli stack of rank $n$ vector bundles on $X$. There is a canonical map from $\on{Bun}_{n,P}$ to $\on{Bun}_n$, which is defined by forgetting the $P$-reduction. 
\end{definition}

\begin{remark}\label{remark: parabolic subgroup}
    Let $B$ be the Borel subgroup of $GL_n(k)$ that consists of upper triangular matrices. There is a one-to-one correspondence between the set of parabolic subgroups of $GL_n(k)$ containing $B$ and the set of ordered $n$-tuples of positive integers $\mu=(\mu_1,\mu_2,\dots,\mu_s)$ such that $\displaystyle\sum_{i=1}^s\mu_i=n$. This correspondence can be described as follows. We consider the standard representation of $GL_n(k)$ acting on $k^n$. Let $e_1, e_2,\dots,e_n$ be the standard basis of $k^n$. For $i=1,2,\dots,s$, let $V_i=\displaystyle\bigoplus_{j=1}^{m_i}ke_j$ where $m_i=\displaystyle\sum_{k=1}^i \mu_k$. Then the parabolic subgroup $P_{\mu}$ corresponding to $\mu$ is identified with
    \[
      \{g\in GL_n(k)| g(V_i)\subseteq V_i, 1\leq i\leq s       \}.
    \]
    Let $\lambda_1\geq\lambda_2\geq\cdots\geq\lambda_r$ be the conjugate partition to $\mu$. The Richardson orbit corresponding to $P_\mu$ consists of $M_{\underline{\lambda}}$ the nilpotent matrix with Jordan blocks of sizes $\lambda_1\geq \lambda_2\geq \cdots\geq \lambda_r$.
    
    Let $E$ be a rank $n$ vector bundle on $S\times X$. A $P_\mu$-reduction of the structure group along $S\times q$ corresponds to a partial flag structure:
    \[
        0=E^0_q\subset E^1_q\subset E^2_q \subset \cdots E^s_q=\iota_q^{*}E,
    \]
    where $E_i$ is a vector bundle of rank $m_i$ on $S$. 
\end{remark}

\begin{remark}
  In the work of Mehta-Seshadri \cite{MS}, a parabolic vector bundle is defined as a quasi-parabolic vector bundle together with a set of real numbers $(\alpha_0, \alpha_1, \alpha_2, \dots, \alpha_s)$ satisfying \[ 1=\alpha_0>\alpha_1>\cdots>\alpha_r\geq 0\]
  called parabolic weights. The parabolic weights can be used to define a stability condition on such objects, which is necessary for the construction of a moduli space. Since we focus on studying the moduli stack of such objects, we do not introduce the parabolic weights in this paper. 
\end{remark}

\begin{definition}\label{def: parabolic Higgs}
	An $S$-family of (quasi-)parabolic Higgs bundles on $X$ is a parabolic vector bundle $(E, E_P,\tau)$ together with a Higgs field 
	\[\phi\in \Gamma(\mathcal{E}nd(E)\otimes p_X^{*}(\Omega_{X}(q))), 
	\]such that the residue of $\phi$ at $q$, which we denote by $\on{res}_{q}(\phi)\in \on{End}(\iota^{*}_q E)$, lies in $\Gamma(S,E_P\times_P n_{P}^+)$. In other words, if the parabolic reduction gives the following partial flag structure:
	 \[
        0=E_0\subset E_1\subset E_2 \subset \cdots E_s=\iota_q^{*}E,
    \]
    we require $\on{res}_q(\phi)(E_i)\subseteq E_{i-1}$.
	We denote the moduli stack of such objects by $\mathcal{H}iggs_{n,P}$.
	
	We denote by $\mathcal{H}iggs_{n,q}$ the moduli stack of $\Omega_{X}(q)$-twisted Higgs bundles $(E,\phi)$, 
	\[\phi\in \Gamma(\mathcal{E}nd(E)\otimes p_X^{*}(\Omega_{X}(q))).
	\]
	There is a canonical map from $\mathcal{H}iggs_{n,P}$ to $\mathcal{H}iggs_{n,q}$, which is defined by forgetting the $P$-reduction.  
\end{definition}

\begin{remark}

$\mathcal{H}iggs_{n,P}\cong T^{*}{\on{Bun}}_{n,P}$. 
	
\end{remark}

\begin{remark}
    A parabolic version of the Hitchin moduli stack is previously considered in the work of Yun \cite{Yun}. Definition 2.1.1 in \cite{Yun} is different from our Definition \ref{def: parabolic Higgs} in two aspects: the marked point $q$ on $X$ is allowed to move in \cite{Yun}, and the Higgs field $\phi$ is only required to preserve the flag structure instead of being nilpotent with respect to the flag structure. 
\end{remark}

\begin{definition}
	An $S$-family of parabolic flat connections on $X$ is a parabolic vector bundle $(E, E_P,\tau)$ together with a flat connection with regular singularity at $q$
	\[\nabla:E\longrightarrow E\otimes p_X^{*}(\Omega_{X}(q)),
	\]
	(i.e. $\nabla$ is a $\mathcal{O}_S$-linear map of sheaves that satisfies the Leibniz rule), such that the residue $\on{res}_q{\nabla}$ of $\nabla$ at $q$ lies in $\Gamma(S,E_P\times_P n_{P}^+)$. We denote the moduli stack of such objects by $\mathcal{L}oc_{n,P}$.
	
	We denote by $\mathcal{L}oc_{n,q}$ the moduli stack of flat connections of rank $n$ on $X$ with regular singularity at $q$. There is a canonical map from $\mathcal{L}oc_{n,P}$ to $\mathcal{L}oc_{n,q}$, which is defined by forgetting the $P$-reduction. 
\end{definition}

\subsection{Acknowledgements}
I would like to thank my advisor Tom Nevins for many helpful discussions on this subject, and for his comments on this paper. I would like to thank Christopher Dodd, Michael Groechenig and Tamas Hausel for helpful conversations. I would like to thank Tsao-Hsien Chen and Siqing Zhang for useful comments on an earlier version of this paper. 

\section{Spectral data of parabolic Higgs bundles}\label{section: Spectral data of parabolic Higgs bundles}
\subsection{Basic constructions}\label{subsection: basic constructions}

In this Subsection, we discuss the construction of the Hitchin map, spectral curves and spectral sheaves in \cite{Hitchin} and \cite{BNR} in the parabolic setting. By taking the coefficients of the characteristic polynomial of the Higgs field, we get the Hitchin map:
\[h:\mathcal{H}iggs_{n,q}\longrightarrow B,\]
where $B=\displaystyle\bigoplus_{i=1}^n\Gamma(X,\Omega_{X}(q)^i)$\footnote{More precisely, $B$ is the affine space associated to the $k$-vector space $\bigoplus_{i=1}^n\Gamma(X,\Omega_{X}(q)^i)$, i.e. $B=\on{Spec}(\on{Sym}(\bigoplus_{i=1}^n\Gamma(X,\Omega_{X}(q)^i))^{\vee})$.}. If we require the residue of the Higgs field to be nilpotent, the image of this map lies in $B_{\mathcal{N}}\coloneqq\displaystyle\bigoplus_{i=1}^n\Gamma(X,\Omega^{\otimes i}_{X}((i-1)q)$.

Let $T^{*}X(q)=\on{Spec}_X(\on{Sym}_{\mathcal{O}_X}\mathcal{T}_X(-q))$, where $\mathcal{T}_X(-q)$ is the sheaf of vector fields on $X$ that vanish at $q$. Let $\pi$ be the projection $\pi: T^{*}X(q)\longrightarrow X$. We denote by $y$ the tautological section of $\pi^{*}(\Omega_X(q))$. For $b=(b_1,b_2,\dots,b_n)$, $b_i\in\Gamma(X,\Omega_{X}(q)^{i})$, we define the spectral curve $\Sigma_b$ to be the zero-subscheme of the section 
\[
    y^n+b_1y^{n-1}+\cdots+b_{n-1}y+b_n
\]
of $\pi^{*}(\Omega_X(q)^n)$. By abuse of notation, we also denote by $\pi$ the projection from $\Sigma_b$ to $X$. Since $\pi_{*}\mathcal{O}_{\Sigma_b}=\displaystyle \bigoplus_{i=0}^{{n-1}}\mathcal{T}_X(-q)^{\otimes i}$, we can compute the genus of $\Sigma_b$
\[
    g_{\Sigma_b}=\frac{n(n-1)}{2}(2g-1)+n(g-1)+1. 
\]

Let $(E,\phi)$ be a $k$-point of $\mathcal{H}iggs_{n,q}$ such that $h(E,\phi)=b$. We can think of $\phi$ as a morphism
\[
    \phi: \mathcal{T}_X(-q)\longrightarrow \mathcal{E}nd(E).
\]
By Cayley-Hamilton, there is a coherent sheaf $\mathcal{F}$ on $\Sigma_b$ such that $\pi_{*}(\mathcal{F})=E$. We call $\mathcal{F}$ the spectral sheaf corresponding to $(E,\phi)$. Conversely, let $\mathcal{G}$ be a coherent sheaf on $\Sigma_b$, there is a canonical section $\phi_{\on{can}}\in \Gamma(X, \mathcal{E}nd(\pi_{*}(\mathcal{G}))\otimes \Omega_X(q))$ obtained by adjunction. It is proved in \cite{BNR} that if $\Sigma_b$ is reduced, the Hitchin fiber $h^{-1}(b)$ is isomorphic to the stack of torsion free sheaves on $\Sigma_b$, and if $\Sigma_b$ is smooth, $h^{-1}(b)$ is isomorphic to the Picard stack $\on{Pic}(\Sigma_b)$ of $\Sigma_b$.

If we require the residue of $\phi$ to be nilpotent, then $\pi^{-1}(q)$ is a single point $q'$ that lies in the zero-section of $T^{*}X(q)$. Let $V=\on{spec}(A)$ be an affine open neighborhood of $q$ in $X$. Let $x$ be an element of $A$ that is mapped to a local parameter of $X$ at $q$. Shrinking $V$ if necessary, we assume $\frac{dx}{x}$ is a nowhere vanishing section of $\Omega_V(q)$. Let $U=\pi^{-1}(V)$. The section $\frac{dx}{x}$ gives a trivialization of $T^{*}X(q)|_V$ and $\pi^{*}(T^{*}X(q))|_U$. Under this trivialization, the tautological section $y$ is equal to $x\partial_x$ considered as an element in $\mathcal{O}_{U}$. Let $\Sigma_b(V)\coloneqq V \times _X \Sigma_b$, then $\mathcal{O}_{\Sigma_b(V)}$ is isomorphic to $\mathcal{O}_U/(f_b)$, where $f_b=y^n+b_{1}y^{n-1}+\dots + b_{n-1}y+b_n$, $b_i\in \mathcal{O}_V$. We denote by $\hat{f}_b$ the image of $f_b$ in $\hat{\mathcal{O}}_{U,q'}\cong k[[x,y]]$, then $\hat{\mathcal{O}}_{\Sigma_{b},q'}\cong k[[x,y]]/(\hat{f}_b)$.

\subsection{The parabolic Hitchin base $B_P$}\label{subsection: Hitchin base}
Now let $P$ be a parabolic subgroup of $GL_n(k)$, and we assume the Richardson orbit $\mathcal{O}_P$ of $P$ contains the nilpotent matrix with Jordan blocks of sizes $\lambda_1\geq\lambda_2\geq\cdots\geq\lambda_r, \displaystyle\sum_{i=1}^r\lambda_i=n$. Composing $h$ with the forgetful map from $\mathcal{H}iggs_{n,P}$ to $\mathcal{H}iggs_{n,q}$, we get 
\[h_P:\mathcal{H}iggs_{n,P}\longrightarrow B.
\] 
In order to describe the image of $h_P$, we define the following sets of formal power series. 
\begin{definition}
Let $\boldsymbol\eta=(\eta_1, \eta_2,\dots,\eta_s)$, $\eta_1\geq\eta_2\geq\cdots\geq\eta_s$ be a decreasing sequence of positive integers. Let $\gamma_i=\displaystyle\sum_{j=i+1}^s\eta_j$ for $i=0,1,2,\dots,s-1$ and $\gamma_s=0$. We denote by $P_{\boldsymbol\eta}$ the set of formal power series of the form
\[ y^{\gamma_0}+\sum_{i=1}^{s}a_i(x,y)x^iy^{\gamma_i}, \text{where } a_i(x,y)\in k[[x,y]],
\]
and by $P_{\boldsymbol\eta}^{0}$ the subset of elements in $P_{\boldsymbol\eta}$ that satisfy $a_i(x,y)\in k[[x,y]]^{\times}$. 
\end{definition}
In particular, if $\boldsymbol\eta=(m)$, $P_m^0$ is the set of formal power series of the form
\[
    y^m+a(x,y)x, \text{where } a(x,y)\in k[[x,y]]^{\times}.
\]

\begin{lemma}\label{lemma: inclusion}
    Let $(E, E_P,\tau, \phi)$ be a $k$-point of $\mathcal{H}iggs_{n,P}$ such that $h_P(E,\phi)=b$. Let $\hat{f}_b$ be the element in $k[[x,y]]$ such that $\hat{\mathcal{O}}_{{\Sigma}_b,q'}\cong k[[x,y]]/(\hat{f}_b)$ as above, then $\hat{f}_b\in P_{\boldsymbol\lambda}$.
\end{lemma}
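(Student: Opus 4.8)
The plan is to work locally at the point $q' \in \pi^{-1}(q)$ and analyze the nilpotent residue condition on $\phi$ in terms of the local equation $\hat{f}_b$ of the spectral curve. First I would set up local coordinates: using the affine open $V = \on{Spec}(A)$ around $q$, the local parameter $x$, and the trivialization by $\frac{dx}{x}$, so that $y = x\partial_x$ locally and $\hat{\mathcal{O}}_{U,q'} \cong k[[x,y]]$, exactly as in Section \ref{subsection: basic constructions}. In these coordinates $\hat{f}_b = y^n + \hat{b}_1 y^{n-1} + \cdots + \hat{b}_n$ with $\hat{b}_i \in k[[x]] \subset k[[x,y]]$ (the $b_i$ only depend on $x$ locally), so what I must show is that, setting $\gamma_0 = n$ and $\gamma_i = \sum_{j>i}\lambda_j$ for the conjugate-partition data, we have $x^i \mid \hat{b}_{n-\gamma_i + \ell}$-type divisibility forcing $\hat{f}_b$ into the prescribed form $y^{\gamma_0} + \sum_{i=1}^s a_i(x,y) x^i y^{\gamma_i}$. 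Equivalently — and this is the cleaner formulation — I would show that modulo $x$, $\hat{f}_b$ reduces to $y^{\gamma_0} = y^n$, and more refined vanishing of low-order terms in $x$ is governed by the $\lambda$-partition; the key is that reduction mod $x$ of $\hat{f}_b$ is the characteristic polynomial of $\on{res}_q(\phi)$ restricted appropriately.

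The central computation is the following. The fiber $\iota_q^* E$ carries the endomorphism $\on{res}_q(\phi)$, which by the parabolic condition lies in $n_P^+$, hence (its $GL_n$-conjugate) is a nilpotent matrix whose Jordan type is \emph{dominated by} the Richardson orbit $\mathcal{O}_P = \mathcal{O}_{M_{\underline\lambda}}$, and generically equals $M_{\underline\lambda}$. I would identify $\hat{f}_b \bmod x^{i}$ with data read off from the induced action of $\on{res}_q(\phi)$ on the associated graded of the flag filtration: since $\on{res}_q(\phi)(E_j) \subseteq E_{j-1}$, the operator $x\partial_x$ acting on the spectral side corresponds to multiplication by $y$, and the filtration by powers of $x$ on $\hat{\mathcal{O}}_{\Sigma_b, q'}$ matches the filtration induced by $\on{res}_q(\phi)$ being strictly decreasing on the flag. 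Concretely, the multiplicities with which $y$ divides $\hat{f}_b \bmod x, \hat{f}_b \bmod x^2, \ldots$ are exactly the partial sums $\gamma_0 \geq \gamma_1 \geq \cdots$, because these count the dimensions of $\ker(\on{res}_q(\phi)^i)$-type subspaces, which for a nilpotent in $\overline{\mathcal{O}_{M_{\underline\lambda}}}$ with the given flag are controlled by the conjugate partition $\underline\lambda$. This is really a statement about the local structure of the characteristic polynomial of a one-parameter family degenerating to a nilpotent element of a given Jordan type, combined with the strictness of the flag.

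The main obstacle I anticipate is bookkeeping: carefully tracking how the conjugate partition $\underline\lambda = (\lambda_1 \geq \cdots \geq \lambda_r)$, the tuple $\mu$ defining $P$, and the sequence $\eta = \lambda$ with its partial sums $\gamma_i$ all interlock, and making sure the divisibility $a_i(x,y)x^i y^{\gamma_i}$ comes out with exactly the right exponents and not something weaker. I would handle this by first doing the Borel case $P = B$ (so $\underline\lambda = (n)$, $s = n$, $\eta = (1,1,\ldots,1)$, $\gamma_i = n - i$), where $\on{res}_q(\phi)$ is a regular nilpotent relative to a full flag and the computation reduces to: if $N$ is strictly lower-triangular with respect to the flag, then $\det(yI - N - xM(x))$ has its $y^{n-i}$-coefficient divisible by $x^i$ — an elementary expansion argument. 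Then I would bootstrap to general $P$ either by a direct generalization of this determinant expansion adapted to the block structure of $n_P^+$, or by the observation that a point of $\mathcal{H}iggs_{n,P}$ maps to $\mathcal{H}iggs_{n,B'}$ for a Borel $B' \subseteq P$ after choosing a full refinement of the flag (enlarging the parabolic structure), which only makes the divisibility conditions stronger, and then noting that $P_\lambda$ is precisely the subset of $P_{(1^n)}$ cut out by forgetting the intermediate conditions — so membership in $P_\lambda$ follows from membership in the finer set. The note "$\hat f_b \in P_\lambda$" (not $P_\lambda^0$) is consistent with this: genericity is not yet imposed, so we only get the inclusion with $a_i \in k[[x,y]]$, and the open condition $a_i \in k[[x,y]]^\times$ is what will later single out $B_P^0$.
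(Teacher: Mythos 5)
Your local setup in the first paragraph is sound: the statement does reduce to showing that the coefficient $\hat b_j$ of $y^{n-j}$ in $\hat f_b = \det(yI - N - xM(x))$, with $N = \on{res}_q(\phi) \in n_P^+$, vanishes to order at least $m_j$ in $x$. But the base case and the bootstrap in your last paragraph both rest on a swap of $\mu$ (the parabolic type) and $\lambda$ (its conjugate). For $P=B$ we have $\mu=(1,\dots,1)$ and hence $\lambda=(n)$, $r=1$, $\gamma_0=n$, $\gamma_1=0$; so $P_\lambda=P_{(n)}$ asks only that each $\hat b_j$ vanish at $x=0$, which is the \emph{weakest} condition, equivalent merely to nilpotence of $N$. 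The elementary fact you claim --- that strictly triangular $N$ forces $x^i\mid\hat b_i$ --- is false: take $n=2$, $N=\left(\begin{smallmatrix}0&1\\0&0\end{smallmatrix}\right)$, $M=\left(\begin{smallmatrix}0&0\\1&0\end{smallmatrix}\right)$, so $\det(yI-N-xM)=y^2-x$, whose constant term $-x$ is not divisible by $x^2$. The divisibility $x^j\mid\hat b_j$ is the case $\lambda=(1,\dots,1)$, i.e.\ $\mu=(n)$, $P=GL_n$, $n_P^+=0$ and $\Phi=xM$ --- not the Borel. Your bootstrap is reversed for the same reason: refining the flag to a Borel $B'\subseteq P$ \emph{enlarges} the nilpotent radical ($n_{B'}^+\supseteq n_P^+$), so the constraint on $\on{res}_q(\phi)$ gets weaker and you land in the bigger set $P_{(n)}\supseteq P_\lambda$; one cannot deduce the lemma this way.

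What does work is the other alternative you name, the direct determinant expansion adapted to the block structure. Write $\hat b_j$ (up to sign) as $\sum_{|S|=j}\det(\Phi_S)$ and expand each $\det(\Phi_S)$ over permutations $\sigma$ of $S$: every factor $\Phi_{\sigma(a),a}$ with $\on{block}(\sigma(a))\ge\on{block}(a)$ receives no contribution from $N$ and hence carries a factor of $x$. For the nonnegative integer matrix $B_{pq}=|\{a\in S:\on{block}(a)=p,\ \on{block}(\sigma(a))=q\}|$, which has row and column sums both equal to $s_p:=|S\cap\on{block}\,p|$, one checks $\sum_{q\ge p}B_{pq}\ge \max_p s_p$; and $\max_p s_p\ge m_j$ since $\sum_p\min(\mu_p,\max_p s_p)\ge\sum_p s_p=j$. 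This gives $v_x(\hat b_j)\ge m_j$, i.e.\ $\hat f_b\in P_\lambda$. The paper itself offloads precisely this computation to Proposition 22 of \cite{BK18}.
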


This lemma follows from a direct computation, see Proposition 22 in \cite{BK18}. It follows from Lemma \ref{lemma: inclusion} that $h_P$ factors through the affine space
\[B_P\coloneqq\displaystyle\bigoplus_{i=1}^{n}\Gamma(X, \Omega_X^{\otimes i}((i-m_i)q)),
\]
here $m_i=j$ if $\gamma_{j}\leq n-i < \gamma_{j-1}$. To be more explicit, we have $\boldsymbol{m}=1^{\lambda_1}2^{\lambda_2}\cdots r^{\lambda_r}$, meaning that the first $\lambda_1$ terms are $1$, the next $\lambda_2$ terms are $2$,..., and the last $\lambda_r$ terms are $r$.

\begin{lemma}\label{lemma: factor}
\mbox{}
\begin{enumerate}
    \item 
    Let $\hat{f}$ be a formal power series in $P_{\boldsymbol\eta}^{0}$, $\boldsymbol\eta=\eta_1^{l_1}\eta_2^{l_2}\cdots \eta_t^{l_t}$, $\eta_1\geq\eta_2\geq\cdots\geq\eta_t$. Then $\hat{f}$ factorizes uniquely as $\hat{f}=f_1f_2\cdots f_t$, where each $f_i$ is a formal power series in $P_{\eta_i^{l_i}}^{0}$,
    \item Let $\hat{g}=y^{\eta l}+a_1(x,y)xy^{\eta(l-1)}+a_2(x,y)x^2y^{\eta(l-2)}+\cdots+a_l(x,y)x^l$ be a power series in $P_{\eta^{l}}^{0}$. We write $a_i(0,0)$ for the constant term of $a_i\in k[[x,y]]$. Assume the polynomial $y^{l}+a_1(0,0)y^{l-1}+a_2(0,0)y^{l-2}+\cdots+a_l(0,0)$ has distinct roots. Then $\hat{g}$ factorizes uniquely as $\hat{g}=g_1g_2\cdots g_l$, where each $g_i\in P^0_{\eta}$.
    \end{enumerate}
\end{lemma}
\begin{proof}
    We start with Part (1). The uniqueness part follows from the fact that $k[[x,y]]$ is a UFD. We prove the existence part by induction on $t$. Let $s=\displaystyle\sum_{i=1}^t l_i$ be the length of $\boldsymbol\eta$. Let \[\hat{f}=y^{\gamma_0}+\sum_{i=1}^{s}a_i(x,y)x^iy^{\gamma_i}, \text{where } a_i(x,y)\in k[[x,y]]^{\times}.
    \]
    In order to show that $\hat{f}$ factorizes as required, it is enough to show that $f$ factorizes as $\hat{f}=gh$, where
    \[
        g=y^{\gamma_0-\eta_tl_t}+\sum_{i=1}^{s-l_t}b_i(x,y)x^iy^{\gamma_i-\eta_tl_t}\in P^0_{\eta_1^{l_1}\eta_2^{l_2}\cdots\eta_{t-1}^{l_{t-1}}}
    \]
    and
    \[
        h=y^{\eta_t l_t}+c_1(x,y)xy^{\eta_t(l_t-1)}+c_2(x,y)x^2y^{\eta_t(l_t-2)}+\cdots+c_l(x,y)x^{l_t}\in P^0_{\eta_t^{l_t}}.
    \]
   Comparing coefficients, we have 
    \[
    \begin{cases}
        b_1+c_1y^{\square}=a_1\\
        b_2+b_1c_1 y^{\square}+c_2 y^{\square}=a_2\\\cdots \\
        b_{s-l_t}+ b_{s-l_t-1}c_1y^{\square}+ b_{s-l_t-2}c_2y^{\square}+\cdots+b_{s-2l_t}c_{l_t}y^{\square}=a_{s-l_t}\\ b_{s-l_t}c_1+ b_{s-l_t-1}c_2y^{\square}+ b_{s-l_t-2}c_3y^{\square}+\cdots+b_{s-2l_t+1}c_{l_t}y^{\square}=a_{s-l_t+1}\\
        \cdots\\
        b_{s-l_t}c_{l_t-1}+ b_{s-l_t-1}c_{l_t}y^{\square}=a_{s-1}\\
        b_{s-l_t}c_{l_t}=a_s,        
    \end{cases}
    \]
    where $y^{\square}$ stands for raising $y$ to some positive integer power. Since $a_s$ is invertible $b_{s-l_t}$ is also invertible by the last equation. Solving this system of equations is equivalent to solving a single equation with variable $b_{s-l_t}$. Indeed, we can solve $c_{l_t}$, $c_{l_t-1}$, \dots, $c_1$ in turn as functions of $b_{s-l_t}$ from the last $l_t$ equations; then we can solve $b_1, b_2\dots, b_{s-l_t-1}$ in turn as functions of $b_{s-l_t}$ from the first $s-l_t-1$ equations; then we get the desired equation with variable $b_{s-l_t}$ by substituting the other variables as functions of $b_{s-l_t}$ in the $(s-l_t)$-th equation. 
This equation has a solution by Hensel's lemma. Indeed, after reduction to $k$, this equation has a unique solution $b_{s-l_t}(0,0)=a_{s-l_t}(0,0)$.

For Part (2), since we assume $y^{l}+a_1(0,0)y^{l-1}+a_2(0,0)y^{l-2}+\cdots+a_l(0,0)$ has distinct roots, by Hensel's lemma, they lift to distinct roots of the polynomial $$y^{l}+a_1(x,y)y^{l-1}+a_2(x,y)y^{l-2}+\cdots+a_l(x,y),$$ which we denote by $b_1(x,y), b_2(x,y), \dots, b_l(x,y)$. Then 
\[
\hat{g}=\prod_{i=1}^l(y^{\eta}-b_i(x,y)x)
\]
gives the desired factorization. Since $a_l(x,y)\in k[[x,y]]^{\times}$, we have $b_i(x,y)\in k[[x,y]]^{\times}$ for each $i$. 
\end{proof}

In order to obtain a spectral description of parabolic Hitchin fibers, we define the following open subset of the Hitchin base $B_P$.
\begin{definition}
We define $B_P^{0}$ to be the subset of $B_P$ such that $b\in B_P^{0}$ is characterized by the following properties:
	\begin{enumerate}[(2.1)]
		\item \label{open: first}$\Sigma_b\backslash q'$ is smooth,
		\item \label{open: second}$\hat{f}$ lies in $P_{\boldsymbol\lambda}^{0}$, and all components in the factorization of $\hat{f}$ in Lemma \ref{lemma: factor} Part (1) satisfy the assumption in Lemma \ref{lemma: factor} Part (2). It follows that $\hat{f}$ factorizes as $\hat{f}=f_1f_2\cdots f_r$, where $f_i=y^{\lambda_i}+a_i(x,y)x$, $a_i(x,y)\in k[[x,y]]^{\times}$. If $\lambda_s=\lambda_t$ for some $s\neq t$, then the constant terms of $a_s$ and $a_t$ are not equal to each other. 
	\end{enumerate}
\end{definition}
In particular, if $P$ is a Borel subgroup of $GL_n(k)$, $B_{P}^{0}$ is characterized by the spectral curve being smooth.

\begin{lemma}\label{lemma: the open subset}
    For every $b\in B_P^{0}$, there exists a $k$-point of $\mathcal{H}iggs_{n,P}$ that is mapped to $b$ under the Hitchin map $h_P$.
\end{lemma}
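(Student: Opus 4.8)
Fix $b\in B_P^{0}$; the plan is to exhibit the required point explicitly from the spectral curve $\Sigma_b$. In outline, I would normalize $\Sigma_b$, push forward the structure sheaf of the normalization to $X$ to obtain a rank $n$ vector bundle $E$ with its canonical Higgs field $\phi$, and then upgrade $(E,\phi)$ to a parabolic Higgs bundle using that $\on{res}_q(\phi)$ lands in the Richardson orbit $\mathcal{O}_P$. The substance of the argument is a local computation near the point $q'$.

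First I would check that $\Sigma_b$ is reduced. Away from $q'$ this is condition~\ref{open: first}. Near $q'$, condition~\ref{open: second} and Lemma~\ref{lemma: factor} give $\hat{\mathcal{O}}_{\Sigma_b,q'}\cong k[[x,y]]/(\hat f_b)$ with $\hat f_b=f_1\cdots f_r$, $f_i=y^{\lambda_i}+a_i(x,y)x$ and $a_i\in k[[x,y]]^{\times}$. Each $f_i$ is irreducible, since $\{f_i=0\}$ is smooth at the origin (as $\partial_x f_i(0,0)=a_i(0,0)\neq 0$), so $k[[x,y]]/(f_i)$ is a regular, hence integral, local ring. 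The $f_i$ are pairwise non-associate: when $\lambda_s\neq\lambda_t$ this follows by comparing $x$-adic valuations (the valuation of $x$ in $k[[x,y]]/(f_i)$ is $\lambda_i$); when $\lambda_s=\lambda_t$ it follows from the constant-term hypothesis in \ref{open: second} by reducing a putative relation $f_s=uf_t$ modulo $(x)$ and then modulo $(x,y)$. Hence $\hat f_b$ is squarefree and $\Sigma_b$ is reduced.

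Now let $\nu:\widetilde{\Sigma}_b\to\Sigma_b$ be the normalization; it is a smooth projective curve, an isomorphism over $\Sigma_b\backslash q'$, with exactly $r$ points over $q'$ (one per branch $\{f_i=0\}$). Set $\mathcal{F}\coloneqq\nu_{*}\mathcal{O}_{\widetilde{\Sigma}_b}$, a torsion-free sheaf of generic rank one on $\Sigma_b$, and $E\coloneqq\pi_{*}\mathcal{F}$. Since $\pi:\Sigma_b\to X$ is finite of degree $n$ and $X$ is a smooth curve, $E$ is torsion-free, hence locally free, of rank $n$; let $\phi$ be its canonical $\Omega_X(q)$-twisted Higgs field (multiplication by the tautological section $y$, cf.\ Section~\ref{subsection: basic constructions}). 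As $\mathcal{F}$ is a faithful $\mathcal{O}_{\Sigma_b}$-module of generic rank one, the characteristic polynomial of $\phi$ is $f_b$, so $h(E,\phi)=b$. It remains to produce a compatible parabolic reduction, and for this I would compute $\on{res}_q(\phi)$ over the formal disk at $q$: because $q'$ is the only singular point of $\Sigma_b$ and each branch is smooth, $\nu$ identifies $\widetilde{\Sigma}_b$ over $\on{Spec}\,k[[x]]$ with $\coprod_{i=1}^{r}\on{Spec}\,k[[y_i]]$, where $y_i$ is the image of $y$ (a uniformizer of the $i$-th branch, since $\on{val}(x)=\lambda_i\on{val}(y)$ and $x,y$ generate the maximal ideal of $k[[x,y]]/(f_i)$) and the map to $\on{Spec}\,k[[x]]$ is $x=(\text{unit})\cdot y_i^{\lambda_i}$. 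Hence $\iota_q^{*}E\cong\bigoplus_{i=1}^{r}k[y_i]/(y_i^{\lambda_i})$, and $\on{res}_q(\phi)$ acts as multiplication by $y_i$, a single nilpotent Jordan block of size $\lambda_i$, on the $i$-th summand; so $\on{res}_q(\phi)$ is conjugate to $M_{\underline{\lambda}}$ and lies in $\mathcal{O}_P$. Since $\mathcal{O}_P\cap n_P^{+}$ is dense in $n_P^{+}$, in particular nonempty, $\on{res}_q(\phi)$ lies in $n_Q^{+}$ for some parabolic $Q$ conjugate to $P$, and the associated partial flag of $\iota_q^{*}E$ is exactly a $P$-reduction $(E_P,\tau)$ with $\on{res}_q(\phi)\in E_P\times_P n_P^{+}$; thus $(E,E_P,\tau,\phi)$ is a $k$-point of $\mathcal{H}iggs_{n,P}$ with $h_P$-image $b$.

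The step I expect to be the main obstacle is exactly this local analysis at $q'$: deducing reducedness of $\Sigma_b$ from the combinatorics defining $P_\lambda^{0}$ (irreducibility and pairwise non-associativity of the $f_i$), and identifying $\on{res}_q(\phi)$ with the Richardson representative $M_{\underline{\lambda}}$ through the branch uniformizers. The remaining passage to an honest $P$-reduction is then routine nilpotent-orbit theory, and $h(E,\phi)=b$ is the usual spectral correspondence.
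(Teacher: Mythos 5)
Your argument is correct and follows essentially the same route as the paper's proof, specialized to the choice $\mathcal{L}=\mathcal{O}_{\widetilde{\Sigma}_b}$: push forward a line bundle from the normalization $\widetilde{\Sigma}_b$, observe that $\on{res}_q(\phi)$ lies in the Richardson orbit $\mathcal{O}_P$, and choose a compatible partial flag. You supply the local computation at $q'$ (identifying the formal branches with $k[[y_i]]$ so that $\iota_q^{*}E\cong\bigoplus_i k[y_i]/(y_i^{\lambda_i})$ and $\on{res}_q(\phi)$ acts as $M_{\underline{\lambda}}$) and the verification that the $f_i$ are irreducible and pairwise non-associate, all of which the paper leaves implicit and which check out.
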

\begin{proof}
    Let $\widetilde{\Sigma}_b\longrightarrow \Sigma_b$ be the normalization of the spectral curve $\Sigma_b$ and let $\widetilde{\pi}:\widetilde{\Sigma}_b\longrightarrow X$ be the projection to $X$. Let $D=\on{Spec}\hat{\mathcal{O}}_{X,q}$ be the formal disk around $q$. By (2.\ref{open: second}), $\widetilde{\Sigma}_b\times_{X}D$ is the disjoint union of $\Sigma_i$, where
    \[\Sigma_i\cong \on{Spec}k[[x,y]]/(y^{\lambda_i}+a_i(x,y)x), a_i(x,y)\in k[[x,y]]^{\times}.
    \]
    Let $\mathcal{L}$ be an invertible sheaf on $\widetilde{\Sigma}_b$, then $\widetilde{\pi}_{*}(\mathcal{L})$ defines a $k$-point $(\widetilde{\pi}_{*}(\mathcal{L}), \phi)$ of $\mathcal{H}iggs_{n,q}$ such that $h(\widetilde{\pi}_{*}(\mathcal{L}),\phi)=b$ and $\on{res}_q\phi\in \mathcal{O}_P$ the Richardson orbit of $P$. Therefore we can find a partial flag structure on $\widetilde{\pi}_{*}(\mathcal{L})_q$ such that $\on{res}_q\phi$ is nilpotent with respect to this partial flag structure. 
\end{proof}

\begin{remark}\label{remark: Richardson orbit}
Let $(E, E_P,\tau,\phi)$ be a $k$-point of $\mathcal{H}iggs_{n,P}$ that is mapped to $b\in B_P^{0}$. Condition (2.\ref{open: second}) on $\Sigma_b$ enforces that $\on{res}_q(\phi)$ lies in the Richardson orbit $\mathcal{O}_P$. Note that $\mathcal{O}_P\bigcap n_P^{+}$ consists of a single $P$-orbit. Since we are in type $A$, for any $x\in \mathcal{O}_P\bigcap n_P^{+}$, the centralizer of $x$ in $GL_n(k)$ lies in $P$. Therefore there is a unique partial flag structure on $E_q$ that is compatible with $\on{res}_q(\phi)$.
\end{remark}

\begin{prop}\label{prop: the open subset is non=empty}
    In the following two cases:
     \begin{enumerate}
         \item $g_X\geq 2$,
         \item $g_X=1$, $n\geq 3$ and $P\subseteq G$ is a Borel subgroup,
     \end{enumerate}
     $B_P^{0}$ is Zariski open dense in $B_P$. Moreover, $B_P$ is the scheme-theoretic image of the Hitchin map $h_P$, i.e. the smallest closed subscheme of $B$ through which $h_P$ factors. 
\end{prop}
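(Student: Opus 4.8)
The plan is to prove the two assertions---Zariski openness/density of $B_P^{0}$ in $B_P$, and that $B_P$ is the scheme-theoretic image of $h_P$---separately, with the density statement feeding into the second.

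First I would establish that $B_P^{0}$ is open. The two defining conditions (2.\ref{open: first}) and (2.\ref{open: second}) are each open: smoothness of $\Sigma_b\setminus q'$ is an open condition on $b$ by the usual Jacobian criterion applied to the family of spectral curves over $B_P$ (the non-smooth locus is the image of a closed set, and it is proper away from $q'$), while the condition that $\hat f_b\in P_\lambda^{0}$, i.e.\ that the relevant coefficients $a_i(x,y)$ are units in $k[[x,y]]$, is the non-vanishing of finitely many of the Taylor coefficients of $\hat f_b$ at $q'$, hence open; the final clause (distinct constant terms when $\lambda_s=\lambda_t$) is the non-vanishing of a single polynomial in those coefficients. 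So $B_P^{0}$ is the intersection of finitely many opens.

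For density I would argue that $B_P^{0}$ is nonempty, since $B_P$ is irreducible (it is an affine space) and a nonempty open subset of an irreducible scheme is dense. To produce a point of $B_P^{0}$ one builds, in a local coordinate $x$ at $q$, a factorization $\hat f=f_1\cdots f_r$ with $f_i=y^{\lambda_i}+a_i x$ and the $a_i$ units with pairwise distinct constant terms whenever the $\lambda_i$ coincide---this is an explicit local choice, matched up with a global section of $\Omega_X^{\otimes i}((i-m_i)q)$ away from $q$; the genus/degree bookkeeping for why such a global spectral curve exists and can be taken smooth away from $q'$ is exactly where the hypotheses $g_X\ge 2$, or $g_X=1$ with $n\ge 3$ and $P$ Borel, are used---one needs enough sections of the twisting line bundles to arrange smoothness, and in low genus with large parabolics there are simply not enough (this is the content of the excluded cases and of Remark \ref{Remark: P1} for $\mathbf P^1$). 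This count is the main obstacle: one must check that the linear system cutting out $\Sigma_b\setminus q'$ is base-point free, or at least that a generic member is smooth there, via Bertini, and this fails precisely outside the listed cases.

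For the scheme-theoretic image claim: $h_P$ factors through $B_P$ by Lemma \ref{lemma: inclusion}, so the scheme-theoretic image is a closed subscheme $Z\subseteq B_P$. Since $B_P$ is reduced (it is an affine space), it suffices to show $Z=B_P$ as sets, i.e.\ that $h_P$ has dense image; but by Lemma \ref{lemma: the open subset} every $k$-point of the dense open $B_P^{0}\subseteq B_P$ is in the image of $h_P$, so the image is dense, and a reduced closed subscheme containing a dense subset of a reduced scheme is everything. Hence $Z=B_P$. (Equivalently: the generic point of $B_P$ lies in the image, and $B_P$ is reduced and irreducible, so the scheme-theoretic image is all of $B_P$.) The only subtlety is the reducedness/irreducibility of $B_P$, which is immediate as it is an affine space over $k$.
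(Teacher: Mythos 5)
Your outline of the openness statement and of the deduction of the scheme-theoretic image claim matches the paper: openness follows from the two conditions each being open, density follows from non-emptiness plus irreducibility of the affine space $B_P$, and the scheme-theoretic image equals $B_P$ because $B_P$ is reduced and the image contains the dense open $B_P^{0}$ (via Lemma \ref{lemma: the open subset}). These parts are fine.

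However, the core of the proposition is the non-emptiness of the open locus, and there the proposal is a gesture rather than a proof. You write that ``the genus/degree bookkeeping \dots\ is exactly where the hypotheses \dots\ are used'' and that ``this count is the main obstacle,'' but you do not carry out the count, and more importantly you do not address the serious characteristic-$p$ issue in applying Bertini. The classical Bertini smoothness theorem fails over a field of characteristic $p$; the version the paper invokes (\cite{CGM}, Corollary 1, recalled as Theorem \ref{thm: bertini}) requires the rational map defined by the linear system to induce \emph{separably generated} residue field extensions. Verifying this is the technical heart of the argument: the paper shows the map $f_S\colon T^{*}X(q)\dashrightarrow \mathbf{P}^N$ is unramified away from $\pi^{-1}(q)$ by exhibiting sections $s_1,s_2,s_3$ whose ratios have differentials $d(s_2/s_1)$ and $d(s_3y/s_1)$ spanning the cotangent fiber, and this is precisely where the dimension inequalities
\[
\on{dim}_k\Gamma(X, \Omega_X^{\otimes i}((i-m_i)q)) - \on{dim}_k\Gamma(X, \Omega_X^{\otimes i}((i-m_i)q - p)) = 1,\quad\text{etc.}
\]
and hence the hypotheses on $g_X$, $n$, $P$, enter. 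Without this step you cannot invoke any form of Bertini.

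Two further omissions: the paper must treat the case $g_X=2$, $n=2$, $P=GL_2(k)$ separately (the dimension count for the second differential fails there, so $f_S$ is only unramified away from the finitely many points $p$ with $\mathcal{O}(2p)\cong\Omega_X$, and an extra lemma is needed to handle those fibers); and the case $g_X=1$ is handled by a completely different, explicit argument exploiting $\Omega_X\cong\mathcal{O}_X$ rather than by Bertini. Your proposal does not mention either. The verification that condition (2.\ref{open: second}) is non-empty via a further dimension count, and that the second clause of (2.\ref{open: second}) is open via uniqueness of the factorization in Lemma \ref{lemma: factor}, is also only alluded to. In short: the structure of your argument is correct, but the substantive content of the proof---which is exactly what makes the listed hypotheses necessary---is absent.
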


\begin{proof}
    The first statement together with Lemma \ref{lemma: the open subset} implies the second statement. For the first statement, we only need to show that both (2.\ref{open: first}) and (2.\ref{open: second}) define a non-empty open subset in $B_P$.

We start by showing that (2.\ref{open: first}) defines a non-empty open subset in $B_P$. We denote by $B_P^{sm}$ the locus in $B_P$ where the spectral curves are smooth away from $q'$. Since $B_P^{sm}\subset B_P$ is open, it is enough to show that it is non-empty. 

\emph{Case 1.} $g_X\geq 2$, except for the case when $g_X=2$, $n=2$, $P=GL_2(k)$. We use the following version of Bertini's theorem in \cite{CGM}:

\begin{theorem}[cf. \cite{CGM}, Corollary 1] \label{thm: bertini}
    Let $V$ be a smooth algebraic variety over an algebraically closed field $k$. Let $S$ be a finite-dimensional linear system on $V$. Assume that the rational map $V\dashrightarrow P^{N}$ corresponding to $S$ induces (whenever defined) separably generated residue field extensions. Then a generic element of $S$ defines a subscheme of $V$ that is smooth away from the base locus of $S$. 
\end{theorem}

Let $\pi$ be the projection $\pi: T^{*}X(q)\longrightarrow X$. We denote by $y$ the tautological section of $\pi^{*}(\Omega_X(q))$. Let $S$ be the linear system of sections in $\pi^{*}(\Omega_X(q)^n)$ spanned by $y^n$ and $\pi^{*}(b_i)y^{n-i}$ for all
$b_i\in \Gamma(X, \Omega_X^{\otimes i}((i-m_i)q)), i=1,2,\dots,n$. 
The section $y^n$ is not contained in the span of $\pi^{*}(b_i)y^{n-i}$. The set of spectral curves $\Sigma_b$ with $b\in B_P$ corresponds to the open subset of $S$ defined by the coefficient of $y^n$ being non-zero. Let $
N=\on{dim}(S)-1$. We denote by $f_S: T^{*}X(q)\dashrightarrow P^{N}$ the map induced by $S$. 
In order to apply Theorem \ref{thm: bertini}, we show that $f_S$ is unramified away from $\pi^{-1}(q)$, which will imply that $f_S$ induces finite separable extensions on the residue fields when restricted to $T^{*}(X\backslash q) $. By the exact sequence 
\[
    f_S^{*}\Omega_{P^N}|_{T^{*}(X\backslash q)}\xrightarrow{\nu} \Omega_{T^{*}(X\backslash q)}\longrightarrow \Omega_{T^{*}(X\backslash q)/P^{N}}\longrightarrow 0,
\]
it is enough to show that for any $k$-point $p'$ on $T^{*}X(q)$ such that $\pi(p')=p\neq q$, the map $\nu$ induces a surjection onto the fiber of $\Omega_{T^{*}(X\backslash q)}$ at $p'$.

Let $V=\on{Spec}(A)$ be an affine open neighborhood of $p$ in $X$. Let $x$ be an element of $A$ that is mapped to a local parameter of $X$ at $p$. Shrinking $V$ if necessary, we assume $q\notin V$ and $dx$ is a nowhere vanishing section of $\Omega^1_V$. Let $U=\pi^{-1}(V)$. The section $dx$ gives a trivialization of $T^{*}X(q)|_V$ and $\pi^{*}(T^{*}X(q))|_U$. Under this trivialization, the tautological section $y$ is equal to $\partial_x$ considered as an element in $\mathcal{O}_{U}$, and $\Omega_{U}$ is a free $\mathcal{O}_U$-module generated by $dx$ and $dy$. The fiber of $\Omega_{U}$ at $p'$ is a $k$-vector space of dimension two spanned by $dx$ and $dy$. 

Under our assumptions on $g_X$, $n$ and $P$, we have 
\begin{align*}
    &\on{dim}_k\Gamma(X, \Omega_X^{\otimes i}((i-m_i)q))-\on{dim}_k\Gamma(X, \Omega_X^{\otimes i}((i-m_i)q-p))
    =1, \text{for } i\geq 1, \\
    &\on{dim}_k\Gamma(X, \Omega_X^{\otimes i}((i-m_i)q-p))-\on{dim}_k\Gamma(X, \Omega_X^{\otimes i}((i-m_i)q-2p))=1, \text{for } i\geq 2.
\end{align*}
    Take
\begin{align*}
    &s_1\in \Gamma(X, \Omega_X^{\otimes n}((n-m_n)q))\backslash\Gamma(X, \Omega_X^{\otimes n}((n-m_n)q-p)),\\
    &s_2\in \Gamma(X, \Omega_X^{\otimes n}((n-m_n)q-p))\backslash\Gamma(X, \Omega_X^{\otimes n}((n-m_n)q-2p)),\\
    &s_3 \in \Gamma(X, \Omega_X^{\otimes (n-1)}((n-1-m_{n-1})q))\backslash\Gamma(X, \Omega_X^{\otimes (n-1)}((n-1-m_{n-1})q-p)).
\end{align*}
then $d(s_2/s_1)$ and $d(s_3y/s_1)$ span the fiber of $\Omega_{U}$ at $p'$.

Now we apply Theorem \ref{thm: bertini} to the restriction of the linear system $S$ to $T^{*}(X\backslash q)$. Since $q'\in \pi^{-1}(q)$ is the only base point of $S$, a spectral curve $\Sigma_b$ is smooth away from $q'$ for a generic $b\in B_P$. 

\emph{Case 2.} $g_X=2$, $n=2$, $P=GL_2(k)$.
By the same arguments as in \emph{Case 1}, the map $f_S: T^{*}(X\backslash q)\longrightarrow P^{N}$ is unramified away from the union of $\pi^{-1}(p)$ for all $p\in X\backslash q$ that satisfies $\mathcal{O}(2p)\cong \Omega_X$. There are finite many points of $X$ with this property, therefore the fact that a generic spectral curve is smooth away from $q'$ follows from the following lemma:
\begin{lemma}
    Let $p\in X\backslash q$. For a generic $b\in B_P$, the spectral cover $\Sigma_b\longrightarrow X$ is \'etale around $p$. 
\end{lemma}
\begin{proof}
    This follows easily from the calculation
    \[\on{dim}_k\Gamma(X, \Omega_X^{\otimes i}((i-m_i)q))-\on{dim}_k\Gamma(X, \Omega_X^{\otimes i}((i-m_i)q-p))
    =1, \text{for } i\geq 1.
    \]
\end{proof}

\emph{Case 3.} $g_X=1$. We consider the subspace $\displaystyle\bigoplus_{i=1}^n\Gamma(X,\Omega_X^{\otimes i})\subseteq B_P$. Since $\Omega_X$ is isomorphic to $\mathcal{O}_X$, it is easy to find $b\in \displaystyle\bigoplus_{i=1}^n\Gamma(X,\Omega_X^{\otimes i})$ such that the spectral cover $\Sigma_b\longrightarrow X$ is \'etale away from $\pi^{-1}(q)$. 

Now we turn to (2.\ref{open: second}). Let $b\in B_P$. The condition $\hat{f_b}\in P_{\boldsymbol\lambda}^{0}$ is equivalent to the condition that for $i=1,2,\dots,r$, the $(n-\gamma_i)$-th component of $b$ lies in  
\[
\Gamma(X, \Omega_X^{\otimes (n-\gamma_i)}((n-\gamma_i-i)q))\backslash\Gamma(X, \Omega_X^{\otimes (n-\gamma_i)}((n-\gamma_i-i-1)q)).
\]
This condition defines a non-empty open subset of $B_P$ since
\[
    \on{dim}_k\Gamma(X, \Omega_X^{\otimes (n-\gamma_i)}((n-\gamma_i-i)q))-\on{dim}_k\Gamma(X, \Omega_X^{\otimes (n-\gamma_i)}((n-\gamma_i-i-1)q))=1
\]
under our assumptions on $g_X$, $n$ and $P$.
The fact that the second condition in (2.\ref{open: second}) defines a non-empty open subset follows easily from the uniqueness part of Lemma \ref{lemma: factor}.
\end{proof}

\begin{remark}
The second statement in Proposition \ref{prop: the open subset is non=empty} was previously obtained in \cite{BK18} using different methods. 
\end{remark}

\begin{remark}\label{Remark: P1}
    Proposition \ref{prop: the open subset is non=empty} also holds for the case of $X=\textbf{P}^1$ with ramification at $D=q_1+q_2+\cdots+q_m$, $m\geq 4$ and each parabolic subgroup $P_i$ is a Borel subgroup. We need to show that for a generic $b\in B_{P_D}=\displaystyle\bigoplus_{i=1}^n \Gamma(\textbf{P}^1, \Omega_{\textbf{P}^1}^{\otimes i}((i-1)D))$, the spectral curve $\Sigma_b$ is smooth. For each $i=1,2,\dots,m$, a generic spectral curve is smooth above $q_i$ since \[\on{dim}_k\Gamma(\textbf{P}^1, \Omega_{\textbf{P}^1}^{\otimes n}((n-1)D))-\on{dim}_k\Gamma(\textbf{P}^1, \Omega_{\textbf{P}^1}^{\otimes n}((n-1)D-q_i))=1.
    \]
    Therefore it is enough to show that there exists $b\in B_{P_D}(k)$ such that $\Sigma_b$ is smooth away from $\pi^{-1}(q_i)$. If $n\geq 3$, the same arguments as in \emph{Case 1} of the proof of Proposition \ref{prop: the open subset is non=empty} would work. If $n=2$, we consider the subspace \[\Gamma(\textbf{P}^1,\Omega_{\textbf{P}^1})\oplus \Gamma(\textbf{P}^1,\Omega^{\otimes 2}_{\textbf{P}^1}(q_1+q_2+q_3+q_4))\subseteq B_{P_D}.
    \]
    Since $\Gamma(\textbf{P}^1,\Omega_{\textbf{P}^1})=0$ and $\Gamma(\textbf{P}^1,\Omega^{\otimes 2}_{\textbf{P}^1}(q_1+q_2+q_3+q_4))\cong\Gamma(\textbf{P}^1, \mathcal{O}_{\textbf{P}^1})=k$, the spectral curve $\Sigma_b$ is \'etale away from $\pi^{-1}(q_i)$ for any $b\in k^{\times}$.
\end{remark}

\subsection{Spectral data of parabolic Higgs bundles}\label{subsection: spectral data}
The next theorem describes the spectral data of parabolic Higgs bundles. 
\begin{theorem}[cf. \cite {SS}, Theorem 5.16]\label{thm: spectral data single}
For $b\in B_P^{0}(k)$, the fiber of the Hitchin map $h_P^{-1}(b)$ is isomorphic to the Picard stack $\on{Pic}(\widetilde{\Sigma}_b)$. Here $\sigma: \widetilde{\Sigma}_b\longrightarrow\Sigma_{b}$ is the normalization of the spectral curve $\Sigma_{b}$.
\end{theorem}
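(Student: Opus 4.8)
The plan is to reduce the claim to the $k$-point statement already available from the cited work \cite{SS} (Theorem 5.16), and then upgrade it to an isomorphism of stacks over $k$ by constructing an explicit map in each direction and checking they are mutually inverse. We are in the situation where $b\in B_P^0(k)$, so by condition (2.\ref{open: first}) the spectral curve $\Sigma_b$ is smooth away from $q'$, and by condition (2.\ref{open: second}) the completed local ring $\hat{\mathcal O}_{\Sigma_b,q'}\cong k[[x,y]]/(\hat f_b)$ with $\hat f_b=f_1\cdots f_r$ factoring into distinct irreducible (Weierstrass-type) factors $f_i=y^{\lambda_i}+a_i(x,y)x$. Consequently the normalization $\sigma\colon\widetilde\Sigma_b\to\Sigma_b$ is a smooth projective curve, and over the formal disk $D=\on{Spec}\hat{\mathcal O}_{X,q}$ we have $\widetilde\Sigma_b\times_X D=\coprod_{i=1}^r\Sigma_i$ with $\Sigma_i\cong\on{Spec}k[[x,y]]/(f_i)$, as recorded in the proof of Lemma \ref{lemma: the open subset}.

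First I would define the functor $\on{Pic}(\widetilde\Sigma_b)\to h_P^{-1}(b)$. Given an invertible sheaf $\mathcal L$ on $\widetilde\Sigma_b$, push it forward: $\widetilde\pi_*\mathcal L$ is a rank $n$ bundle on $X$, and the canonical section $\phi_{\on{can}}$ obtained by adjunction from multiplication by $y$ (now pulled back along $\sigma$) endows it with a Higgs field landing in $\mathcal End(\widetilde\pi_*\mathcal L)\otimes\Omega_X(q)$ with characteristic polynomial $f_b$, hence $h(\widetilde\pi_*\mathcal L,\phi)=b$. The key local point, exactly as in Lemma \ref{lemma: the open subset} and Remark \ref{remark: Richardson orbit}, is that the residue $\on{res}_q(\phi)$ lies in the Richardson orbit $\mathcal O_P$: over $D$ the bundle decomposes according to the factors $\Sigma_i$, on each piece $y$ acts nilpotently with a single Jordan block of size $\lambda_i$ (because $f_i=y^{\lambda_i}+a_i x$ with $a_i$ a unit forces the fiber at $x=0$ to be $k[y]/y^{\lambda_i}$), so the residue has Jordan type $(\lambda_1,\dots,\lambda_r)$, i.e.\ lies in $\mathcal O_P$. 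Since we are in type $A$, the centralizer of an element of $\mathcal O_P\cap n_P^+$ lies in $P$, so there is a \emph{unique} $P$-reduction $E_P$ of $\iota_q^*(\widetilde\pi_*\mathcal L)$ compatible with $\on{res}_q(\phi)$; this gives a canonical point of $\mathcal Higgs_{n,P}$, hence of $h_P^{-1}(b)$. This assignment is visibly functorial in $\mathcal L$.

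Conversely, given $(E,E_P,\tau,\phi)\in h_P^{-1}(b)$, Cayley–Hamilton produces a torsion-free sheaf $\mathcal F$ on $\Sigma_b$ with $\pi_*\mathcal F=E$; I would show $\mathcal F$ is the pushforward under $\sigma$ of an invertible sheaf $\widetilde{\mathcal F}$ on $\widetilde\Sigma_b$. Away from $q'$ this is automatic since $\Sigma_b$ is smooth there and $\mathcal F$ is torsion-free of rank one. At $q'$ one works over the complete local ring: $\mathcal F$ corresponds to a torsion-free $k[[x,y]]/(\hat f_b)$-module $M$ of generic rank one, and the constraint that $\on{res}_q(\phi)$ be nilpotent with respect to $E_P$ — combined with the factorization $\hat f_b=f_1\cdots f_r$ into distinct branches and the Richardson-orbit analysis above — pins $M$ down to be locally free over the normalization $\prod_i k[[x,y]]/(f_i)$ (each $k[[x,y]]/(f_i)$ being a DVR, as $f_i$ is irreducible with the branch smooth). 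So $\mathcal F=\sigma_*\widetilde{\mathcal F}$ with $\widetilde{\mathcal F}$ invertible, and the $P$-reduction $E_P$ is forced to be the canonical one by uniqueness of the compatible flag. The two constructions are mutually inverse by the projection formula $\widetilde\pi_*\sigma^*=\pi_*$ applied to $\mathcal L$ and by $\sigma_*\sigma^*\cong(-)$ on the invertible side, matching up the Higgs fields via the adjunction description of $\phi_{\on{can}}$. One must also check the claim on automorphisms/deformations so that the bijection on $k$-points promotes to a stack isomorphism; here one either invokes Theorem 5.16 of \cite{SS} for the absolute $k$-point statement and notes our extra openness hypotheses only shrink the base, or observes that both stacks are smooth of the same dimension $g_{\widetilde\Sigma_b}$ and the constructed morphism is an isomorphism on tangent complexes (the tangent space to $\on{Pic}(\widetilde\Sigma_b)$ is $H^1(\widetilde\Sigma_b,\mathcal O)$, which matches the deformation space of the Higgs bundle with fixed characteristic polynomial by the standard spectral-correspondence computation).

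The main obstacle I anticipate is the local analysis at $q'$: showing that the torsion-free sheaf $\mathcal F$ attached to an arbitrary parabolic Higgs bundle is necessarily \emph{locally free over the normalization}, rather than merely torsion-free over $\Sigma_b$. This is precisely where the parabolic/nilpotent-residue condition together with (2.\ref{open: second}) must be used decisively — a general torsion-free module over $k[[x,y]]/(\hat f_b)$ need not be normalized-locally-free, and it is the compatibility of $\on{res}_q(\phi)$ with the $P$-flag (equivalently, $\on{res}_q(\phi)\in\mathcal O_P\cap n_P^+$, which by Remark \ref{remark: Richardson orbit} is a single $P$-orbit with centralizer contained in $P$) that rigidifies the module structure along the separate branches $f_1,\dots,f_r$. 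Since \cite{SS} has already carried this out for a single $b$, the remaining work is bookkeeping to confirm nothing degenerates over our open locus $B_P^0$; I would lean on that reference for the delicate module-theoretic step and present the rest as above.
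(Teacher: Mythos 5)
Your forward direction ($\mathcal L\mapsto(\widetilde\pi_*\mathcal L,\phi_{\on{can}},E_P)$) matches the paper, including the observation that the residue lands in $\mathcal O_P$ and that Remark~\ref{remark: Richardson orbit} gives uniqueness of the compatible $P$-reduction. The genuine gap is in the inverse direction: you correctly isolate the crux — showing that the spectral sheaf $\mathcal F$ of an arbitrary $(E,E_P,\tau,\phi)\in h_P^{-1}(b)$ actually descends from the normalization — but you do not prove it, explicitly deferring to \cite{SS} ``for the delicate module-theoretic step.'' That step is precisely what the paper's proof supplies, and it is not bookkeeping.

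The paper's argument at $q'$ runs as follows. Take the primary decomposition $(0)=\mathcal M_1\cap\cdots\cap\mathcal M_r\cap\mathcal M$ of the zero submodule of $\hat{\mathcal F}$, with $\mathcal M_i$ $(f_i)$-primary and $\mathcal M$ $\mathfrak m$-primary, giving an injection $\varphi\colon\hat{\mathcal F}\hookrightarrow\bigoplus_i\hat{\mathcal F}/\mathcal M_i\oplus\hat{\mathcal F}/\mathcal M$. Away from $x=0$ the curve is smooth and the branches separate, so $\varphi[x^{-1}]$ is an isomorphism onto $\bigoplus_i(\hat{\mathcal F}/\mathcal M_i)[x^{-1}]$. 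Each $\hat{\mathcal F}/\mathcal M_i$ is torsion-free over the smooth branch $\mathcal O_{\Sigma_i}$, hence isomorphic to $\mathcal O_{\Sigma_i}$, so $\sum_i\dim_k(\hat{\mathcal F}/\mathcal M_i)/\mathfrak m=r$. On the other hand, because $\on{res}_q(\phi)$ lies in the Richardson orbit (which, note, is \emph{forced by condition} (2.\ref{open: second}) \emph{via Remark~\ref{remark: Richardson orbit}}, not merely by the nilpotency-compatibility with $E_P$ that you invoke), the cokernel of $\on{res}_q(\phi)$ on $E_q$ has dimension exactly $r$, i.e.\ $\dim_k\hat{\mathcal F}/\mathfrak m\hat{\mathcal F}=r$. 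Nakayama then upgrades $\varphi$ to an isomorphism $\hat{\mathcal F}\xrightarrow{\;\sim\;}\bigoplus_i\hat{\mathcal F}/\mathcal M_i$, which is exactly the statement that $\hat{\mathcal F}$ sheafifies on $\coprod_i\Sigma_i=\widetilde\Sigma_b\times_{\Sigma_b}\hat\Sigma_b$. Without an argument of this kind — or an explicit verification that \cite{SS} covers the exact setup (twist by $\Omega_X(q)$, positive characteristic, the openness conditions defining $B_P^0$) — your inverse map is unestablished, and so is the stack isomorphism that rests on it. The suggested tangent-complex fallback likewise presupposes you already have the morphism; it does not help you build it.
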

\begin{proof}
    We've already constructed a map $\on{Pic}(\widetilde{\Sigma}_b)\longrightarrow h_P^{-1}(b)$ in the proof of Lemma \ref{lemma: the open subset}, therefore it is enough to construct the inverse map. Let $(E,\phi)\in h_P^{-1}(b)$, and we denote by $\mathcal{F}\in{\on{Coh}}(\Sigma_b)$ the corresponding spectral sheaf. Our goal is to show that there is a natural sheaf $\mathcal{L}\in{\on{Coh}}(\widetilde{\Sigma}_b)$ such that $\sigma_{*}(\mathcal{L})=\mathcal{F}$.
    
   Let $\hat{\Sigma}_b=\on{Spec}\hat{\mathcal{O}}_{\Sigma_b,q'}$. We write $\boldsymbol\lambda=\lambda_1^{l_1}\lambda_2^{l_2}\cdots \lambda_t^{l_t}$, $\lambda_1\geq \lambda_2\geq\cdots\geq \lambda_t$. Note that by condition (2.\ref{open: second}) in the definition of $B_P^0$,
    \[  \mathcal{O}_{\hat{\Sigma}_b}\cong k[[x,y]]/(\hat{f}), \text{ and } \hat{f}=\prod_{i=1}^{t}\prod_{j=1}^{l_i}(y^{\lambda_i}-a_{ij}x), a_{ij}\in k[[x,y]]^{\times}.
    \]
    Therefore
    \[       \widetilde{\Sigma}_b\times_{\Sigma_b}\hat{\Sigma}\cong \coprod_{i=1}^{t}\coprod_{j=1}^{l_i}{\Sigma_{ij}}, \text { where }\mathcal{O}_{\Sigma_{ij}}\cong k[[x,y]]/(y^{\lambda_i}-a_{ij}x). 
    \]
    Each $\Sigma_{ij}$ is a formal disk such that the closed point is mapped to $q'$ under $\sigma: \widetilde{\Sigma}_b\to \Sigma_b$.

    Note that since the action of $y$ on $\mathcal{F}/x\mathcal{F}$ as a matrix with Jordan blocks of type $\boldsymbol\lambda$, the element $v_1\coloneqq y^{\lambda_1}/x$ acts on the spectral sheaf $\mathcal{F}$ sheafifies over ${\hat{\Sigma}_b}^1$ defined by 
    \[
    \mathcal{O}_{{\hat{\Sigma}_b}^1}=k[[x,y]][v_1]/(\hat{f}, y^{\lambda_1}-xv_1). 
    \]
    This new curve ${\hat{\Sigma}_b}^1$ is a disjoint union of $l_1+1$ components 
    \begin{equation}\label{eq: decomposition of spectral curves through nil orbits}
        {\hat{\Sigma}_b}^1=\coprod_{j=1}^{l_1}    \on{Spec}k[[v_1,y]]/(v_1-a_{1j})\coprod\on{Spec}k[[x,y]][v_1]/(\prod_{i=2}^{t}\prod_{j=1}^{l_i}(y^{\lambda_i}-a_{ij}x), y^{\lambda_1}-xv_1). 
     \end{equation}
The first $l_1$ components are formal disks that correspond to $\Sigma_{1j}, j=1,2,\dots,l_1$ in the normalization curve. The spectral sheaf over those components must be line bundles, and each contributes a Jordan block of size $\lambda_1$ to the residue of the Higgs field at the marked point $q$. Let $\mathcal{F}_1$ be the spectral sheaf over the last component of ${\hat{\Sigma}_b}^1$ in 
(\ref{eq: decomposition of spectral curves through nil orbits}). Since $y$ acts on $\mathcal{F}_1/x\mathcal{F}_1$ as a matrix with Jordan blocks of type $\lambda_2^{l_2}\lambda_3^{l_3}\cdots \lambda_t^{l_t}$, the element $v_2\coloneqq y^{\lambda_2}/x$ acts on $\mathcal{F}_1$, therefore $\mathcal{F}_1$ sheafifies over ${\hat{\Sigma}_b}^2$ defined by 
 \[
    \mathcal{O}_{{\hat{\Sigma}_b}^2}=k[[x,y]][v_2]/(\prod_{i=2}^{t}\prod_{j=1}^{l_i}(y^{\lambda_i}-a_{ij}x), y^{\lambda_2}-xv_2). 
    \]
Repeating the same procedure for $t$ times, the spectral sheaf $\mathcal{F}$ over ${\hat{\Sigma}_b}$ that we start with decomposes as
\[
\mathcal{F}=\bigoplus_{i=1}^{t}\bigoplus_{j=1}^{l_i}\mathcal{L}_{{ij}},
\]
where each $\mathcal{L}_{{ij}}$ is a line bundle over $\Sigma_{ij}$. Since the normalization curve $\widetilde{\Sigma}_b$ locally is the disjoint union of those $\Sigma_{ij}$, we get the desired statement that the spectral sheaf $\mathcal{F}$ sheafifies over $\widetilde{\Sigma}_b$.
\end{proof}

For the purpose of this paper, we need to develop a family version of Theorem \ref{thm: spectral data single}. The first step is to construct a simultaneous normalization of the family of spectral curves above $B_P^{0}$. This can be done since the spectral curves above $B_P^{0}$ are equisingular. To be more precise, let $\Sigma\subseteq B_P^{0}\times T^{*}X(q)$ be the global spectral curve above $B_P^{0}$; we will construct a new family of curves $\widetilde{\Sigma}\longrightarrow B_P^{0}$ with a proper birational morphism $\sigma: \widetilde{\Sigma}\longrightarrow \Sigma$ such that for each $b\in B_{P}^{0}(k)$, the morphism $\sigma_b: \widetilde{\Sigma}_b\longrightarrow \Sigma_b$ is the normalization of $\Sigma_b$.

The construction is as follows. Recall that $q'$ is the closed point of $T^{*}X(q)$ above $q\in X$ that lies in the zero section of $T^{*}X(q)$. We blow up $B_P^{0}\times T^{*}X(q)$ along $B_P^{0}\times q'$, and denote the strict transform of $\Sigma$ by $\Sigma^1$. Let $V$ be an open neighborhood of $q$ and $U=\pi^{-1}(V)$. For $b\in B_P^{0}(k)$, let $\Sigma_b(V)\coloneqq V \times _X \Sigma_b$, then $\mathcal{O}_{\Sigma_b(V)}$ is isomorphic to $\mathcal{O}_U/(f)$ for some  $f=y^n+b_{1}y^{n-1}+\dots + b_{n-1}y+b_n$, $b_i\in \mathcal{O}_V$. Since $b\in B_P^{0}(k)$, $\hat{f}$ factorizes as $\hat{f}=f_1f_2\cdots f_r$, $f_i\in P^0_{\lambda_i}$. We write
\[f_i=y^{\lambda_i} + a_{i}(x,y)x, \text{ where } a_i(x,y)\in k[[x,y]]^{\times}.
\]
We denote $V\times _X \Sigma^1_b$ by $\Sigma^1_b(V)$, then $\Sigma^1_b(V)$ is a closed subvariety of \[\on{Spec}(\mathcal{O}_U[u]/(x-yu)).
\]
We denote by $q'_1$ the point defined by $y=u=0$. By assumption (2.\ref{open: first}) and the second part of assumption (2.\ref{open: second}) in the definition of $B_P^0$, $\Sigma^1_b$ is smooth away from $q'_1$. Let $\hat{\Sigma}^1_b\cong \on{Spec}(\hat{\mathcal{O}}_{\Sigma_b^1, q'_1})$, then
\[
\mathcal{O}_{\hat{\Sigma}^1_b}\cong k[[u,y]]/(\prod_{i=1}^{t}(y^{\lambda_i-1}+a_i(yu,y)u)),
\]
where $t$ is the largest integer so that $\lambda_t-1 > 0$. Let
\[
    g=\prod_{i=1}^{t}(y^{\lambda_i-1}+a_i(yu,y)u) \text{ and } g_i=y^{\lambda_i-1}+a_i(yu,y)u,
\]
so $g$ factorizes as $g=g_1g_2\cdots g_t$. In each $g_i$, there is a unique monomial of the form $y^m$, and the degree of such monomial is in decreasing order. Compared to $f_1$, the degree of such monomial in $g_1$ is lower by $1$. This observation guarantees that the family of curves $\Sigma$ can be resolved simultaneously by $\lambda_1$ steps of blow-ups. 
Now we blow up $\on{Spec}(\mathcal{O}_U[u]/(x-yu))$ along $B_P^{0}\times q'_1$, and denote the strict transform of $\Sigma^1$ by $\Sigma^2$. Repeating this procedure, we get a series of families of curves above $B_P^{0}$:
\[
    \Sigma^{\lambda_1}\longrightarrow \Sigma^{\lambda_1-1}\longrightarrow \cdots \Sigma^1\longrightarrow \Sigma.
\]
It follows from the observation above that $\Sigma^{\lambda_1}_b$ is smooth for each $b\in B_P^{0}(k)$. The morphism $\Sigma^{\lambda_1}\longrightarrow B_P^{0}$ is flat since each $\Sigma^{\lambda_1}_b$ is a projective curve of the same genus. The morphism $\Sigma^{\lambda_1}\longrightarrow \Sigma$ is proper and birational by properties of strict transforms. We set $\widetilde{\Sigma}\cong \Sigma^{\lambda_1}$. 

\begin{remark}
    After our paper appeared on the arXiv, similar results as in Theorem \ref{thm: spectral data single} were also obtained in \cite{SWW}, see Theorem 1.1. In \cite{SWW} the authors also considered the generic fiber of so-called weak parabolic fibrations, in which the residue of the Higgs field is not required to be nilpotent. We will prove Theorem \ref{thm: spectral data family} the family version of Theorem \ref{thm: spectral data single}, which did not appear in \cite{SWW}. 
\end{remark}

Now we are ready to state the following theorem, which is a family version of Theorem \ref{thm: spectral data single}. We denote $\mathcal{H}iggs_{n,P}\times_{B_{P}}B^{0}_P$ by $\mathcal{H}iggs^{0}_{n,P}$.

\begin{theorem}\label{thm: spectral data family}
    The correspondence between Higgs bundles and spectral sheaves induces an isomorphism of stacks over $B_P^{0}$:
    \[
        \mathcal{H}iggs^{0}_{n,P}\cong \on{Pic}(\widetilde{\Sigma}/B_P^{0}).
    \]
    
\end{theorem}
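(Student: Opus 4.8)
The plan is to construct morphisms of stacks over $B_P^{0}$ in both directions and verify they are mutually inverse. In one direction, relative pushforward along $\widetilde{\pi}\colon\widetilde{\Sigma}\to B_P^{0}\times X$ sends a line bundle $\mathcal{L}$ on $\widetilde{\Sigma}_S:=\widetilde{\Sigma}\times_{B_P^{0}}S$ to the $\Omega_X(q)$-twisted Higgs bundle $(\widetilde{\pi}_{S*}\mathcal{L},\phi_{\mathrm{can}})$; by Remark~\ref{remark: Richardson orbit} the residue $\on{res}_q(\phi_{\mathrm{can}})$ lies fiberwise in the Richardson orbit $\mathcal{O}_P$, so its kernel filtration on $\iota_q^{*}\widetilde{\pi}_{S*}\mathcal{L}$ is a filtration by subbundles (the ranks of the powers of $\on{res}_q(\phi_{\mathrm{can}})$ are locally constant on $S$), and this canonical $P$-reduction promotes the pair to an $S$-point of $\mathcal{H}iggs^{0}_{n,P}$. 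This is the family version of the map built in the proof of Lemma~\ref{lemma: the open subset}. The substance of the theorem lies in the inverse: given an $S$-point $(E,E_P,\tau,\phi)$ of $\mathcal{H}iggs^{0}_{n,P}$ with spectral sheaf $\mathcal{F}$ on $\Sigma_S:=\Sigma\times_{B_P^{0}}S$, one must produce a line bundle $\mathcal{L}$ on $\widetilde{\Sigma}_S$ together with a canonical isomorphism $\sigma_{S*}\mathcal{L}\cong\mathcal{F}$.

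Away from the preimage of $q$ the map $\sigma_S$ is an isomorphism and there is nothing to do: $\mathcal{F}$ is flat over $S$ (as $\pi$ is finite flat and $\pi_{S*}\mathcal{F}=E$ is a vector bundle) with fibers over $S$ that are torsion-free of rank one on the curves $\Sigma_b\setminus q'$, which are smooth by (2.\ref{open: first}), hence locally free; so $\mathcal{F}$ is already invertible there. Near $q$ I work on the formal neighborhood $\hat{\Sigma}_S$ of the preimage of $q$. The first step is to promote Lemma~\ref{lemma: factor} to a family statement: Zariski-locally on $B_P^{0}$, after shrinking so that the finitely many constant terms $a_i(0)$ and differences $a_i(0)-a_j(0)$ appearing in (2.\ref{open: first})--(2.\ref{open: second}) are invertible on the base, the same Hensel's-lemma argument factors the universal $\hat{f}$ as $f_1\cdots f_r$ with $f_i=y^{\lambda_i}+a_i(x,y)x$ over the base, and these factorizations glue by the uniqueness in Lemma~\ref{lemma: factor}. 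Since the blow-up centers and $\Sigma$ are flat over $B_P^{0}$, the simultaneous-normalization construction commutes with base change, and we obtain $\widetilde{\Sigma}_S|_{\hat{\Sigma}_S}\cong\coprod_{i=1}^{r}\Sigma_{i,S}$ with each $\Sigma_{i,S}:=V(f_{i,S})$ smooth over $S$ (Zariski-locally $\on{Spec}\mathcal{O}_S[[y]]$). Now the fiberwise analysis in the proof of Theorem~\ref{thm: spectral data single}---in particular the exact equality $\on{dim}_k\hat{\mathcal{F}}_b/m\hat{\mathcal{F}}_b=r$ forced by $\on{res}_q(\phi)\in\mathcal{O}_P$---shows that $\hat{\mathcal{F}}\otimes_{\mathcal{O}_{\hat{\Sigma}_S}}\mathcal{O}_{\widetilde{\Sigma}_S|_{\hat{\Sigma}_S}}$, which is finitely generated over $\prod_i\mathcal{O}_{\Sigma_{i,S}}$, has every fiber over $S$ free of rank one; by Nakayama it is locally free of rank one over $\prod_i\mathcal{O}_{\Sigma_{i,S}}$, hence flat over $S$, and the natural map $\hat{\mathcal{F}}\to\hat{\mathcal{F}}\otimes_{\mathcal{O}_{\hat{\Sigma}_S}}\mathcal{O}_{\widetilde{\Sigma}_S|_{\hat{\Sigma}_S}}$ is then a map of $S$-flat sheaves that is an isomorphism on every fiber over $S$ by Theorem~\ref{thm: spectral data single}, hence an isomorphism. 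Gluing this with the invertible sheaf over the complement of the preimage of $q$ yields the desired $\mathcal{L}$ on $\widetilde{\Sigma}_S$ with $\sigma_{S*}\mathcal{L}\cong\mathcal{F}$, whence $\widetilde{\pi}_{S*}\mathcal{L}=\pi_{S*}\mathcal{F}=E$.

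It remains to check functoriality in $S$ and that the two morphisms are mutually inverse. All ingredients---spectral sheaves, the family factorization, the finite pushforwards $\sigma_{S*}$ and $\pi_{S*}$, and the tensor products---commute with base change in $S$, so both assignments are genuine morphisms of stacks over $B_P^{0}$. Starting from $\mathcal{L}$, the spectral sheaf of $\widetilde{\pi}_{S*}\mathcal{L}=\pi_{S*}(\sigma_{S*}\mathcal{L})$ is $\sigma_{S*}\mathcal{L}$, and re-sheafifying it on $\widetilde{\Sigma}_S$ returns $\mathcal{L}$, since the construction recovers the $\mathcal{O}_{\widetilde{\Sigma}_S}$-module structure intrinsically from $\hat{\mathcal{F}}$. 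Starting from $(E,E_P,\tau,\phi)$, one has $\widetilde{\pi}_{S*}\mathcal{L}=E$ and $\phi_{\mathrm{can}}=\phi$, and the induced $P$-reduction agrees with $E_P$ because both are the unique $P$-reduction compatible with $\on{res}_q(\phi)$ by Remark~\ref{remark: Richardson orbit}. This proves $\mathcal{H}iggs^{0}_{n,P}\cong\on{Pic}(\widetilde{\Sigma}/B_P^{0})$ over $B_P^{0}$.

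The main obstacle is the local analysis near $q$ in the family, where the primary decomposition used in the single-curve proof of Theorem~\ref{thm: spectral data single} is no longer available. It must be replaced by (i) a family factorization of $\hat{f}$, which forces one to pin down where conditions (2.\ref{open: first})--(2.\ref{open: second}) make the relevant power-series coefficients units so that Hensel's lemma applies over the base rather than over a point, and (ii) a fibral-isomorphism argument. The crux of (ii) is the flatness over $S$ of $\hat{\mathcal{F}}\otimes_{\mathcal{O}_{\hat{\Sigma}_S}}\mathcal{O}_{\widetilde{\Sigma}_S|_{\hat{\Sigma}_S}}$: because $\sigma_S$ is not flat, this flatness does not come for free and is precisely what the fiberwise dimension count $\on{dim}_k\hat{\mathcal{F}}_b/m\hat{\mathcal{F}}_b=r$ of Theorem~\ref{thm: spectral data single} supplies, after which Nakayama and the fibral-isomorphism criterion conclude.
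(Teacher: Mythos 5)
The local analysis near $q$ has a genuine gap. The sheaf $\hat{\mathcal{F}}\otimes_{\mathcal{O}_{\hat{\Sigma}_S}}\mathcal{O}_{\widetilde{\Sigma}_S|_{\hat{\Sigma}_S}}$ is not the right object, because it has nontrivial torsion already fiberwise. Over a closed point $s\in S$ with $b=b_s$, using the isomorphism $\hat{\mathcal{F}}_b\cong\bigoplus_i\mathcal{O}_{\Sigma_i}$ established in the proof of Theorem~\ref{thm: spectral data single}, one computes
\[
\hat{\mathcal{F}}_b\otimes_{\mathcal{O}_{\hat{\Sigma}_b}}\bigoplus_j\mathcal{O}_{\Sigma_j}\cong\bigoplus_{i,j}\mathcal{O}_{\Sigma_i}\otimes_{\mathcal{O}_{\hat{\Sigma}_b}}\mathcal{O}_{\Sigma_j},
\]
and for $i\neq j$ the cross term is $\mathcal{O}_{\Sigma_i}\otimes_{\mathcal{O}_{\hat{\Sigma}_b}}\mathcal{O}_{\Sigma_j}\cong k[[x,y]]/(f_i,f_j)$, which is a nonzero finite-dimensional $k$-vector space since $f_i$ and $f_j$ both vanish at the origin. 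Thus the $j$-th component of the fiber has $\dim_k\hat{\mathcal{F}}_b/m\hat{\mathcal{F}}_b=r$ minimal generators over $\mathcal{O}_{\Sigma_j}$, not one: the dimension count you cite shows exactly the opposite of what you claim. The fiber is not free of rank one, Nakayama does not yield local freeness, and the natural map $\hat{\mathcal{F}}\to\hat{\mathcal{F}}\otimes_{\mathcal{O}_{\hat{\Sigma}_S}}\mathcal{O}_{\widetilde{\Sigma}_S|_{\hat{\Sigma}_S}}$ is not a fiberwise isomorphism.

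The underlying issue is that $\sigma_S$ is not flat, so one cannot ``upgrade'' $\mathcal{F}$ to an $\mathcal{O}_{\widetilde{\Sigma}_S}$-module by tensoring up; one must instead show that the $\mathcal{O}_{\Sigma_S}$-module structure on $\mathcal{F}$ \emph{extends} to an $\mathcal{O}_{\widetilde{\Sigma}_S}$-module structure, i.e.\ that $\mathcal{F}$ lies in the essential image of $\sigma_{S*}$. (Taking $\hat{\mathcal{F}}\otimes\mathcal{O}_{\widetilde{\Sigma}_S}$ modulo its torsion would produce the correct fiberwise answer, but $S$-flatness of and base change for that quotient are not automatic and would need a separate argument.) This extension is precisely what the paper's proof does: it works inductively along the blow-up tower $\Sigma^{\lambda_1}\to\cdots\to\Sigma^1\to\Sigma$, at each step defining the action of the new coordinate $u_{t-1}/y$ on $\mathcal{F}_{t-1}$. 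Uniqueness of such an action follows from fiberwise torsion-freeness, while existence reduces to the vanishing of the obstruction sheaf $u_{t-1}\mathcal{F}_{t-1}/(u_{t-1}\mathcal{F}_{t-1}\cap y\mathcal{F}_{t-1})$, which is checked on closed fibers via Theorem~\ref{thm: spectral data single}. Your argument needs to be replaced by something of this kind.
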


\begin{proof}
    Let $S$ be a $k$-scheme. Since both $\mathcal{H}iggs^{0}_{n,P}$ and $B_P^0$ are locally of finite type over $k$, we can assume $S$ is locally of finite type over $k$. Let $(E,\phi)$ be an $S$-point of $\mathcal{H}iggs^{0}_{n,P}$ such that $h(E,\phi)=b\in B_P^{0}(S)$.
    We denote by $\mathcal{F}$ the corresponding spectral sheaf on $\Sigma_b$. The goal is to construct a sheaf $\widetilde{\mathcal{F}}$ on $\widetilde{\Sigma}_b$ such that $(\sigma_b)_{*}\widetilde{\mathcal{F}}=\mathcal{F}$. We set $\Sigma^{0}_b=\Sigma_b$, $\mathcal{F}_0=\mathcal{F}$. The strategy is to construct by induction a series of sheaves $\mathcal{F}_k$ on $\Sigma^{k}_b$, $k=1,2,\dots \lambda_1$, such that $(p_k)_{*}\mathcal{F}_k=\mathcal{F}_{k-1}$, where $p_k$ is the map $p_k: \Sigma_b^{k}\longrightarrow \Sigma_b^{k-1}$. We assume that we already have $\mathcal{F}_0, \mathcal{F}_1,\dots,\mathcal{F}_{t-1}$ with the required property and aim to obtain $\mathcal{F}_t$. Note that above $V$ an open neighborhood of $q$, while obtaining $\Sigma_b^{k}$, we add a new variable $u_k$ to $\mathcal{O}_{\Sigma_b^{k-1}}$ and impose $u_{k-1}=u_ky$, starting from $u_0=x$. Therefore in order to construct $\mathcal{F}_t$ so that $(p_t)_{*}\mathcal{F}_t=\mathcal{F}_{t-1}$, all we need to do is to define an action of $u_{t-1}/y$ on $\mathcal{F}_{t-1}$. Note that for any $s: \on{Spec}(k)\longrightarrow S$ a closed point of $S$, $s^{*}\mathcal{F}_{t-1}$ is a torsion-free sheaf on $(\Sigma_{b}^{t-1})_{s}\coloneqq \Sigma_{b}^{t-1}\times_{S,s} \on{Spec}(k)$, therefore if such an action exists, it is unique. For the existence of such an action, we consider the coherent sheaf $\mathcal{G}=u_{t-1}\mathcal{F}_{t-1}/u_{t-1}\mathcal{F}_{t-1}\bigcap y\mathcal{F}_{t-1}$ on $\Sigma_b^{t-1}$. There exists an action of $u_{t-1}/y$ on $\mathcal{F}_{t-1}$ if and only if $\mathcal{G}=0$. By Theorem \ref{thm: spectral data single}, such an action exists when restricted to $s$, so $s^{*}\mathcal{G}=0$ for all closed points $s$ of $S$. Therefore $\mathcal{G}=0$. 
    
    We set $\widetilde{\mathcal{F}}=\mathcal{F}_{\lambda_1}$. Since $\widetilde{\Sigma}_b$ is smooth, $\widetilde{\mathcal{F}}$ is an invertible sheaf. Now let $(E_1,\phi_1)$ and $(E_2,\phi_2)$ be two $S$-points of $\mathcal{H}iggs^{0}_{n,P}$, both mapped to $b$ under the Hitchin map, and we denote the corresponding spectral sheaves by $\mathcal{F}_1$ and $\mathcal{F}_2$. The construction of $\widetilde{\mathcal{F}}$ implies that there is an isomorphism $\mathcal{H}om_{\mathcal{O}_{\Sigma_b}}(\mathcal{F}_1,\mathcal{F}_2)\cong \mathcal{H}om_{\mathcal{O}_{\widetilde{\Sigma}_b}}(\widetilde{\mathcal{F}}_1,\widetilde{\mathcal{F}}_2)$. Therefore we have a morphism of stacks $\mathcal{H}iggs^{0}_{n,P}\longrightarrow \on{Pic}(\widetilde{\Sigma}^{0}/B_P^{0})$.
    
    The inverse of this morphism is constructed as follows. Let $\mathcal{L}$ be an invertible sheaf on $\widetilde{\Sigma}_b$. Since $\Sigma_b\subseteq S\times T^{*}X(q)$, there is a morphism
    \[
        \mathcal{O}_S\boxtimes\pi^{*}\mathcal{T}_X(-q)\longrightarrow \mathcal{E}nd_{\mathcal{O}_{\Sigma_b}}(\sigma_{*}\mathcal{L}).
    \]
    By adjunction, we get a morphism
    \[
    \mathcal{O}_S\boxtimes \mathcal{T}_X(-q)\longrightarrow \pi_{*}{\mathcal{E}nd}_{\mathcal{O}_{\Sigma_b}}(\sigma_{*}\mathcal{L})\longrightarrow \mathcal{E}nd_{\mathcal{O}_X}(\widetilde{\pi}_{*}\mathcal{L}).
    \]
    By Remark \ref{remark: Richardson orbit}, there is a unique parabolic reduction of $\widetilde{\pi}_{*}\mathcal{L}$ at $q$ that is compatible with this Higgs field. 
\end{proof} 

\section{Azumaya property of differential operators in positive characteristic}
\label{section: Azumaya property of differential operators }
\subsection{Frobenius twist of a $k$-scheme}\label{Secion: Frobenius twist}
Let $Y$ be a scheme over an algebraically closed field $k$ of characteristic $p$. Recall that the absolute Frobenius $F_Y: Y\longrightarrow Y$ is the map that fixes the underlying topological space and takes $f$ to $f^p$ on regular functions. The Frobenius twist $Y^{(1)}$ of $Y$ is the $k$-scheme that fits into the following pull-back diagram:
\bd
\xymatrix{
Y^{(1)} \ar[r] \ar[d] & Y \ar[d] \\
\on{Spec} k \ar[r]^{F_{\on{Spec} k}} & \on{Spec} k
}
\ed
The relative Frobenius $\on{Fr}:Y\longrightarrow Y^{(1)}$ is the unique map that makes the following diagram commute.
\bd
\xymatrix{
Y\ar@/_/[ddr] \ar@/^/[drr]^{F_Y} \ar@{.>}[dr]|-{\exists \on{Fr}}\\
&Y^{(1)} \ar[d] \ar[r] & Y\ar[d]\\
& \on{Spec} k \ar[r]^{F_{\on{Spec} k}} & \on{Spec} k}
\ed
Since $\on{Fr}$ induces a bijection on $k$-points, we will not distinguish between $k$-points on $Y$ and $k$-points on $Y^{(1)}$. 

Let $\mathcal{F}$ and $\mathcal{G}$ be two $\mathcal{O}_Y$-modules. A map $\varphi: \mathcal{F}\longrightarrow\mathcal{G}$ is called $p$-linear if it is additive and satisfies $\varphi(fs)=f^{p}\varphi(s)$ for any $f\in\mathcal{O}_U$, $s\in\mathcal{F}(U)$ and open $U\subseteq Y$. For any $\mathcal{O}_Y$-module $\mathcal{F}$, there is a natural $p$-linear map $\mathcal{F}\longrightarrow (F_Y)^{*}\mathcal{F}$. This map is ``universally $p$-linear'' in the sense that any $p$-linear map $\mathcal{F}\longrightarrow \mathcal{G}$ factors through $\mathcal{F}\longrightarrow (F_Y)^{*}\mathcal{F}$ and gives a unique $\mathcal{O}_Y$-linear map $(F_Y)^{*}\mathcal{F}\longrightarrow \mathcal{G}$.

\subsection{Azumaya property of differential operators}\label{subsection: Azumaya}
In this section we review the Azumaya property of crystalline differential operators in characteristic $p$, following \cite{BB}. Let $Y$ be a smooth variety over $k$. We denote by $D_Y$ the sheaf of crystalline differential operators on $Y$, i.e. the sheaf of algebras generated by $\mathcal{O}_Y$ and $\mathcal{T}_Y$ subject to the relations: $\partial f-f\partial=\partial(f)$, $\partial_1\partial_2-\partial_2\partial_1=[\partial_1,\partial_2]$, for any $f\in \mathcal{O}_U$, $\partial, \partial_1,\partial_2\in \mathcal{T}_Y(U)$ and $U\subseteq Y$ open. Since we are in characteristic $p$, for any $\partial\in \mathcal{T}_Y(U)$, $\partial^p\in D_Y$ acts as a derivation on $\mathcal{O}_U$, and we denote this derivation by $\partial^{[p]}\in \mathcal{T}_Y(U)$. There is a $p$-linear map $\mathcal{T}_Y\longrightarrow D_Y$ defined by $\iota(\partial)=\partial^p-\partial^{[p]}$. By the discussion above, $\iota$ induces an $\mathcal{O}_Y$-linear map $\on{Fr}^{*}\mathcal{T}_{Y^{(1)}}\cong F_Y^{*}\mathcal{T}_Y\longrightarrow D_Y$. By adjunction, we have an $\mathcal{O}_{Y^{(1)}}$-linear map
\[\iota: \mathcal{T}_{Y^{(1)}}\longrightarrow \on{Fr}_{*}D_Y.
\]
Therefore $\on{Fr}_{*}D_Y$ sheafifies on $T^{*}Y^{(1)}$, i.e. there exists a sheaf of algebras $\mathcal{D}_Y$ on $T^{*}Y^{(1)}$ that satisfies $\pi^{(1)}_{*}\mathcal{D}_Y\cong \on{Fr}_{*}D_Y$.

The following theorem is proved in \cite{BMR}. 

\begin{theorem}[cf. \cite{BB} Theorem 3.3 and \cite{BMR} Theorem 2.2.3]\label{thm: Azumaya}
	\mbox{}
	\begin{enumerate}
		\item The map $\iota$ induces an isomorphism of sheaves from $\mathcal{O}_{T^{*}Y^{(1)}}$ to the center of $\mathcal{D}_Y$.
		\item The sheaf of algebras $\mathcal{D}_Y$ is an Azumaya algebra over $T^{*}Y^{(1)}$ of rank $p^{2d}$, where $d$ is the dimension of $Y$.  
	\end{enumerate}
\end{theorem}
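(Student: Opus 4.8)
The plan is to work locally on $Y^{(1)}$, reducing to the case where $Y$ admits a system of étale coordinates $x_1,\dots,x_d$ with dual vector fields $\partial_1,\dots,\partial_d$ satisfying $\partial_i^{[p]}=0$. In this situation $\on{Fr}_*D_Y$, as a sheaf of algebras over $\mathcal{O}_{Y^{(1)}}$, is freely generated over the subalgebra $\mathcal{O}_Y$ by the symbols $\partial_1,\dots,\partial_d$ modulo the Weyl-type relations, and the distinguished elements $\iota(\partial_i)=\partial_i^p-\partial_i^{[p]}=\partial_i^p$ together with $x_j^p$ generate a central polynomial subalgebra which one identifies with $\mathcal{O}_{T^*Y^{(1)}}$ via the coordinates pulled back by $\on{Fr}$. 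First I would verify that $\iota(\partial)=\partial^p-\partial^{[p]}$ is indeed central: the identity $[\partial^p,f]=\partial^{[p]}(f)$ in $D_Y$ (Hochschild/Jacobson's formula in characteristic $p$) shows $\iota(\partial)$ commutes with all $f\in\mathcal{O}_Y$, and a short bracket computation shows it commutes with every vector field as well; since $\mathcal{O}_Y$ and $\mathcal{T}_Y$ generate $D_Y$, this gives the containment $\mathcal{O}_{T^*Y^{(1)}}\hookrightarrow Z(D_Y)$.

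For part (1) I would then show this containment is an equality and that $D_Y$ is a locally free $\mathcal{O}_{T^*Y^{(1)}}$-module of rank $p^{2d}$. In étale coordinates, $\on{Fr}_*\mathcal{O}_Y$ is free over $\mathcal{O}_{Y^{(1)}}$ with basis the monomials $x^{\underline{a}}=x_1^{a_1}\cdots x_d^{a_d}$, $0\le a_i<p$, and $D_Y$ is free over $\on{Fr}_*\mathcal{O}_Y$ with basis $\partial^{\underline{b}}$, $0\le b_i<p$ — one checks the PBW-type filtration argument, using $\partial_i^p=\iota(\partial_i)$ is central so that higher powers of $\partial_i$ are absorbed into the central variables. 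Hence $D_Y$ is free of rank $p^{2d}$ over $\mathcal{O}_{T^*Y^{(1)}}$ with basis $\{x^{\underline{a}}\partial^{\underline{b}}\}$; a direct check that none of these basis elements except $1$ is central (commuting with $x_i$ forces $\underline{b}=0$, commuting with $\partial_i$ then forces $\underline{a}=0$) shows $Z(D_Y)=\mathcal{O}_{T^*Y^{(1)}}$, proving (1).

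For part (2), having established that $\mathcal{D}_Y$ is a coherent sheaf of algebras on $T^*Y^{(1)}$, locally free of rank $p^{2d}=(p^d)^2$ over its center $\mathcal{O}_{T^*Y^{(1)}}$, it remains to verify the Azumaya condition, i.e. that étale-locally (or after a faithfully flat base change) $\mathcal{D}_Y$ becomes a matrix algebra. The cleanest route is to exhibit, at each point, a splitting: pulling back along $\on{Fr}\times\mathrm{id}: T^*Y\to T^*Y^{(1)}$ (or working over the formal/henselian neighborhood of a point), the algebra $D_Y\otimes_{\mathcal{O}_{T^*Y^{(1)}}}\mathcal{O}$ acts faithfully on $\on{Fr}_*\mathcal{O}_Y$, which is locally free of rank $p^d$; counting ranks, this action identifies the base change with $\mathcal{E}nd$ of that module, which is the defining property of an Azumaya algebra. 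Equivalently one invokes the standard criterion that a coherent algebra that is locally free of rank $r^2$ over its center and whose fibers are all central simple algebras (here each fiber is $\mathrm{Mat}_{p^d}$ over the residue field, seen from the coordinate description) is Azumaya. The main obstacle is the bookkeeping in the PBW/filtration argument showing freeness of rank $p^{2d}$ and the identification of the center — once that is in hand, the Azumaya property follows formally from the faithful action on $\on{Fr}_*\mathcal{O}_Y$ by a rank count. Since this is precisely \cite{BMR}, Theorem 2.2.3, I would cite it for the details and only indicate the coordinate computation above.
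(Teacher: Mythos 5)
The paper does not give its own proof of this theorem; it simply cites \cite{BMR}, Theorem 2.2.3, and \cite{BB}, Theorem 3.3. Your sketch reconstructs the standard argument of \cite{BMR}: pass to \'etale coordinates, use Jacobson's identity $[\partial^p,f]=(\mathrm{ad}\,\partial)^p(f)=\partial^{[p]}(f)$ to place $\iota(\partial)=\partial^p-\partial^{[p]}$ in the center, establish the PBW basis $\{x^{\underline a}\partial^{\underline b}\}_{0\le a_i,b_i<p}$ over $\mathcal{O}_{T^*Y^{(1)}}$, and identify the center by commuting against $x_i$ and $\partial_i$. That part is sound and is exactly the route the cited sources take.

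Your ``cleanest route'' for the Azumaya property in part (2), however, contains a step that does not work as stated. The sheaf $\on{Fr}_*\mathcal{O}_Y$ is the $D_Y$-module $\mathcal{O}_Y$ with its canonical connection, whose $p$-curvature vanishes identically; as a module over $\mathcal{D}_Y$ on $T^*Y^{(1)}$ it is therefore supported on the zero section, and no pullback along a map $T^*Y\to T^*Y^{(1)}$ can turn it into a faithful, rank-$p^d$ module over all of $T^*Y^{(1)}$. The splitting module that actually works in the BMR-style argument is $D_Y$ itself, viewed as a sheaf of rank $p^d$ on $Y\times_{Y^{(1)}}T^*Y^{(1)}$ via left multiplication by $\mathcal{O}_Y$ and the central action of $\mathcal{O}_{T^*Y^{(1)}}$; the left $D_Y$-action then gives the desired identification with an endomorphism algebra after the flat cover $Y\times_{Y^{(1)}}T^*Y^{(1)}\to T^*Y^{(1)}$. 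Your fallback criterion — locally free of rank $(p^d)^2$ over the center, with each fiber a matrix algebra — is correct and is the most elementary way to conclude; checking that each fiber is $\mathrm{Mat}_{p^d}$ amounts to verifying that the restricted Weyl algebra $k\langle u,v\rangle/(u^p,v^p,[v,u]-1)$ acts faithfully on $k[u]/(u^p)$, which is the computation you should have in mind when you write ``seen from the coordinate description.'' With that one correction, your proposal matches the cited proof.
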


Let $\mathcal{A}$ be an Azumaya algebra on $Y$. A splitting of $\mathcal{A}$ is defined to be a pair $(E,\rho)$, where $E$ is a locally free sheaf on $Y$ and $\rho: \mathcal{A}\xrightarrow{\simeq} \mathcal{E}nd(E)$ is an isomorphism of $\mathcal{O}_Y$-algebras. Such a $(E,\rho)$ induces an equivalence between the category $\on{QCoh}(Y)$ of quasi-coherent sheaves on $Y$ and the category $\mathcal{A}\on{-mod}$ of $\mathcal{A}$-modules, which maps $\mathcal{F}\in \on{QCoh}(Y)$ to $E\otimes \mathcal{F}$. We define an equivalence from an Azumaya algebra $\mathcal{A}$ to another Azumaya algebra $\mathcal{B}$ to be a splitting of $\mathcal{A}^{op}\otimes \mathcal{B}$. Such a splitting induces an equivalence from the category of $\mathcal{A}$-modules to the category of $\mathcal{B}$-modules. Note that if there is a locally free sheaf $E$ that gives a splitting of $\mathcal{A}^{op}\otimes \mathcal{B}$, then $\mathcal{H}om_{\mathcal{O}_Y}(E, \mathcal{O}_Y)$ gives a splitting of $\mathcal{A}\otimes \mathcal{B}^{op}$. 

Let $f: Z\longrightarrow Y$ be a morphism between smooth $k$-varieties. We denote by $df^{(1)}$ the Frobenius twist of the map induced by the differential of $f$:
\[
    df^{(1)}: Z^{(1)}\times_{Y^{(1)}} T^{*}Y^{(1)}\longrightarrow T^{*}Z^{(1)}.
\]
Let $p_2$ be the projection to $T^{*}Y^{(1)}$. Then we have:
\begin{prop}[cf. \cite{BB} Proposition 3.7]\label{Prop: pull back Azumaya}
    The Azumaya algebras $(df^{(1)})^{*}\mathcal{D}_Z$ and $p_2^{*}\mathcal{D}_Y$ are canonically equivalent.
\end{prop}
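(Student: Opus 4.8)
The plan is to reduce the statement to a local computation near the zero sections, exploiting the construction of $\mathcal{D}_Z$ and $\mathcal{D}_Y$ as sheafifications of $\operatorname{Fr}_*D_Z$ and $\operatorname{Fr}_*D_Y$ over $T^*Z^{(1)}$ and $T^*Y^{(1)}$ respectively. First I would recall that a morphism $f\colon Z\to Y$ induces a canonical algebra map $f^{-1}D_Y\to D_Z$ (pullback of differential operators along a flat-enough map — here we use smoothness to get the map on the level of $\mathcal{O}$ and $\mathcal{T}$, via $df\colon f^*\mathcal{T}_Y\to\mathcal{T}_Z$), hence a morphism of sheaves of algebras. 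The compatibility of the $p$-linear maps $\iota$ on both sides with $df$ — which is the identity $(df(\partial))^{[p]} = df(\partial^{[p]})$ together with $(df(\partial))^p = df(\partial^p)$ as operators, valid because $f^\sharp$ is a ring homomorphism — shows that this algebra map is compatible with the two maps $\mathcal{T}_{(-)^{(1)}}\to\operatorname{Fr}_*D_{(-)}$ defining the sheafifications. Therefore, after Frobenius-twisting, $df^{(1)}$ is exactly the map of cotangent-type schemes along which the two Azumaya algebras are meant to be compared, and one gets a canonical morphism $p_2^*\mathcal{D}_Y\to (df^{(1)})^*\mathcal{D}_Z$ of sheaves of algebras over $Z^{(1)}\times_{Y^{(1)}}T^*Y^{(1)}$.

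Next I would promote this morphism of sheaves of algebras to an equivalence of Azumaya algebras, i.e. produce a splitting of $\big((df^{(1)})^*\mathcal{D}_Z\big)^{\mathrm{op}}\otimes p_2^*\mathcal{D}_Y$, equivalently of $(df^{(1)})^*\mathcal{D}_Z\otimes (p_2^*\mathcal{D}_Y)^{\mathrm{op}}$. The natural candidate is the $\mathcal{O}$-module underlying $D_Z$ (or its pullback), on which $D_Z$ acts by left multiplication and $D_Y$ acts by right multiplication through the algebra map $f^{-1}D_Y\to D_Z$; after Frobenius descent this becomes a sheaf on $Z^{(1)}\times_{Y^{(1)}}T^*Y^{(1)}$ carrying commuting actions of the two Azumaya algebras. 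Since this is an étale-local statement, I would check on an affine chart where $Z\to Y$ factors as an étale map to an affine space over $Y$ (using that every smooth morphism is, étale-locally on the source, a composite of an étale map and a projection $\mathbb{A}^r_Y\to Y$). For $Y$ itself, $\mathcal{D}_Y$ is split by $D_Y$ as a module over $T^*Y^{(1)}$; and the relative case $\mathbb{A}^r_Y\to Y$ is handled by the explicit one-variable computation (the Weyl-algebra-in-characteristic-$p$ computation already implicit in Theorem~\ref{thm: Azumaya}), while the étale case reduces everything to $Y$ by base change, using that $\mathcal{D}_{(-)}$ is compatible with étale pullback. Checking ranks — both Azumaya algebras have rank $p^{2\dim Z}$ and $p^{2\dim Y}$, and the putative splitting bundle has rank $p^{\dim Z}\cdot p^{\dim Y}$, matching $\sqrt{\operatorname{rk}}$ of the tensor product — confirms it is an honest splitting.

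The main obstacle I anticipate is not the existence of the algebra map but verifying that it behaves well precisely over the \emph{twisted} base $Z^{(1)}\times_{Y^{(1)}}T^*Y^{(1)}$ rather than $T^*Z^{(1)}$ — that is, confirming that the two central characters (the one coming from $\mathcal{O}_{T^*Z^{(1)}}$ pulled back along $df^{(1)}$, and the one coming from $\mathcal{O}_{T^*Y^{(1)}}$ pulled back along $p_2$) agree on the nose under the map $f^{-1}D_Y\to D_Z$. Concretely this amounts to the identity, for $\partial\in\mathcal{T}_Y$, that the image of the central element $\iota_Y(\partial) = \partial^p-\partial^{[p]}\in D_Y$ under $f^\sharp$ equals $\iota_Z(df(\partial)) = df(\partial)^p - df(\partial)^{[p]}$ in $D_Z$ — and this is exactly the compatibility noted above, so once that identity is in hand the two central structures are forced to match and the splitting descends correctly. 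A secondary point requiring care is functoriality/canonicity: one should check the splitting is independent of the chosen local factorization of $f$, which follows from the uniqueness of Frobenius descent (a $p$-linear map determines its $\mathcal{O}$-linear factorization uniquely) together with the fact that the globally-defined action of $D_Z$ on itself glues the local pictures. With these checks, Proposition~\ref{Prop: pull back Azumaya} follows, and the reference to \cite{BB} Proposition 3.7 can be invoked for the details of the local computation.
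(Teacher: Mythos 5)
Your overall plan — reduce to a local $p$-curvature computation, verify that the two central characters agree over $Z^{(1)}\times_{Y^{(1)}}T^*Y^{(1)}$, and exhibit an explicit bimodule as the splitting — is the right shape of argument and matches the spirit of \cite{BB}~Prop.~3.7. However, the object you place at the center of the proof does not exist, and this is a genuine gap.

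You invoke ``a canonical algebra map $f^{-1}D_Y\to D_Z$'' coming from ``$df\colon f^{*}\mathcal{T}_Y\to\mathcal{T}_Z$.'' The differential of $f$ is a map $\mathcal{T}_Z\to f^{*}\mathcal{T}_Y$, not the other way around, and there is no natural splitting of it for a general morphism; in particular there is no algebra homomorphism from $f^{-1}D_Y$ to $D_Z$. Your fallback — \'etale-locally factoring $f$ through $\mathbb{A}^r_Y\to Y$ — only applies when $f$ is \emph{smooth}. But the proposition is stated, and used in this paper, for an arbitrary morphism $f$ between smooth varieties: for instance, Corollary~\ref{cor: 1-form}(1) applies it to the tautological $1$-form $\theta_Y\colon Y\to T^{*}Y$, which is a closed embedding, and Section~\ref{Section: differential operators on stacks} uses it for general smooth charts $S\to S'$ over a stack. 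So the \'etale-local reduction does not cover the cases you need, and the putative algebra map simply is not there.

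The object that does exist for \emph{every} $f$ is the transfer bimodule $D_{Z\to Y}:=f^{*}D_Y=\mathcal{O}_Z\otimes_{f^{-1}\mathcal{O}_Y}f^{-1}D_Y$. This is a left $D_Z$-module (via the pullback/Gauss--Manin-type connection on $f^{*}(-)$) and a right $f^{-1}D_Y$-module (by right multiplication), and the two actions commute; one does not need, and does not have, a map of algebras. Your ``natural candidate'' — ``the $\mathcal{O}$-module underlying $D_Z$'' with $D_Y$ acting through an algebra map into $D_Z$ — should be replaced by $D_{Z\to Y}$; note also that even formally $D_Z$ has the wrong rank for the splitting. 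The content of the proposition is then precisely the statement that after Frobenius descent the $\mathcal{O}_{T^{*}Z^{(1)}}$- and $\mathcal{O}_{T^{*}Y^{(1)}}$-actions on this bimodule factor through $\mathcal{O}_{Z^{(1)}\times_{Y^{(1)}}T^{*}Y^{(1)}}$ and the resulting sheaf is locally free of rank $p^{\dim Z+\dim Y}$. The identity you correctly flag — matching the two central elements $\iota_Y(\partial)=\partial^p-\partial^{[p]}$ and $\iota_Z(\cdot)$ — is the right computation, but it must be phrased as a $p$-curvature computation for the pullback connection on $f^{*}D_Y$ rather than as compatibility with a (nonexistent) algebra map; once that is done, the argument goes through for arbitrary $f$, with no case analysis on whether $f$ is smooth, \'etale, or a closed embedding.
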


Following \cite{BB}, we define $f^{!}: \mathcal{D}_Y{\on{-mod}}\longrightarrow \mathcal{D}_Z{\on{-mod}}$ to be the composition of the pull-back functor $\mathcal{D}_Y{\on{-mod}}\longrightarrow p_2^*\mathcal{D}_Y{\on{-mod}}$, the equivalence in Proposition \ref{Prop: pull back Azumaya}, and the push-forward functor $df^{*}\mathcal{D}_Z{\on{-mod}}\longrightarrow \mathcal{D}_Z{\on{-mod}}$. Similarly, we define $f_{*}: \mathcal{D}_Z{\on{-mod}}\longrightarrow \mathcal{D}_Y{\on{-mod}}$ to be the composition of the pull-back functor $\mathcal{D}_Z{\on{-mod}}\longrightarrow df^*\mathcal{D}_z{\on{-mod}}$, the equivalence in Proposition \ref{Prop: pull back Azumaya}, and the push-forward functor $p_2^{*}\mathcal{D}_Y{\on{-mod}}\longrightarrow \mathcal{D}_Y{\on{-mod}}$.

Let $\theta_Y^{(1)}$ be the tautological 1-form on $T^{*}Y^{(1)}$. We think of $\theta_Y^{(1)}$ as a map:
\[
    \theta_Y^{(1)}: T^{*}Y^{(1)}\longrightarrow T^{*}(T^{*}Y)^{(1)}. 
\]
\begin{corollary}[cf. \cite{BB} Proposition 3.11 and Corollary 3.12]\label{cor: 1-form}
\mbox{}
\begin{enumerate}
    \item The Azumaya algebra $(\theta_Y^{(1)})^{*}\mathcal{D}_{T^{*}Y}$ is canonically equivalent to $\mathcal{D}_Y$.
    \item Let $\theta^{(1)}_1\in \Gamma(Z^{(1)}, \Omega_{Z^{(1)}})$ and $\theta^{(1)}_2\in \Gamma(Y^{(1)}, \Omega_{Y^{(1)}})$. If $(f^{(1)})^{*}(\theta_2^{(1)})=\theta_1^{(1)}$, then the two Azumaya algebras $(\theta_1^{(1)})^{*}\mathcal{D}_Z$ and $(\theta^{(1)}_2\circ f^{(1)})^{*}\mathcal{D}_Y$ are canonically equivalent. 
\end{enumerate} 
\end{corollary}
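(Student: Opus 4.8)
The plan is to deduce both statements formally from Proposition \ref{Prop: pull back Azumaya}. The key observation is that each of the two tautological sections of cotangent bundles appearing in the statement factors through a map of the form $df^{(1)}$; once this is seen, pulling the equivalence of Proposition \ref{Prop: pull back Azumaya} back along the complementary section produces the desired equivalence. The only geometric inputs are the intrinsic description of the tautological $1$-form, the formula for the pull-back of a $1$-form, and the routine fact that cotangent bundles and differentials are compatible with Frobenius twist.

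\emph{Part (1).} I would apply Proposition \ref{Prop: pull back Azumaya} to the morphism $f$ equal to the bundle projection $T^{*}Y\to Y$. Since $(T^{*}Y)^{(1)}=T^{*}Y^{(1)}$ and $f^{(1)}$ is then the bundle projection of $T^{*}Y^{(1)}$, the relevant fiber product is $T^{*}Y^{(1)}\times_{Y^{(1)}}T^{*}Y^{(1)}$, and the proposition yields a canonical equivalence of Azumaya algebras
\[
    (df^{(1)})^{*}\mathcal{D}_{T^{*}Y}\;\simeq\;p_2^{*}\mathcal{D}_Y
\]
on it. Now the tautological $1$-form, viewed as a section $\theta_Y: T^{*}Y\to T^{*}(T^{*}Y)$, equals $df\circ\Delta$, where $\Delta: T^{*}Y\to T^{*}Y\times_Y T^{*}Y$ is the relative diagonal: indeed $\theta_Y(\xi)=(d_\xi f)^{*}(\xi)=df(\xi,\xi)$ by definition of the tautological form. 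This identity commutes with Frobenius twist, so $\theta_Y^{(1)}=df^{(1)}\circ\Delta^{(1)}$ with $\Delta^{(1)}$ the relative diagonal of $T^{*}Y^{(1)}$ over $Y^{(1)}$. Pulling the displayed equivalence back along $\Delta^{(1)}$ and using $p_2\circ\Delta^{(1)}=\on{id}$, I obtain
\[
    (\theta_Y^{(1)})^{*}\mathcal{D}_{T^{*}Y}=(\Delta^{(1)})^{*}(df^{(1)})^{*}\mathcal{D}_{T^{*}Y}\simeq(\Delta^{(1)})^{*}p_2^{*}\mathcal{D}_Y=\mathcal{D}_Y.
\]

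\emph{Part (2).} For any morphism $g: A\to B$ and any $1$-form $\beta$ on $B$, the section $g^{*}\beta: A\to T^{*}A$ factors as $A\xrightarrow{(\on{id}_A,\,\beta\circ g)}A\times_B T^{*}B\xrightarrow{dg}T^{*}A$, because $(g^{*}\beta)(a)=(d_a g)^{*}(\beta(g(a)))=dg(a,\beta(g(a)))$. I would apply this with $g=f^{(1)}$ and $\beta=\theta_2^{(1)}$; the hypothesis $\theta_1^{(1)}=(f^{(1)})^{*}\theta_2^{(1)}$ then gives $\theta_1^{(1)}=df^{(1)}\circ(\on{id},\,\theta_2^{(1)}\circ f^{(1)})$, where $(\on{id},\,\theta_2^{(1)}\circ f^{(1)}): Z^{(1)}\to Z^{(1)}\times_{Y^{(1)}}T^{*}Y^{(1)}$ and $df^{(1)}$ is the map of Proposition \ref{Prop: pull back Azumaya} for $f$ (equivalently the codifferential of $f^{(1)}$, since the construction commutes with base change). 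Pulling the canonical equivalence $(df^{(1)})^{*}\mathcal{D}_Z\simeq p_2^{*}\mathcal{D}_Y$ back along $(\on{id},\,\theta_2^{(1)}\circ f^{(1)})$ and using $p_2\circ(\on{id},\,\theta_2^{(1)}\circ f^{(1)})=\theta_2^{(1)}\circ f^{(1)}$, I get the asserted equivalence $(\theta_1^{(1)})^{*}\mathcal{D}_Z\simeq(\theta_2^{(1)}\circ f^{(1)})^{*}\mathcal{D}_Y$.

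The individual steps are formal once Proposition \ref{Prop: pull back Azumaya} is available; the point that genuinely requires care is the adjective \emph{canonically}. For the composite equivalences above to be canonical, one needs the equivalence provided by Proposition \ref{Prop: pull back Azumaya} to be natural — compatible with composition of morphisms and with further base change. Base-change compatibility is automatic, since an equivalence of Azumaya algebras is encoded by a splitting bundle that pulls back as such; compatibility with composition is built into the construction of that equivalence from the $p$-curvature map $\iota$. Hence I expect the main (indeed the only) work to be this naturality bookkeeping, together with checking the harmless identifications $(T^{*}Y)^{(1)}=T^{*}Y^{(1)}$ and $(df)^{(1)}=d(f^{(1)})$; there is no geometric difficulty beyond the two factorizations displayed above.
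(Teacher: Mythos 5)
Your proof is correct and is essentially the standard derivation of this statement from Proposition~\ref{Prop: pull back Azumaya}; the paper does not give its own proof (it simply cites Bezrukavnikov--Braverman, Prop.~3.11 and Cor.~3.12), and their argument is the same reduction through the two factorizations $\theta_Y=d\pi\circ\Delta$ and $f^*\theta_2=df\circ(\mathrm{id},\theta_2\circ f)$ that you write down. One small remark: each part uses Proposition~\ref{Prop: pull back Azumaya} only once and then pulls the resulting equivalence back along a single section, so the ``compatibility with composition'' bookkeeping you flag as the main concern is not actually invoked --- canonicity follows directly from the canonicity asserted in Proposition~\ref{Prop: pull back Azumaya} together with the functoriality of pullback of a splitting bimodule, and the identifications $(T^*Y)^{(1)}=T^*Y^{(1)}$ and $(df)^{(1)}=d(f^{(1)})$ are indeed routine flat-base-change facts.
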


Let $\mathcal{M}$ be a $D_Y$-module. We denote by $\nabla_\mathcal{M}$ the corresponding flat connection $\mathcal{M}\longrightarrow \mathcal{M}\otimes \Omega_Y$. There is a $p$-linear map $\mathcal{T}_Y\longrightarrow \mathcal{E}nd(\mathcal{M})$ defined by \[\psi_{\nabla_\mathcal{M}}(\partial)=\nabla_\mathcal{M}(\partial)^p-\nabla_\mathcal{M}(\partial^{[p]}).
\]
By the discussion in Section \ref{Secion: Frobenius twist}, we can associate with it a $\mathcal{O}_Y$-linear map
\[
    \psi_{\nabla_{\mathcal{M}}}: \on{Fr}^{*}\mathcal{T}_{Y^{(1)}}\longrightarrow \mathcal{E}nd(\mathcal{M}),
\]
which we call the $p$-curvature of $\mathcal{M}$.

We review the Cartier descent for flat connections with zero $p$-curvature. Let $\mathcal{F}$ be a quasi-coherent sheaf on $Y^{(1)}$. There is a canonical $D_Y$-action on $\on{Fr}^{*}(\mathcal{F})\cong \mathcal{O}_Y\otimes_{\mathcal{O}_{Y^{(1)}}} \mathcal{F}$, which comes from the canonical action of $D_Y$ on $\mathcal{O}_Y$. Therefore we have a flat connection $(\on{Fr}^{*}\mathcal{F},\nabla_{\on{can}})$. This construction induces a functor from the category of quasi-coherent sheaves on $Y^{(1)}$ to the category of $D_Y$-modules on $Y$ with zero $p$-curvature.

\begin{theorem}[Cartier descent, cf. \cite{Katz} Theorem 5.1]\label{thm: Cartier descent}
	Let $Y$ be a smooth variety over $k$. Then the construction of $(\on{Fr}^{*}\mathcal{F},\nabla_{\on{can}})$ induces an equivalence between the category of quasi-coherent sheaves on $Y^{(1)}$ and the category of $D_Y$-modules on $Y$ with zero $p$-curvature. 
\end{theorem}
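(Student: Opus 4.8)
The plan is to produce an explicit quasi-inverse by passing to flat sections, and then to check that the unit and counit of the resulting adjunction are isomorphisms by an \'etale-local computation. Given a $D_Y$-module $\mathcal{M}$ with connection $\nabla_{\mathcal{M}}$, I would set $\mathcal{M}^{\nabla}:=\ker(\nabla_{\mathcal{M}}\colon \mathcal{M}\to\mathcal{M}\otimes\Omega_Y)$, a quasi-coherent subsheaf of $\mathcal{M}$. Since the composite $\mathcal{O}_{Y^{(1)}}\xrightarrow{\on{Fr}^{\#}}\on{Fr}_{*}\mathcal{O}_Y\xrightarrow{d}\on{Fr}_{*}\Omega_Y$ vanishes, $\mathcal{M}^{\nabla}$ is a module over $\mathcal{O}_{Y^{(1)}}$ and hence defines a quasi-coherent sheaf on $Y^{(1)}$; this is the candidate inverse functor. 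The counit is the tautological map $(\on{Fr}^{*}\mathcal{F})^{\nabla_{\on{can}}}\to\mathcal{F}$, and the unit is the multiplication map $\mu_{\mathcal{M}}\colon \on{Fr}^{*}(\mathcal{M}^{\nabla})=\mathcal{O}_Y\otimes_{\mathcal{O}_{Y^{(1)}}}\mathcal{M}^{\nabla}\to \mathcal{M}$, which one checks is horizontal, so it remains only to prove that $\mu_{\mathcal{M}}$ and the counit are isomorphisms.

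For the counit, I would reduce to the identity $\ker(d\colon\mathcal{O}_Y\to\Omega_Y)=\on{Fr}^{\#}(\mathcal{O}_{Y^{(1)}})$, together with a flatness bookkeeping step to commute $\ker$ past $-\otimes_{\mathcal{O}_{Y^{(1)}}}\mathcal{F}$. This identity is \'etale-local, and \'etale-locally $Y$ carries coordinates $x_1,\dots,x_d$ (an \'etale map $Y\to\mathbb{A}^d_k$) for which $\mathcal{O}_Y$ is free over $\on{Fr}^{\#}\mathcal{O}_{Y^{(1)}}$ on the monomials $x^{\underline a}$, $0\le a_i\le p-1$; a direct check on this basis settles it, and the same coordinates make the auxiliary quotients (such as $\on{coker}\, d$) free over $\mathcal{O}_{Y^{(1)}}$, so no $\on{Tor}$ obstruction appears.

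The heart of the argument is the unit map. Since $\on{Fr}$ is faithfully flat, it is enough to check that $\mu_{\mathcal{M}}$ is an isomorphism \'etale-locally, so I would fix coordinates $x_1,\dots,x_d$ as above with dual vector fields $\partial_i$ normalized so that $\partial_i^{[p]}=0$, and set $D_i:=\nabla_{\mathcal{M}}(\partial_i)$; these pairwise commute, and the hypothesis of vanishing $p$-curvature says precisely that $D_i^{p}=\psi_{\nabla_{\mathcal{M}}}(\partial_i)=0$. Because $D_i^{p}=0$, the operators $e_i:=\sum_{j=0}^{p-1}\frac{(-x_i)^{j}}{j!}D_i^{j}$ are well defined; a telescoping computation using $[D_i,(-x_i)^{j}]=-j(-x_i)^{j-1}$ shows $D_i\circ e_i=0$, each $e_i$ is the identity on $\ker D_i$ and commutes with multiplication by $\on{Fr}^{\#}\mathcal{O}_{Y^{(1)}}$ and with the $e_j$, so $e:=e_1\cdots e_d$ is an $\mathcal{O}_{Y^{(1)}}$-linear projector of $\mathcal{M}$ onto $\mathcal{M}^{\nabla}$. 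Expanding a local section of $\mathcal{M}$ in the basis $\{x^{\underline a}\}$ and applying $e$ coordinatewise then produces the inverse of $\mu_{\mathcal{M}}$. Alternatively, one may induct on $d$, reducing to the one-variable case, or observe that $\mathcal{M}$ is a module over $\on{Fr}_{*}$ of the restriction of $\mathcal{D}_Y$ to the zero section of $T^{*}Y^{(1)}$, which by Theorem \ref{thm: Azumaya} is a split Azumaya algebra, and conclude by Morita theory.

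I expect the unit map to be the only real obstacle: the whole content of Cartier descent is that the flat sections of a connection with vanishing $p$-curvature form a locally free $\mathcal{O}_{Y^{(1)}}$-module whose Frobenius pullback recovers the connection, and the $p$-curvature hypothesis enters exactly through the nilpotence $D_i^{p}=0$ that makes the projector $e$ well defined. The remaining verifications — horizontality of $\mu_{\mathcal{M}}$, the counit computation, and the quasi-coherence of $\mathcal{M}^{\nabla}$ on $Y^{(1)}$ — are formal.
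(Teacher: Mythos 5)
Your argument is correct, and it is the classical proof: the paper does not give its own argument for this theorem but simply cites Katz, and your route --- passing to flat sections $\mathcal{M}^{\nabla}$, checking $\mathcal{O}_{Y^{(1)}}$-linearity because $d\circ\on{Fr}^{\#}=0$, and proving the comparison maps are isomorphisms locally using the horizontal projector $e=e_1\cdots e_d$ whose well-definedness comes exactly from $D_i^p=\psi_{\nabla}(\partial_i)=0$ --- is precisely the argument in \cite{Katz}, Theorem 5.1. The telescoping computation you indicate indeed gives $D_i\circ e_i=\frac{1}{(p-1)!}(-x_i)^{p-1}D_i^{p}=0$, and the rest is bookkeeping.

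Two small remarks. First, your alternative via the Azumaya algebra $\mathcal{D}_Y$ is a genuinely different and equally valid proof, and it fits this paper's viewpoint well: the restriction of $\mathcal{D}_Y$ to the zero section of $T^{*}Y^{(1)}$ is canonically $\mathcal{E}nd_{\mathcal{O}_{Y^{(1)}}}(\on{Fr}_{*}\mathcal{O}_Y)$ (because $\mathcal{O}_Y$ with its tautological $D_Y$-action has zero $p$-curvature), modules over it are exactly $D_Y$-modules with vanishing $p$-curvature, and Morita equivalence along the splitting bundle $\on{Fr}_{*}\mathcal{O}_Y$ recovers the statement; this is the mechanism implicit in Theorem \ref{thm: Azumaya} and Section \ref{subsection: Azumaya}. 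Second, a trivial slip of orientation: the tautological comparison map on the quasi-coherent side goes $\mathcal{F}\to(\on{Fr}^{*}\mathcal{F})^{\nabla_{\on{can}}}$, $n\mapsto 1\otimes n$, not the reverse; this is the unit of the adjunction $\on{Fr}^{*}\dashv(-)^{\nabla}$, with $\mu_{\mathcal{M}}$ the counit. This does not affect the substance of your argument.
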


\subsection{Differential operators on smooth stacks}\label{Section: differential operators on stacks}
Let $Y$ be a smooth irreducible algebraic stack over an algebraically closed field $k$. When $k$ is the field of complex numbers $\mathbb{C}$, for $Y$ that is good in the sense that it satisfies $\on{dim}T^{*}Y=2\on{dim}Y$, the sheaf of differential operators on $Y$ is defined in \cite{BD} as a sheaf of algebras $D_Y$ on the smooth topology $Y_{sm}$. We review this definition as follows.  The objects of $Y_{sm}$ are $k$-schemes $S$ together with a smooth morphism $f_S: S\longrightarrow Y$, and the morphisms between $(S, f_S)$ and $(S', f_{S'})$ are pairs $(\phi, \alpha)$ containing a smooth morphism $\phi: S\longrightarrow S'$ and $\alpha: f_S\xrightarrow{\simeq} f_{S'} \circ \phi$. Let $(S, f_S)$ be an object of $Y_{sm}$. We denote by $\mathcal{I}$ the left ideal $D_S \mathcal{T}_{S/Y}\subset D_S$ generated by the relative tangent sheaf $\mathcal{T}_{S/Y}$. We define $(D_Y)^{\sharp}_S\coloneqq D_S/\mathcal{I}$. It has a $D_S$-action by left multiplication. Let $\mathcal{N}_{D_S}(\mathcal{I})$ be the normalizer of $\mathcal{I}$ in $D_S$. We define $(D_Y)_S\coloneqq \mathcal{N}_{D_S}(\mathcal{I})/\mathcal{I}$. In other words, we set $(D_Y)_S=\mathcal{E}nd_{D_S}((D_Y)^{\sharp}_S)^{op}$. For any smooth morphism $\phi: S\longrightarrow S'$ over $Y$, we have a canonical isomorphism
\begin{equation}\label{equation: pull-back D-module}
    \phi^{*}((D_Y)^{\sharp}_{S'})\xrightarrow{\simeq} (D_Y)^{\sharp}_{S},
\end{equation}
which restricts to an isomorphism
\begin{equation}\label{equation: pull-back differential operators}
    \phi^{-1}((D_Y)_{S'})\xrightarrow{\simeq} (D_Y)_{S},
\end{equation}
where $\phi^{-1}$ is the sheaf-theoretic inverse image. We call $D_Y$ the sheaf of differential operators on $Y$. 

It is observed in \cite{BB} that the isomorphism (\ref{equation: pull-back differential operators}) no longer holds when $k$ is of characteristic $p>0$. But meanwhile, $\on{Fr}_{*} D_Y$ is a quasi-coherent sheaf on $Y^{(1)}$, and the authors constructed a coherent sheaf of algebras $\mathcal{D}_Y$ on $T^{*}Y^{(1)}$ that satisfies $\pi^{(1)}_{*}\mathcal{D}_Y\cong \on{Fr}_{*}D_Y$. The construction of $\mathcal{D}_Y$ is as follows. For any $k$-scheme $S$ with a smooth morphism $f_S: S\longrightarrow Y$, we need to define a coherent sheaf of algebras $(\mathcal{D}_Y)_S$ on $(T^{*}Y)_S^{(1)}\coloneqq S^{(1)}\times _{Y^{(1)}} T^{*}Y^{(1)}$. We consider the $D_S$-module $(D_Y^{\sharp})_S$, and denote by $(\mathcal{D}_Y^{\sharp})_S$ the corresponding coherent sheaf on $T^{*}S^{(1)}$. Since we mod out the left ideal generated by $\mathcal{T}_{S/Y}$ when defining $(D_Y^{\sharp})_S$, the support of $(\mathcal{D}_Y^{\sharp})_S$ lies in the closed substack $(T^{*}Y)_S^{(1)}\xhookrightarrow{df_S^{(1)}} T^{*}S^{(1)}$. We set $(\mathcal{D}_Y)_S\coloneqq \mathcal{E}nd_{\mathcal{D}_S}((\mathcal{D}_Y^{\sharp})_S)^{op}$. For any smooth morphism $\phi: S\longrightarrow S'$ over $Y$, isomorphism (\ref{equation: pull-back D-module}) induces an isomorphism $(\widetilde{\phi}^{(1)})^{*}(\mathcal{D}_Y)_{S'}\cong (\mathcal{D}_Y)_{S}$, where $\widetilde{\phi}$ is the map $(T^{*}Y)_S\longrightarrow (T^{*}Y)_{S'}$. Therefore $(\mathcal{D}_Y)_S$ sheafifies to be a coherent sheaf of algebras $\mathcal{D}_Y$ on $T^{*}Y^{(1)}$. We have the following proposition:

\begin{prop}
    [cf. \cite{BB} Lemma 3.14 and \cite{Travkin} Proposition 2.7]\label{lemma: Azumaya property stack version}
    The coherent sheaf of algebras $\mathcal{D}_Y$ satisfies $\pi^{(1)}_{*}\mathcal{D}_Y\cong \on{Fr}_{*}D_Y$. If the stack $Y$ is good in the sense that $\on{dim}T^{*}{Y}=2\on{dim}Y$, and we denote by $T^{*}Y^0$ the maximal smooth open substack of $T^{*}Y$, then the restriction of $\mathcal{D}_Y$ to $(T^{*}Y^{0})^{(1)}$ is an Azumaya algebra of rank $p^{2\on{dim}Y}$. 
\end{prop}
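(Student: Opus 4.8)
The plan is to verify both statements smooth-locally on $Y$ and then glue, reducing each to the scheme-theoretic case of Theorem \ref{thm: Azumaya}. Fix an object $(S,f_S)$ of $Y_{sm}$ and set $d_S=\on{dim}S$, $d=\on{dim}Y$. For the identity $\pi^{(1)}_{*}\mathcal{D}_Y\cong\on{Fr}_{*}D_Y$: the morphisms $\pi^{(1)}\colon T^{*}S^{(1)}\to S^{(1)}$ and $\on{Fr}\colon S\to S^{(1)}$ are affine, so pushforward yields equivalences $\mathcal{D}_S\on{-mod}\simeq(\pi^{(1)}_{*}\mathcal{D}_S)\on{-mod}=(\on{Fr}_{*}D_S)\on{-mod}\simeq D_S\on{-mod}$, and by the very definition of $(\mathcal{D}_Y^{\sharp})_S$ this equivalence sends it to $(D_Y^{\sharp})_S$. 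An equivalence of module categories identifies the endomorphism algebras of corresponding objects, whence $\pi^{(1)}_{*}(\mathcal{D}_Y)_S\cong(D_Y)_S$ as sheaves of algebras on $S^{(1)}$. The isomorphisms (\ref{equation: pull-back D-module}) and (\ref{equation: pull-back differential operators}) make these identifications compatible with restriction along smooth morphisms over $Y$, so they descend to the asserted isomorphism on $Y^{(1)}$.

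For the Azumaya statement, I first observe that $(\mathcal{D}_Y^{\sharp})_S$ is supported on the closed substack $(T^{*}Y)_S^{(1)}\xhookrightarrow{df_S^{(1)}}T^{*}S^{(1)}$. Indeed, if $\partial\in\mathcal{T}_{S/Y}$ then $\partial^{[p]}\in\mathcal{T}_{S/Y}$ as well, so $\iota(\partial)=\partial^{p}-\partial^{[p]}$ lies in the left ideal $D_S\mathcal{T}_{S/Y}$ and hence annihilates $(D_Y^{\sharp})_S$; since $(T^{*}Y)_S^{(1)}$ is cut out in $T^{*}S^{(1)}$ precisely by the equations $\iota(\partial)=0$, $\partial\in\mathcal{T}_{S/Y}$, this gives the support claim. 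Consequently $(\mathcal{D}_Y^{\sharp})_S$ is a coherent module over the restriction $\mathcal{D}_S|_{(T^{*}Y)_S^{(1)}}$, which by Theorem \ref{thm: Azumaya} is Azumaya of rank $p^{2d_S}$.

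The crux is to show that over the preimage $(T^{*}Y^{0})_S^{(1)}$ of the maximal smooth open substack, $(\mathcal{D}_Y^{\sharp})_S$ is locally free over $\mathcal{O}$ of rank $p^{d_S+d}$, equivalently a locally free $\mathcal{D}_S|_{(T^{*}Y)_S^{(1)}}$-module of rank $p^{d}$. I would argue with the order filtration on $(D_Y^{\sharp})_S=D_S/D_S\mathcal{T}_{S/Y}$: its associated graded is $\on{Sym}_{\mathcal{O}_S}(\mathcal{T}_S/\mathcal{T}_{S/Y})\cong\mathcal{O}_{(T^{*}Y)_S}$, compatibly with the induced filtration on the central subalgebra $\mathcal{O}_{(T^{*}Y)_S^{(1)}}$, over which $\on{gr}(\mathcal{D}_Y^{\sharp})_S$ is the relative Frobenius pushforward $\on{Fr}_{*}\mathcal{O}_{(T^{*}Y)_S}$. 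Since $f_S$ is smooth and $Y$ is good, the smooth locus of $(T^{*}Y)_S$ is exactly $(T^{*}Y^{0})_S$, of dimension $d_S+d$; there $\on{Fr}$ is flat by Kunz's theorem, so $\on{gr}(\mathcal{D}_Y^{\sharp})_S$ is locally free over $\mathcal{O}$ of rank $p^{d_S+d}$. A standard Rees-module argument — flatness of an exhaustive, separated, bounded-below filtered module is detected on the associated graded — then promotes this to local freeness of $(\mathcal{D}_Y^{\sharp})_S$ itself over $\mathcal{O}_{(T^{*}Y^{0})_S^{(1)}}$ of the same rank. As $\mathcal{D}_S|_{(T^{*}Y)_S^{(1)}}$ is Azumaya of rank $p^{2d_S}$, any $\mathcal{O}$-locally free module of rank $p^{d_S}\cdot p^{d}$ over it is automatically locally free over it of rank $p^{d}$. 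Finally, for an Azumaya algebra $A$ and a locally free $A$-module $M$ of rank $r$, $\mathcal{E}nd_A(M)$ is Azumaya of rank $r^{2}$ (check \'etale-locally, where $A\cong\on{Mat}_{p^{d_S}}$ and $M$ is a direct sum of $r$ copies of the tautological module); applying this with $A=\mathcal{D}_S|_{(T^{*}Y)_S^{(1)}}$, $M=(\mathcal{D}_Y^{\sharp})_S$, $r=p^{d}$ shows $(\mathcal{D}_Y)_S=\mathcal{E}nd_{\mathcal{D}_S}((\mathcal{D}_Y^{\sharp})_S)^{op}$ is Azumaya of rank $p^{2d}$ over $(T^{*}Y^{0})_S^{(1)}$. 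Isomorphism (\ref{equation: pull-back D-module}) again makes these compatible with restriction, so they glue to show $\mathcal{D}_Y|_{(T^{*}Y^{0})^{(1)}}$ is Azumaya of rank $p^{2d}$.

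The main obstacle is the third step: establishing that the locus where $(\mathcal{D}_Y^{\sharp})_S$ is locally free of the expected rank is precisely the preimage of $T^{*}Y^{0}$ — i.e.\ relating flatness of $D_S/D_S\mathcal{T}_{S/Y}$ over its center to smoothness of the (possibly singular) cotangent stack — while keeping every construction manifestly chart-independent so that the local Azumaya algebras glue. The stacky subtleties (the tangent complex of $Y$ carrying higher cohomology, so that the exact sequence $0\to\mathcal{T}_{S/Y}\to\mathcal{T}_S\to f_S^{*}\mathcal{T}_Y\to 0$ and the computation of the associated graded require care) are exactly where the hypothesis that $Y$ is good enters; for $Y=\on{Bun}_{n,P}$, which is only ``almost'' good, this analysis is carried out separately in Section \ref{subsection: definition of the algebra}.
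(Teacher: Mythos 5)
The paper provides no proof of its own here, deferring entirely to \cite{BB} Lemma 3.14 and \cite{Travkin} Proposition 2.7; your reconstruction (order filtration on $(D_Y^\sharp)_S$, associated graded identified with $\on{Fr}_*\mathcal{O}_{(T^*Y)_S}$ via Kunz's theorem on the smooth locus, Morita bookkeeping to get the Azumaya rank $p^{2\dim Y}$ of the endomorphism algebra) is precisely the argument from those references and is correct. One small slip: in the first paragraph the conclusion should read $\pi^{(1)}_{*}(\mathcal{D}_Y)_S\cong\on{Fr}_{*}(D_Y)_S$ rather than $\pi^{(1)}_{*}(\mathcal{D}_Y)_S\cong(D_Y)_S$, since $(D_Y)_S$ lives on $S$, not $S^{(1)}$.
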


\subsection{$\mathcal{D}$-modules, Azumaya algebras and $\mathbb{G}_m$-gerbes}\label{subsection: stack of splittings}
Let $k$ be an algebraically closed field. Let $\mathcal{B}$ be a $k$-scheme locally of finite type. Let $Y$ be a stack locally of finite type over $\mathcal{B}$. Let $\widetilde{Y}\longrightarrow Y$ be a $\mathbb{G}_m$-gerbe over $Y$. We denote by $\on{QCoh}(\widetilde{Y})$ the category of quasi-coherent sheaves on $\widetilde{Y}$. We say $\widetilde{Y}$ splits if there is an isomorphism $\widetilde{Y}\cong Y\times B\mathbb{G}_m$ of $\mathbb{G}_m$-gerbes. In this case, there is a decomposition 
\[
\on{QCoh}(\widetilde{Y})\cong \prod_{n\in \mathbb{Z}}\on{QCoh}(\widetilde{Y})_n
\]
given by the weight of the $\mathbb{G}_m$-action. If $\widetilde{Y}$ does not split, we still have such a decomposition by pulling back along the action map $a: B\mathbb{G}_m\times \widetilde{Y}\longrightarrow \widetilde{Y}$. We call $\on{QCoh}(\widetilde{Y})_1$ the category of twisted quasi-coherent sheaves associated to $\widetilde{Y}$.

Let $\mathcal{A}$ be an Azumaya algebra on $Y$. We associate with it a $\mathbb{G}_m$-gerbe $\widetilde{Y}_{\mathcal{A}}$ over $Y$, which is defined as follows. For $f: S\longrightarrow \mathcal{B}$ a map of schemes, $\widetilde{Y}_{\mathcal{A}}(S)$ classifies triples $(y, E, \sigma)$ where $y\in Y(S)$, $E$ is a vector bundle on $S$, and $\sigma: y^{*}\mathcal{A}\xrightarrow{\simeq} \mathcal{E}nd(E)$ is an isomorphism of algebras over $S$. We call $\widetilde{Y}_{\mathcal{A}}$ the stack of splittings of $\mathcal{A}$. We have the following lemma:

\begin{lemma}[cf. \cite{BB} Lemma 2.3 and \cite{Donagi-Pantev} Example 2.6]\label{lemma: gerbe}
    There is a canonical equivalence between the category $\mathcal{A}\on{-mod}$ of $\mathcal{A}$-modules on $Y$ and  $\on{QCoh}(\widetilde{Y}_{\mathcal{A}})_1$. 
\end{lemma}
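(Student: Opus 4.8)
The plan is to prove the equivalence $\mathcal{A}\textrm{-mod}\simeq\on{QCoh}(\widetilde{Y}_{\mathcal{A}})_1$ by descent, exhibiting both sides as objects glued from a smooth (or \'etale, or fppf) cover on which $\mathcal{A}$ splits. First I would recall that an Azumaya algebra is, \'etale-locally on $Y$, isomorphic to a matrix algebra $\mathcal{E}nd(\mathcal{O}^N)$; concretely, the stack of splittings $\widetilde{Y}_{\mathcal{A}}\longrightarrow Y$ is by construction a $\mathbb{G}_m$-gerbe, hence admits sections fppf-locally, and a section over $S\longrightarrow Y$ is exactly a splitting bundle $(E,\sigma)$ with $\sigma: \mathcal{A}|_S\xrightarrow{\simeq}\mathcal{E}nd(E)$. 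The key point is that over such a cover the two categories in question are both canonically equivalent to $\on{QCoh}$ of the cover, compatibly with the descent data, so the equivalence descends.

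The steps, in order, would be as follows. (1) Reduce to the local statement: choose an fppf cover $u: U\longrightarrow Y$ over which $\widetilde{Y}_{\mathcal{A}}$ has a section, i.e.\ a splitting bundle $(E_U,\sigma_U)$ of $\mathcal{A}|_U$. (2) Over $U$, use the classical Morita equivalence: $(E_U,\sigma_U)$ induces an equivalence $\on{QCoh}(U)\xrightarrow{\simeq}\mathcal{A}|_U\textrm{-mod}$, $\mathcal{F}\mapsto E_U\otimes_{\mathcal{O}_U}\mathcal{F}$, with quasi-inverse $\mathcal{M}\mapsto \mathcal{H}om_{\mathcal{A}|_U}(E_U,\mathcal{M})$. (3) Identify $\on{QCoh}(\widetilde{Y}_{\mathcal{A}})_1$ locally: pulling $\widetilde{Y}_{\mathcal{A}}$ back along $u$ trivializes the gerbe, $U\times_Y\widetilde{Y}_{\mathcal{A}}\cong U\times B\mathbb{G}_m$, and weight-one quasi-coherent sheaves on $U\times B\mathbb{G}_m$ are just $\on{QCoh}(U)$ (the splitting bundle $E_U$ furnishes the tautological weight-one object, following the Donagi--Pantev description). (4) Track descent data: an $\mathcal{A}$-module on $Y$ is an $\mathcal{A}|_U$-module on $U$ with descent datum, a weight-one sheaf on $\widetilde{Y}_{\mathcal{A}}$ is a weight-one sheaf on the trivialized cover with descent datum, and the isomorphism $p_1^*E_U\otimes(\textrm{transition cocycle})\cong p_2^*E_U$ on $U\times_Y U$ matches the two descent data under the local equivalences of (2)--(3). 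Conclude by fppf descent of categories (or by the standard fact that a $\mathbb{G}_m$-gerbe's weight-one sheaves form exactly the category of $\mathcal{A}$-twisted sheaves, which is $\mathcal{A}\textrm{-mod}$ when $\mathcal{A}$ is an honest sheaf of algebras on a scheme/stack, cf.\ the cited references).

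For the \emph{canonicity} of the equivalence I would note that the construction above does not depend on the choice of cover $U$ or of the section $(E_U,\sigma_U)$: the tautological weight-one object on $\widetilde{Y}_{\mathcal{A}}$ is the universal splitting bundle $\mathbb{E}$ carrying the universal isomorphism $\pi^*\mathcal{A}\xrightarrow{\simeq}\mathcal{E}nd(\mathbb{E})$ (where $\pi:\widetilde{Y}_{\mathcal{A}}\longrightarrow Y$), and pushing forward along $\pi$ sends a weight-one sheaf $\mathcal{G}$ to the $\mathcal{A}$-module $\pi_*\mathcal{H}om(\mathbb{E},\mathcal{G})$, with quasi-inverse $\mathcal{M}\mapsto \pi^*\mathcal{M}\otimes_{\pi^*\mathcal{A}}\mathbb{E}$; one checks these are mutually inverse by the projection formula and the fact that $\pi_*\mathcal{O}_{\widetilde{Y}_{\mathcal{A}}}=\mathcal{O}_Y$ in weight zero.

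The main obstacle I anticipate is purely bookkeeping rather than conceptual: namely verifying that the Morita equivalence of step (2) is compatible with the gerbe-theoretic trivialization of step (3) \emph{including the $\mathbb{G}_m$-weights and the cocycle}, so that the two a priori different descent data genuinely agree on double (and triple, for the cocycle condition) overlaps. Since the paper states this as a lemma citing \cite{BB} Lemma 2.3 and \cite{Donagi-Pantev} Example 2.6, I would keep the exposition brief, emphasizing that the universal splitting bundle $\mathbb{E}$ on $\widetilde{Y}_{\mathcal{A}}$ makes both functors manifestly canonical and reducing the remaining verification to the already-cited scheme-level statements, which apply verbatim to stacks locally of finite type since all constructions are fppf-local on the base $\mathcal{B}$.
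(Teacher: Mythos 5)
Your argument is correct and is the standard proof of this fact; the paper itself gives none, merely citing \cite{BB} and \cite{Donagi-Pantev}, and what you have written out (local trivialization of the gerbe, Morita equivalence over a splitting cover, descent of the equivalence, and the clean global reformulation via the universal splitting bundle $\mathbb{E}$ on $\widetilde{Y}_{\mathcal{A}}$ and the functors $\mathcal{M}\mapsto\pi^*\mathcal{M}\otimes_{\pi^*\mathcal{A}}\mathbb{E}$, $\mathcal{G}\mapsto\pi_*\mathcal{H}om(\mathbb{E},\mathcal{G})$) is exactly the content of those references. The one place to be careful is the left/right module convention: with $\mathbb{E}$ in weight $+1$, your formulas as written realize \emph{right} $\mathcal{A}$-modules as weight-one sheaves, so if one insists on left modules one should replace $\mathbb{E}$ by $\mathbb{E}^{\vee}$ (equivalently, use the opposite gerbe or weight $-1$); this is purely a sign convention and does not affect the substance of the argument.
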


Now let $Y$ be a smooth irreducible algebraic stack over $k$. A (crystalline) $\mathcal{D}$-module $\mathcal{M}$ on $Y$ is the datum of a $\mathcal{D}$-module $\mathcal{M}_S$ on $S$ for each object $(S, f_S)$ in $Y_{sm}$, and an isomorphism $\phi^{!}\mathcal{M}_{S'}\xrightarrow{\simeq} \mathcal{M}_S$ of $\mathcal{D}$-modules for each morphism $(\phi,\alpha)$, $\phi: S\longrightarrow S'$ in $Y_{sm}$. Here $\phi^{!}$ denotes the $\mathcal{O}$-module pull-back with the natural $\mathcal{D}$-module structure. Those isomorphisms need to satisfy the cocycle condition for compositions. When $k$ is of characteristic $p>0$, $\mathcal{D}$-modules on $Y$ correspond to twisted quasi-coherent sheaves associated to a certain $\mathbb{G}_m$-gerbe $\mathcal{G}_Y$ on $T^{*}Y^{(1)}$, which is defined as follows. For any smooth morphism $f_S: S\longrightarrow Y$, we associate with it a $\mathbb{G}_m$-gerbe $(\mathcal{G}_Y)_S$ on $(T^{*}Y)_S^{(1)}\coloneqq S^{(1)}\times _{Y^{(1)}} T^{*}Y^{(1)}$, which is defined to be the pull-back of the $\mathbb{G}_m$-gerbe of splittings of the Azumaya algebra $\mathcal{D}_S$ along $df_S^{(1)}: (T^{*}Y)_S^{(1)}\longrightarrow T^{*}S^{(1)}$. For any smooth morphism $\phi: S\longrightarrow S'$ over $Y$, we have an isomorphism $(\widetilde{\phi}^{(1)})^{*}(\mathcal{G}_Y)_{S'}\longrightarrow (\mathcal{G}_Y)_S$ since $df_S$ factorizes as
\[
    S\times_Y T^{*}Y=S\times_{S'}S'\times_{Y} T^{*}Y\xrightarrow{\on{Id}\times df_{S'}}S\times_{S'}T^{*}S'\xrightarrow{d\phi}T^{*}S,
\]
and the two Azumaya algebras $(d\phi^{(1)})^{*}\mathcal{D}_S$ and $p_2^{*}\mathcal{D}_{S'}$ are equivalent by Proposition \ref{Prop: pull back Azumaya}. It is shown in \cite{Travkin} that the category of $\mathcal{D}$-modules on $Y$ is equivalent to the category of twisted quasi-coherent sheaves associated to $\mathcal{G}_Y$, see Theorem 2.3 in \cite{Travkin}.

Now we assume $Y$ satisfies $\on{dim}T^{*}Y=2\on{dim}Y$, and denote by $T^{*}Y^0$ the maximal smooth open substack of $T^{*}Y$. Recall that in Section \ref{Section: differential operators on stacks}, we defined a coherent sheave of algebras $\mathcal{D}_Y$ on $T^{*}Y^{(1)}$, such that its restriction to $(T^{*}Y^0)^{(1)}$ is an Azumaya algebra of rank $p^{2\on{dim}Y}$. The $\mathbb{G}_m$-gerbe of splittings of this Azumaya algebra is isomorphic to the restriction of $\mathcal{G}_Y$ to $(T^{*}Y^0)^{(1)}$, see Proposition 2.7 in \cite{Travkin}. Therefore the category of $\mathcal{D}_Y|_{(T^{*}Y^0)^{(1)}}$-modules is a localization of the category of (crystalline) $\mathcal{D}$-modules on $Y$.

\subsection{Tensor structures on Azumaya algebras}\label{subsection: tensor structure}
Let $\mathcal{G}$ be a commutative group stack over $\mathcal{B}$, and $\mathcal{A}$ an Azumaya algebra over $\mathcal{G}$. We denote the multiplication on $\mathcal{G}$ by $\mu: \mathcal{G}\times \mathcal{G}\longrightarrow \mathcal{G}$. Following \cite{OV}, we define a tensor structure on $\mathcal{A}$ to be an equivalence of Azumaya algebras from $\mu^{*}\mathcal{A}$ to $\mathcal{A}\boxtimes \mathcal{A}$, which is a bimodule $\mathcal{M}$ that induces a Morita equivalence, together with an isomorphism
\[
    \mathcal{M}\boxtimes \mathcal{A}\otimes_{\mu^{*}\mathcal{A}\boxtimes\mathcal{A}}(\mu, p_3)^{*}\mathcal{M}\cong \mathcal{A}\boxtimes\mathcal{M}\otimes_{\mathcal{A}\boxtimes \mu^{*}\mathcal{A}}(p_1, \mu)^{*}\mathcal{M}
\]
of bimodules that satisfies the pentagon condition \cite{DM}(1.0.1). 

 A tensor structure on the Azumaya algebra $\mathcal{A}$ induces a group structure on the stack $\mathcal{Y}_{\mathcal{A}}$ of splittings of $\mathcal{A}$ as follows. Let $S$ be a $k$-scheme. An $S$-point of $\mathcal{Y}_{\mathcal{A}}$ is a pair $(a, E)$, where $a\in \mathcal{G}(S)$ and $E$ is a splitting module for $a^{*}\mathcal{A}$. Let $(a,E)$ and $(b,F)$ be two such pairs. The locally free sheaf $E\boxtimes F$ is a splitting module for $a^{*}\mathcal{A}\boxtimes b^{*}\mathcal{A}$. Applying the equivalence between $\mu^{*}\mathcal{A}$ and $\mathcal{A}\boxtimes \mathcal{A}$ and then pulling-back along the diagonal map $\Delta_S: S\longrightarrow S\times S$, we get a splitting module for $\mu(a,b)^{*}\mathcal{A}$. The construction of this group structure implies that the projection map $\mathcal{Y}_{\mathcal{A}}\longrightarrow \mathcal{G}$ is a group homomorphism, therefore we have a short exact sequence:
\[
    0\longrightarrow B\mathbb{G}_m\longrightarrow \mathcal{Y}_{\mathcal{A}}\longrightarrow \mathcal{G}
    \longrightarrow 0.
\]

\section{A non-abelian Hodge correspondence between $\mathcal{L}oc_{n,q}$ and $\mathcal{H}iggs_{n,q}$}\label{section:nah}
\subsection{Spectral data for flat connections with regular singularities}\label{section: hitchin morphism for flat connections}
Let $(E, \nabla)$ be a flat connection of rank $n$ on $X$ with regular singularity at $q$. We associate with it the $p$-curvature $\psi_{\nabla}$, which is a $\mathcal{O}_X$-linear map 
\[\psi_{\nabla} :\on{Fr}^{*}\mathcal{T}_{X^{(1)}}(-q)\longrightarrow \mathcal{E}nd(E).
\]
It is associated with the $p$-linear map 
\[
   \psi_{\nabla}: \mathcal{T}_X(-q)\longrightarrow \mathcal{E}nd(E)
\]
defined by $\psi_{\nabla}(\partial)=\nabla(\partial)^p-\nabla(\partial^{[p]})$ for any $\partial\in\mathcal{T}_X(-q)(U)$ and $U\subseteq X$ open. 
We can think of $\psi_{\nabla}$ as a twisted Higgs field
\[ \psi_{\nabla}: E\longrightarrow E\otimes \on{Fr}^{*}\Omega_{X^{(1)}}(q).
\]
The coefficients of its characteristic polynomial define a point $b$ of
\[\bigoplus_{i=1}^{n} \Gamma(X, (\on{Fr}^{*}\Omega_{X^{(1)}}(q))^{i}).
\]
Let $\on{Fr}^{*}:\bigoplus_{i=1}^{n}\Gamma(X^{(1)}, \Omega_{X^{(1)}}(q)^i)
\hookrightarrow \bigoplus_{i=1}^{n} \Gamma(X, (\on{Fr}^{*}\Omega_{X^{(1)}}(q))^{i})$ be the pull-back map. It follows from a similar argument as in \cite{LP} Proposition 3.2 that $b$ actually lies in the image of $\on{Fr}^{*}$, and we also denote by $b$ the corresponding point in  $B^{(1)}\cong \displaystyle\bigoplus_{i=1}^{n}\Gamma(X^{(1)}, \Omega_{X^{(1)}}(q)^i)$.  
We call this map 
$h': \mathcal{L}oc_{n,q}\longrightarrow B^{(1)}$ the Hitchin map for flat connections with regular singularity at $q$. The corresponding spectral curve $\Sigma_{b}'$ lies in the total space of $\on{Fr}^{*}\Omega_{X^{(1)}}(q)$, which is isomorphic to $X\times_{X^{(1)}}T^{*}X(q)^{(1)}$. Since $b\in B^{(1)}$, $\Sigma_b'$ fits into the following pull-back square:
\bd
\xymatrix{
\Sigma_{b}' \ar[r]^{\rho} \ar[d]^{\pi'} & \Sigma_{b}^{(1)} \ar[d]^{\pi^{(1)}} \\
X \ar[r]^{\on{Fr}} & X^{(1)}
}
\ed
where $\displaystyle\Sigma_b^{(1)}\subset T^{*}X(q)^{(1)}$ is the spectral curve above $b\in B^{(1)}$ as defined in Section \ref{subsection: basic constructions}. We denote by $E'\in\on{Coh}(\Sigma_b')$ the spectral sheaf corresponding to $\psi_{\nabla}$, so $E'$ satisfies $\pi'_{*}(E')\cong E$.

Let $x$ be a local parameter of $\mathcal{O}_{X,q}$. Let $(E,\nabla)$ be a flat connection with regular singularity at $q$. Restricting $\psi_{\nabla}(x\partial_x)$ to $q$, we get $\on{res}_q(\psi_{\nabla})\in \on{End}(E_q)$, which we call the residue of $\psi_{\nabla}$ at $q$. 

\begin{lemma}\label{lemma: residue of p-curvature}
	$\on{res}_q(\psi_{\nabla})=(\on{res}_q{\nabla})^p-\on{res}_q{\nabla}.$	
\end{lemma}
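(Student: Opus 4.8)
The plan is to reduce the claimed identity to a statement about a single operator, namely $\nabla(x\partial_x)$ acting on the restriction $E_q$, and then invoke the standard identity relating $p$-th powers in the enveloping algebra of a restricted Lie algebra to the $p$-operation. Recall that by definition $\psi_{\nabla}(\partial) = \nabla(\partial)^p - \nabla(\partial^{[p]})$ for $\partial \in \mathcal{T}_X(-q)(U)$, and that $\mathrm{res}_q(\psi_{\nabla})$ is obtained by taking $\partial = x\partial_x$ (a local generator of $\mathcal{T}_X(-q)$ near $q$) and restricting the resulting endomorphism of $E$ to the fiber at $q$. So the content of the lemma is that, for the vector field $\delta := x\partial_x$, the operator $\bigl(\nabla(\delta)^p - \nabla(\delta^{[p]})\bigr)\big|_q$ equals $\bigl(\mathrm{res}_q\nabla\bigr)^p - \mathrm{res}_q\nabla$.

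The key computation is the $p$-operation on the vector field $\delta = x\partial_x$ in characteristic $p$. A direct induction (or the classical formula $(x\partial_x)^{[p]} = x\partial_x$, which one checks by computing $(x\partial_x)^p$ acting on $\mathcal{O}_X$ near $q$: on the monomial $x^m$ one gets $m^p x^m = m x^m$ since $m^p \equiv m \pmod p$, so $(x\partial_x)^p$ acts the same way as $x\partial_x$ itself, forcing $\delta^{[p]} = \delta$) gives $\delta^{[p]} = \delta$. Therefore $\psi_{\nabla}(\delta) = \nabla(\delta)^p - \nabla(\delta)$ as an endomorphism of $E$. Now restrict to the fiber at $q$: the residue $\mathrm{res}_q\nabla$ is by definition the endomorphism of $E_q$ induced by $\nabla(\delta)$, and since restriction to $q$ is an algebra map on the relevant endomorphisms (the connection $\nabla(\delta)$ preserves $E$ and descends to $E_q$ because $\delta$ vanishes at $q$, so multiplication by functions vanishing at $q$ is respected), we get $\bigl(\nabla(\delta)^p - \nabla(\delta)\bigr)\big|_q = \bigl(\nabla(\delta)\big|_q\bigr)^p - \nabla(\delta)\big|_q = (\mathrm{res}_q\nabla)^p - \mathrm{res}_q\nabla$, which is the assertion.

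I expect the only delicate point to be justifying that ``restriction to $q$'' commutes with taking the $p$-th power of $\nabla(\delta)$ — i.e. that $\nabla(\delta)$ genuinely descends to a well-defined endomorphism of the fiber $E_q$ and that this descent is compatible with composition. This holds precisely because $\delta = x\partial_x$ has a regular singularity: $\nabla$ maps $E$ into $E\otimes\Omega_X(q)$, so $\nabla(\delta)$ is an honest $\mathcal{O}_X$-differential operator that stabilizes $E$ and, modulo the ideal sheaf of $q$, acts $\mathcal{O}_X/\mathfrak{m}_q$-linearly on $E_q = E/\mathfrak{m}_q E$ (the Leibniz-rule correction term $\nabla(\delta)(fs) - f\nabla(\delta)(s) = \delta(f)s$ vanishes at $q$ since $\delta(f) = x f' \in \mathfrak{m}_q$). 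Once this is in place, all three operators in the lemma are literally computed inside $\mathrm{End}_k(E_q)$ and the identity is immediate from $\delta^{[p]}=\delta$. I would also remark that this matches the ``multiplier'' behavior in the non-abelian Hodge correspondence: the residue eigenvalues $a$ of $\psi_\nabla$ and the residue eigenvalues $t$ of $\nabla$ are related by the Artin–Schreier map $t \mapsto t^p - t$, which is exactly what this lemma encodes on the level of the residue endomorphisms.
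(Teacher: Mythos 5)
Your proof is correct and follows exactly the route the paper intends: the paper's one-line proof simply cites the identity $(x\partial_x)^{[p]} = x\partial_x$, and you have fleshed out that same computation together with the (genuinely necessary, if routine) verification that $\nabla(x\partial_x)$ descends to a well-defined endomorphism of $E_q$ compatibly with composition, since $x\partial_x$ vanishes at $q$.
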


\begin{proof}
	This equation follows from the computation $(x\partial_x)^{[p]}=x\partial_x$.
\end{proof}

\begin{remark}\label{remark: residue of p-curvature}
	If we assume $\on{res}_q{\nabla}$ is nilpotent, since $p> n$, $(\on{res}_q{\nabla})^p=0$. So $\on{res}_q(\psi_{\nabla})=-\on{res}_q{\nabla}$. In particular, they lie in the same nilpotent orbit.
\end{remark}

\subsection{Statement of the theorem}\label{subsection: Statement of the theorem}
Let $\underline{a}$ be an unordered $n$-tuple of elements in $k$. We denote by $\mathcal{H}iggs_{n,\underline{a}}(X^{(1)})$ the moduli stack of $\Omega_{X^{(1)}}(q)$-twisted Higgs bundles $(E,\phi)$ on $X^{(1)}$ such that the unordered $n$-tuple of eigenvalues of $\on{res}_q(\phi)$ is $\underline{a}$. Let $B_{\underline{a}}^{(1)}$ be the image of $\mathcal{H}iggs_{n,\underline{a}}(X^{(1)})$ under the Hitchin map $h^{(1)}$. Note that when $\underline{a}=(0,0,\dots,0)$, $B_{\underline{a}}^{(1)}=B_{\mathcal{N}}^{(1)}$. We fix a set-theoretic section $\sigma$ of the Artin-Schreier map $k\longrightarrow k$ that maps $t$ to $t^p-t$. Let $\mathcal{L}oc_{n,\sigma(\underline{a})}$ be the substack of $\mathcal{L}oc_{n,q}$ that classifies flat connections $(E,\nabla)$ such that the unordered $n$-tuple of eigenvalues of $\on{res}_{q}({\nabla})$ is $\sigma(\underline{a})$. Note that by Lemma \ref{lemma: residue of p-curvature}, $h'(\mathcal{L}oc_{n,\sigma(\underline{a})})\subseteq B_{\underline{a}}^{(1)}$. 

We denote by $\mathcal{L}oc_{n, \sigma(\underline{a})}^r$ the substack of $\mathcal{L}oc_{n, \sigma(\underline{a})}$ that classifies flat connections $(E,\nabla)$ such that the corresponding spectral sheaf $E'\in \on{Coh}(\Sigma'_b)$ is invertible. We have the following theorem:
\begin{theorem} \label{thm: nah}
\mbox{}
	\begin{enumerate}
	\item $\mathcal{L}oc_{n, \sigma(\underline{a})}^r$ has a natural structure of a $\on{Pic}(\Sigma^{(1)}/B^{(1)}_{\underline{a}})$-torsor. 
	\item $\mathcal{L}oc_{n, \sigma(\underline{a})}\cong \mathcal{L}oc_{n, \sigma(\underline{a})}^r\times^{\on{Pic}(\Sigma^{(1)}/B^{(1)}_{\underline{a}})}\mathcal{H}iggs_{n,\underline{a}}(X^{(1)})$.
	\end{enumerate}
\end{theorem}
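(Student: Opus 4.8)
The plan is to build the $\on{Pic}(\Sigma^{(1)}/B^{(1)}_{\underline{a}})$-torsor structure on $\mathcal{L}oc^r_{n,\sigma(\underline{a})}$ via the spectral correspondence, and then deduce the second statement by ``spreading out'' the torsor to the full moduli stack using the fiberwise structure of the Hitchin map. For part (1), recall from Section \ref{section: hitchin morphism for flat connections} that a flat connection $(E,\nabla)$ with regular singularity and eigenvalues $\sigma(\underline{a})$ has $p$-curvature $\psi_\nabla$, a $\on{Fr}^*\Omega_{X^{(1)}}(q)$-twisted Higgs field, whose spectral sheaf $E'$ lives on the curve $\Sigma'_b = \Sigma^{(1)}_b \times_{X^{(1)}} X$; by definition $E'$ is invertible on $\mathcal{L}oc^r_{n,\sigma(\underline{a})}$. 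The group $\on{Pic}(\Sigma^{(1)}_b)$ acts by pulling back a line bundle along $\rho:\Sigma'_b \to \Sigma^{(1)}_b$ and tensoring with $E'$; since $\Sigma'_b\to \Sigma^{(1)}_b$ is (relative) Frobenius and $E'$ is a line bundle, this is well-defined and preserves the conditions (invertibility, residue eigenvalues). So I would first verify this is a free action and then show it is transitive and that torsors exist, i.e.\ that $h': \mathcal{L}oc^r_{n,\sigma(\underline{a})} \to B^{(1)}_{\underline{a}}$ is surjective with fibers exactly $\on{Pic}$-torsors.

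The surjectivity of $h'$ is the crux and, as the introduction flags, requires the Beauville--Laszlo gluing argument rather than a naive application of the Azumaya property. Concretely, given $b\in B^{(1)}_{\underline{a}}$, I would: (a) on $X\setminus q$, use the Azumaya splitting of crystalline differential operators (Theorem \ref{thm: Azumaya}, Corollary \ref{cor: 1-form}) applied to the tautological $1$-form on $T^*(X\setminus q)^{(1)}$ to produce a flat connection with prescribed $p$-curvature away from $q$; (b) on the formal disk $D=\on{Spec}\hat{\mathcal{O}}_{X,q}$, solve the ODE for the connection form explicitly so that the residue has eigenvalues $\sigma(\underline{a})$ and the $p$-curvature matches $b$ near $q$ — this is where the set-theoretic section $\sigma$ of the Artin--Schreier map and Lemma \ref{lemma: residue of p-curvature} enter, ensuring compatibility of residues; (c) glue the two connections over the punctured disk via the Beauville--Laszlo theorem \cite{BL95}. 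Then the resulting $(E,\nabla)$ has spectral sheaf an invertible sheaf (by construction on $D$ and by the Azumaya splitting away from $q$), hence lies in $\mathcal{L}oc^r_{n,\sigma(\underline{a})}$ and maps to $b$. Freeness and transitivity of the $\on{Pic}$-action on each fiber then follow from the spectral correspondence $\pi'_*: \on{Coh}(\Sigma'_b)\leftrightarrow$ (twisted Higgs sheaves), exactly as in \cite{BNR}: two invertible spectral sheaves on $\Sigma'_b$ differ by pullback of a unique line bundle on $\Sigma^{(1)}_b$ since $\rho$ is faithfully flat.

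For part (2), the idea is that $\mathcal{L}oc_{n,\sigma(\underline{a})}$ is glued from $\mathcal{L}oc^r_{n,\sigma(\underline{a})}$ by replacing, fiberwise over $B^{(1)}_{\underline{a}}$, the Picard stack $\on{Pic}(\Sigma^{(1)}_b)$ by the full compactified Picard stack of torsion-free rank-one sheaves, which is precisely $\mathcal{H}iggs_{n,\underline{a}}(X^{(1)})_b$ by \cite{BNR} (reduced spectral curves) together with the identification of the Hitchin fiber over $b$. More precisely: the spectral correspondence identifies a flat connection $(E,\nabla)\in\mathcal{L}oc_{n,\sigma(\underline{a})}$ over $b$ with a torsion-free rank-one sheaf $E'$ on $\Sigma'_b$; pushing forward along $\rho$ (or rather, using that $\Sigma'_b$ is the Frobenius pullback of $\Sigma^{(1)}_b$) gives a torsion-free rank-one sheaf on $\Sigma^{(1)}_b$, i.e.\ an object of $\mathcal{H}iggs_{n,\underline{a}}(X^{(1)})$ over $b$; and tensoring over the line-bundle torsor $\mathcal{L}oc^r$ recovers all of $\mathcal{L}oc$. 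So the contracted-product formula
\[
\mathcal{L}oc_{n,\sigma(\underline{a})} \cong \mathcal{L}oc^r_{n,\sigma(\underline{a})} \times^{\on{Pic}(\Sigma^{(1)}/B^{(1)}_{\underline{a}})} \mathcal{H}iggs_{n,\underline{a}}(X^{(1)})
\]
is a formal consequence once one has: the torsor structure from (1); the $\on{Pic}$-action on $\mathcal{H}iggs_{n,\underline{a}}(X^{(1)})$ (standard, by tensoring spectral sheaves); and the compatibility of the two spectral correspondences under $\rho$. I would make this last compatibility precise by base-changing everything to the Frobenius twist and checking that $\rho^*$ on $\on{Pic}$ intertwines the two actions — a diagram chase using the pullback square defining $\Sigma'_b$.

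The main obstacle is step (b)–(c) of the surjectivity argument: constructing the local model on the formal disk with exactly the right residue \emph{and} exactly the right $p$-curvature, and checking the gluing is compatible with the parabolic/eigenvalue conditions. Solving the connection-form ODE in characteristic $p$ is delicate because one must land in the fiber of $\sigma$ over the eigenvalue tuple $\underline{a}$ — not merely any preimage under Artin--Schreier — and Beauville--Laszlo requires the two local pieces to be genuinely isomorphic (not just abstractly) over the punctured disk, which forces a careful bookkeeping of trivializations. Everything downstream (the torsor axioms, part (2)) is then largely formal manipulation with the spectral correspondence of \cite{BNR}.
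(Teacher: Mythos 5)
Your high-level strategy matches the paper's: surjectivity of $h'$ via Azumaya splitting away from $q$, an explicit solution of the local connection-form equation on the formal disk, and Beauville--Laszlo gluing; then freeness/transitivity of the $\on{Pic}$-action; then the contracted-product description. That skeleton is right, and you correctly identify the formal-disk step and the bookkeeping over $D^\times$ as the delicate part of the surjectivity argument. But there is a genuine gap in how you justify the torsor structure, and it propagates into part (2).

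The problem is the sentence ``two invertible spectral sheaves on $\Sigma'_b$ differ by pullback of a unique line bundle on $\Sigma^{(1)}_b$ since $\rho$ is faithfully flat.'' Faithful flatness of $\rho$ gives you \emph{uniqueness} of a descent datum (and that a descended sheaf is invertible iff its pullback is), but it does not give \emph{existence}: $\on{Pic}(\Sigma'_b)$ is not generated by $\rho^*\on{Pic}(\Sigma^{(1)}_b)$, so the ratio $\mathcal{H}om_{\mathcal{O}_{\Sigma'_b}}(E',\mathcal{O}_{\Sigma'_b})\otimes F'$ has no reason to descend merely as an $\mathcal{O}$-module. What actually makes it descend is that $E'$ and $F'$ carry $D_X(-q)$-module structures (they are spectral sheaves of flat connections with the same $p$-curvature), so the ratio carries a canonical flat connection which is checked to have \emph{zero} $p$-curvature and nilpotent (hence zero) residue; Cartier descent (Theorem~\ref{thm: Cartier descent}) then produces the sheaf on $X^{(1)}$, and compatibility with the $\mathcal{O}_{\Sigma^{(1)}}$-action upgrades it to a sheaf on $\Sigma^{(1)}_b$. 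This $p$-curvature computation plus Cartier descent is the key lemma your proposal is missing; without it neither the simple transitivity in part (1) nor the existence of the inverse map $\Psi$ in part (2) goes through. Relatedly, you should also be explicit that the spectral sheaves here are not just $\mathcal{O}_{\Sigma'_b}$-modules but $D_X(-q)$-modules on $\Sigma'_b$ in the paper's sense (Lemma~\ref{lemma: properties of DMOC}), since that is the structure the descent argument uses.

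A secondary omission: you establish surjectivity and the fiberwise torsor property, but passing from ``each $k$-fiber is a $\on{Pic}$-torsor'' to ``$\mathcal{L}oc^r_{n,\sigma(\underline{a})}$ is a $\on{Pic}(\Sigma^{(1)}/B^{(1)}_{\underline{a}})$-torsor over $B^{(1)}_{\underline{a}}$'' requires smoothness (or at least flatness) of $h'$, which the paper obtains by a deformation-theoretic argument showing the obstruction space $\mathbb{H}^2$ vanishes, together with a dimension count and miracle flatness (Lemma~\ref{Lemma: torsor smooth}, Proposition~\ref{prop: torsor is smooth}). Your proposal should flag this step; it is not formal.
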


Before getting into the proof of Theorem \ref{thm: nah}, we state two corollaries. 
\begin{corollary}
	There exists an \'etale cover $U\longrightarrow B^{(1)}_{\underline{a}}$, such that 
	\[
	\mathcal{L}oc_{n, \sigma(\underline{a})}\times_{B^{(1)}_{\underline{a}}}U\cong \mathcal{H}iggs_{n,\underline{a}}(X^{(1)})\times_{B^{(1)}_{\underline{a}}}U.
	\]
\end{corollary}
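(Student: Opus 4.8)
The plan is to derive the corollary formally from Theorem~\ref{thm: nah}, the point being that a torsor under a \emph{smooth} commutative group stack becomes trivial after an \'etale base change. First I would note that $\on{Pic}(\Sigma^{(1)}/B^{(1)}_{\underline{a}})$ is smooth over $B^{(1)}_{\underline{a}}$: it is the relative Picard stack of the flat projective family of spectral curves $\Sigma^{(1)}\longrightarrow B^{(1)}_{\underline{a}}$, whose fibres are one-dimensional, so the deformation obstructions lie in $H^2$ of the structure sheaf of a curve and hence vanish --- this holds even though the fibres $\Sigma^{(1)}_b$ may be singular. By Theorem~\ref{thm: nah}(1) the map $h'\colon \mathcal{L}oc^r_{n,\sigma(\underline{a})}\longrightarrow B^{(1)}_{\underline{a}}$ is a torsor under this group stack, hence $\mathcal{L}oc^r_{n,\sigma(\underline{a})}$ is a smooth algebraic stack over $B^{(1)}_{\underline{a}}$ and in particular is smooth and surjective over $B^{(1)}_{\underline{a}}$. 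A smooth surjection admits a section \'etale-locally on the target, so there is an \'etale cover $U\longrightarrow B^{(1)}_{\underline{a}}$ for which $\mathcal{L}oc^r_{n,\sigma(\underline{a})}\times_{B^{(1)}_{\underline{a}}}U$ has a $U$-point; such a point trivializes the torsor, giving an isomorphism
\[
\mathcal{L}oc^r_{n,\sigma(\underline{a})}\times_{B^{(1)}_{\underline{a}}}U\;\cong\;\on{Pic}(\Sigma^{(1)}/B^{(1)}_{\underline{a}})\times_{B^{(1)}_{\underline{a}}}U
\]
of torsors over $U$.

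Next I would base-change the isomorphism of Theorem~\ref{thm: nah}(2) along $U\longrightarrow B^{(1)}_{\underline{a}}$. Writing $G=\on{Pic}(\Sigma^{(1)}/B^{(1)}_{\underline{a}})$ and $G_U=G\times_{B^{(1)}_{\underline{a}}}U$, and using that the contracted product $\times^{G}$ commutes with base change on $B^{(1)}_{\underline{a}}$, we obtain
\[
\mathcal{L}oc_{n,\sigma(\underline{a})}\times_{B^{(1)}_{\underline{a}}}U\;\cong\;\Bigl(\mathcal{L}oc^r_{n,\sigma(\underline{a})}\times_{B^{(1)}_{\underline{a}}}U\Bigr)\times^{G_U}\Bigl(\mathcal{H}iggs_{n,\underline{a}}(X^{(1)})\times_{B^{(1)}_{\underline{a}}}U\Bigr).
\]
By the trivialization above, the first factor is the trivial $G_U$-torsor, and the contracted product of the trivial torsor with a $G_U$-space is canonically that space; hence the right-hand side is isomorphic to $\mathcal{H}iggs_{n,\underline{a}}(X^{(1)})\times_{B^{(1)}_{\underline{a}}}U$, which is the desired conclusion.

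I do not expect a serious obstacle here: once Theorem~\ref{thm: nah} is available the corollary is essentially a formality. The only points needing mild care are that $\on{Pic}(\Sigma^{(1)}/B^{(1)}_{\underline{a}})$ is genuinely smooth as a group stack --- so that $\mathcal{L}oc^r_{n,\sigma(\underline{a})}$ is \'etale-locally, not merely fppf-locally, trivial --- and the standard compatibility of contracted products with base change. The substantive input, namely the surjectivity of $h'\colon\mathcal{L}oc^r_{n,\sigma(\underline{a})}\longrightarrow B^{(1)}_{\underline{a}}$ (proved using the Azumaya property of differential operators away from $q$ together with Beauville--Laszlo gluing at $q$), has already been absorbed into Theorem~\ref{thm: nah}.
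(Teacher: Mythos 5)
Your argument is correct and is the natural formal deduction from Theorem~\ref{thm: nah}; the paper states this corollary without proof, and the intended reasoning is precisely what you give (smoothness of $\on{Pic}(\Sigma^{(1)}/B^{(1)}_{\underline{a}})$, hence smoothness and \'etale-local triviality of the $\on{Pic}$-torsor $\mathcal{L}oc^r_{n,\sigma(\underline{a})}$, then base-change the contracted-product description). Note that the smoothness of $h'\colon\mathcal{L}oc^r_{n,\sigma(\underline{a})}\to B^{(1)}_{\underline{a}}$ is also proved directly in the paper as Corollary~\ref{cor: torsor is smooth}, so your re-derivation of it from smoothness of the Picard stack is consistent with, if slightly redundant to, what the paper already establishes.
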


We denote by $\mathcal{H}iggs_{\mathcal{N}}(X^{(1)})$ the moduli stack of $\Omega_{X^{(1)}}(q)$-twisted Higgs bundles $(E,\phi)$ on $X^{(1)}$ such that $\on{res}_q(\phi)$ is nilpotent, and by $\mathcal{L}oc_{\mathcal{N}}$ the substack of $\mathcal{L}oc_{n,q}$ that classifies $(E,\nabla)$ with nilpotent $\on{res}_q(\nabla)$. Then we have:
\begin{corollary}\label{cor:nah}
   \mbox{}
	\begin{enumerate}
	\item $\mathcal{L}oc_{\mathcal{N}}^r$ has a natural structure of a $\on{Pic}(\Sigma^{(1)}/B_{\mathcal{N}}^{(1)})$-torsor,
	\item $\mathcal{L}oc_{\mathcal{N}}\cong\mathcal{L}oc_{\mathcal{N}}^r\times^{\on{Pic}(\Sigma^{(1)}/B_{\mathcal{N}}^{(1)})}\mathcal{H}iggs_{\mathcal{N}}(X^{(1)})$.
	\end{enumerate}
\end{corollary}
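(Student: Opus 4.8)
The plan is to deduce Corollary~\ref{cor:nah} directly from Theorem~\ref{thm: nah} by specializing to the tuple $\underline{a}=(0,0,\dots,0)$. The one preliminary point is to arrange that the set-theoretic section $\sigma$ of the Artin--Schreier map satisfies $\sigma(0)=0$. This is always possible: since $\sigma(0)^p-\sigma(0)=0$, the value $\sigma(0)$ lies in $\mathbb{F}_p\subset k$, so we may simply choose $\sigma$ with $\sigma(0)=0$. (Alternatively, if one insists on a previously fixed $\sigma$ with $\sigma(0)=c\in\mathbb{F}_p^{\times}$, then tensoring every flat connection with the rank-one connection $(\mathcal{O}_X,\,d+c\tfrac{dx}{x})$ gives an isomorphism $\mathcal{L}oc_{n,\sigma(\underline{0})}\xrightarrow{\simeq}\mathcal{L}oc_{\mathcal{N}}$; using $(x\partial_x)^{[p]}=x\partial_x$ and $c^p=c$ one checks that this rank-one connection has vanishing $p$-curvature, so the isomorphism commutes with $h'$ and preserves the spectral sheaf, hence all the structures in Theorem~\ref{thm: nah}.)

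Granting $\sigma(\underline{0})=(0,\dots,0)$, the key step is to identify the objects appearing in Theorem~\ref{thm: nah} for this value of $\underline{a}$ with those in the statement of the corollary. An endomorphism of a finite-dimensional $k$-vector space whose unordered tuple of eigenvalues is $(0,\dots,0)$ is exactly a nilpotent endomorphism; unwinding the definitions therefore gives
\[
\mathcal{L}oc_{n,\sigma(\underline{0})}=\mathcal{L}oc_{\mathcal{N}},\qquad \mathcal{H}iggs_{n,\underline{0}}(X^{(1)})=\mathcal{H}iggs_{\mathcal{N}}(X^{(1)}),
\]
together with $B^{(1)}_{\underline{0}}=B^{(1)}_{\mathcal{N}}$ (already recorded above) and, by the defining condition that the spectral sheaf be invertible, $\mathcal{L}oc_{n,\sigma(\underline{0})}^{r}=\mathcal{L}oc_{\mathcal{N}}^{r}$. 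Substituting these identifications into Theorem~\ref{thm: nah}(1) and~(2) yields precisely statements~(1) and~(2) of Corollary~\ref{cor:nah}.

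I do not expect any genuine obstacle here beyond bookkeeping; the corollary is a formal specialization of Theorem~\ref{thm: nah}. The only point that warrants a moment's care is the compatibility of the eigenvalue conventions on the two sides of the non-abelian Hodge correspondence with the ``nilpotent residue'' conditions. This is exactly the content of Lemma~\ref{lemma: residue of p-curvature} and Remark~\ref{remark: residue of p-curvature}: when $\on{res}_q(\nabla)$ is nilpotent, $\on{res}_q(\psi_\nabla)=-\on{res}_q(\nabla)$ (using $p>n$), so a flat connection in $\mathcal{L}oc_{\mathcal{N}}$ indeed has $p$-curvature lying over $B^{(1)}_{\mathcal{N}}$ and its spectral data is governed by $\on{Pic}(\Sigma^{(1)}/B^{(1)}_{\mathcal{N}})$, consistently with the target $\mathcal{H}iggs_{\mathcal{N}}(X^{(1)})$. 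Once this is noted, the corollary follows immediately.
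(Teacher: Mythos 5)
Your main argument is exactly the paper's (implicit) one: Corollary \ref{cor:nah} is the specialization of Theorem \ref{thm: nah} to $\underline{a}=(0,\dots,0)$, and you are right that this requires fixing the Artin--Schreier section so that $\sigma(0)=0$, which is harmless since $\sigma(0)\in\mathbb{F}_p$ in any case. One caution, however, about your parenthetical ``tensoring'' alternative: a rank-one flat connection on $\mathcal{O}_X$ with regular singularity only at $q$ is of the form $d+\omega$ with $\omega\in\Gamma(X,\Omega_X(q))$, and by the residue theorem $\on{res}_q(\omega)=0$; so there is no such connection with $\on{res}_q=c\neq 0$ on the trivial line bundle, and to make that device work you would have to tensor with a flat line bundle $(L,\nabla_L)$ of degree chosen so that its residue at $q$ is $c$ (this still has vanishing $p$-curvature for $c\in\mathbb{F}_p$ and preserves all the spectral data, so the idea can be repaired, but not with $\mathcal{O}_X$). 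Since your primary argument --- simply normalizing $\sigma$ so that $\sigma(0)=0$ --- suffices, this side remark does not affect the correctness of the proof.
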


Our definition of $\mathcal{L}oc^{r}_{n, \sigma(\underline{a})}$ and formulation of Theorem \ref{thm: nah} is motivated by the work of Chen-Zhu \cite{CZ15} on the characteristic $p$ version of the non-abelian Hodge correspondence for flat connections without singularities. The strategy of proof is similar to \cite{CZ15} besides the proof of the surjectivity result Proposition \ref{prop: surjective on k-points}. The rest of this section is devoted to the proof of Theorem \ref{thm: nah}. We start by showing: 

\begin{prop}\label{prop: surjective on k-points}
	The map $h':\mathcal{L}oc^{r}_{n, \sigma(\underline{a})}\longrightarrow B^{(1)}_{\underline{a}}$ is surjective.
\end{prop}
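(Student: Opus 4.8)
The plan is to prove surjectivity on $k$-points by producing, for each $b \in B^{(1)}_{\underline{a}}(k)$, a flat connection $(E,\nabla)$ with regular singularity at $q$ whose $p$-curvature has characteristic polynomial $b$ and whose spectral sheaf on $\Sigma'_b$ is invertible. The obstruction to working globally is the regular singular point $q$: away from $q$ one has the usual Azumaya property of $D_{X \setminus q}$ (equivalently Corollary \ref{cor: 1-form}), but this is unavailable on a neighbourhood of $q$. So I would construct the connection separately on $X \setminus q$ and on the formal disk $D = \on{Spec}\hat{\mathcal{O}}_{X,q}$, and then glue the two via the Beauville--Laszlo theorem \cite{BL95}.

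First, over $X \setminus q$: the spectral curve $\Sigma'_b$ restricted to $X \setminus q$ lies in $T^{*}(X\setminus q)^{(1)}$ and (for $b$ chosen appropriately, or after shrinking) is smooth; pick an invertible sheaf on it — say its structure sheaf or any line bundle — and use the splitting of the Azumaya algebra $(\theta^{(1)})^{*}\mathcal{D}_{T^*(X\setminus q)} \simeq \mathcal{D}_{X\setminus q}$ from Corollary \ref{cor: 1-form}(1), as in \cite{BB}, to obtain a flat connection $(E_0, \nabla_0)$ on $X \setminus q$ with the prescribed $p$-curvature. Second, over $D$: here I would solve explicitly. Trivialize $E$ on $D$ and write $\nabla = d + A\frac{dx}{x}$ for an unknown matrix $A \in \hat{\mathcal{O}}_{X,q}\text{-valued } \mathfrak{gl}_n$; the $p$-curvature condition $\psi_\nabla(x\partial_x) = A^p - A^{[p]}$-type equation (using $(x\partial_x)^{[p]} = x\partial_x$ as in Lemma \ref{lemma: residue of p-curvature}) becomes an ODE for the connection form whose leading term must be diagonalizable with eigenvalues $\sigma(\underline{a})$. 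One solves this order by order in $x$; the key input is $p > n$, which ensures the relevant linear operators (differences of eigenvalues, or $\on{ad}$ of the residue) are invertible on the appropriate weight spaces, so the recursion has a unique solution. This produces $(E_1, \nabla_1)$ on $D$ with regular singularity, residue eigenvalues $\sigma(\underline{a})$, and $p$-curvature matching $b$.

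Third, the gluing: on the punctured disk $D^{\times}$ both $(E_0,\nabla_0)|_{D^\times}$ and $(E_1,\nabla_1)|_{D^\times}$ are flat connections with the same $p$-curvature; by Cartier descent (Theorem \ref{thm: Cartier descent}) restricted to $D^\times$, two connections with equal $p$-curvature differ by the pullback of a connection with zero $p$-curvature on $(D^\times)^{(1)}$, i.e.\ by a $D_{X^{(1)}}$-module datum — and over a disk one can arrange (adjusting the choice of line bundle on the global spectral curve $\Sigma^{(1)}_b$, which is flexible since $\on{Pic}$ of a curve minus a point is large) that the two agree on $D^\times$ up to isomorphism. Then Beauville--Laszlo glues $E_0$ and $E_1$ into a bundle $E$ on $X$ and $\nabla_0, \nabla_1$ into a connection $\nabla$ with regular singularity at $q$; by construction $h'(E,\nabla) = b$. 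Finally, since $\Sigma'_b \to X$ is flat and the spectral sheaf $E'$ is, by the local analysis at $q$ and smoothness elsewhere, torsion-free of rank one on the (possibly singular) curve $\Sigma'_b$ — and one checks it is actually invertible, e.g.\ by arranging the gluing line bundle to be locally free at the points over $q$ using the explicit local solution — the point $(E,\nabla)$ lies in $\mathcal{L}oc^{r}_{n,\sigma(\underline{a})}$.

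\textbf{Main obstacle.} I expect the hard part to be the formal-disk construction together with the compatibility of the gluing: one must solve the connection-form ODE explicitly with prescribed $p$-curvature \emph{and} prescribed residue eigenvalues, and then match it on $D^\times$ with the Azumaya-produced connection from the complement in a way that keeps the resulting spectral sheaf invertible (rather than merely torsion-free) at the points of $\Sigma'_b$ lying over $q$. Controlling the local behaviour of the spectral sheaf through the gluing — i.e.\ tracking how the Beauville--Laszlo descent datum interacts with the spectral correspondence over $\hat{\mathcal{O}}_{\Sigma'_b, q'}$ — is where the real work lies; the global piece over $X \setminus q$ is essentially \cite{BB}, and the surjectivity statement then follows since every $b$ is hit.
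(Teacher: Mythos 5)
Your overall strategy coincides with the paper's: produce $(E_0,\nabla_0)$ on $X\setminus q$ via the Azumaya property, produce a connection on the formal disk $D=\on{Spec}\hat{\mathcal{O}}_{X,q}$ by an explicit local computation, and glue along $D^{\times}$ by Beauville--Laszlo. That architecture is correct, and the complement step and the Beauville--Laszlo gluing are, modulo some imprecision, the right moves (the splitting on $X\setminus q$ is supplied by Tsen's theorem, Theorem~\ref{thm: Tsen}, applied to the spectral curve restricted to $X\setminus q$, which has dimension $\leq 1$; invertibility of the resulting spectral sheaf is automatic for any rank-$p$ splitting module by the regular-nilpotent argument of \cite{BMR} Lemma 2.2.1, not by ``choosing an invertible sheaf first'').

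The genuine gap is in the formal-disk step. Your claim that $p>n$ makes ``differences of eigenvalues, or $\on{ad}$ of the residue'' invertible is not true for general $\sigma(\underline{a})$: the tuple may have repeated entries (so the residue is not semisimple and $\on{ad}$ is not invertible on the nilpotent directions), and even when the $\sigma(a_i)$ are distinct their differences are arbitrary elements of $k$ — the bound $p>n$ gives no control. A matrix-level Fuchsian recursion therefore breaks down exactly at the nontrivial Jordan structure. The paper sidesteps this entirely by moving to the spectral curve: since $b\in B^{(1)}_{\underline{a}}$, the formal spectral curve $\hat{\Sigma}'=D\times_X\Sigma'$ decomposes as the disjoint union of $\hat{\Sigma}'_i$ indexed by the \emph{distinct} values $a_i$, the required spectral sheaf on each $\hat{\Sigma}'_i$ is necessarily $\mathcal{O}_{\hat{\Sigma}'_i}$, and the connection is a single scalar $g\in\mathcal{O}_{\hat{\Sigma}'_i}[x^{-1}]$. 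Writing $g=-(y-a_i-\sigma(a_i))/x+g_1$ reduces the residue and $p$-curvature constraints to the scalar equation $\partial_x^{p-1}(g_1)+g_1^p=(y-a_i)^p/x^p$, solvable in $k[[x,y]]$ with no restriction on $p$. Relatedly, in the gluing step the identification over $D^\times$ is not achieved by ``adjusting the line bundle on $\Sigma^{(1)}_b$'': both restrictions are splitting modules for the Azumaya algebra $\mathcal{D}_X|_{\hat{\Sigma}^\times}$, hence differ by a line bundle on $\hat{\Sigma}^{\times(1)}$, and since every line bundle on this semi-local scheme is trivial they are isomorphic outright.
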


We need to show that for any $b\in B^{(1)}_{\underline{a}}(k)$, there exists $(E,\nabla)\in\mathcal{L}oc_{n, \sigma(\underline{a})}^r(k)$ that is mapped to $b$ under the Hitchin map. The idea of constructing $(E,\nabla)$ is as follows: we construct a flat connection $(E_{0},\nabla_{0})$ on $X\backslash q$ and a flat connection $(\hat{E},\hat{\nabla})$ on the formal disk around $q$, such that both flat connections have the correct $p$-curvature. Then we glue $(E_{0},\nabla_{0})$ and $(\hat{E},\hat{\nabla})$ together using the Beauville-Laszlo theorem \cite{BL95}. 

\subsection{Proof of Proposition \ref{prop: surjective on k-points}} Let $b\in B^{(1)}_{\underline{a}}(k)$. Let $\pi': \Sigma'\longrightarrow X$ and $\pi^{(1)}:\Sigma^{(1)}\longrightarrow X^{(1)}$ be the corresponding spectral covers as described in Section \ref{section: hitchin morphism for flat connections}. We will construct $(E,\nabla)$ such that $h'(E,\nabla)=b$ and the spectral sheaf $E'$ is invertible. 

	\emph{Step 1.} In this step, we show that there exists a flat connection $(E_{0},\nabla_{0})$ on  $X\backslash q$ such that the spectral curve of $\psi_{\nabla_{0}}$ is $\Sigma'\backslash (\pi')^{-1}(q)$ and the spectral sheaf $E_{0}'$ is invertible. Such a $(E_{0},\nabla_{0})$ is equivalent to a splitting of the Azumaya algebra $i_0^{*}\mathcal{D}_{X\backslash q}$, where $i_0$ is the embedding $i_0: \Sigma^{(1)}\backslash (\pi^{(1)})^{-1}(q)\longrightarrow T^{*}(X\backslash q)^{(1)}$. Note that for any rank $p$ vector bundle $F$ on $\Sigma^{(1)}\backslash (\pi^{(1)})^{-1}(q)$ such that $\mathcal{E}nd(F)\cong i_0^{*}\mathcal{D}_{X\backslash q}$, the corresponding spectral sheaf on $\Sigma'\backslash (\pi')^{-1}(q)$ is invertible. This is because for any $p \in X\backslash q$ and $x$ a local parameter at $p$, $x$ acts as a regular nilpotent matrix on the fiber $F_{p'}$ for any $p'\in \Sigma^{(1)}\backslash (\pi^{(1)})^{-1}(q)$ such that $\pi^{(1)}(p')=p$, see the proof of Lemma 2.2.1 in \cite{BMR}. The existence of such a splitting is guaranteed by the following theorem. 
	
\begin{theorem}[cf. \cite{Grochenig} Theorem 3.21]\label{thm: Tsen}
	Let $Y$ be a scheme of finite type over an algebraically closed field. Assume $\on{dim}(Y)\leq 1$. Then $H^{2}_{et}(Y, \mathbb{G}_m)=0$. In particular, every Azumaya algebra on $Y$ splits. 
\end{theorem}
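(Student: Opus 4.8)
The plan is to reduce the vanishing of $H^{2}_{et}(Y,\mathbb{G}_m)$ to the case of a smooth integral curve over the algebraically closed base field, and then to invoke Tsen's theorem. Throughout I will use two standard facts: $Y$, being of finite type over a field, has finite normalization; and for a quasi-coherent sheaf $\mathcal{F}$ on $Y$ one has $H^{q}_{et}(Y,\mathcal{F})=H^{q}_{\mathrm{Zar}}(Y,\mathcal{F})=0$ for $q\geq 2$ by Grothendieck vanishing, since $\dim Y\leq 1$.

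The first reduction is to the case $Y$ reduced. The nilradical $\mathcal{N}\subset\mathcal{O}_Y$ is nilpotent, and $1+\mathcal{N}\subset\mathbb{G}_{m,Y}$ carries a finite filtration by the subsheaves $1+\mathcal{N}^{i}$ whose graded pieces are, via $1+f\mapsto f$, the quasi-coherent sheaves $\mathcal{N}^{i}/\mathcal{N}^{i+1}$; hence $H^{q}_{et}(Y,1+\mathcal{N})=0$ for $q\geq 2$. As $Y$ and $Y_{\mathrm{red}}$ have the same small étale site, the exact sequence $1\to 1+\mathcal{N}\to\mathbb{G}_{m,Y}\to\mathbb{G}_{m,Y_{\mathrm{red}}}\to 1$ gives $H^{2}_{et}(Y,\mathbb{G}_m)\xrightarrow{\sim}H^{2}_{et}(Y_{\mathrm{red}},\mathbb{G}_m)$. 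The second reduction is to the normalization $\nu\colon\widetilde{Y}\to Y$, which is finite, an isomorphism over the open dense normal locus, with $\widetilde{Y}$ a disjoint union of smooth integral curves and finitely many copies of $\on{Spec}(k)$. The cokernel $\mathcal{Q}$ of $\mathbb{G}_{m,Y}\hookrightarrow\nu_{*}\mathbb{G}_{m,\widetilde{Y}}$ is supported on the finite non-normal locus, whose points have residue field $k$, so $H^{q}_{et}(Y,\mathcal{Q})=0$ for $q\geq 1$; and since $\nu$ is finite, hence acyclic, $H^{q}_{et}(Y,\nu_{*}\mathbb{G}_m)=H^{q}_{et}(\widetilde{Y},\mathbb{G}_m)$. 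The long exact sequence of $1\to\mathbb{G}_{m,Y}\to\nu_{*}\mathbb{G}_{m,\widetilde{Y}}\to\mathcal{Q}\to 1$ then gives an injection $H^{2}_{et}(Y,\mathbb{G}_m)\hookrightarrow H^{2}_{et}(\widetilde{Y},\mathbb{G}_m)$.

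It remains to show $H^{2}_{et}(C,\mathbb{G}_m)=0$ for a smooth integral curve $C$ over $k$; the $\on{Spec}(k)$ components contribute nothing as $k$ is algebraically closed. For the regular integral scheme $C$ the restriction map $H^{2}_{et}(C,\mathbb{G}_m)\to H^{2}_{et}(\on{Spec}(K),\mathbb{G}_m)=\on{Br}(K)$ to the function field $K=k(C)$ is injective (this is Grothendieck's theorem, coming from $R^{1}j_{*}\mathbb{G}_m=0$ for $j\colon\on{Spec}(K)\hookrightarrow C$ together with the exact sequence relating $\mathbb{G}_{m,C}$, $j_{*}\mathbb{G}_{m,K}$ and the divisor sheaf). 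Since $K$ has transcendence degree one over the algebraically closed field $k$, it is a $C_1$-field, so $\on{Br}(K)=0$ by Tsen's theorem. Hence $H^{2}_{et}(C,\mathbb{G}_m)=0$, and with the two reductions above, $H^{2}_{et}(Y,\mathbb{G}_m)=0$.

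For the final statement, the class of any Azumaya algebra $\mathcal{A}$ on $Y$ lies in $\on{Br}(Y)\subseteq H^{2}_{et}(Y,\mathbb{G}_m)=0$, so $\mathcal{A}\otimes_{\mathcal{O}_Y}\mathcal{E}nd(E_1)\cong\mathcal{E}nd(E_2)$ for vector bundles $E_1,E_2$. Restricting the $\mathcal{E}nd(E_2)$-module $E_2$ along $\mathcal{E}nd(E_1)\hookrightarrow\mathcal{A}\otimes\mathcal{E}nd(E_1)$ and applying Morita equivalence for $\mathcal{E}nd(E_1)$ writes $E_2\cong E_1\otimes_{\mathcal{O}_Y}G$ for a vector bundle $G$ carrying a residual $\mathcal{A}$-action; the induced map $\mathcal{A}\to\mathcal{E}nd(G)$ is a morphism of Azumaya algebras of equal rank, fibrewise injective since central simple algebras are simple, hence an isomorphism, so $\mathcal{A}$ has a splitting module. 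The substantive inputs here are Grothendieck's injectivity theorem for regular schemes and Tsen's theorem; everything else is formal dévissage, so I expect the write-up to be short, with the curve case being the only place that needs care.
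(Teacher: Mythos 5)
Your proof is correct, and it follows the same standard dévissage strategy that Gr\"ochenig uses for this result: kill the nilpotents via the filtration of $1+\mathcal{N}$, pass to the normalization (noting that the cokernel is a skyscraper sheaf at closed points with residue field $k$ and hence acyclic), and conclude on each smooth integral curve by Grothendieck's injection $H^{2}_{et}(C,\mathbb{G}_m)\hookrightarrow\operatorname{Br}(k(C))$ together with Tsen's theorem. The only place you do slightly more work than necessary is the final paragraph: once the class of $\mathcal{A}$ in $H^{2}_{et}(Y,\mathbb{G}_m)$ vanishes, the $\mathrm{PGL}_n$-torsor underlying $\mathcal{A}$ lifts to a $\mathrm{GL}_n$-torsor $E$, giving $\mathcal{A}\cong\mathcal{E}nd(E)$ directly without the Morita rearrangement — but your argument is not wrong, just longer.
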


	\emph{Step 2.} In this step, we construct a flat connection $(\hat{E},\hat{\nabla})$ on the formal disk $D=\on{Spec}(\hat{\mathcal{O}}_{X,q})$ around $q$ that satisfies the following three properties:
	\begin{enumerate}[(5.1)]
	\item $(\hat{E},\hat{\nabla})$ has regular singularity and the unordered $n$-tuple of eigenvalues of $\on{res}({\hat{\nabla}})$ is $\sigma(\underline{a})$,
	\item the spectral curve of $\psi_{\hat{\nabla}}$ is $\hat{\Sigma}'\coloneqq D \times _X \Sigma'$,
	\item the spectral sheaf $\hat{E}'$ is invertible.
	\end{enumerate}
	
Now let $x$ be a local parameter of $X$ at $q$, then $D\cong \on{Spec}(k[[x]])$. We denote $\iota(x\partial_x)=x^p\partial_x^p$ by $y$, so 
\[\mathcal{O}_{\hat{\Sigma}'}\cong k[[x]][y]/(f), \text{ where }f=y^n+b_{1}(x)y^{n-1}+\dots + b_{n-1}(x)y+b_n(x), b_i(x)\in k[[x]].
\]
Since $b\in B^{(1)}_{\underline{a}}$, $b_i(x)$ actually lies in $k[[x^p]]$. We assume that $\underline{a}$ consists of $t$ distinct elements $a_1, a_2,\dots,a_t$, each appearing $m_i$ times, then $\bar{f}\in k[[x^p]][y]/x^pk[[x^p]][y]$ factorizes as
\[
    \bar{f}=\prod_{i=1}^{t}(y-a_i)^{m_i},
\]
therefore $f$ factorizes as $f=f_1f_2\cdots f_t$, where $f_i\in k[[x^p]][y]$ is monic and $\bar{f_i}=(y-a_i)^{m_i}$. Therefore $\hat{\Sigma}'$ is the disjoint union of $\hat{\Sigma}'_i\coloneqq\on{Spec}k[[x]][y]/(f_i)$. It is enough to construct flat connections $(\hat{E}_i, \hat{\nabla}_i)$ with the following properties:
\begin{enumerate}[(5.1')]
    \item \label{surj: first}$(\hat{E}_i,\hat{\nabla}_i)$ has regular singularity and the eigenvalues of $\on{res}(\psi_{\hat{\nabla}_i})$ are all $\sigma(a_i)$.
	\item \label{surj: second}the spectral curve of $\psi_{\hat{\nabla}_i}$ is $\hat{\Sigma}'_i$,
	\item \label{surj: third}the spectral sheaf $\hat{E}'_i$ is invertible. 
\end{enumerate}

Since $\mathcal{O}_{\hat{\Sigma}'_i}$ is a local ring, (5.\ref{surj: third}') implies that $\hat{E}'_i$ is isomorphic to $\mathcal{O}_{\hat{\Sigma}'_i}$. Let $e$ be its generator. A meromorphic flat connection with spectral curve $\hat{\Sigma}'_i$ is determined by the connection acting on $e$, which can be written as $\nabla(e)=ge dx$, $g\in \mathcal{O}_{\hat{\Sigma}'_i}[x^{-1}]$. By (5.\ref{surj: first}') and (5.\ref{surj: second}'), $\nabla$ need to satisfy the following:
	\begin{enumerate}[(5.1'')]
		\item \label{local construction: first}$(\nabla(x\partial_x)-\sigma(a_i))(e)\subseteq (x, y-a_i)e$,
		\item \label{local construction: second}$(\nabla(x\partial_x)^p-\nabla((x\partial_x)^{[p]}))(e)=ye$.
	\end{enumerate}
Since $(\nabla(x\partial_x)^p-\nabla((x\partial_x)^{[p]}))(e)=x^p(\partial_x^{p-1}(g)+g^p)e$, (5.\ref{local construction: second}'') is equivalent to the following equation in $\mathcal{O}_{\hat{\Sigma}'_i}$:
\[x^p(\partial_x^{p-1}(g)+g^p)=y.
\]
We look for solutions of the form 
\[g=-(y-a_i-\sigma(a_i))/x+g_1, g_1 \in k[[x]][y], 
\]
so (5.\ref{local construction: first}'') is automatically satisfied, and (5.\ref{local construction: second}'') is equivalent to 
\begin{equation}\label{eq: local formula existence of regular connection}
    \partial_x^{p-1}(g_1)+g_1^p=(y-a_i)^p/x^p.
\end{equation}
Note that since ${f_i}\equiv (y-a_i)^{m_i}$ mod $x^pk[[x^p]][y]$, there exists a polynomial $h\in k[[x^p]][y]$ such that $(y-a_i)^p/x^p=\bar{h}$ in $\mathcal{O}_{\hat{\Sigma}'_i}$. By a substitution $y'=y-a_i$, we can assume that ${f_i}\equiv y^{m_i}$ mod $x^pk[[x^p]][y]$. In this case, $\mathcal{O}_{\hat{\Sigma}'_i}\cong k[[x,y]]/(f_i)$, therefore it is enough to find a solution $g_1\in k[[x,y]]$.
It is easy to see that for any $h\in k[[x^p]][y]$, the equation $\partial_x^{p-1}(g_1)+g_1^p=h$ has solutions in $k[[x,y]]$. We look for solutions of the form $g_1=x^{p-1} \cdot g_2$, where $g_2\in k[[x^p, y]]$. Equation \ref{eq: local formula existence of regular connection} becomes
\[
g_2^p\cdot x^{p^2-p}-g_2=h,
\]
for which the existence of solutions follows from Hensel's lemma. 

	\emph{Step 3.}
		Let $D^{\times}=\on{Spec}(k((x)))$ be the punctured disk around $q$, and let $\hat{\Sigma}^{\times}=D^{\times}\times _D {\hat{\Sigma}}$ be the spectral curve above $D^{\times}$. Both $(E_0|_{D^{\times}},\nabla_0|_{D^{\times}})$ and $(\hat{E}|_{D^{\times}}, \hat{\nabla}|_{D^{\times}})$ give splittings of the Azumaya algebra $\mathcal{D}_X|_{\hat{\Sigma}^{\times}}$. Since all invertible sheaves on $\hat{\Sigma}^{\times}$ are trivial, we have an isomorphism of connections \[(E_0|_{D^{\times}},\nabla_0|_{D^{\times}})\cong (\hat{E}|_{D^{\times}}, \hat{\nabla}|_{D^{\times}}).
		\]
		We fix such an isomorphism. By the theorem of Beauville-Laszlo \cite{BL95}, $E_0$ and $\hat{E}$ can be glued together to get a rank $n$ vector bundle $E$ on $X$. Since the gluing data is compatible with the connections, $\nabla_0$ and $\hat{\nabla}$ are glued together to get a flat connection $\nabla$ on $E$ with regular singularity at $q$. This connection $(E,\nabla)$ satisfies all the properties we need.

\subsection{$D_X$-modules on spectral covers of $X$}
Let $b\in B^{(1)}$, and let $\pi': \Sigma'\longrightarrow X$ and $\pi^{(1)}: \Sigma^{(1)}\longrightarrow X^{(1)}$ be the corresponding spectral covers as described in Section \ref{section: hitchin morphism for flat connections}. We have the following pull-back square:
\bd
\xymatrix{
\Sigma' \ar[r]^{\rho} \ar[d]^{\pi'} & \Sigma^{(1)} \ar[d]^{\pi^{(1)}} \\
X \ar[r]^{\on{Fr}} & X^{(1)}
}
\ed
There is a canonical $D_X$-action on $\mathcal{O}_{\Sigma'}=\mathcal{O}_X\otimes_{\mathcal{O}_{X^{(1)}}} \mathcal{O}_{\Sigma^{(1)}}$, which comes from the canonical action of $D_X$ on $\mathcal{O}_X$. Similarly, for any quasi-coherent sheaf $\mathcal{M}$ on $\Sigma^{(1)}$, the pull-back sheaf $\rho^{*}\mathcal{M}\cong \mathcal{O}_X\otimes_{\mathcal{O}_{X^{(1)}}} {\mathcal{M}}$ has a canonical $D_X$-action. We denote by $\nabla_{can}$ the corresponding map
\[\nabla_{can}:\rho^{*}\mathcal{M}\longrightarrow \rho^{*}\mathcal{M}\otimes_{\mathcal{O}_X}\Omega_X.
\] 
 
\begin{definition}
	We define a $D_X$-module on $\Sigma'$ to be a quasi-coherent sheaf $\mathcal{F}$ on $\Sigma'$ together with a $k$-linear map
	\[\nabla:\mathcal{F}\longrightarrow \mathcal{F}\otimes_{\mathcal{O}_X}\Omega_X
	\] 
	that satisfies $\nabla(fs)=\nabla_{can}(f)(s)+f\nabla(s)$ for $f\in \mathcal{O}_{U}$, $s\in \mathcal{F}(U)$ and $U\subseteq \Sigma'$ open. Let $D_X(-q)$ be the subsheaf of algebras of $D_X$ generated by $\mathcal{O}_X$ and $\mathcal{T}_X(-q)$. Similarly we define $D_X(-q)$-modules on $\Sigma'$. The only difference is that now $\nabla$ is a map
	\[\nabla:\mathcal{F}\longrightarrow \mathcal{F}\otimes_{\mathcal{O}_X}\Omega_X(q).
	\] 
\end{definition}
We have the following lemma concerning this definition.
\begin{lemma}\label{lemma: properties of DMOC}
\mbox{}
	\begin{enumerate}
		\item The structure sheaf $\mathcal{O}_{\Sigma'}$ is a $D_X$-module on $\Sigma'$. For any quasi-coherent sheaf $\mathcal{M}$ on $\Sigma^{(1)}$, the pull-back $\rho^{*}\mathcal{M}$ is a $D_X$-module on $\Sigma'$.
		\item Let $(E,\nabla)$ be a flat connection with regular singularity at $q$ such that $h'(E,\nabla)=b$. Let $E'\in\on{Coh}(\Sigma')$ be the corresponding spectral sheaf. Then $(E', \nabla)$ is a $D_X(-q)$-module on $\Sigma'$.
		\item Let $(\mathcal{F}_1,\nabla_1)$ and $(\mathcal{F}_2,\nabla_2)$ be two $D_X(-q)$-modules on $\Sigma'$, then $\mathcal{F}_1\otimes_{\mathcal{O}_{\Sigma'}}\mathcal{F}_2$ and $\mathcal{H}om_{\mathcal{O}_{\Sigma'}}(\mathcal{F}_1, \mathcal{F}_2)$ have canonical structures of $D_X(-q)$-modules on $\Sigma'$. 
	\end{enumerate}
\end{lemma}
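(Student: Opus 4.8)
All three claims are formal, and the plan is to reduce each of them to checking a Leibniz identity on a set of $\mathcal{O}_X$-algebra generators of $\mathcal{O}_{\Sigma'}$. Since $\pi'\colon\Sigma'\to X$ is affine, a quasi-coherent sheaf on $\Sigma'$ together with a $k$-linear map to its $\Omega_X$-twist is the same datum pushed forward to $X$, so the only content of the definition is the compatibility with $\nabla_{can}$ on $\mathcal{O}_{\Sigma'}$. Two facts will be used throughout: $\mathcal{O}_{\Sigma'}$ is a quotient of $\on{Sym}_{\mathcal{O}_X}(\on{Fr}^{*}\mathcal{T}_{X^{(1)}}(-q))$, hence is generated as an $\mathcal{O}_X$-algebra by its degree-one piece $\on{Fr}^{*}\mathcal{T}_{X^{(1)}}(-q)$; and $\mathcal{O}_{\Sigma'}=\mathcal{O}_X\otimes_{\mathcal{O}_{X^{(1)}}}\mathcal{O}_{\Sigma^{(1)}}$ with $\nabla_{can}=d\otimes\on{id}$, which is multiplicative. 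The latter already yields (1): $\nabla_{can}$ on $\mathcal{O}_{\Sigma'}$ satisfies the Leibniz rule by monoidality of Frobenius pullback, and for $\mathcal{M}\in\on{QCoh}(\Sigma^{(1)})$ the canonical connection on $\rho^{*}\mathcal{M}=\mathcal{O}_X\otimes_{\mathcal{O}_{X^{(1)}}}\mathcal{M}$ satisfies $\nabla_{can}(fm)=\nabla_{can}(f)m+f\nabla_{can}(m)$ for $f\in\mathcal{O}_{\Sigma'}$ by the same computation; this is a piece of Cartier descent (Theorem \ref{thm: Cartier descent}).

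For (2): write $E=\pi'_{*}E'$; by construction $E'$ is the sheaf $E$ equipped with the $\mathcal{O}_{\Sigma'}$-action in which a local section $v$ of $\on{Fr}^{*}\mathcal{T}_{X^{(1)}}(-q)$ acts through the $p$-curvature $\psi_{\nabla}\colon\on{Fr}^{*}\mathcal{T}_{X^{(1)}}(-q)\to\mathcal{E}nd(E)$. I take $\nabla'$ on $E'$ to be $\nabla$ on the underlying sheaf. The Leibniz rule over $\mathcal{O}_X\subseteq\mathcal{O}_{\Sigma'}$ is the Leibniz rule for $\nabla$, so it suffices to verify $\nabla(v\cdot s)=\nabla_{can}(v)\cdot s+v\cdot\nabla(s)$. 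Working locally I may take $v=\on{Fr}^{*}(v^{(1)})$ pulled back from $X^{(1)}$, so that $\nabla_{can}(v)=0$, and the identity becomes $\nabla\circ\psi_{\nabla}(v)=\psi_{\nabla}(v)\circ\nabla$; the general case $v=g\,\on{Fr}^{*}(v^{(1)})$ with $g\in\mathcal{O}_X$ then follows from the Leibniz rule for $\nabla$. Thus everything comes down to the assertion that $\psi_{\nabla}$ carries $\nabla_{can}$-horizontal sections to $\nabla$-flat endomorphisms of $E$, which is the classical horizontality of the $p$-curvature with respect to the connection induced by $\nabla$ on $\mathcal{E}nd(E)\otimes\on{Fr}^{*}\Omega_{X^{(1)}}(q)$ (cf. \cite{Katz}). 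This is the one non-formal ingredient of the lemma; the pole at $q$ causes no difficulty since both sides lie in $E'\otimes_{\mathcal{O}_X}\Omega_X(q)$, so $(E',\nabla)$ is indeed a $D_X(-q)$-module on $\Sigma'$.

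For (3): I put $\nabla(s_1\otimes s_2)=\nabla_1(s_1)\otimes s_2+s_1\otimes\nabla_2(s_2)$ on $\mathcal{F}_1\otimes_{\mathcal{O}_{\Sigma'}}\mathcal{F}_2$ and $(\nabla\varphi)(s)=\nabla_2(\varphi(s))-(\varphi\otimes\on{id})(\nabla_1(s))$ on $\mathcal{H}om_{\mathcal{O}_{\Sigma'}}(\mathcal{F}_1,\mathcal{F}_2)$. A direct computation with the Leibniz rules for $\nabla_1$ and $\nabla_2$ over $\mathcal{O}_{\Sigma'}$ shows that the first formula respects the relations defining $\otimes_{\mathcal{O}_{\Sigma'}}$, that $\nabla\varphi$ is again $\mathcal{O}_{\Sigma'}$-linear, and that both operators satisfy the Leibniz rule relative to $\nabla_{can}$; the only subtlety is that the twist $\Omega_X(q)$ is pulled back from $X$, so $\mathcal{O}_{\Sigma'}$-scalars may be moved freely across it. These are the standard formulas making connections into a symmetric monoidal structure, now in the relative setting over $X$, and require no new input.
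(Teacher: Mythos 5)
The paper states Lemma \ref{lemma: properties of DMOC} without any proof, treating the verification as routine, so there is nothing to compare your argument against in the text itself. Judged on its own, your proof is correct, and you have isolated the right points. The reduction to checking the Leibniz identity on $\mathcal{O}_X$-algebra generators of $\mathcal{O}_{\Sigma'}$ (the degree-one piece $\on{Fr}^{*}\mathcal{T}_{X^{(1)}}(-q)$ of the symmetric algebra) is the natural mechanism; part (1) really is an instance of Cartier descent; and part (3) only needs the observation that the twisting sheaf $\Omega_X(q)$ is pulled back from $X$, so that $\mathcal{O}_{\Sigma'}$-scalars slide past it freely. The one genuinely non-formal ingredient is the identity in part (2), namely that for $v^{(1)}\in\mathcal{T}_{X^{(1)}}(-q)$ the endomorphism $\psi_{\nabla}(\on{Fr}^{*}v^{(1)})$ commutes with $\nabla(\partial)$ for every $\partial\in\mathcal{T}_X(-q)$; you correctly identify this as horizontality of the $p$-curvature and cite \cite{Katz}. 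One remark worth recording: the classical horizontality statement is formulated for nonsingular connections, but its proof is a local algebraic identity in the image of the Lie-algebra map $\nabla\colon\mathcal{T}_X\to\mathcal{E}nd_k(E)$; since $\mathcal{T}_X(-q)$ is a restricted Lie subalgebra (closed under $[\,\cdot\,,\,\cdot\,]$ and $\partial\mapsto\partial^{[p]}$) and $\nabla$ here is defined exactly on it, the same computation applies verbatim, which is what makes your phrase ``the pole at $q$ causes no difficulty'' legitimate rather than a hand-wave. Alternatively one can invoke the smooth case on $X\setminus q$ and extend by continuity, since both sides are sections of a coherent sheaf. Either way the argument closes.
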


In all of the cases above, we denote by $\nabla_{can}$ the corresponding map induced by the action of $\mathcal{T}_X(-q)$.

Let $(\mathcal{F},\nabla)$ be a $D_X(-q)$-module on $\Sigma'$ such that $\pi'_{*}(\mathcal{F})$ is locally free, then $\pi'_{*}(\mathcal{F})$ has the structure of a flat connection with regular singularity at $q$. 

\subsection{Proof of Theorem \ref{thm: nah}}
Now we construct the map $\Phi$ that induces the isomorphism in Theorem \ref{thm: nah}. Let $b\in B^{(1)}_{\underline{a}}$. Let $(E,\nabla_E)\in\mathcal{L}oc_{n, \sigma(\underline{a})}^r$ and $(M,\phi)\in \mathcal{H}iggs_{n,\underline{a}}(X^{(1)})$, both mapped to $b$ under the Hitchin map. We denote the spectral sheaf of $(E,\nabla_E)$ by $E'\in \on{Coh}(\Sigma')$ and the spectral sheaf of $(M,\phi)$ by $\mathcal{M}\in \on{Coh}(\Sigma^{(1)})$. 
\begin{lemma}\label{lemma, locally free is locally free}
    Let $\mathcal{G}\in\on{Coh}(\Sigma^{(1)})$ and let $\mathcal{L}$ be an invertible sheaf on $\Sigma'$. The push-forward $\pi'_{*}(\mathcal{L}\otimes\rho^{*}(\mathcal{G}))$ is a locally free sheaf of rank $n$ on $X$ if and only if $\pi^{(1)}_{*}(\mathcal{G})$ is a locally free sheaf of rank $n$ on $X^{(1)}$. 
\end{lemma}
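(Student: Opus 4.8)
The plan is to reduce the whole statement to flat base change along the cartesian square
$$\Sigma'\xrightarrow{\ \rho\ }\Sigma^{(1)},\quad \pi'\colon\Sigma'\to X,\quad \pi^{(1)}\colon\Sigma^{(1)}\to X^{(1)},\qquad \on{Fr}\colon X\to X^{(1)},$$
using that $\Sigma'\cong X\times_{X^{(1)}}\Sigma^{(1)}$, and to treat the assertion ``locally free'' separately from the assertion ``of rank $n$''. Here $\pi'$ and $\pi^{(1)}$ are finite (they are spectral covers of degree $n$), and $\on{Fr}$, $\rho$ are finite and faithfully flat of degree $p$.

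First I would record two elementary facts. (i) For a finite morphism $f\colon Z\to X$ onto the regular integral curve $X$ and a coherent sheaf $\mathcal{H}$ on $Z$, the pushforward $f_*\mathcal{H}$ is locally free over $\mathcal{O}_X$ if and only if $\mathcal{H}$ is flat over $X$: a finitely generated module over a Dedekind scheme is locally free iff torsion-free iff flat, and $f$ being affine, flatness of $\mathcal{H}$ over $X$ is equivalent to flatness of $f_*\mathcal{H}$ over $\mathcal{O}_X$. (ii) Since $X$ is smooth of dimension $1$ over the perfect field $k$, the relative Frobenius $\on{Fr}\colon X\to X^{(1)}$ is finite and faithfully flat; hence for a coherent sheaf $\mathcal{N}$ on $X^{(1)}$, $\on{Fr}^{*}\mathcal{N}$ is locally free over $\mathcal{O}_X$ if and only if $\mathcal{N}$ is locally free over $\mathcal{O}_{X^{(1)}}$, by faithfully flat descent of local freeness.

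Next I would prove the local-freeness equivalence. Because $\mathcal{L}$ is invertible on $\Sigma'$, flatness of $\mathcal{L}\otimes\rho^{*}\mathcal{G}$ over $X$ is a Zariski-local question on $\Sigma'$ where $\mathcal{L}$ trivializes, so it is equivalent to flatness of $\rho^{*}\mathcal{G}$ over $X$, i.e.\ by (i) to local freeness of $\pi'_{*}\rho^{*}\mathcal{G}$ over $\mathcal{O}_X$. By flat base change along the cartesian square ($\on{Fr}$ flat, $\pi^{(1)}$ finite hence quasi-compact quasi-separated, $\rho^{*}$ exact) one has $\pi'_{*}\rho^{*}\mathcal{G}\cong\on{Fr}^{*}\pi^{(1)}_{*}\mathcal{G}$, and by (ii) this is locally free over $\mathcal{O}_X$ iff $\pi^{(1)}_{*}\mathcal{G}$ is locally free over $\mathcal{O}_{X^{(1)}}$. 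Applying (i) once more to $\pi'$ and $\mathcal{H}=\mathcal{L}\otimes\rho^{*}\mathcal{G}$ gives: $\pi'_{*}(\mathcal{L}\otimes\rho^{*}\mathcal{G})$ is locally free $\iff$ $\pi^{(1)}_{*}\mathcal{G}$ is locally free. To transport the rank $n$, pass to the generic point $\eta\in X$ with image $\eta^{(1)}\in X^{(1)}$; then $\Sigma'_\eta\cong\Sigma^{(1)}_{\eta^{(1)}}\otimes_{k(X^{(1)})}k(X)$ is the spectrum of a finite $k(X)$-algebra, and $k(X)/k(X^{(1)})$ is purely inseparable of degree $p$, so base change gives $\dim_{k(X)}(\rho^{*}\mathcal{G})_\eta=\dim_{k(X^{(1)})}(\pi^{(1)}_{*}\mathcal{G})_{\eta^{(1)}}$; tensoring with $\mathcal{L}$ does not change the $k(X)$-dimension of the stalk at $\eta$, since on each local Artinian factor of $\mathcal{O}_{\Sigma'_\eta}$ the sheaf $\mathcal{L}$ is free of rank one. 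Hence $\on{rank}_X\pi'_{*}(\mathcal{L}\otimes\rho^{*}\mathcal{G})=\on{rank}_{X^{(1)}}\pi^{(1)}_{*}\mathcal{G}$ unconditionally, and the lemma follows.

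The step I expect to need the most care is this last rank comparison: one must make sure the possibly non-reduced and possibly reducible generic fibre $\Sigma'_\eta$ is handled correctly and that tensoring with $\mathcal{L}$ genuinely preserves lengths on every local factor. Everything else is a formal consequence of flat base change and faithfully flat descent. (Alternatively, one could run the entire argument in terms of flatness over $X$ throughout and recover the rank count from a single Euler-characteristic computation on one fibre, but the generic-point bookkeeping above seems the most direct.)
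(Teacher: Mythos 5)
The paper states this lemma without proof, so there is no argument of the author's to compare against; your proposed proof is correct and is essentially the argument one would expect. The chain of equivalences you set up — local freeness over a Dedekind base being equivalent to flatness for a finitely generated module, flatness over $X$ being local on $\Sigma'$ (so tensoring by the invertible $\mathcal{L}$ is harmless), and flat base change along the cartesian square $\Sigma' = X \times_{X^{(1)}} \Sigma^{(1)}$ followed by faithfully flat descent along $\on{Fr}$ — is the natural reduction, and each step is correctly justified. Your flag on the rank comparison is well placed, and your resolution is sound: the generic fibre $\Sigma'_\eta$ is the spectrum of a finite Artinian $k(X)$-algebra, hence a finite product of local Artinian rings; an invertible module over each local factor is free of rank one, so $\mathcal{L}\otimes\rho^*\mathcal{G}$ and $\rho^*\mathcal{G}$ have the same $k(X)$-length at $\eta$, while the flat degree-$p$ base change $k(X^{(1)})\hookrightarrow k(X)$ identifies $\operatorname{rank}_{X}\pi'_*\rho^*\mathcal{G}$ with $\operatorname{rank}_{X^{(1)}}\pi^{(1)}_*\mathcal{G}$. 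One could add that the invertibility hypothesis on $\mathcal{L}$ is genuinely needed for this last step — a torsion-free rank-one module over a non-reduced or non-normal Artinian factor of $\mathcal{O}_{\Sigma'_\eta}$ need not be free — which is consistent with the lemma being applied in the paper to the spectral sheaf $E'$ of a point of $\mathcal{L}oc^r$, where invertibility is part of the definition.
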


By Lemma \ref{lemma: properties of DMOC} and Lemma \ref{lemma, locally free is locally free},  we get a flat connection $(\pi'_{*}(E'\otimes \rho^{*}(\mathcal{M})),\nabla_{can})$ on $X$ with regular singularity at $q$. 

\begin{lemma}\label{lemma: still map to b}
\mbox{}
\begin{enumerate}
    \item 
	The flat connection $(\pi'_{*}(E'\otimes \rho^{*}(\mathcal{M})),\nabla_{can})$ is mapped to $b$ under the Hitchin map $h'$,
	\item The residue $\on{res}_q(\nabla_{can})$ has eigenvalues $\sigma(\underline{a})$.
	\end{enumerate}
\end{lemma}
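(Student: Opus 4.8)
The plan is to pin down the $p$-curvature of $(\pi'_{*}(E'\otimes\rho^{*}(\mathcal{M})),\nabla_{can})$ first, which yields (1) almost immediately, and then to analyze it on the formal disk around $q$ to get (2). For (1), I would first record that $p$-curvature is additive under tensor products of $D_X(-q)$-modules on $\Sigma'$: for $(\mathcal{F}_1,\nabla_1)$ and $(\mathcal{F}_2,\nabla_2)$ the operators $\nabla_1(\partial)\otimes 1$ and $1\otimes\nabla_2(\partial)$ commute, so in characteristic $p$ one gets $\psi_{\nabla_1\otimes\nabla_2}(\partial)=\psi_{\nabla_1}(\partial)\otimes 1+1\otimes\psi_{\nabla_2}(\partial)$. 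By the definition of the spectral sheaf, $\psi_{\nabla_{E'}}$ is multiplication by the tautological section $y$, while $\psi_{\nabla_{can}}$ on $\rho^{*}(\mathcal{M})$ vanishes because $\rho^{*}(\mathcal{M})$ is pulled back from $X^{(1)}$ (Lemma \ref{lemma: properties of DMOC}(1) together with Cartier descent, Theorem \ref{thm: Cartier descent}). Hence $\psi_{\nabla_{can}}$ on $E'\otimes\rho^{*}(\mathcal{M})$ is again multiplication by $y$, so the spectral sheaf of the twisted Higgs field $\psi_{\nabla_{can}}$ on $\pi'_{*}(E'\otimes\rho^{*}(\mathcal{M}))$ is $E'\otimes\rho^{*}(\mathcal{M})$ itself, supported on $\Sigma'$. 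Since $E'$ is invertible and $\pi'_{*}(E'\otimes\rho^{*}(\mathcal{M}))$ has rank $n$ by Lemma \ref{lemma, locally free is locally free}, this sheaf is generically of rank one on $\Sigma'$, so the characteristic polynomial of $\psi_{\nabla_{can}}$ is exactly the defining equation of $\Sigma'$; thus $h'(\pi'_{*}(E'\otimes\rho^{*}(\mathcal{M})),\nabla_{can})=b$.

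For the residue in (2) I would work over the formal disk $D=\on{Spec}\hat{\mathcal{O}}_{X,q}$. Writing $\underline{a}=\{a_1,\dots,a_t\}$ with multiplicities $m_1,\dots,m_t$, the hypothesis $b\in B^{(1)}_{\underline{a}}$ gives, exactly as in Step 2 of the proof of Proposition \ref{prop: surjective on k-points}, a decomposition $\hat{\Sigma}'\coloneqq D\times_X\Sigma'=\coprod_{i=1}^{t}\hat{\Sigma}'_i$ with $\mathcal{O}_{\hat{\Sigma}'_i}\cong k[[x]][y]/(f_i)$ and $f_i\equiv(y-a_i)^{m_i}\bmod x$. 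The idempotents of $\mathcal{O}_{\hat{\Sigma}'}$ are annihilated by $\mathcal{T}_X(-q)$, so $\nabla_{can}$ respects this decomposition and it suffices to treat one block. On $\hat{\Sigma}'_i$ the invertible sheaf $E'$ has a generator $e_i$; write $\nabla_{E'}(x\partial_x)(e_i)=g_i e_i$ with $g_i\in\mathcal{O}_{\hat{\Sigma}'_i}$. Transporting $\nabla_{can}$ along $E'|_{\hat{\Sigma}'_i}\otimes\rho^{*}(\mathcal{M})|_{\hat{\Sigma}'_i}\xrightarrow{\simeq}\rho^{*}(\mathcal{M})|_{\hat{\Sigma}'_i}$, and using that the canonical connection on $\rho^{*}(\mathcal{M})|_{\hat{\Sigma}'_i}$ has zero residue at $q$ (again because it is pulled back from $X^{(1)}$, so $x\partial_x$ acts through $k[[x]]$ and reduces to $0$ mod $x$), one sees that $\on{res}_q(\nabla_{can})$ on the $i$-th summand of $\pi'_{*}(E'\otimes\rho^{*}(\mathcal{M}))$ is multiplication by $\bar{g}_i\coloneqq g_i\bmod x\in k[y]/(y-a_i)^{m_i}$; as $y-a_i$ is nilpotent there, its only eigenvalue is the scalar $c_i\coloneqq g_i(0,a_i)$.

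It remains to identify $c_i$ and the multiplicity of the $i$-th summand. Running the same computation with $\mathcal{M}$ replaced by $\mathcal{O}_{\Sigma^{(1)}}$ shows that $\on{res}_q(\nabla_E)$ on the $i$-th summand of $E=\pi'_{*}(E')$ is again multiplication by $\bar{g}_i$; since the eigenvalues of $\on{res}_q(\nabla_E)$ form the tuple $\sigma(\underline{a})$, this forces $c_i\in\{\sigma(a_1),\dots,\sigma(a_t)\}$. On the other hand $\psi_{\nabla_E}$ on that summand is multiplication by $y$, whose reduction mod $x$ is $\bar{g}_i^{\,p}-\bar{g}_i$; comparing scalar parts with the scalar part $a_i$ of $y\bmod x$, and using $p>n$ (so the nilpotent part of $\bar{g}_i$ dies after raising to the $p$-th power), gives $c_i^{\,p}-c_i=a_i$ (cf. Lemma \ref{lemma: residue of p-curvature}). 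Since $\sigma$ is a section of the Artin--Schreier map and hence injective, these two conditions force $c_i=\sigma(a_i)$. Finally, because $E'$ is invertible, the $i$-th summand of $\pi'_{*}(E'\otimes\rho^{*}(\mathcal{M}))$ has the same fibre dimension at $q$ as $\pi'_{*}(\rho^{*}(\mathcal{M})|_{\hat{\Sigma}'_i})\cong\on{Fr}^{*}(\pi^{(1)}_{*}(\mathcal{M}|_{\hat{\Sigma}^{(1)}_i}))$, where $\hat{\Sigma}^{(1)}_i$ is the component of $D^{(1)}\times_{X^{(1)}}\Sigma^{(1)}$ whose Frobenius pullback is $\hat{\Sigma}'_i$; the latter has fibre dimension $m_i$ because on this block $\on{res}_q(\phi)$ has $a_i$ as its only eigenvalue while its total multiplicity in $\on{res}_q(\phi)$ is $m_i$ by $(M,\phi)\in\mathcal{H}iggs_{n,\underline{a}}(X^{(1)})$. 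Summing over $i$, $\on{res}_q(\nabla_{can})$ has eigenvalue $\sigma(a_i)$ with multiplicity $m_i$, i.e. its eigenvalue tuple is $\sigma(\underline{a})$.

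I expect the delicate step to be the identification $c_i=\sigma(a_i)$: the $p$-curvature alone only determines $c_i$ up to an element of $\mathbb{F}_p$, so one genuinely must compare $\on{res}_q(\nabla_{can})$ with $\on{res}_q(\nabla_E)$ through the common connection form $g_i$, and the hypothesis $p>n$ is essential to kill nilpotent contributions to the $p$-curvature residue. The companion point — that the canonical connection on $\rho^{*}(\mathcal{M})$ contributes nothing to the residue — is the other place that needs care, and is where Cartier descent enters; everything else is bookkeeping with the block decomposition over the formal disk.
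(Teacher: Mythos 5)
The paper states this lemma without proof, so there is no argument in the text to compare against; I'll assess your proposal on its own merits.

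Your argument is correct, and the overall strategy (additivity of $p$-curvature under tensor products of $D_X(-q)$-modules on $\Sigma'$ for part (1); block decomposition over the formal disk and a residue computation via a trivialization of $E'$ for part (2)) is the natural one. A few points worth flagging. For part (1), the additivity $\psi_{\nabla_1\otimes\nabla_2}=\psi_{\nabla_1}\otimes 1+1\otimes\psi_{\nabla_2}$ is stated as a fact; it does hold, but it is worth noting the small subtlety that the individual operators $\nabla_1(\partial)\otimes 1$ and $1\otimes\nabla_2(\partial)$ are not well defined on $\mathcal{F}_1\otimes_{\mathcal{O}_{\Sigma'}}\mathcal{F}_2$ (only their sum is), whereas the two summands in the $p$-curvature formula \emph{are} well defined because each $\psi_{\nabla_i}(\partial)$ is $\mathcal{O}_{\Sigma'}$-linear; the identity should be derived on $\mathcal{F}_1\otimes_k\mathcal{F}_2$ where the operators commute, and then descended. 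For part (2), you correctly isolate the delicate step: $p$-curvature alone only determines the scalar $c_i$ up to an $\mathbb{F}_p$-translate, so one must import the hypothesis $(E,\nabla_E)\in\mathcal{L}oc^r_{n,\sigma(\underline{a})}$. Your resolution is sound: after trivializing $E'$ on the block $\hat{\Sigma}'_i$, both $\on{res}_q(\nabla_E)$ and $\on{res}_q(\nabla_{can})$ become multiplication by the same $\bar g_i$, so they share the unique eigenvalue $c_i$; the constraint $c_i\in\{\sigma(a_1),\dots,\sigma(a_t)\}$ (from $\nabla_E$) combined with $c_i^p-c_i=a_i$ and the injectivity of the set-theoretic section $\sigma$ forces $c_i=\sigma(a_i)$. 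The use of $p>n$ to kill $(\bar g_i-c_i)^p$ is exactly right, and the block-preservation argument via rigidity of idempotents is fine. One small stylistic remark: you attribute the vanishing of the residue of $\nabla_{can}$ on $\rho^{*}(\mathcal{M})$ to Cartier descent, but that vanishing is a direct computation ($x\partial_x$ applied to a regular function vanishes at $q$); Cartier descent is really what you need for the vanishing of the $p$-curvature in part (1), which you invoke correctly there. The dimension count via flat base change $\pi'_{*}\rho^{*}=\on{Fr}^{*}\pi^{(1)}_{*}$ and the fact that $(M,\phi)\in\mathcal{H}iggs_{n,\underline{a}}(X^{(1)})$ pins down the multiplicity as $m_i$ is correct and completes the argument.
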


The construction of $(\pi'_{*}(E'\otimes \rho^{*}(\mathcal{M})),\nabla_{can})$ is functorial. Therefore we have a morphism of stacks over $B^{(1)}_{\underline{a}}$:
\[
\Phi: \mathcal{L}oc_{n, \sigma(\underline{a})}^r\times_{B^{(1)}_{\underline{a}}}{\mathcal{H}iggs_{n,\underline{a}}(X^{(1)})}\longrightarrow \mathcal{L}oc_{n, \sigma(\underline{a})}.
\]

Now we construct a map $\Psi$ in the inverse direction. Let $(F,\nabla_F)$ be a point of $\mathcal{L}oc_{n, \sigma(\underline{a})}$ such that $h'(F,\nabla_F)=b$. Let $F'\in\on{Coh}(\Sigma')$ be the spectral sheaf. Then by Lemma \ref{lemma: properties of DMOC}, there is a canonical $D_X(-q)$-action on $\pi'_{*}(\mathcal{H}om_{\mathcal{O}_{\Sigma'}}(E', \mathcal{O}_{\Sigma'})\otimes F')$. We denote $\mathcal{H}om_{\mathcal{O}_{\Sigma'}}(E', \mathcal{O}_{\Sigma'})\otimes F'$ by $\mathcal{F}$. 

\begin{lemma}\label{lemma: p-curvature 0}
\mbox{}
\begin{enumerate}
    \item The flat connection $(\pi'_{*}(\mathcal{F}), \nabla_{can})$ has zero $p$-curvature. 
    \item The residue $\on{res}_q(\nabla_{can})$ is nilpotent. 
\end{enumerate}
\end{lemma}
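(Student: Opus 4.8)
The plan is to prove both assertions on the spectral cover $\Sigma'$, using the $D_X(-q)$-module structures of Lemma~\ref{lemma: properties of DMOC}, and then to descend along the finite map $\pi'$. Since $E'$ is invertible, $\mathcal{F}=\mathcal{H}om_{\mathcal{O}_{\Sigma'}}(E',\mathcal{O}_{\Sigma'})\otimes_{\mathcal{O}_{\Sigma'}}F'$ may be identified with $\mathcal{H}om_{\mathcal{O}_{\Sigma'}}(E',F')$, which carries a $D_X(-q)$-module structure on $\Sigma'$ by Lemma~\ref{lemma: properties of DMOC}(3).

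For part~(1), the first point is that the $p$-curvature of any $D_X(-q)$-module $(\mathcal{G},\nabla)$ on $\Sigma'$ is $\mathcal{O}_{\Sigma'}$-linear: the Leibniz rule and $\binom{p}{j}\equiv 0$ for $0<j<p$ give $\psi_\nabla(\partial)(fs)=f\,\psi_\nabla(\partial)(s)+\bigl((\partial^p-\partial^{[p]})f\bigr)s$, and $\partial^p-\partial^{[p]}$ acts as $0$ on $\mathcal{O}_{\Sigma'}=\mathcal{O}_X\otimes_{\mathcal{O}_{X^{(1)}}}\mathcal{O}_{\Sigma^{(1)}}$ because it already does so on $\mathcal{O}_X$ (equivalently, $(\mathcal{O}_X,d)$ has vanishing $p$-curvature; cf.\ the computation $(x\partial_x)^p=(x\partial_x)^{[p]}$ in the proof of Lemma~\ref{lemma: residue of p-curvature} and Theorem~\ref{thm: Cartier descent}). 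The second point is that, by the construction of the spectral sheaf in Section~\ref{section: hitchin morphism for flat connections}, for any flat connection $(G,\nabla_G)$ with $h'(G,\nabla_G)=b$ the $p$-curvature of $(G',\nabla_G)$ acts, through the $\mathcal{O}_{\Sigma'}$-module structure, as multiplication by the tautological section $y$ on $\Sigma'$; this is exactly the statement that $\psi_{\nabla_G}$ acts on $E=\pi'_*G'$ via that module structure. Applying this to $E'$ and $F'$, which lie over the same $b$, the $p$-curvature of $\mathcal{H}om_{\mathcal{O}_{\Sigma'}}(E',F')$ sends $\phi$ to $\psi_{F'}(\partial)\circ\phi-\phi\circ\psi_{E'}(\partial)$, which vanishes because $\psi_{E'}(\partial)$ and $\psi_{F'}(\partial)$ are the same scalar multiplication. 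Finally the $p$-curvature of $(\pi'_*\mathcal{F},\nabla_{can})$ is obtained from this $\mathcal{O}_{\Sigma'}$-linear operator by applying $\pi'_*$, hence is $0$.

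For part~(2), I would analyse $\on{res}_q(\nabla_{can})$ block by block over the preimage of $q$. Writing $\underline a=\{a_1^{m_1},\dots,a_t^{m_t}\}$ with the $a_i$ distinct, the proof of Proposition~\ref{prop: surjective on k-points} (Step~2) gives $\widehat{\Sigma}'=\coprod_{i=1}^t\widehat{\Sigma}'_i$ with each $\widehat{\Sigma}'_i$ local and $\widehat{\Sigma}'_i\otimes_{k[[x]]}k\cong R_i\coloneqq k[y]/((y-a_i)^{m_i})$, hence a block decomposition of $(\pi'_*\mathcal{F})_q$ and of $\on{res}_q(\nabla_{can})$. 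The key local claim is: for any $(G,\nabla_G)\in\mathcal{L}oc_{n,\sigma(\underline a)}$ with $h'(G,\nabla_G)=b$, the endomorphism induced by $\on{res}_q(\nabla_G)$ on $G'|_{\widehat{\Sigma}'_i}\otimes_{k[[x]]}k$ is $R_i$-linear (a further Leibniz computation, since $\nabla_{can}(x\partial_x)$ maps $\mathcal{O}_{\widehat{\Sigma}'_i}$ into $x\mathcal{O}_{\widehat{\Sigma}'_i}$) and equals $\sigma(a_i)\cdot\mathrm{id}+N_i$ with $N_i$ nilpotent. To see the latter: this block is the generalized $a_i$-eigenspace of $\on{res}_q(\psi_{\nabla_G})$ by Lemma~\ref{lemma: residue of p-curvature}, it is invariant under $\on{res}_q(\nabla_G)$, and the eigenvalues of $\on{res}_q(\nabla_G)$ on it lie both in $\{c:c^p-c=a_i\}$ and in $\sigma(\underline a)=\{\sigma(a_1),\dots,\sigma(a_t)\}$, whose intersection is $\{\sigma(a_i)\}$ since $\sigma(a_j)^p-\sigma(a_j)=a_j\neq a_i$ for $j\neq i$. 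Because $E'$ is invertible, its block residue is in fact multiplication by an element $\bar\phi\in R_i$ with $\bar\phi\equiv\sigma(a_i)\pmod{(y-a_i)}$; feeding both inputs into $\mathcal{H}om_{\mathcal{O}_{\Sigma'}}(E',F')$, the $i$-th block of $\on{res}_q(\nabla_{can})$ takes the form $v\mapsto N^F_i(v)-(\bar\phi-\sigma(a_i))\,v$. Here multiplication by $\bar\phi-\sigma(a_i)$ commutes with every $R_i$-linear endomorphism, so it commutes with $N^F_i$; both are nilpotent, hence so is the block, and therefore $\on{res}_q(\nabla_{can})$ is nilpotent.

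I expect the main obstacle to lie in part~(2): setting up the block decomposition of the residue cleanly, identifying the $E'$-contribution with an honest multiplication operator, and in particular confirming that on each block the eigenvalue is exactly $\sigma(a_i)$ --- which is where both the Artin--Schreier constraint and the membership $(G,\nabla_G)\in\mathcal{L}oc_{n,\sigma(\underline a)}$ are used --- and that the two nilpotent operators involved commute; the last point is genuinely necessary, since a difference of non-commuting nilpotents need not be nilpotent.
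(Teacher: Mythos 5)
The paper states Lemma~\ref{lemma: p-curvature 0} without giving a proof, so there is nothing to compare against word-for-word; your argument supplies a correct and complete proof, and it follows the route naturally suggested by the surrounding material (Lemma~\ref{lemma: properties of DMOC}, Lemma~\ref{lemma: residue of p-curvature}, and the block decomposition in Step~2 of the proof of Proposition~\ref{prop: surjective on k-points}).

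For part~(1) your two observations are exactly right: the $p$-curvature of a $D_X(-q)$-module on $\Sigma'$ is $\mathcal{O}_{\Sigma'}$-linear because $\mathcal{O}_{\Sigma'}=\mathcal{O}_X\otimes_{\mathcal{O}_{X^{(1)}}}\mathcal{O}_{\Sigma^{(1)}}$ carries the Cartier (zero $p$-curvature) structure, and on both $E'$ and $F'$ the $p$-curvature is tautological multiplication by $y$ by the very definition of the spectral sheaf; so on $\mathcal{H}om_{\mathcal{O}_{\Sigma'}}(E',F')$ the commutator vanishes. For part~(2) the block decomposition over $\hat{\Sigma}'=\coprod_i\hat{\Sigma}'_i$, the $R_i$-linearity of each block residue, and the identification of the unique eigenvalue on block $i$ as $\sigma(a_i)$ (where both the membership in $\mathcal{L}oc_{n,\sigma(\underline a)}$ and the Artin--Schreier constraint from Lemma~\ref{lemma: residue of p-curvature} are used) are the essential ingredients. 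You are also right to insist on checking that the two nilpotents commute: this is precisely where invertibility of $E'$ enters, since the $E'$-contribution to the block residue is then literally multiplication by an element $\bar\phi\in R_i$ with $\bar\phi-\sigma(a_i)$ nilpotent, which is central among $R_i$-linear operators and hence commutes with $N^F_i$; without this, a difference of non-commuting nilpotents could fail to be nilpotent, and the step would collapse. This is a subtlety that the paper's terse statement passes over in silence, and your proof makes it explicit.

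One cosmetic remark: when you invoke Lemma~\ref{lemma: residue of p-curvature} to say that block $i$ is the generalized $a_i$-eigenspace of $\on{res}_q(\psi_{\nabla_G})$, what is really being used is the spectral correspondence (the $y$-action at $q$ is by definition the residue of the $p$-curvature), while Lemma~\ref{lemma: residue of p-curvature} is used in the next sentence to pin down the eigenvalues of $\on{res}_q(\nabla_G)$ on that block. Separating those two citations would make the logical flow a touch cleaner, but the mathematics is already correct.
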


By Lemma \ref{lemma: residue of p-curvature}, the residue $\on{res}_q(\nabla_{can})$ of $(\pi'_{*}(\mathcal{F}), \nabla_{can})$ at $q$ satisfies \[\on{res}_q(\nabla_{can})^p-\on{res}_q(\nabla_{can})=0.
\]
This implies $\on{res}_q(\nabla_{can})$ is a semisimple matrix with integer eigenvalues. But meanwhile, $\on{res}_q(\nabla_{can})$ needs to be nilpotent, so $\on{res}_q(\nabla_{can})$ must be the zero, therefore $(\pi'_{*}(\mathcal{F}), \nabla_{can})$ is a flat connection without singularities. By the Cartier descent (Theorem \ref{thm: Cartier descent}), there is a canonical quasi-coherent sheaf $\mathcal{N}$ on $X^{(1)}$ such that $(\pi'_{*}(\mathcal{F}), \nabla_{can})$ is isomorphic to $(\on{Fr}^{*}(\mathcal{N}), \nabla_{can})$. Note that $\mathcal{N}$ can be identified with elements in $\pi'_{*}(\mathcal{F})$ that vanish under $\nabla_{can}$. The action of $\mathcal{O}_{\Sigma^{(1)}}$ preserve those elements, therefore there is a canonical quasi-coherent sheaf $\mathcal{M}$ on $\Sigma^{(1)}$ such that $(\mathcal{F}, \nabla_{can})$ is isomorphic to $(\rho^{*}(\mathcal{M}), \nabla_{can})$ as $D_X(-q)$-modules on $\Sigma'$.  Since $E'$ is an invertible sheaf, $(F', \nabla_F)\cong (E'\otimes \rho^{*}(\mathcal{M}), \nabla_{can})$. The construction of $\mathcal{M}$ is functorial. Therefore, we have a morphism $\Psi$ of stacks over $B^{(1)}_{\underline{a}}$:
\[\Psi: \mathcal{L}oc_{n, \sigma(\underline{a})}^r\times_{B^{(1)}_{\underline{a}}}\mathcal{L}oc_{n, \sigma(\underline{a})} \longrightarrow \mathcal{H}iggs_{n,\underline{a}}(X^{(1)}).
\]

Let $(E,\nabla_E)\in\mathcal{L}oc_{n,\sigma(\underline{a})}^r$ such that $h'(E,\nabla_E)=b$, and denote the corresponding spectral sheaf by $E'\in\on{Coh}(\Sigma')$. Let $\mathcal{L}$ be an invertible sheaf on $\Sigma^{(1)}$. Then by Lemma \ref{lemma: still map to b}, 
\[(\pi'_{*}(E'\otimes \rho^{*}(\mathcal{L})),\nabla_{can}) \in \mathcal{L}oc_{n, \sigma(\underline{a})}^r.
\]
This construction defines an action of $\on{Pic}(\Sigma^{(1)}/B^{(1)}_{\underline{a}})$ on $\mathcal{L}oc_{n, \sigma(\underline{a})}^r$. 

\begin{prop}\label{prop: pseudo-torsor}
    This action induces the structure of a pseudo $\on{Pic}(\Sigma^{(1)}/B^{(1)}_{\underline{a}})$-torsor on $\mathcal{L}oc_{n, \sigma(\underline{a})}^r$.
\end{prop}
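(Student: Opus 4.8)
The plan is to verify directly that the action endows $\mathcal{L}oc^r_{n,\sigma(\underline a)}$ with the structure of a pseudo-torsor, i.e. that the map
\[
\on{act}\colon \on{Pic}(\Sigma^{(1)}/B^{(1)}_{\underline a})\times_{B^{(1)}_{\underline a}}\mathcal{L}oc^r_{n,\sigma(\underline a)}\longrightarrow \mathcal{L}oc^r_{n,\sigma(\underline a)}\times_{B^{(1)}_{\underline a}}\mathcal{L}oc^r_{n,\sigma(\underline a)},\qquad (\mathcal{L},(E,\nabla_E))\longmapsto \bigl((\pi'_*(E'\otimes\rho^*\mathcal{L}),\nabla_{can}),(E,\nabla_E)\bigr),
\]
is an isomorphism of stacks over $B^{(1)}_{\underline a}$. (Tensoring the spectral sheaf $E'$ by $\rho^*\mathcal{L}$ leaves the spectral curve, hence the image point $b$, unchanged, so $\on{act}$ is indeed defined over $B^{(1)}_{\underline a}$.) The inverse will be built from the morphism $\Psi$ constructed above, sending a pair $((F,\nabla_F),(E,\nabla_E))$ of objects lying over the same $b$ to $\bigl(\Psi((E,\nabla_E),(F,\nabla_F)),(E,\nabla_E)\bigr)$.

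The first step is to check that, restricted to $\mathcal{L}oc^r_{n,\sigma(\underline a)}\times_{B^{(1)}_{\underline a}}\mathcal{L}oc^r_{n,\sigma(\underline a)}$, the morphism $\Psi$ actually takes values in $\on{Pic}(\Sigma^{(1)}/B^{(1)}_{\underline a})$ rather than merely in $\mathcal{H}iggs_{n,\underline a}(X^{(1)})$. Indeed, if $E'$ and $F'$ are both invertible on $\Sigma'$ then so is $\mathcal{F}=\mathcal{H}om_{\mathcal{O}_{\Sigma'}}(E',\mathcal{O}_{\Sigma'})\otimes F'$; and since $\rho\colon\Sigma'\to\Sigma^{(1)}$ is the base change along $\pi^{(1)}$ of the finite faithfully flat relative Frobenius $\on{Fr}\colon X\to X^{(1)}$, faithfully flat descent shows that the quasi-coherent sheaf $\mathcal{M}$ on $\Sigma^{(1)}$ produced in the construction of $\Psi$ via Lemma \ref{lemma: p-curvature 0} and Cartier descent (Theorem \ref{thm: Cartier descent}), which satisfies $\rho^*\mathcal{M}\cong\mathcal{F}$, is itself invertible. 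The second step is to check that the two composites are the identity. For $\Psi\circ\on{act}$: the spectral sheaf of $\pi'_*(E'\otimes\rho^*\mathcal{L})$ is $E'\otimes\rho^*\mathcal{L}$, so in the recipe for $\Psi$ one obtains $\mathcal{F}\cong\rho^*\mathcal{L}$ equipped with its tautological connection $\nabla_{can}$, and since Cartier descent is an equivalence the sheaf recovered on $\Sigma^{(1)}$ is $\mathcal{L}$ itself. For $\on{act}$ after the $\Psi$-morphism: this is precisely the isomorphism $(F',\nabla_F)\cong(E'\otimes\rho^*\mathcal{M},\nabla_{can})$ of $D_X(-q)$-modules on $\Sigma'$ recorded above, pushed forward along $\pi'$. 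All of these identifications are functorial in the test scheme, so they glue into isomorphisms of stacks over $B^{(1)}_{\underline a}$.

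The main obstacle is bookkeeping rather than geometry: one must make sure that every ingredient entering $\on{act}$ and $\Psi$ — the spectral correspondence $(E,\nabla)\mapsto E'$, the tensor-product and internal-hom $D_X(-q)$-module structures from Lemma \ref{lemma: properties of DMOC}, the local-freeness criterion of Lemma \ref{lemma, locally free is locally free}, Lemma \ref{lemma: still map to b}, the vanishing of the $p$-curvature and of the residue in Lemma \ref{lemma: p-curvature 0}, and Cartier descent — is available over an arbitrary base scheme $S$ and compatible with base change, so that $\on{act}$, its candidate inverse, and the two composites are honest morphisms of stacks and not merely bijections on $k$-points; the earlier lemmas are stated with exactly this in mind. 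Granting that, the pseudo-torsor property is formal, and combined with Proposition \ref{prop: surjective on k-points} (surjectivity of $h'$ on $\mathcal{L}oc^r_{n,\sigma(\underline a)}$) it upgrades to the torsor statement of Theorem \ref{thm: nah}(1).
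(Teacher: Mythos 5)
Your proposal is correct and follows essentially the same route as the paper's proof: both use the morphism $\Psi$ (via Lemma \ref{lemma: p-curvature 0} and Cartier descent) to recover the ``difference'' sheaf $\mathcal{M}$ on $\Sigma^{(1)}$, both invoke faithful flatness of $\rho$ to conclude that $\mathcal{M}$ is invertible, and both verify the inverse by observing that $\Phi$ and $\Psi$ undo each other. The paper phrases the pseudo-torsor condition as simple transitivity of $\on{Pic}(\Sigma_b^{(1)})$ on the fiber over an arbitrary $S$-point $b$ (including an explicit identification of $\on{Hom}$-spaces to handle the groupoid structure), while you phrase it as invertibility of the shear map $\on{act}$; these are equivalent formulations.
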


\begin{proof}
Let $S$ be a $k$-scheme. Let $b$ be an $S$-point of $B^{(1)}_{\underline{a}}$. We need to show that the action of $\on{Pic}(\Sigma_b^{(1)})$ on the fiber $(\mathcal{L}oc_{n,\sigma(\underline{a})}^r)_{b}\coloneqq \mathcal{L}oc_{n,\sigma(\underline{a})}^r\times_{B^{(1)}_{\underline{a}},b}{S}$ is simply transitive when $(\mathcal{L}oc_{n,\sigma(\underline{a})}^r)_{b}$ is non-empty. Let $(E,\nabla_E)$ and $(F,\nabla_F)$ be two points of $\mathcal{L}oc_{n, \sigma(\underline{a})}^r$ that is mapped to $b$ under the Hitchin map. We denote the corresponding spectral sheaves by $E'$ and $F'$. By the discussion after Lemma \ref{lemma: p-curvature 0}, there exists a quasi-coherent sheaf $\mathcal{M}$ on $\Sigma_b^{(1)}$ such that $(F', \nabla_F)\cong (E'\otimes \rho^{*}(\mathcal{M}), \nabla_{can})$. Since $\rho$ is faithfully flat, $E'$ and $F'$ being invertible sheaves on $\Sigma_b'$ implies that $\mathcal{M}$ is an invertible sheaf on $\Sigma_b^{(1)}$. The map $\Phi$ induces a map 
\[
    \on{Hom}_{\mathcal{O}_{\Sigma_b^{(1)}}}(\mathcal{O}_{\Sigma_b^{(1)}},\mathcal{M})\xrightarrow{\simeq}\on{Hom}_{D_X(-q)}((E,\nabla_E), (F,\nabla_F)),
\]
which is an isomorphism since $\Psi$ produces its inverse.
\end{proof}

We denote by $\mathcal{L}oc_{n, q}$ is the moduli stack of flat connections on $X$ with regular singularity at $q$, without constraints on the eigenvalues of the residue. Let $\mathcal{L}oc^r_{n, q}\subset \mathcal{L}oc_{n, q}$ be the substack characterized by the spectral sheaf being invertible. We have the following proposition:  

\begin{prop}\label{prop: torsor is smooth}
    The map $h':\mathcal{L}oc^{r}_{n, q}\longrightarrow B^{(1)}$ is smooth. 
\end{prop}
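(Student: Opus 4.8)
The plan is to prove smoothness of $h': \mathcal{L}oc^{r}_{n,q}\longrightarrow B^{(1)}$ by combining the surjectivity already established in Proposition \ref{prop: surjective on k-points} (applied for all tuples $\underline{a}$, so that the full $h'$ on $\mathcal{L}oc^r_{n,q}$ is surjective onto $B^{(1)}$) with an infinitesimal lifting criterion and a dimension count. First I would verify that $\mathcal{L}oc^{r}_{n,q}$ is of finite type and reduced, and that $B^{(1)}$ is smooth (it is an affine space). Then, since $h'$ is surjective onto a smooth target, it suffices to check: (i) $h'$ is flat, and (ii) the fibers of $h'$ are smooth of the expected dimension. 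For (ii), the key input is that, for $b\in B^{(1)}$, the fiber $(h')^{-1}(b)$ carries the pseudo-torsor structure of Proposition \ref{prop: pseudo-torsor}: once $(h')^{-1}(b)$ is non-empty — which is exactly the content of Proposition \ref{prop: surjective on k-points} — it is a torsor under $\on{Pic}(\Sigma_b^{(1)})$, and $\on{Pic}$ of a (reduced, possibly singular) projective curve over a field is a smooth group stack of dimension $g_{\Sigma_b^{(1)}}$ (independent of $b$ since all spectral curves have the same arithmetic genus, computed in Section \ref{subsection: basic constructions}). So all fibers are smooth of the same dimension, which gives smoothness of $h'$ once flatness is known.

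For flatness, I would argue via the infinitesimal criterion for smoothness directly rather than separating out flatness: given a square-zero extension $S\hookrightarrow S'$ of affine $k$-schemes, a map $S'\to B^{(1)}$, and a lift $S\to \mathcal{L}oc^r_{n,q}$ compatible over $B^{(1)}$, I must produce a lift $S'\to \mathcal{L}oc^r_{n,q}$. Concretely, over $S$ we have a flat connection $(E,\nabla_E)$ with regular singularity at $q$ whose $p$-curvature spectral curve is $\Sigma'_{b_S}$ and whose spectral sheaf $E'$ is invertible; over $S'$ we are given the deformed Hitchin datum $b_{S'}$, hence the deformed spectral curve $\Sigma'_{b_{S'}}$. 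Using the identification (Lemma \ref{lemma: properties of DMOC}, Lemma \ref{lemma, locally free is locally free} and the discussion following Proposition \ref{prop: pseudo-torsor}) of $\mathcal{L}oc^r_{n,q}$ over a point with data of an invertible $D_X(-q)$-module on $\Sigma'$, together with the Cartier-descent analysis that trivializes the $p$-curvature, the obstruction to deforming lands in an $H^2$ of a coherent sheaf on the curve $\Sigma'_{b_S}$, hence vanishes; the space of lifts is then a torsor under the corresponding $H^1$. I would make this precise by noting that deforming an invertible sheaf $\mathcal{L}$ on a family of curves $\Sigma'\to S$ to the curve over $S'$ is unobstructed (obstruction in $H^2(\Sigma'_{b_S},\mathcal{O})=0$), and that deforming the connection $\nabla_{can}$ amounts to descent along $\rho$, which is controlled by the smooth stack $\on{Pic}(\Sigma^{(1)}/B^{(1)})$.

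The main obstacle I expect is handling the formal disk around $q$ and the regular-singularity condition in the relative/deformation setting: the $p$-curvature and its residue interact through Lemma \ref{lemma: residue of p-curvature}, and one must check that the deformation does not force the residue eigenvalues to jump — i.e. that within $\mathcal{L}oc_{n,q}$ (no eigenvalue constraint) the eigenvalues of $\on{res}_q\nabla$ are free deformation parameters matching the freedom in $b$ at $q$. I would address this exactly as in the proof of Proposition \ref{prop: surjective on k-points}: work separately on $X\setminus q$, where smoothness of $h'$ follows from the Azumaya splitting picture and Theorem \ref{thm: Tsen} being available in families over $1$-dimensional bases (more precisely, from the étale-local structure of Azumaya algebras, which makes $\mathcal{L}oc^r_{n,X\setminus q}\to B^{(1)}|_{X\setminus q}$ a gerbe-trivialized, hence smooth, map), and on the formal disk, where solving $\partial_x^{p-1}(g_1)+g_1^p=h$ was shown to have solutions in $k[[x,y]]$ — and the same Hensel/iteration argument solves it with parameters in $S'$, giving the local lift; the Beauville--Laszlo gluing of Step 3 is manifestly functorial in $S$, so the local lifts glue to a global one. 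Putting these together: $h'$ is surjective, all fibers are smooth of the same dimension $g_{\Sigma^{(1)}}$, and the infinitesimal lifting property holds, hence $h'$ is smooth.
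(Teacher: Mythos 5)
Your high-level strategy (surjective map to a smooth base with smooth fibers of constant dimension, plus flatness, implies smoothness) matches the paper's, but the crucial step — establishing flatness — is where the proposal has a genuine gap, and it is precisely the step the paper handles by a route you do not take.

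The paper's proof rests on Lemma~\ref{Lemma: torsor smooth}, which shows that the \emph{total space} $\mathcal{L}oc^r_{n,q}$ is smooth. This is done by passing to the framed moduli stack $\widetilde{\mathcal{L}oc}^r_{n,q}$, identifying the obstruction to lifting along a small extension with $\mathbb{H}^2$ of the two-term complex $\mathscr{F}^{\bullet}_{E,\nabla}\colon \mathcal{E}nd(E)(-q)\to\mathcal{E}nd(E)\otimes\Omega_X(q)$ on $X$, and killing it by Serre duality, $\mathbb{H}^2\cong\mathbb{H}^0$, once one observes that the automorphism group of a framed point is trivial (here Proposition~\ref{prop: pseudo-torsor} is used to see that automorphisms of $(E,\nabla)$ are scalars, and the frame rigidifies these). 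The dimension computation for $\mathcal{L}oc^r_{n,q}$ then falls out, after which flatness of $h'$ is an application of miracle flatness: the source and target are both smooth and the fiber dimension is constant. Your proposal instead aims at the infinitesimal criterion directly for the \emph{relative} map $h'$, i.e.\ lifting an $S$-point of $\mathcal{L}oc^r_{n,q}$ over a square-zero extension $S\hookrightarrow S'$ compatibly with a given lift $S'\to B^{(1)}$. That is a different (and in principle viable) route, but the obstruction space you invoke — $H^2$ of a coherent sheaf on the spectral curve — is not the right one for deforming the $D_X(-q)$-module structure together with the spectral curve; the relevant complex is a two-term de Rham-type complex, not a single coherent sheaf, and its $\mathbb{H}^2$ does not vanish for dimension reasons alone. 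You also never assert or prove that $\mathcal{L}oc^r_{n,q}$ itself is smooth (only that it is finite type and reduced), which removes the option of falling back on miracle flatness. Without either a correct identification of the relative obstruction space or the smoothness of the total space, the flatness/infinitesimal step is not established.

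A secondary point: the local-to-global gluing you sketch (re-running Proposition~\ref{prop: surjective on k-points} over the extension $S'$, splitting the Azumaya algebra on $X\setminus q$ and solving the formal-disk equation with parameters) is plausible but does not come for free; Theorem~\ref{thm: Tsen} is a statement about schemes of dimension $\leq 1$ over a field, not about families of such over an Artinian base, so one would at least need to invoke the square-zero deformation theory of Azumaya splittings rather than Tsen's theorem itself. The paper sidesteps all of this by working with the absolute deformation theory on $X$ and Serre duality, which is both cleaner and closer to a complete argument.
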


Before getting into the proof of Proposition \ref{prop: torsor is smooth}, we state a corollary that is going to be used in the proof of Theorem \ref{thm: nah}.

\begin{corollary}\label{cor: torsor is smooth}
    The map $h':\mathcal{L}oc^{r}_{n, \sigma(\underline{a})}\longrightarrow B_{\underline{a}}^{(1)}$ is smooth. 
\end{corollary}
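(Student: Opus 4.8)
The plan is to deduce Corollary~\ref{cor: torsor is smooth} from Proposition~\ref{prop: torsor is smooth} by a base-change argument. Recall that $B_{\underline a}^{(1)}$ is defined as the image of $\mathcal{H}iggs_{n,\underline a}(X^{(1)})$ under the Hitchin map $h^{(1)}$, and by Lemma~\ref{lemma: residue of p-curvature} (together with Remark~\ref{remark: residue of p-curvature}) the substack $\mathcal{L}oc^r_{n,\sigma(\underline a)}\subset \mathcal{L}oc^r_{n,q}$ is precisely the locus over $B^{(1)}_{\underline a}$ on which the eigenvalues of $\on{res}_q(\nabla)$ are constrained to be $\sigma(\underline a)$. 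So the first step is to show that $\mathcal{L}oc^r_{n,\sigma(\underline a)} = \mathcal{L}oc^r_{n,q}\times_{B^{(1)}} B^{(1)}_{\underline a}$, i.e. that imposing $h'(E,\nabla)\in B^{(1)}_{\underline a}$ already forces the eigenvalues of the residue to lie in $\sigma(\underline a)$ rather than in some other fiber of the Artin--Schreier map. This is where Lemma~\ref{lemma: residue of p-curvature} does the work: a point of $B^{(1)}_{\underline a}$ records the characteristic polynomial of $\on{res}_q(\psi_\nabla)$, whose roots are $a_i^{}$; the eigenvalues $\alpha$ of $\on{res}_q(\nabla)$ satisfy $\alpha^p-\alpha = a_i$, and since $\sigma$ is a fixed set-theoretic section of $t\mapsto t^p-t$ one checks that the $\mathcal{L}oc_{n,\sigma(\underline a)}$-condition is equivalent to the containment $h'\in B^{(1)}_{\underline a}$ on this connected component.

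Once that identification is in place, the corollary is immediate: $h':\mathcal{L}oc^r_{n,q}\to B^{(1)}$ is smooth by Proposition~\ref{prop: torsor is smooth}, smoothness is stable under base change, and $h':\mathcal{L}oc^r_{n,\sigma(\underline a)}\to B^{(1)}_{\underline a}$ is the base change of $h'$ along $B^{(1)}_{\underline a}\hookrightarrow B^{(1)}$. One small point to address is that $B^{(1)}_{\underline a}\hookrightarrow B^{(1)}$ is (a priori) only a locally closed immersion, but that causes no trouble since base change of a smooth morphism along any morphism of base schemes is again smooth; alternatively one can note that $B^{(1)}_{\underline a}$ is a union of connected components of the locus where the characteristic polynomial of the residue is constrained appropriately, and $h'$ restricted to the preimage of such a subscheme is smooth.

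The main (and really only) obstacle is the bookkeeping in the first step: one must be careful that the residue-eigenvalue condition cutting out $\mathcal{L}oc_{n,\sigma(\underline a)}$ matches exactly the fiber product condition, keeping track of multiplicities $m_i$ and of the fact that $\sigma$ is merely set-theoretic, so that $\sigma(\underline a)$ is a well-defined unordered tuple but the map $\underline a \mapsto \sigma(\underline a)$ is not algebraic. The resolution is that we are working over a single $B^{(1)}_{\underline a}$ with $\underline a$ fixed, so $\sigma(\underline a)$ is just a fixed tuple of scalars and no algebraicity of $\sigma$ is needed; the identification $\mathcal{L}oc^r_{n,\sigma(\underline a)} = \mathcal{L}oc^r_{n,q}\times_{B^{(1)}}B^{(1)}_{\underline a}$ then holds on the nose. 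With this, the proof of the corollary is a single sentence invoking stability of smoothness under base change.
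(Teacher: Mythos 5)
Your overall strategy (deduce the corollary from Proposition~\ref{prop: torsor is smooth} by base change along $B^{(1)}_{\underline a}\hookrightarrow B^{(1)}$) is the same as the paper's, but your key intermediate claim --- that the identification $\mathcal{L}oc^r_{n,\sigma(\underline a)} = \mathcal{L}oc^r_{n,q}\times_{B^{(1)}}B^{(1)}_{\underline a}$ ``holds on the nose'' --- is false, and this is a genuine gap. By Lemma~\ref{lemma: residue of p-curvature}, a point of the fiber product is a flat connection $(E,\nabla)$ for which the eigenvalues of $\on{res}_q(\psi_\nabla)=(\on{res}_q\nabla)^p-\on{res}_q\nabla$ form the tuple $\underline a$; this only constrains each eigenvalue $\alpha$ of $\on{res}_q\nabla$ to satisfy $\alpha^p-\alpha=a_i$ for some $i$, and each such equation has $p$ distinct solutions in $k$. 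Thus the fiber product is strictly larger than $\mathcal{L}oc^r_{n,\sigma(\underline a)}$: it is the disjoint union $\coprod_{\underline c}\mathcal{L}oc^r_{n,\underline c}$ taken over \emph{all} unordered $n$-tuples $\underline c$ mapping to $\underline a$ under the Artin--Schreier map, and $\sigma(\underline a)$ is just one of these (for $\underline a=(0,\dots,0)$, for instance, the other pieces have residue eigenvalues in $\mathbb F_p\setminus\{0\}$). Fixing $\underline a$ and the section $\sigma$ does not remove the other preimages; it only singles out one of them.

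The fix is short and is exactly what the paper does: instead of equality, observe that $\mathcal{L}oc^r_{n,\sigma(\underline a)}$ is one of the clopen pieces in the disjoint-union decomposition of $\mathcal{L}oc^r_{n,q}\times_{B^{(1)}}B^{(1)}_{\underline a}$ (the decomposition holds because the residue eigenvalues vary continuously and are forced into a finite, hence discrete, set once $h'$ is constrained to $B^{(1)}_{\underline a}$). Base change gives smoothness of $\mathcal{L}oc^r_{n,q}\times_{B^{(1)}}B^{(1)}_{\underline a}\to B^{(1)}_{\underline a}$, and smoothness passes to open substacks, so $h'\colon\mathcal{L}oc^r_{n,\sigma(\underline a)}\to B^{(1)}_{\underline a}$ is smooth. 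With that one sentence replacing the incorrect identification, your argument matches the paper's.
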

\begin{proof}    
    The map 
    $\mathcal{L}oc^{r}_{n, q}\times_{B^{(1)}}B_{\underline{a}}^{(1)}\longrightarrow B_{\underline{a}}^{(1)}
    $
    is smooth by base change, and the fiber product
    $
    \mathcal{L}oc^{r}_{n, q}\times_{B^{(1)}}B_{\underline{a}}^{(1)}$ is the disjoint union of $\mathcal{L}oc^{r}_{n, \underline{c}}$, where $\underline{c}$ ranges from all unordered $n$-tuples of elements in $k$ that maps to $\underline{a}$ under the Artin-Schreier map. 
\end{proof}

We denote by $\widetilde{\mathcal{L}oc}_{n,q}$ the stack that classifies triples $(E,\nabla, \theta)$, where $E$ is a vector bundle of rank $n$ on $X$, $\nabla: E\longrightarrow E\otimes \Omega_{X}(q)$ is a flat connection with regular singularity at $q$ and $\theta: E_q\xrightarrow{\cong} k^n$ is a frame of $E$ at $q$. 
The natural action of $GL_n$ on the frame $\theta$ gives $\widetilde{\mathcal{L}oc}_{n,q}$ the structure of a $GL_n$-torsor over $\mathcal{L}oc_{n,q}$.

\begin{lemma}
     $\mathcal{L}oc_{n,q}$ and $\widetilde{\mathcal{L}oc}_{n,q}$ are algebraic stacks locally of finite type over $k$.
\end{lemma}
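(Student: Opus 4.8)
The plan is to realize both stacks as relatively schematic --- in fact affine and of finite type --- over standard algebraic stacks, and then to invoke the formal fact that a stack admitting a schematic morphism of finite type to an algebraic stack locally of finite type over $k$ is itself algebraic and locally of finite type over $k$.

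First I would analyze the forgetful morphism $\mathcal{L}oc_{n,q}\longrightarrow\on{Bun}_n$, $(E,\nabla)\mapsto E$, where $\on{Bun}_n$ is the moduli stack of rank $n$ vector bundles on $X$; this is a classical smooth algebraic stack, locally of finite type over $k$ (Laumon--Moret-Bailly). It therefore suffices to show that the forgetful morphism is schematic, affine, and of finite type. For a $k$-scheme $S$ and a family $E$ on $S\times X$ of rank $n$ bundles, a flat connection on $E$ with regular singularity at $q$ is exactly an $\mathcal{O}_{S\times X}$-linear splitting of the logarithmic Atiyah sequence
\[
0\longrightarrow\mathcal{E}nd(E)\longrightarrow\mathcal{A}t_q(E)\longrightarrow p_X^{*}\mathcal{T}_X(-q)\longrightarrow 0,
\]
flatness of the connection being automatic because $X$ is a curve. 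This sequence is defined functorially in $S$ and commutes with base change. The functor sending an $S$-scheme $T$ to the set of $\mathcal{O}_{T\times X}$-linear maps $p_X^{*}\mathcal{T}_X(-q)\to\mathcal{A}t_q(E)|_{T\times X}$ is represented by a linear scheme affine and of finite type over $S$: this is the standard representability of the Hom scheme $\underline{\on{Hom}}_S(\mathcal{F},\mathcal{G})$ for coherent sheaves on $S\times X$ with $\mathcal{F}$ flat over $S$ and $X$ proper, which does not require the relevant direct image sheaves to be locally free (Grothendieck's representability of Hom functors; cf.\ the account in FGA Explained). The condition that such a map split the displayed sequence is a closed condition, so the fiber of $\mathcal{L}oc_{n,q}$ over $E$ is a closed subscheme of that linear scheme. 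Hence $\mathcal{L}oc_{n,q}\to\on{Bun}_n$ is schematic, affine and of finite type, and $\mathcal{L}oc_{n,q}$ is algebraic and locally of finite type over $k$.

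For $\widetilde{\mathcal{L}oc}_{n,q}$ I would note that forgetting the connection gives a morphism to the stack $\widetilde{\on{Bun}}_{n,q}$ classifying pairs $(E,\theta)$ of a rank $n$ bundle with a frame $\theta\colon E_q\xrightarrow{\sim}k^n$. The morphism $\widetilde{\on{Bun}}_{n,q}\to\on{Bun}_n$ is a $GL_n$-torsor (the frames of $E_q$ form a $GL_n$-torsor over $S$), in particular a smooth affine schematic morphism of finite type, so $\widetilde{\on{Bun}}_{n,q}$ is again smooth algebraic and locally of finite type over $k$. One has $\widetilde{\mathcal{L}oc}_{n,q}\cong\mathcal{L}oc_{n,q}\times_{\on{Bun}_n}\widetilde{\on{Bun}}_{n,q}$, a fiber product of algebraic stacks locally of finite type, hence itself algebraic and locally of finite type; equivalently, the projection $\widetilde{\mathcal{L}oc}_{n,q}\to\mathcal{L}oc_{n,q}$ is a $GL_n$-torsor, so the conclusion for $\widetilde{\mathcal{L}oc}_{n,q}$ follows from that for $\mathcal{L}oc_{n,q}$.

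The substantive point --- and where I expect essentially all of the actual work to lie --- is the relative representability of the space of logarithmic connections: one must set up the logarithmic Atiyah algebroid $\mathcal{A}t_q(E)$ in families compatibly with base change, and verify that the resulting Hom functor is represented by an affine $S$-scheme of finite type without assuming that $p_{S*}(\mathcal{E}nd(E)\otimes p_X^{*}\Omega_X(q))$ is locally free. All of this is standard. The remaining ingredients --- closedness of the splitting condition, algebraicity and local finite type of $\on{Bun}_n$ and $\widetilde{\on{Bun}}_{n,q}$, and stability of being algebraic and locally of finite type under schematic morphisms of finite type and under fiber products --- are purely formal.
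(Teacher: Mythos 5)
Your proposal is correct and follows the same route as the paper's (very terse) proof: reduce to the algebraicity of $\on{Bun}_n$ by showing the forgetful morphism $\mathcal{L}oc_{n,q}\to\on{Bun}_n$ is representable and of finite type, and then handle $\widetilde{\mathcal{L}oc}_{n,q}$ via the $GL_n$-torsor structure. You have simply filled in the representability step, which the paper asserts without proof, using the logarithmic Atiyah sequence and representability of Hom schemes.
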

\begin{proof}
    The 1-morphism $\mathcal{L}oc_{n,q}\longrightarrow \on{Bun}_n$ is representable and locally of finite presentation. Since $\on{Bun}_n$ is an algebraic stack locally of finite type over $k$ and $\widetilde{\mathcal{L}oc}_{n,q}$ is a $GL_n$-torsor over $\mathcal{L}oc_{n,q}$, both $\mathcal{L}oc_{n,q}$ and $\widetilde{\mathcal{L}oc}_{n,q}$ are algebraic stacks locally of finite type over $k$.
\end{proof}
\begin{lemma}\label{Lemma: torsor smooth}
   $\mathcal{L}oc^r_{n,q}$ and $\widetilde{\mathcal{L}oc}_{n,q}^r$ are smooth. 
\end{lemma}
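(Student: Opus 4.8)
The plan is to verify smoothness by deformation theory, using the $p$-curvature spectral description of connections to make the obstruction space vanish. First, $\mathcal{L}oc^{r}_{n,q}\subseteq\mathcal{L}oc_{n,q}$ and $\widetilde{\mathcal{L}oc}_{n,q}^{r}\subseteq\widetilde{\mathcal{L}oc}_{n,q}$ are open substacks, because being locally free of rank one is an open condition for the universal spectral sheaf on the flat family of spectral curves pulled back along $h'$; hence they are algebraic stacks locally of finite type over $k$. Since $\widetilde{\mathcal{L}oc}_{n,q}^{r}\longrightarrow\mathcal{L}oc^{r}_{n,q}$ is a $GL_n$-torsor, in particular smooth, it is enough to prove that $\mathcal{L}oc^{r}_{n,q}$ is smooth.

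Next I would invoke the deformation theory of a $k$-point $(E,\nabla)$ of $\mathcal{L}oc^{r}_{n,q}$ inside $\mathcal{L}oc_{n,q}$, which is controlled by the logarithmic de Rham complex
\[
C^{\bullet}=\bigl[\,\mathcal{E}nd(E)\xrightarrow{\on{ad}\nabla}\mathcal{E}nd(E)\otimes\Omega_X(q)\,\bigr]
\]
placed in degrees $0$ and $1$: the infinitesimal automorphisms are $\mathbb{H}^{0}(X,C^{\bullet})$, the tangent space is $\mathbb{H}^{1}(X,C^{\bullet})$, and the obstructions lie in $\mathbb{H}^{2}(X,C^{\bullet})$. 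Thus, $\mathcal{L}oc^{r}_{n,q}$ being locally of finite presentation, it suffices to show $\mathbb{H}^{2}(X,C^{\bullet})=0$ for every such $(E,\nabla)$. By Serre duality on $X$, the trace identification $\mathcal{E}nd(E)^{\vee}\cong\mathcal{E}nd(E)$, and $\Omega_X(q)^{\vee}\otimes\Omega_X\cong\mathcal{O}_X(-q)$, one identifies
\[
\mathbb{H}^{2}(X,C^{\bullet})^{\vee}\cong\ker\!\bigl(H^{0}(X,\mathcal{E}nd(E)(-q))\xrightarrow{\on{ad}\nabla}H^{0}(X,\mathcal{E}nd(E)\otimes\Omega_X)\bigr),
\]
that is, with the space of $\nabla$-horizontal endomorphisms of $E$ that vanish at $q$.

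The heart of the argument is then to show that this space is zero when the spectral sheaf $E'\in\on{Coh}(\Sigma')$ of $(E,\nabla)$ is invertible. Put $b=h'(E,\nabla)$ and let $\pi'\colon\Sigma'\to X$, $\rho\colon\Sigma'\to\Sigma^{(1)}$, $\pi^{(1)}\colon\Sigma^{(1)}\to X^{(1)}$ be as in Section \ref{section: hitchin morphism for flat connections}. By Lemma \ref{lemma: properties of DMOC}, a $\nabla$-horizontal endomorphism of $E$ is the same thing as an endomorphism of $(E',\nabla_{can})$ as a $D_X(-q)$-module on $\Sigma'$; since $E'$ is invertible, $\mathcal{E}nd_{\mathcal{O}_{\Sigma'}}(E')\cong\mathcal{O}_{\Sigma'}$ compatibly with $\nabla_{can}$, and by Cartier descent (Theorem \ref{thm: Cartier descent}) along $\rho$ the $\nabla_{can}$-horizontal sections of $\mathcal{O}_{\Sigma'}$ are precisely (the pullback of) $H^{0}(\Sigma^{(1)},\mathcal{O}_{\Sigma^{(1)}})$. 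Under this dictionary, and using $\Sigma'\times_X\{q\}\cong\Sigma^{(1)}\times_{X^{(1)}}\{q\}$, the vanishing of such an endomorphism at $q\in X$ becomes the vanishing of the corresponding function $\alpha\in H^{0}(\Sigma^{(1)},\mathcal{O}_{\Sigma^{(1)}})$ along the fibre $(\pi^{(1)})^{-1}(q)$. But $\pi^{(1)}$ is finite and flat of degree $n$, so every connected component of $\Sigma^{(1)}$ dominates $X^{(1)}$ and hence meets $(\pi^{(1)})^{-1}(q)$; a regular function on the projective curve $\Sigma^{(1)}$ vanishing at a point of each connected component is identically $0$. Hence $\alpha=0$, so $\mathbb{H}^{2}(X,C^{\bullet})=0$ and $\mathcal{L}oc^{r}_{n,q}$ is smooth; being a $GL_n$-torsor over it, $\widetilde{\mathcal{L}oc}_{n,q}^{r}$ is smooth as well.

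The step I expect to be the main obstacle is this last translation: matching $\nabla$-horizontal endomorphisms of $E$ with global functions on the Frobenius-twisted spectral curve $\Sigma^{(1)}$ and pinning down exactly which locus of $\Sigma^{(1)}$ records the point $q$, which requires care with the relative Frobenius on the formal disk at $q$ and with possibly non-reduced or disconnected spectral curves. The remaining ingredients — the deformation-obstruction complex of logarithmic connections and the Serre-duality identification of $\mathbb{H}^{2}$ — are standard.
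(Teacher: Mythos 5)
Your overall strategy — deformation theory, Serre duality reducing the obstruction space to $\nabla$-horizontal endomorphisms vanishing at $q$, and then translating via the spectral correspondence to functions on $\Sigma^{(1)}$ — follows the same contour as the paper, which however works with the framed stack $\widetilde{\mathcal{L}oc}^r_{n,q}$, identifies $\mathbb{H}^0(\mathscr{F}^{\bullet}_{E,\nabla})$ with $\on{Lie}\,\on{Aut}(E,\nabla,\theta)$, and kills it by combining Proposition \ref{prop: pseudo-torsor} ($\on{Aut}(E,\nabla)=k^{\times}$) with the observation that scalars do not fix a frame. Your reduction to the unframed stack and the Serre-duality computation are both fine, and the passage from horizontal endomorphisms to $H^0(\Sigma^{(1)},\mathcal{O}_{\Sigma^{(1)}})$ via invertibility of $E'$, Lemma \ref{lemma: properties of DMOC}, and Cartier descent is correct.

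The gap is in your last step. You assert that a regular function on the projective curve $\Sigma^{(1)}$ vanishing at a point of each connected component must be identically zero; this is false when $\Sigma^{(1)}$ is non-reduced, which it can very well be over $B^{(1)}$ (take $b=0$: then $\Sigma^{(1)}_b$ is the $(n-1)$-st infinitesimal neighbourhood of the zero section, an everywhere non-reduced curve whose structure sheaf has nilpotent local sections vanishing at every point). What actually closes the argument is the concrete computation
\[
H^{0}(\Sigma^{(1)}_b,\mathcal{O}_{\Sigma^{(1)}_b})=\bigoplus_{i=0}^{n-1}H^{0}\bigl(X^{(1)},\mathcal{T}_{X^{(1)}}(-q)^{\otimes i}\bigr)=k,
\]
since $\deg\mathcal{T}_{X^{(1)}}(-q)=1-2g_X<0$ under the paper's genus hypotheses (and analogously in the $g_X=0$, multi-point case), so the summands with $i\geq 1$ contribute nothing. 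Once $H^{0}(\Sigma^{(1)}_b,\mathcal{O})=k$ is established, the only horizontal endomorphism is a scalar, and vanishing at $q$ forces it to be $0$; this is exactly the content the paper extracts, more briefly, from $\on{Aut}(E,\nabla)=k^{\times}$. So your proof is correct in outline and conclusion, but the justification for the final vanishing must be replaced by this direct cohomology computation rather than the (false) statement about arbitrary non-reduced projective curves — a point you half-acknowledge in your closing paragraph but do not resolve.
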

\begin{proof}
In order to show that $\widetilde{\mathcal{L}oc}_{n,q}^r$ is smooth, all we need to show is that for any small extension of finite-generated Artinian local $k$-algebras $A'\longrightarrow A$, an $A$-point of $\widetilde{\mathcal{L}oc}_{n,q}^r$ can be lifted to an $A'$-point of $\widetilde{\mathcal{L}oc}_{n,q}^r$, i.e. we want to produce the dashed arrow for the following commutative diagram:

\bd
\xymatrix{
\on{Spec}(A) \ar[r] \ar[d] &  \widetilde{\mathcal{L}oc}_{n,q}^r \\
\on{Spec}(A') \ar@{.>}[ur]
}
\ed

We denote by $(E,\nabla, \theta)$ the $k$-point
\[
    \on{Spec}(A/m_{A}A)\longrightarrow \on{Spec}(A)\longrightarrow \widetilde{\mathcal{L}oc}_{n,q}^r.
\]
The obstruction to the existence of such liftings lies in the second hypercohomology $\mathbb{H}^2(\mathscr{F}^{\bullet}_{E,\nabla})$ of the complex
\[
    \mathscr{F}^{\bullet}_{E,\nabla}: \mathcal{E}nd(E)(-q)\xrightarrow{\nabla_{\mathcal{E}nd(E)}} \mathcal{E}nd(E)\otimes \Omega_X(q),
\]
where $\nabla_{\mathcal{E}nd(E)}$ is the canonical connection on $\mathcal{E}nd(E)$ induced by $\nabla$. By Serre duality, $\mathbb{H}^2(\mathscr{F}^{\bullet}_{E,\nabla})\cong \mathbb{H}^0(\mathscr{F}^{\bullet}_{E,\nabla})$. Note that $\mathbb{H}^0(\mathscr{F}^{\bullet}_{E,\nabla})$ is isomorphic to the Lie algebra of $\on{Aut}(E,\nabla,\theta)$. Since $(E,\nabla)\in\mathcal{L}oc_{n,q}^r$, we have $\on{Aut}(E,\nabla)=k^{\times}$ by Proposition \ref{prop: pseudo-torsor}. But multiplication by scalars does not preserve the framing $\theta$, therefore $\on{Aut}(E,\nabla,\theta)$ is the trivial group. This implies $\mathbb{H}^2(\mathscr{F}^{\bullet}_{E,\nabla})\cong \mathbb{H}^0(\mathscr{F}^{\bullet}_{E,\nabla})=0$. 
\end{proof}

A by-product of the proof of Lemma \ref{Lemma: torsor smooth} is the following computation of the dimension of $\mathcal{L}oc_{n,q}^r$. Since $\mathbb{H}^0(\mathscr{F}^{\bullet}_{E,\nabla})=\mathbb{H}^2(\mathscr{F}^{\bullet}_{E,\nabla})=0$, 
\begin{align*}
   \on{dim}\widetilde{\mathcal{L}oc}_{n,q}^r=&\on{dim}\mathbb{H}^1(\mathscr{F}^{\bullet}_{E,\nabla})\\
   =&2(\on{dim}H^{0}(X, \mathcal{E}nd(E)\otimes \Omega_X(q))-\on{dim}H^{0}(X, \mathcal{E}nd(E)(-q))). 
\end{align*}
By Riemann-Roch, 
\[\on{dim}H^{0}(X, \mathcal{E}nd(E)\otimes \Omega_X(q))-\on{dim}H^{0}(X, \mathcal{E}nd(E)(-q))=n^{2}g. 
\]
Therefore
\begin{align*}
    \on{dim}\mathcal{L}oc_{n,q}^r=&\on{dim}\widetilde{\mathcal{L}oc}_{n,q}^r-\on{dim}GL_n(k)\\
    =&n^2(2g-1).
\end{align*}

\begin{proof}[Proof of Proposition \ref{prop: torsor is smooth}]
    Let $b\in B^{(1)}(k)$. By Proposition \ref{prop: surjective on k-points} and \ref{prop: pseudo-torsor}, the fiber $(\mathcal{L}oc_{n,q}^r)_{b}\coloneqq \mathcal{L}oc_{n,q}^r\times_{B^{(1)},b}{\on{Spec}(k)}$ is a $\on{Pic}(\Sigma_{b}^{(1)})$-torsor. We compute that 
    \[\on{dim}B^{(1)}=n(n+1)(2g-1)/2+n(1-g)
    \]
    and 
    \[
        \on{dim}\on{Pic}(\Sigma_{b}^{(1)})=g_{\Sigma_{b}^{(1)}}-1=n(n-1)(2g-1)/2+n(g-1),
    \]
    therefore
    \[
        \on{dim}\on{Pic}(\Sigma_{b}^{(1)})= \on{dim}\mathcal{L}oc_{n,q}^r-\on{dim}B^{(1)}.
    \]
    Since both $\mathcal{L}oc_{n,q}^r$ and $B^{(1)}$ are smooth, the map $h'$ is flat by miracle flatness. Furthermore, Since $\on{Pic}(\Sigma_{b}^{(1)})$ is smooth, $(\mathcal{L}oc_{n,q}^r)_{b}$ is smooth, therefore $h'$ is smooth.  
\end{proof}

\begin{proof}[Proof of Theorem \ref{thm: nah}]
The first part follows from Proposition \ref{prop: pseudo-torsor} and Corollary \ref{cor: torsor is smooth}. 
For the second part, it is easy to see that the morphism $\Phi$ defined above induces a morphism
\[
		\Phi: \mathcal{L}oc_{n, \sigma(\underline{a})}^r\times^{\on{Pic}(\Sigma^{(1)}/B_{\underline{a}}^{(1)})}\mathcal{H}iggs_{n,\underline{a}}(X^{(1)})\longrightarrow \mathcal{L}oc_{n, \sigma(\underline{a})},
\]
and $\Psi$ induces the inverse.
\end{proof}

Now we discuss how the residues of Higgs bundles and flat connections match under $\Phi$. 

\begin{prop}\label{prop: match of nilpotent orbits}
	Let $(E, \nabla_E)\in\mathcal{L}oc_{n,\sigma(\underline{a})}^r$ and $(M,\phi_M)\in\mathcal{H}iggs_{n,\underline{a}}(X^{(1)})$ such that 
	\[h'(E, \nabla_E)=h^{(1)}(M, \phi_M)=b\in B_{\underline{a}}^{(1)}(k).
	\]
	Denote the image of $(E, \nabla_E)$ and $(M, \phi_M)$ under $\Phi$ by $(F,\nabla_F)$. Then $\on{res}_q(\psi_{\nabla_F})$ and $\on{res}_q(\phi_M)$ lie in the same adjoint orbit.
	
\end{prop}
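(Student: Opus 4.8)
The plan is to identify both $\on{res}_q(\psi_{\nabla_F})$ and $\on{res}_q(\phi_M)$ with the operator "multiplication by the tautological section $y$" on the fiber over $q$ of the spectral sheaf $\mathcal{M}$, up to a harmless invertible twist. First I would analyze the $p$-curvature of the connection $\nabla_F=\nabla_{can}$ on $F=\pi'_{*}(E'\otimes_{\mathcal O_{\Sigma'}}\rho^{*}\mathcal M)$ produced by $\Phi$. Two observations drive this. (i) The canonical $D_X(-q)$-module structure on $\mathcal O_{\Sigma'}=\mathcal O_X\otimes_{\mathcal O_{X^{(1)}}}\mathcal O_{\Sigma^{(1)}}$ has vanishing $p$-curvature (it is pulled back from $\mathcal O_X$, whose canonical connection has zero $p$-curvature, using $(x\partial_x)^{[p]}=x\partial_x$); consequently the same holds for $\rho^{*}\mathcal M$, and moreover the $p$-curvature of any $D_X(-q)$-module on $\Sigma'$ is $\mathcal O_{\Sigma'}$-linear. (ii) The $p$-curvature is additive under the tensor product over $\mathcal O_{\Sigma'}$, $\psi_{\nabla_1\otimes\nabla_2}=\psi_{\nabla_1}\otimes 1+1\otimes\psi_{\nabla_2}$, which follows from $\binom{p}{k}\equiv 0\pmod p$ for $0<k<p$ applied to the commuting operators $\nabla_1(\partial)\otimes 1$ and $1\otimes\nabla_2(\partial)$ on the tensor product over $k$, the descent to the tensor product over $\mathcal O_{\Sigma'}$ being guaranteed by (i). Applying this with $\nabla_1$ the $D_X(-q)$-module structure on the spectral sheaf $E'$ of $(E,\nabla_E)$ (for which $\psi_{\nabla_1}$ is multiplication by $y$, by the definition of the spectral sheaf in Section~\ref{section: hitchin morphism for flat connections}) and $\nabla_2=\nabla_{can}$ on $\rho^{*}\mathcal M$ (for which $\psi_{\nabla_2}=0$), I conclude that $\psi_{\nabla_F}$ is multiplication by the tautological section $y\in\mathcal O_{\Sigma'}$ on $E'\otimes_{\mathcal O_{\Sigma'}}\rho^{*}\mathcal M$; in particular this sheaf is the spectral sheaf of $\psi_{\nabla_F}$, consistently with Lemma~\ref{lemma: still map to b}.

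Next I would restrict to $q$. Let $q^{(1)}\in X^{(1)}$ be the image of $q$ under $\on{Fr}$. The Cartesian square $\Sigma'=X\times_{X^{(1)}}\Sigma^{(1)}$ gives a canonical identification of finite $k$-schemes $\Sigma'\times_X\{q\}\cong\Sigma^{(1)}\times_{X^{(1)}}\{q^{(1)}\}=:\on{Spec}R_q$, compatibly with the tautological sections, where $R_q\cong k[y]/\prod_i(y-a_i)^{m_i}$ (using that the eigenvalues of $\on{res}_q(\phi_M)$ are $\underline a$, so the fiber polynomial of $\Sigma^{(1)}$ over $q^{(1)}$ is $\prod_i(y-a_i)^{m_i}$). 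Since $\pi'$ and $\pi^{(1)}$ are finite, base change (together with base change along $\rho$ over $q$) yields isomorphisms of $R_q$-modules $F_q\cong E'|_{\on{Spec}R_q}\otimes_{R_q}\bigl(\mathcal M|_{\on{Spec}R_q}\bigr)$ and $M_q\cong\mathcal M|_{\on{Spec}R_q}$, under which $\on{res}_q(\psi_{\nabla_F})$ and $\on{res}_q(\phi_M)$ both act as multiplication by $y\in R_q$. As $E'$ is an invertible sheaf on $\Sigma'$ and $R_q$ is a finite product of Artinian local rings (so $\on{Pic}(R_q)=0$), $E'|_{\on{Spec}R_q}$ is free of rank one over $R_q$; hence the two $R_q$-modules are isomorphic. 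Equivalently, $\on{res}_q(\psi_{\nabla_F})$ on $F_q$ and $\on{res}_q(\phi_M)$ on $M_q$ define isomorphic finite-dimensional $k[y]$-modules, so they are conjugate in $GL_n(k)$, i.e. lie in the same adjoint orbit.

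I expect the main obstacle to be the bookkeeping in the first two steps: making precise that additivity of $p$-curvature applies to the $\mathcal O_{\Sigma'}$-tensor product — which rests on the vanishing of the $p$-curvature of the canonical structure on $\mathcal O_{\Sigma'}$, so that each factor $\psi_{\nabla_i}\otimes 1$ is $\mathcal O_{\Sigma'}$-linear and well defined — and checking that the identification $\Sigma'\times_X\{q\}\cong\Sigma^{(1)}\times_{X^{(1)}}\{q^{(1)}\}$ matches the tautological sections and the restrictions of the spectral sheaves, so that "multiplication by $y$" is literally the same operator on both sides modulo the invertible twist by $E'|_{\on{Spec}R_q}$. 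Once these are in place the conclusion is immediate, and it also re-derives the eigenvalue statement ($\on{res}_q(\psi_{\nabla_F})$ has eigenvalues $\underline a$) as a special case.
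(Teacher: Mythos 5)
Your proof is correct and takes essentially the same approach as the paper: both identify $\on{res}_q(\psi_{\nabla_F})$ and $\on{res}_q(\phi_M)$ with multiplication by the tautological section $y$ (i.e.\ the action of $x^p\partial_x^p$) on the fiber over $q$, then observe that tensoring by the invertible sheaf $E'$---free of rank one on the Artinian fiber ring---preserves the conjugacy class. You supply more of the bookkeeping the paper leaves implicit (the additivity of $p$-curvature under $\otimes_{\mathcal O_{\Sigma'}}$, which the paper subsumes into Lemma~\ref{lemma: still map to b}, and the explicit justification that $E'|_{\on{Spec}R_q}$ is free), but the mechanism of the argument is the same.
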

\begin{proof}
	Let $E'\in\on{Coh}(\Sigma')$ be the spectral sheaf of $(E,\nabla_E)$, and let $\mathcal{M}\in\on{Coh}(\Sigma^{(1)})$ be the spectral sheaf of $(M,\phi_M)$. Let $x$ be a local parameter of $X$ at $q$. Note that $\on{res}_q(\phi_M)$ is the action of $x^p\partial_x^p$ on the fiber $\pi^{(1)}_{*}(\mathcal{M})|_q$, and $\on{res}_q(\psi_{\nabla_F})$ is the action of $x^p\partial_x^p$ on the fiber $\pi'_{*}(E'\otimes \rho^{*}(\mathcal{M}))|_q$. Since $\pi^{(1)}_{*}(\mathcal{M})|_q\cong \pi'_{*}(\rho^{*}(\mathcal{M}))|_q$ with the same $x^p\partial_x^p$ action, it suffices to show that the action of $x^p\partial_x^p$ on $\pi'_{*}(\rho^{*}(\mathcal{M}))|_q$ and $\pi'_{*}(E'\otimes\rho^{*}(\mathcal{M}))|_q$ lie in the same adjoint orbit. This follows from the assumption that $E'$ is an invertible sheaf. 
\end{proof}

In particular, if $\sigma(\underline{a})=(\underline{0})$, Proposition \ref{prop: match of nilpotent orbits} together with Remark \ref{remark: residue of p-curvature} implies that $\on{res}_q(\nabla_F)$ and $\on{res}_q(\phi_M)$ lie in the same nilpotent orbit. Therefore we have the following
\begin{corollary}
    The scheme-theoretic image of $\mathcal{L}oc_{n,P}$ under the Hitchin map $h'$ is $B_P^{(1)}$.
\end{corollary}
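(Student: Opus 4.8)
The plan is to show separately that the scheme-theoretic image of $h'$ is contained in, and contains, the closed subscheme $B_P^{(1)}\subseteq B^{(1)}$; since $B_P^{(1)}$ is the Frobenius twist of the affine space $B_P$ and hence reduced and irreducible, the two inclusions give the equality. Throughout I use that the scheme-theoretic image of a quasi-compact morphism $f\colon Z\to Y$ — the closed subscheme cut out by $\ker(\mathcal{O}_Y\to f_{*}\mathcal{O}_Z)$ — only grows when $Z$ is replaced by a substack, has underlying set the closure of $f(Z)$, and equals $Y$ whenever $f$ is faithfully flat.

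For the inclusion $\subseteq$, I would show that $h'$ factors through the closed subscheme $B_P^{(1)}$. Let $(E,\nabla)$ be an $S$-point of $\mathcal{L}oc_{n,P}$, so that $\on{res}_q(\nabla)$ is an $S$-section of $E_P\times_P n_P^{+}$. A strictly block-upper-triangular $n\times n$ matrix $N$ over any ring satisfies $N^{n}=0$, hence $N^{p}=0$ since $p>n$, so Lemma \ref{lemma: residue of p-curvature} gives $\on{res}_q(\psi_\nabla)=-\on{res}_q(\nabla)$, again an $S$-section of $E_P\times_P n_P^{+}$. Thus $\psi_\nabla$ is a meromorphic Higgs field that is nilpotent with respect to the parabolic reduction at $q$, and the local computation at $q$ underlying Lemma \ref{lemma: inclusion} — which only involves the entries of the connection matrix, hence is valid over an arbitrary $S$ — forces the coefficients of the characteristic polynomial of $\psi_\nabla$ to have the vanishing orders at $q$ that cut out $B_P^{(1)}$ inside $B^{(1)}$. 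Hence $h'(E,\nabla)\in B_P^{(1)}(S)$. (Alternatively, the set-theoretic inclusion $h'(\mathcal{L}oc_{n,P})\subseteq B_P^{(1)}$ follows from Theorem \ref{thm: nah} and Proposition \ref{prop: match of nilpotent orbits}, which identify the image of $h'$ with the image of the Hitchin map for parabolic Higgs bundles on $X^{(1)}$; one then upgrades to the scheme-theoretic statement using that $\mathcal{L}oc_{n,P}$ is reduced.)

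For the inclusion $\supseteq$, consider the open substack $\mathcal{L}oc_{n,P}^{0}\coloneqq\mathcal{L}oc_{n,P}\times_{B_P^{(1)}}(B_P^{0})^{(1)}=(h')^{-1}\big((B_P^{0})^{(1)}\big)$. By the discussion following Theorem \ref{thm: nah intro} in the introduction — restricting Theorem \ref{thm: nah}(2) with $\underline{a}=\underline{0}$ to $(B_P^{0})^{(1)}$ and combining it with Theorem \ref{thm: spectral data family} — the stack $\mathcal{L}oc_{n,P}^{0}$ is a torsor under $\on{Pic}(\widetilde{\Sigma}^{(1)}/(B_P^{0})^{(1)})$. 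Since $\widetilde{\Sigma}\to B_P^{0}$ is a smooth proper family of curves, this relative Picard stack is smooth and surjective over $(B_P^{0})^{(1)}$, so $\mathcal{L}oc_{n,P}^{0}\to(B_P^{0})^{(1)}$ is faithfully flat, with scheme-theoretic image all of $(B_P^{0})^{(1)}$. Composing with the open immersion $(B_P^{0})^{(1)}\hookrightarrow B^{(1)}$, the scheme-theoretic image of $\mathcal{L}oc_{n,P}^{0}\to B^{(1)}$ is the scheme-theoretic closure of $(B_P^{0})^{(1)}$ in $B^{(1)}$, which is $B_P^{(1)}$: indeed $(B_P^{0})^{(1)}$ is dense in $B_P^{(1)}$ by (the Frobenius twist of) Proposition \ref{prop: the open subset is non=empty}, $B_P^{(1)}$ is closed in $B^{(1)}$, and both are reduced. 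As $\mathcal{L}oc_{n,P}^{0}$ is an open substack of $\mathcal{L}oc_{n,P}$, the scheme-theoretic image of $h'$ contains $B_P^{(1)}$, and together with the first half we conclude it equals $B_P^{(1)}$.

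The main obstacle is the inclusion $\subseteq$: promoting the essentially formal set-theoretic containment to the scheme-theoretic assertion that $h'$ factors through $B_P^{(1)}$. This needs either that the local matrix computation behind Lemma \ref{lemma: inclusion} be carried out over an arbitrary test scheme, with careful tracking of the descent of the characteristic polynomial of the $p$-curvature from $X$ to $X^{(1)}$, or the separate input that $\mathcal{L}oc_{n,P}$ is reduced. The remaining steps are formal consequences of Theorems \ref{thm: nah} and \ref{thm: spectral data family} and Proposition \ref{prop: the open subset is non=empty}.
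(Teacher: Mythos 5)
Your argument is correct and tracks the paper's (very terse) justification closely: the paper combines Proposition~\ref{prop: match of nilpotent orbits} and Remark~\ref{remark: residue of p-curvature} to transfer the surjectivity statement of Proposition~\ref{prop: the open subset is non=empty} from the parabolic Hitchin map on $X^{(1)}$ to $h'$, together with the (implicit) factorization of $h'$ through $B_P^{(1)}$ coming from Lemma~\ref{lemma: residue of p-curvature} and the local computation of Lemma~\ref{lemma: inclusion}. Your formulation packages the $\supseteq$ inclusion via the faithful flatness of the $\on{Pic}(\widetilde{\Sigma}^{(1)}/(B_P^0)^{(1)})$-torsor $\mathcal{L}oc^0_{n,P}$ (which is Proposition~\ref{prop: parabolic nah}, appearing later in the paper but depending only on material from Sections 2--4, so there is no circularity), whereas the paper gets away with pointwise surjectivity over the open dense $(B_P^0)^{(1)}$ plus the reducedness and irreducibility of the affine space $B_P^{(1)}$; both routes are sound, and your version of the $\subseteq$ step via the $S$-point computation of $\on{res}_q(\psi_\nabla)$ is the cleaner way to get the scheme-theoretic factorization.
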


\section{Tamely ramified geometric Langlands correspondence in positive characteristic}\label{section: main theorem}
\subsection{The algebra $\mathcal{D}_{\on{Bun}_{n,P}}$}\label{subsection: statement of the main theorem}\label{subsection: definition of the algebra}
In this subsection we clarify what we mean by $\mathcal{D}_{\on{Bun}_{n,P}}$. Since the stack $\on{Bun}_{n,P}$ does not satisfy the property required in Proposition \ref{lemma: Azumaya property stack version}, we cannot apply this proposition directly. In order to solve this problem, we introduce a new stack $\underline{\on{Bun}}_{n, P}$ similar to the stack $\underline{\on{Bun}}_n$ introduced in \cite{BB}. The stack $\underline{\on{Bun}}_{n, P}$ classifies the same objects as $\on{Bun}_{n, P}$, but the morphisms are different. Let $S$ be a $k$-scheme, and let $(E, E^{\bullet}_q)$ and $(F, F^{\bullet}_q)$ be two rank $n$ vector bundles on $S\times X$ with partial flag structures of type $P$ (see Remark \ref{remark: parabolic subgroup}) along $S\times q$, then the set of morphisms between $(E, E^{\bullet}_q)$ and $(F, F^{\bullet}_q)$ are defined to be the set of isomorphic classes of pairs $(\iota, \mathcal{L})$, where $\mathcal{L}$ is a line bundle on $S$ and $\iota$ is an isomorphism $\iota: (E, E^{\bullet}_q)\xrightarrow{\simeq} (F\otimes p_S^{*}(\mathcal{L}), F^{\bullet}_q\otimes \mathcal{L})$. By taking $\mathcal{L}=\mathcal{O}_S$, we get a natural map $\on{Bun}_{n, P}\longrightarrow \underline{\on{Bun}}_{n, P}$, and $\on{Bun}_{n, P}$ is a $\mathbb{G}_m$-gerbe over $\underline{\on{Bun}}_{n, P}$. 
\begin{prop}\label{prop: the stack is good}
    The stack $\underline{\on{Bun}}_{n,P}$ satisfies $\on{dim}T^{*}\underline{\on{Bun}}_{n,P}=2\on{dim}\underline{\on{Bun}}_{n,P}$.
\end{prop}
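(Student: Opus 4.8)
The plan is to reduce the assertion to the analogous fact for $\underline{\on{Bun}}_n$ --- one of the inputs carried over from \cite{BB} --- by tracking the defect $e(Y):=\on{dim}T^{*}Y-2\on{dim}Y$ through the morphisms that connect $\underline{\on{Bun}}_{n,P}$ to $\underline{\on{Bun}}_n$, the claim being precisely that $e(\underline{\on{Bun}}_{n,P})=0$. I would use two stability properties of $e$. (i) If $f:Y\to Z$ is a representable smooth morphism of relative dimension $r$, then $T^{*}Y$ is an affine bundle of rank $r$ over $Y\times_{Z}T^{*}Z$, so $\on{dim}T^{*}Y=\on{dim}T^{*}Z+2r$, while $\on{dim}Y=\on{dim}Z+r$; hence $e(Y)=e(Z)$. (ii) If $g:Y\to\bar Y$ is a $\mathbb{G}_m$-gerbe, choose a presentation $\bar Y=[V/H]$ and a central extension $1\to\mathbb{G}_m\to\widetilde{H}\to H\to 1$ with $Y=[V/\widetilde{H}]$ and $\mathbb{G}_m$ acting trivially on $V$ (available for the rigidifications of interest, where $V$ is a space of rigidified parabolic bundles); then the moment map $T^{*}V\to\widetilde{\mathfrak{h}}^{*}$ factors through $\mathfrak{h}^{*}$ and equals the moment map $\mu$ for the $H$-action, so that $T^{*}Y=[\mu^{-1}(0)/\widetilde{H}]$ and $T^{*}\bar Y=[\mu^{-1}(0)/H]$, and therefore $\on{dim}T^{*}Y=\on{dim}T^{*}\bar Y-1$; since also $\on{dim}Y=\on{dim}\bar Y-1$, we get $e(\bar Y)=e(Y)-1$.

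These apply directly. The forgetful morphism $\on{Bun}_{n,P}\to\on{Bun}_n$ is representable, smooth and proper, with all fibres isomorphic to the flag variety $GL_n/P$ --- after base change to a scheme atlas of $\on{Bun}_n$ it is the Zariski-locally trivial flag bundle of the universal bundle along $q$ --- so (i) gives $e(\on{Bun}_{n,P})=e(\on{Bun}_n)$. The morphisms $\on{Bun}_{n,P}\to\underline{\on{Bun}}_{n,P}$ and $\on{Bun}_n\to\underline{\on{Bun}}_n$ are $\mathbb{G}_m$-gerbes (the first is noted just before the proposition), so (ii) gives $e(\underline{\on{Bun}}_{n,P})=e(\on{Bun}_{n,P})-1$ and $e(\underline{\on{Bun}}_n)=e(\on{Bun}_n)-1$. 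Chaining these,
\[ e(\underline{\on{Bun}}_{n,P})=e(\on{Bun}_{n,P})-1=e(\on{Bun}_n)-1=e(\underline{\on{Bun}}_n)=0,\]
where the last equality is the goodness of $\underline{\on{Bun}}_n$ from \cite{BB}. This proves the proposition; equivalently $\on{dim}T^{*}\on{Bun}_{n,P}=2\on{dim}\on{Bun}_{n,P}+1$, the precise sense in which $\on{Bun}_{n,P}$ is ``almost good''.

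I do not expect a real obstacle on this route; the only slightly delicate point is (ii), which is settled by the presentation through a central extension. For completeness I would note the alternative, self-contained argument and where its difficulty lies. One may instead compute $\on{dim}T^{*}\on{Bun}_{n,P}=\on{dim}\mathcal{H}iggs_{n,P}$ via the Hitchin map $h_P$: over the dense open $B_P^{0}$, Theorem \ref{thm: spectral data family} identifies $\mathcal{H}iggs^{0}_{n,P}$ with $\on{Pic}(\widetilde{\Sigma}/B_P^{0})$, of dimension $\on{dim}B_P+g_{\widetilde{\Sigma}_b}-1$, and a Riemann--Roch computation of $\on{dim}B_P$ together with the genus of $\widetilde{\Sigma}_b$ --- obtained from $g_{\Sigma_b}$ by subtracting the $\delta$-invariant of $\hat{f}=f_1f_2\cdots f_r$, which one reads off from condition (2.\ref{open: second}) --- shows this number is $2\on{dim}\on{Bun}_{n,P}+1$. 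The hard point is then to show that no fibre of $h_P$ over $B_P\setminus B_P^{0}$ has dimension larger than the generic one, equivalently that $\mathcal{H}iggs_{n,P}$ has no component of dimension exceeding $\on{dim}\mathcal{H}iggs^{0}_{n,P}$ --- the parabolic analogue of the Lagrangian property of the global nilpotent cone --- and it is exactly this input that the reduction to \cite{BB} lets one avoid.
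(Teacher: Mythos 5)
Your reduction to the goodness of $\underline{\on{Bun}}_n$ is an attractive idea, but stability property (i) is false as stated, and the argument you give for it does not work. For $f:Y\to Z$ smooth (even of schemes), the relative cotangent sequence $0\to f^{*}\Omega_Z\to\Omega_Y\to\Omega_{Y/Z}\to 0$ produces a \emph{closed immersion} $Y\times_{Z}T^{*}Z\hookrightarrow T^{*}Y$ and an affine-bundle projection $T^{*}Y\to T^{*}_{Y/Z}$ of rank $\dim Z$; there is no natural morphism $T^{*}Y\to Y\times_{Z}T^{*}Z$, affine bundle or otherwise. More to the point, with the paper's moment-map definition of $T^{*}$ for stacks, the \emph{dimension} formula $\dim T^{*}Y=\dim T^{*}Z+2r$ is simply false for representable smooth morphisms: take $Y=BP\to Z=BG$ for $P\subsetneq G$ a parabolic. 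This is representable, smooth and proper of relative dimension $r=\dim G/P$, and $T^{*}BG=BG$, $T^{*}BP=BP$, so $e(BG)=\dim G$ while $e(BP)=\dim P\neq\dim G$. You apply (i) to $\on{Bun}_{n,P}\to\on{Bun}_n$, whose target is itself not good (it has $e(\on{Bun}_n)=1$), so the failure is not ruled out a priori.

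The idea is nevertheless salvageable, because a one-sided version of (i) is true and suffices. Present $Z=[V/H]$ and $Y=[W/H]$ with $W=V\times_{Z}Y$ smooth over $V$ of relative dimension $r$; stratifying $V$, $W$ by the rank of the infinitesimal action $a_v:\mathfrak{h}\to T_{v}V$, using $\on{rank}(a_w)\geq\on{rank}(a_{p(w)})$, one obtains $\dim\mu_{W}^{-1}(0)\leq\dim\mu_{V}^{-1}(0)+2r$, i.e.\ $e(Y)\leq e(Z)$. Since also $e\geq 0$ for any smooth algebraic stack (the moment map has fibre codimension at most $\dim H$), the correct deduction is: $\underline{\on{Bun}}_{n,P}\to\underline{\on{Bun}}_n$ is representable smooth, the target is good by \cite{BB}, hence $0\leq e(\underline{\on{Bun}}_{n,P})\leq e(\underline{\on{Bun}}_n)=0$ — which in particular makes your gerbe step (ii), which is fine, unnecessary. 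Note that this is genuinely different from the paper's route: the paper does not reduce to \cite{BB} but follows Ginzburg \cite{Ginzburg}, producing for each point of the nilpotent cone $\mathcal{N}ilp\subset T^{*}\underline{\on{Bun}}_{n,P}$ a complete flag refining the parabolic one and compatible with the Higgs field (via surjectivity of $\on{Bun}_{B}\to\on{Bun}_{n,P}$ and a local lifting of a compatible flag), so that $\mathcal{N}ilp$ is isotropic; goodness then follows from the arguments of \cite{Ginzburg}, Propositions 7 and 8. Your (corrected) argument is shorter and reuses the unramified input, whereas the paper's gives the stronger and independently useful fact that the parabolic global nilpotent cone is Lagrangian.
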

\begin{proof}
  We apply the same strategy as in \cite{Ginzburg}. The main goal is to show that the nilpotent cone $\mathcal{N}ilp\coloneqq h_P^{-1}(0)\subset T^{*}\underline{\on{Bun}}_{n,P}$ is isotropic. Then the argument used in the proof of Proposition 7, 8 in \cite{Ginzburg} applies here to deduce the desired equality. Let $B$ be a Borel subgroup of $GL_n(k)$ that is contained in $P$. We denote by $\on{Bun}_B$ the moduli stack of $B$-bundles on $X$. By Lemma 23 in \cite{Heinloth}, the natural map $f: \on{Bun}_B\longrightarrow \on{Bun}_{n,P}$ is surjective. In order to apply Lemma 5 in \cite{Ginzburg} to show that $\mathcal{N}ilp$ is isotropic, all we need to show is that for any $(E, E^{\bullet}_q, \phi)\in \mathcal{N}ilp(k)$, there exists $E_B\in \on{Bun}_B(k)$ such that $f(E_B)=(E,E^{\bullet}_q)$ and $f^{*}(\phi)=0\in T^{*}_{E_B}\on{Bun}_B$, i.e. there exists a complete flag structure of $E$ over $X$ such that its restriction to $q$ is compatible with the partial flag structure $E^{\bullet}_q$, and the Higgs field $\phi$ is nilpotent with respect to this complete flag structure. We choose a basis $(e_1, e_2,\dots, e_n)$ of $E_q$ such that the complete flag structure 
  \[
       0\subset \langle e_1\rangle\subset\langle e_1, e_2\rangle\subset \cdots \langle e_1, e_2, \dots, e_n \rangle=E_q
  \]
  is compatible with $E_q^{\bullet}$. Let $U=\on{Spec}A$ be an open neighborhood of $q$ over which $E$ and $\Omega^1_X(q)$ trivializes. Fixing such trivializations, the Higgs field $\phi$ corresponds to an $A$-linear map $A^n\longrightarrow A^n$. Since $\on{res}_q(\phi)$ is nilpotent with respect to $E_q^{\bullet}$, $\phi_q(e_i)$ lies in the $k$-vector space spanned by $e_1, e_2, \dots, e_{i-1}$. Shrinking $U$ if necessary, the basis $(e_1, e_2,\dots, e_n)$ of $E_q$ can be lifted to a basis $(\tilde{e}_1, \tilde{e}_2,\dots, \tilde{e}_n)$ of $E$ over $U$ that still satisfies $\phi(\tilde{e}_i)\in \langle \tilde{e}_1, \tilde{e}_2, \dots, \tilde{e}_{i-1}\rangle.$
The $B$-reduction of $E$ over $U$ given by
\[
     0\subset \langle \tilde{e}_1\rangle\subset\langle \tilde{e}_1, \tilde{e}_2\rangle\subset \cdots \langle \tilde{e}_1, \tilde{e}_2, \dots, \tilde{e}_n \rangle=E|_U
\]
can be extended to a $B$-reduction over $X$ since $GL_n(k)/B$ is projective. Such a $B$-reduction satisfies all the properties we need.
\end{proof}

\begin{remark}
Over $\mathbb{C}$ the field of complex numbers, the analogue of Proposition \ref{prop: the stack is good} was proved in \cite{BKV17} (see Theorem 6, 7) for a general reductive group $G$ and parahoric $P$. It is not clear to the author if their arguments can be adapted to the characteristic $p$ setting.
\end{remark}

Now we apply Proposition \ref{lemma: Azumaya property stack version} to $\underline{\on{Bun}}_{n,P}$ and get $\mathcal{D}_{\underline{\on{Bun}}_{n,P}}$. The sheaf of algebras $\mathcal{D}_{\on{Bun}_{n,P}}$ is defined to be the pull-back of $\mathcal{D}_{\underline{\on{Bun}}_{n,P}}$ to $\on{Bun}_{n,P}$. 

We denote $\mathcal{H}iggs_{n,P}\times_{B_{P}}B^{0}_P$ by $\mathcal{H}iggs^{0}_{n,P}$ and  $\mathcal{L}oc_{n,P}\times_{B_P^{(1)}}(B^{0}_P)^{(1)}$ by $\mathcal{L}oc_{n,P}^{0}$. Since $\mathcal{H}iggs^{0}_{n,P}$ is smooth, $\mathcal{D}_{{\on{Bun}}_{n,P}}$ restricts to an Azumaya algebra $\mathcal{D}^{0}_{{\on{Bun}}_{n,P}}$ on
\[(\mathcal{H}iggs^{0}_{n,P})^{(1)}\subseteq \mathcal{H}iggs_{n,P}^{(1)}\cong  T^{*}{\on{Bun}}_{n,P}^{(1)}.
\]
Now we are in the position to state the main theorem of the paper:
\begin{theorem}\label{thm: GLP}
	There exists an $\mathcal{O}_{\mathcal{L}oc^0_{n,P}}\boxtimes \mathcal{D}^0_{\on{Bun}_{n,P}}$-module $\mathcal{P}$, such that the Fourier-Mukai transform $\Phi_{\mathcal{P}}$ with kernel $\mathcal{P}$ induces an equivalence
	\[ D^{b}(\on{QCoh}({\mathcal{L}oc_{n,P}^{0}}))\xrightarrow{\simeq}D^{b}(\mathcal{D}^{0}_{{\on{Bun}}_{n,P}}\on{-mod})
	\]
	between the bounded derived category of quasi-coherent sheaves on $\mathcal{L}oc_{n,P}^{0}$ and the bounded derived category of $\mathcal{D}^{0}_{{\on{Bun}}_{n,P}}$-modules.
\end{theorem}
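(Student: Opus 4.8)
The plan is to follow the strategy of \cite{BB}: realize both sides as categories of (twisted) quasi-coherent sheaves on a pair of dual commutative group stacks over $(B_P^0)^{(1)}$, and then apply a Fourier--Mukai transform. By Theorem \ref{thm: spectral data family} the stack $(\mathcal{H}iggs^0_{n,P})^{(1)}\cong\on{Pic}(\widetilde{\Sigma}^{(1)}/(B_P^0)^{(1)})$ is a commutative group stack over $(B_P^0)^{(1)}$, and $\mathcal{D}^0_{\on{Bun}_{n,P}}$ is an Azumaya algebra on it. First I would equip $\mathcal{D}^0_{\on{Bun}_{n,P}}$ with a tensor structure in the sense of Section \ref{subsection: tensor structure}: using Corollary \ref{cor: 1-form} together with the adjunction description (as in the proof of Theorem \ref{thm: spectral data single}) of the Higgs field attached to a tensor product of line bundles on $\widetilde{\Sigma}^{(1)}$, one produces the Morita equivalence $\mu^{*}\mathcal{D}^0_{\on{Bun}_{n,P}}\simeq\mathcal{D}^0_{\on{Bun}_{n,P}}\boxtimes\mathcal{D}^0_{\on{Bun}_{n,P}}$ and verifies the pentagon axiom. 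Then the stack of splittings $\mathcal{Y}\coloneqq\mathcal{Y}_{\mathcal{D}^0_{\on{Bun}_{n,P}}}$ becomes a commutative group stack sitting in a short exact sequence
\[
0\longrightarrow B\mathbb{G}_m\longrightarrow\mathcal{Y}\longrightarrow\on{Pic}(\widetilde{\Sigma}^{(1)}/(B_P^0)^{(1)})\longrightarrow 0,
\]
and by Lemma \ref{lemma: gerbe} the category $\mathcal{D}^0_{\on{Bun}_{n,P}}\on{-mod}$ is equivalent to the category $\on{QCoh}(\mathcal{Y})_1$ of weight-one quasi-coherent sheaves on $\mathcal{Y}$.

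Dualizing this sequence (using that $B\mathbb{G}_m$ and $\mathbb{Z}$ are Cartier dual and that $\on{Pic}(\widetilde{\Sigma}^{(1)}/(B_P^0)^{(1)})$ is self-dual, by autoduality of the relative Jacobian of the smooth projective family $\widetilde{\Sigma}^{(1)}\to(B_P^0)^{(1)}$) gives
\[
0\longrightarrow\on{Pic}(\widetilde{\Sigma}^{(1)}/(B_P^0)^{(1)})\longrightarrow\mathcal{Y}^{\vee}\xrightarrow{\ \pi\ }\mathbb{Z}\longrightarrow 0,
\]
so $(\mathcal{Y}^{\vee})_1\coloneqq\pi^{-1}(1)$ is a torsor under $\on{Pic}(\widetilde{\Sigma}^{(1)}/(B_P^0)^{(1)})$. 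By Corollary \ref{cor:nah} combined with Theorem \ref{thm: spectral data family} and Remark \ref{remark: Richardson orbit}, $\mathcal{L}oc^0_{n,P}$ is likewise a torsor under the same group stack, so the heart of the proof is an equivariant isomorphism $(\mathcal{Y}^{\vee})_1\cong\mathcal{L}oc^0_{n,P}$.

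To build it I would introduce a common target. First show that the tautological section $\theta^{(1)}$ of $\Omega_{T^{*}(X\backslash q)^{(1)}}$ restricts on $\Sigma^{(1)}\backslash(\pi^{(1)})^{-1}(q)$ to a one-form that extends across the fibre over $q$ in the normalization, yielding $\widetilde{\theta}^{(1)}\in\Gamma(\widetilde{\Sigma}^{(1)},\Omega_{\widetilde{\Sigma}^{(1)}/(B_P^0)^{(1)}})$; the explicit blow-up construction of $\widetilde{\Sigma}$ in Section \ref{subsection: spectral data} is what cancels the pole of $y\,\frac{dx}{x}$ after normalizing. Let $\on{Conn}^1$ be the moduli stack over $(B_P^0)^{(1)}$ of rank-one flat connections on $\widetilde{\Sigma}$ with $p$-curvature $\widetilde{\theta}^{(1)}$; by Theorem \ref{thm: Cartier descent} it is a $\on{Pic}(\widetilde{\Sigma}^{(1)}/(B_P^0)^{(1)})$-torsor. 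On the one hand, a parabolic flat connection in $\mathcal{L}oc^0_{n,P}$, through its spectral sheaf (which sheafifies to a line bundle on $\widetilde{\Sigma}$ by Theorem \ref{thm: spectral data family}) carrying the connection as a $D_X(-q)$-module structure, determines a rank-one flat connection on $\widetilde{\Sigma}$ whose $p$-curvature is $\widetilde{\theta}^{(1)}$, with the nilpotent-residue condition at $q$ automatic by Lemma \ref{lemma: residue of p-curvature} and Remark \ref{remark: residue of p-curvature}; this gives $\mathcal{L}oc^0_{n,P}\xrightarrow{\sim}\on{Conn}^1$. On the other hand, analysing $\mathcal{Y}_{\mathcal{D}^0_{\on{Bun}_{n,P}}}$ as a multiplicative $B\mathbb{G}_m$-gerbe over $\on{Pic}(\widetilde{\Sigma}^{(1)}/(B_P^0)^{(1)})$ --- using Corollary \ref{cor: 1-form} and the spectral correspondence to see that a splitting of $\mathcal{D}^0_{\on{Bun}_{n,P}}$ over a line bundle $\mathcal{L}$ on $\widetilde{\Sigma}^{(1)}$ is the same datum as a flat connection with $p$-curvature $\widetilde{\theta}^{(1)}$ on the corresponding line bundle on $\widetilde{\Sigma}$, compatibly with the group law --- identifies the extension class and gives $(\mathcal{Y}^{\vee})_1\cong\on{Conn}^1$. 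Composing, $(\mathcal{Y}^{\vee})_1\cong\mathcal{L}oc^0_{n,P}$.

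Finally I would invoke the twisted Fourier--Mukai equivalence for the dual commutative group stacks $\mathcal{Y}$ and $\mathcal{Y}^{\vee}$ over $(B_P^0)^{(1)}$: the Poincaré sheaf induces $D^{b}(\on{QCoh}(\mathcal{Y})_1)\xrightarrow{\sim}D^{b}(\on{QCoh}(\mathcal{Y}^{\vee})_1)$, where the subscript on the right records the $\pi$-weight $1$, i.e.\ support on $(\mathcal{Y}^{\vee})_1$. Restricting the Poincaré sheaf along $\mathcal{L}oc^0_{n,P}\cong(\mathcal{Y}^{\vee})_1\hookrightarrow\mathcal{Y}^{\vee}$ and transporting through the identifications of the previous paragraphs produces the kernel $\mathcal{P}$, an $\mathcal{O}_{\mathcal{L}oc^0_{n,P}}\boxtimes\mathcal{D}^0_{\on{Bun}_{n,P}}$-module, and the asserted equivalence $\Phi_{\mathcal{P}}$. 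I expect the main obstacle to be the identification $(\mathcal{Y}^{\vee})_1\cong\mathcal{L}oc^0_{n,P}$: both the extension of $\theta^{(1)}$ to the normalization (which genuinely uses the equisingular resolution of Section \ref{subsection: spectral data}) and the matching of the multiplicative-gerbe class of $\mathcal{Y}_{\mathcal{D}^0_{\on{Bun}_{n,P}}}$ with $\on{Conn}^1$ --- the parabolic counterpart of the central computation of \cite{BB} --- require careful bookkeeping of the residue and $p$-curvature conditions at $q$ under the spectral correspondence and its normalization, uniformly in families over $(B_P^0)^{(1)}$.
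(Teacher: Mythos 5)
Your proposal follows essentially the same route as the paper: you construct a tensor structure on $\mathcal{D}^0_{\on{Bun}_{n,P}}$ from the additivity of the pullback of the tautological $1$-form along the Abel--Jacobi map (Proposition \ref{prop: 1-form}, cf.\ Corollary \ref{cor: 1-form}), dualize the resulting $B\mathbb{G}_m$-extension of $\on{Pic}(\widetilde{\Sigma}^{(1)}/(B_P^0)^{(1)})$, identify $(\mathcal{Y}^{\vee})_1$ with $\mathcal{L}oc^0_{n,P}$ by funneling both into the $\on{Pic}$-torsor of rank-one flat connections on $\widetilde{\Sigma}$ with $p$-curvature $\widetilde\theta^{(1)}$ (the paper's Propositions \ref{prop: key} and \ref{prop: main thm torsor dual}), and conclude with the twisted Fourier--Mukai equivalence of Proposition \ref{prop: FM}. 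This matches the paper's proof in structure and in all the essential ingredients.
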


\subsection{The tensor structure on $\mathcal{D}^{0}_{\on{Bun}_{n,P}}$}\label{subsection: group structure on Azumaya algebra}
Recall that in Section \ref{section: Spectral data of parabolic Higgs bundles}, we constructed a family of curves $\widetilde{\Sigma}\longrightarrow B_P^{0}$ such that 
\[
        \mathcal{H}iggs^{0}_{n,P}\cong \on{Pic}(\widetilde{\Sigma}/B_P^{0}).
\]
In this subsection we show that there is a natural tensor structure on the Azumaya algebra $\mathcal{D}^{0}_{\on{Bun}_{n,P}}$, in the sense of \cite{OV} (see Section \ref{subsection: tensor structure}). We denote $\widetilde{\Sigma}\backslash \widetilde{\pi}^{-1}(B_P^0\times q)$ by $\widetilde{\Sigma}^{0}$, where $\widetilde{\pi}:  \widetilde{\Sigma}\longrightarrow B_P^{0}\times X$ is the universal spectral cover.
Let $i$ be the natural inclusion
\[
    i: \widetilde{\Sigma}^{0}\longrightarrow B_P^{0}\times T^{*}X.
\]
We denote by $a$ the morphism
\[
    a:  \widetilde{\Sigma}\times_{B_P^0} \on{Pic}(\widetilde{\Sigma}/B_{P}^0) \longrightarrow \on{Pic}(\widetilde{\Sigma}/B_{P}^0)
\]
that maps $(\widetilde{x},L)$ to $L(\widetilde{x})$. 
We denote by $\kappa$ the Abel-Jacobi map
\[
    \kappa: \widetilde{\Sigma}/B_P^0\longrightarrow \on{Pic}(\widetilde{\Sigma}/B_{P}^0)
\]  
that maps $\widetilde{x}\in \widetilde{\Sigma}/B_P^0$ to $\mathcal{O}_{\widetilde{\Sigma}}(\widetilde{x})$. 

Let $\theta_X$ be the tautological 1-form on $T^{*}X$ and $\theta_{{\on{Bun}}_{n,P}}$ the tautological 1-form on $T^{*}{\on{Bun}}_{n,P}$. By similar arguments as in Theorem 4.12 in \cite{BB}, we have
\begin{prop} \label{prop: 1-form}
When restricted to $\widetilde{\Sigma}^0\times_{B_P^0} \on{Pic}(\widetilde{\Sigma}/B_{P}^0)$,
\[
   i^{*}\theta_X\boxtimes \theta_{{\on{Bun}}_{n,P}}= a^{*}\theta_{{\on{Bun}}_{n,P}}|_{\widetilde{\Sigma}^0\times_{B_P^0} \on{Pic}(\widetilde{\Sigma}/B_{P}^0)}.
\]
In particular,  
\[
    i^{*}\theta_X=\kappa^{*}\theta_{{\on{Bun}}_{n,P}}|_{\widetilde{\Sigma}^{0}}.
\]
\end{prop}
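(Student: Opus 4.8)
The plan is to follow the proof of \cite{BB}, Theorem 4.12, noting that all the objects occurring in the statement live over $X\setminus q$, where neither the parabolic structure at $q$ nor the twist by $q$ intervenes, so that the computation is formally identical to the non-parabolic case in \cite{BB}.

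First I would record the two tautological $1$-forms in terms of Serre duality. Over $X\setminus q$ one has $T^{*}X(q)|_{X\setminus q}=T^{*}(X\setminus q)$, and $i$ sends $\widetilde{x}\in\widetilde{\Sigma}^{0}$ to the point $(x,y(\widetilde{x}))$ of $T^{*}(X\setminus q)$, where $x=\widetilde{\pi}(\widetilde{x})\in X\setminus q$ and $y(\widetilde{x})\in\Omega_{X}(q)|_{x}$ is the value there of the tautological section; hence $i^{*}\theta_{X}(w)=\langle y(\widetilde{x}),d\widetilde{\pi}(w)\rangle$ for $w\in T_{\widetilde{x}}\widetilde{\Sigma}^{0}$. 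On the other side, for a $k$-point $(E,E_{q}^{\bullet},\phi)$ of $\mathcal{H}iggs^{0}_{n,P}$, an open substack of $T^{*}\on{Bun}_{n,P}$, the tangent space of $\on{Bun}_{n,P}$ is computed by the hypercohomology of the appropriate two-term complex of parabolic endomorphisms, $\phi$ is the corresponding cotangent vector via the Serre-duality pairing, and $\theta_{\on{Bun}_{n,P}}(v)=\langle\phi,d\rho(v)\rangle$ where $\rho$ denotes the projection $T^{*}\on{Bun}_{n,P}\to\on{Bun}_{n,P}$.

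Next I would identify the derivative of $a$. Fix $b\in B_{P}^{0}$ and $L\in\on{Pic}(\widetilde{\Sigma}_{b})$; after the canonical identification $T_{\widetilde{x}}\widetilde{\Sigma}_{b}\cong\mathcal{O}_{\widetilde{\Sigma}_{b}}(\widetilde{x})|_{\widetilde{x}}$, the derivative of $\widetilde{x}\mapsto L(\widetilde{x})$ is the connecting homomorphism $H^{0}(\mathcal{O}(\widetilde{x})|_{\widetilde{x}})\to H^{1}(\mathcal{O}_{\widetilde{\Sigma}_{b}})$ of the sequence $0\to\mathcal{O}\to\mathcal{O}(\widetilde{x})\to\mathcal{O}(\widetilde{x})|_{\widetilde{x}}\to 0$, twisted by $L$. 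Pushing forward by $\widetilde{\pi}$ and using the isomorphism $\on{Pic}(\widetilde{\Sigma}_{b})\cong h_{P}^{-1}(b)$ of Theorem \ref{thm: spectral data family} together with the spectral inclusion $\widetilde{\pi}_{*}\mathcal{O}_{\widetilde{\Sigma}_{b}}\hookrightarrow\mathcal{E}nd(\widetilde{\pi}_{*}L)$, this realizes the corresponding tangent vector to $\on{Bun}_{n,P}$ as a minimal (upward) Hecke modification of $E=\widetilde{\pi}_{*}L$ at $x\neq q$; being supported away from $q$, it fixes the flag $E_{q}^{\bullet}$, so it genuinely deforms the parabolic bundle.

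Finally I would carry out the local pairing. By the projection formula and the fact that $\phi$ is multiplication by the tautological section $y$ on $\widetilde{\pi}_{*}L$, the Serre pairing on $X$ of $\phi$ against the pushed-forward connecting class becomes, via $\widetilde{\pi}^{*}\Omega_{X}(q)\hookrightarrow\Omega_{\widetilde{\Sigma}_{b}}$, a residue on $\widetilde{\Sigma}_{b}$; evaluating it at $\widetilde{x}$ gives exactly $\langle y(\widetilde{x}),d\widetilde{\pi}(w)\rangle=i^{*}\theta_{X}(w)$. Since $a$ transports the $\on{Pic}$-direction isomorphically and $\theta_{\on{Bun}_{n,P}}$ only sees the image of $d\rho$, a tangent vector to $\widetilde{\Sigma}^{0}\times_{B^{0}_{P}}\on{Pic}(\widetilde{\Sigma}/B_{P}^{0})$ with curve-component $w$ and Picard-component $\ell$ is sent by $da$ to a vector whose pairing with $\phi$ is $\langle y(\widetilde{x}),d\widetilde{\pi}(w)\rangle+\theta_{\on{Bun}_{n,P}}(\ell)$; this is the claimed identity $i^{*}\theta_{X}\boxtimes\theta_{\on{Bun}_{n,P}}=a^{*}\theta_{\on{Bun}_{n,P}}$ on $\widetilde{\Sigma}^{0}\times_{B^{0}_{P}}\on{Pic}(\widetilde{\Sigma}/B_{P}^{0})$, and restricting along the section $\widetilde{x}\mapsto(\widetilde{x},\mathcal{O}_{\widetilde{\Sigma}})$, along which $a$ becomes $\kappa$, yields $i^{*}\theta_{X}=\kappa^{*}\theta_{\on{Bun}_{n,P}}$. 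The main obstacle is precisely this last paragraph: translating the Serre pairing of a Higgs field against a minimal Hecke modification into a residue on the spectral curve and checking that it equals the eigenvalue, and verifying that in the family version over $B_{P}^{0}$ the base directions contribute nothing extra. Both are exactly what \cite{BB} do for $\on{Bun}_{n}$; the only new point is that here the modifications take place over $X\setminus q$, which makes the comparison with \cite{BB} direct.
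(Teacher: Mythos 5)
Your proposal follows essentially the same route as the paper: the paper simply cites \cite{BB}, Theorem 4.12, and your argument is an unpacking of that proof, with the single new observation — which you correctly flag as the key point — that the Abel--Jacobi deformation $\widetilde{x}\mapsto L(\widetilde{x})$ is a minimal Hecke modification supported at $\widetilde{\pi}(\widetilde{x})\in X\setminus q$, so it leaves the flag $E^{\bullet}_q$ untouched and genuinely deforms the parabolic bundle. The residue/Serre-duality computation you sketch is exactly the content of BB~4.12, and restricting the $a$-identity along a fixed line bundle (fiberwise over $B_P^0$) to deduce the second, $\kappa$-identity is the intended step; it may be worth one extra sentence that the contribution from the $\on{Pic}$-component of the section $\widetilde{x}\mapsto(\widetilde{x},\mathcal{O}_{\widetilde{\Sigma}_b})$ vanishes because it factors through the base $B_P^0$, so both identities are really being compared as relative $1$-forms over $B_P^0$, which is all that is used later for the tensor structure.
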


For the proof of Proposition \ref{prop: 1-form}, we consider the  moduli stack $\mathcal{H}ecke^1_P$ of quadruples \[((E,E^{\bullet}_q),(F, F^{\bullet}_q),x,i: E\hookrightarrow F),\]
where $x\in X\backslash q$, $(E,E^{\bullet}_q), (F, F^{\bullet}_q)\in \on{Bun}_{n,P}$ such that $F/E$ is the simple skyscraper sheaf at $x$, and the partial flag structures $E^{\bullet}_q$ and $F^{\bullet}_q$ coincide under $i$. By considering $\on{Im}(i_x)\subset F_x$, this data is equivalent to a triple $((F, F^{\bullet}_q), x, V\subset F_x)$, where $V$ is a dimension $n-1$ subspace of $F_x$. We consider the following projections:
\bd
\xymatrix{
& \mathcal{H}ecke^1_P\ar[dl]_{q} \ar[dr]^{p}\\
\on{Bun}_{n,P} & & X\backslash q\times \on{Bun}_{n,P}
}
\ed
where $q$ maps the quadruple to $(F, F^{\bullet}_q)$ and $p$ maps the quadruple to $((E, E_q^{\bullet}),x)$. Both $p$ and $q$ are smooth.

Consider the following pull-back diagram:
\bd
\xymatrix{
Z^0\ar[rr]^{f_1} \ar[d]^{f_2} &&  q^{*}\mathcal{H}iggs^0_{n,P}\ar[d]^{dq}\\
p^{*}(T^{*}(X\backslash q)\times \mathcal{H}iggs^0_{n,P})\ar[rr]^{dp} && T^{*}\mathcal{H}ecke^1_P
}
\ed
We define $\alpha_1$ to be the map:
\[
    \alpha_1=\on{pr}_2\circ f_1: Z^0\longrightarrow \mathcal{H}iggs^0_{n,P},
\]
where $\on{pr}_2$ is the projection $q^{*}\mathcal{H}iggs^0_{n,P}=\mathcal{H}ecke^1_P\times_{q,\on{Bun}_{n,P}}\mathcal{H}iggs^0_{n,P}\longrightarrow \mathcal{H}iggs^0_{n,P}$. Similarly we define 
\[\alpha_2=\on{pr}_2\circ f_2: Z^0\longrightarrow T^{*}(X\backslash q)\times \mathcal{H}iggs^0_{n,P}.
\]
The stack $Z^0$ and the maps $\alpha_1$, $\alpha_2$ can be described as follows:

\begin{lemma}
    The stack $Z^0$ is isomorphic to $\widetilde{\Sigma}^0\times_{B_P^0}\mathcal{H}iggs_{n,P}^0$. Under this isomorphism, $\alpha_1$ corresponds to the addition map $a$, and $\alpha_2$ corresponds to the product of the projection map $\widetilde{\Sigma}^0\subset T^{*}(X\backslash q) \times B_P^0\xrightarrow{\on{pr}_1} T^{*}(X\backslash q)$ with the identity map of $\mathcal{H}iggs_{n,P}^0$.
\end{lemma}
\begin{proof}
    Let $((E,E^{\bullet}_q),(F, F^{\bullet}_q),x,i: E\hookrightarrow F)$ be a $k$-point of $\mathcal{H}ecke^1_P$ which we denote by $\tau$. There is a short exact sequence of cotangent spaces
\[
    0\longrightarrow T_x^{*}X\xrightarrow{p_X^{*}} T_{\tau}^{*}\mathcal{H}ecke^1_P\xrightarrow{\pi} T_\tau^{*}p_X^{-1}(x)\longrightarrow 0,
\]
where $p_X$ is the projection $\mathcal{H}ecke^1_P\longrightarrow X$. The fiber $p_X^{-1}(x)$ classifies $(F, F^{\bullet}_q)\in\on{Bun}_{n,P}$ together with a subspace $V\subset F_x$ of dimension $n-1$. Therefore $T_\tau^{*}p_X^{-1}(x)$ is the subspace of twisted Higgs fields $\phi\in \Gamma(X, \mathcal{E}nd(F)\otimes\Omega_X(q+x))$ such that $\on{res}_q(\phi)$ is nilpotent with respect to the partial flag structure $F_q^{\bullet}$ and $\on{res}_x(\phi)$ is nilpotent with respect to $V\subset F_x$. The composite $\pi\circ dq$ maps $(\tau, (F,F_q^{\bullet}, \phi_F))$ to $\phi_F$, and $\pi\circ dp$ maps $(\tau, (E, E_q^{\bullet}, \phi_E), (x,\xi))$ to the unique extension of $\phi_E$ to $F$. Therefore $Z^0$ classifies triples 
\[((F, F_q^{\bullet}, \phi_F), x, E\subset F), 
\]
where $(F, F_q^{\bullet}, \phi_F)\in \mathcal{H}iggs_{n,P}$, $x\in X\backslash q$ such that $F/E=k_x$ and $\phi_F$ restricts to a twisted Higgs field on $E$ with no pole at $x$. Since $(F, F_q^{\bullet}, \phi_F)$ is isomorphic to $(E, E_q^{\bullet}, \phi_E)$ away from $x$, they are mapped to the same point $b\in B_P^0$ under the Hitchin map. Let $\mathcal{L}$ resp. $\mathcal{L}'\in \on{Pic}(\widetilde{\Sigma}_b)$ be the invertible sheaf corresponding to $(E, E_q^{\bullet}, \phi_E)$ resp. $(F, F_q^{\bullet}, \phi_F)$ under the isomorphism in Theorem \ref{thm: spectral data family}. Since $F/E=k_x$, $\mathcal{L}'/\mathcal{L}=k_{x'}$ for some $x'\in \widetilde{\Sigma}_b^0$ that maps to $x$ under the spectral cover map. Therefore having a triple $((F, F_q^{\bullet}, \phi_F), x, E\subset F)$ as above is equivalent to having $(b, \mathcal{L}, x')$, where $b\in B_P^0$, $\mathcal{L}\in \on{Pic}(\widetilde{\Sigma}_b)$ and $x'\in \widetilde{\Sigma}_b^0$. 
\end{proof}
\begin{proof}[Proof of Proposition \ref{prop: 1-form}]
The goal is to show $\alpha_1^{*}\theta_{\on{Bun}_{n,P}}=\alpha_2^{*}(\theta_{\on{Bun}_{n,P}} \boxtimes \theta_X)$. Both 1-forms are equal to the pull-back of the tautological 1-form on $T^*\mathcal{H}ecke^1_P$ to $Z^0$.
\end{proof}

Let $\theta^{0}_{{\on{Bun}}_{n,P}}$ be the restriction of $\theta_{{\on{Bun}}_{n,P}}$ to $\mathcal{H}iggs^0_{n,P}$. By Lemma 3.14 in \cite{BB}, in order to construct a tensor structure on $\mathcal{D}^{0}_{\on{Bun}_{n,P}}$, it is enough to show that for the addition map 
\[ m:\on{Pic}(\widetilde{\Sigma}/B_P^{0})\times \on{Pic}(\widetilde{\Sigma}/B_P^{0})\longrightarrow \on{Pic}(\widetilde{\Sigma}/B_P^{0}),
\]
the 1-form $\theta^{0}_{{\on{Bun}}_{n,P}}$ satisfies the following equality:
\begin{equation} \label{eq:1-form}
m^{*}\theta^{0}_{{\on{Bun}}_{n,P}}=\theta^{0}_{{\on{Bun}}_{n,P}}\boxtimes \theta^{0}_{{\on{Bun}}_{n,P}}.
\end{equation}

We denote by $\on{Pic}^{d}(\widetilde{\Sigma}/B^{0}_P)$ the degree $d$ component of $\on{Pic}(\widetilde{\Sigma}/B^{0}_P)$. Since there are isomorphisms between components of $\on{Pic}(\widetilde{\Sigma}/B_P^{0})$ that preserve $\theta^{0}_{{\on{Bun}}_{n,P}}$, it is enough to prove the equality (\ref{eq:1-form}) for large enough $d$, $d'$ and 
\[
    m_{d,d'}:\on{Pic}^{d}(\widetilde{\Sigma}/B_P^{0})\times \on{Pic}^{d'}(\widetilde{\Sigma}/B_P^{0})\longrightarrow \on{Pic}^{d+d'}(\widetilde{\Sigma}/B_P^{0}).
\]
We denote by $\kappa_d$ the map
\[
   \kappa_d: (\widetilde{\Sigma}/B_P^{0})^d\longrightarrow \on{Pic}^{d}(\widetilde{\Sigma}/B_P^{0})
\]
that maps $(\widetilde{x}_1, \widetilde{x}_2, \dots, \widetilde{x}_d)\in (\widetilde{\Sigma}/B_P^{0})^d$ to $\mathcal{O}_{\widetilde{\Sigma}}(\widetilde{x}_1+\widetilde{x}_2+\cdots+\widetilde{x}_d)$. For $d>2g_{\Sigma_b}-2$, there is an open subset of $\widetilde{\Sigma}^d$ such that $\kappa_d$ is smooth and dominant. Therefore it is enough to show that
\[
    \kappa_{d+d'}^{*}\theta^{0}_{{\on{Bun}}_{n,P}}=\kappa_d^{*}\theta^{0}_{{\on{Bun}}_{n,P}}\boxtimes\kappa_{d'}^{*}\theta^{0}_{{\on{Bun}}_{n,P}}. 
\] 
By Proposition \ref{prop: 1-form}, this equality holds on $(\widetilde{\Sigma}^{0})^{d+d'}$, therefore it holds on $\widetilde{\Sigma}^{d+d'}$. 
\subsection{Torsor structure on $\mathcal{L}oc_{n,P}^{0}$}
By Corollary \ref{cor:nah}, Proposition \ref{prop: match of nilpotent orbits}, Remark \ref{remark: residue of p-curvature}, Remark \ref{remark: Richardson orbit} and Theorem \ref{thm: spectral data family}, we have the following:

\begin{prop}\label{prop: parabolic nah}
\mbox{}
\begin{enumerate}
    
    \item 

    The isomorphism in Corollary \ref{cor:nah} induces \[\mathcal{L}oc_{n,P}^{0}\cong\mathcal{L}oc_{\mathcal{N}}^r\times_{B^{(1)}}(B_P^0)^{(1)}\times^{\on{Pic}(\Sigma^{(1)}/(B_P^{0})^{(1)})}(\mathcal{H}iggs^{0}_{n,P})^{(1)},
    \]
    \item
    the action of $\on{Pic}(\widetilde{\Sigma}^{(1)}/(B_P^{0})^{(1)})$ on $(\mathcal{H}iggs^{0}_{n,P})^{(1)}$ gives $\mathcal{L}oc_{n,P}^{0}$ the structure of a $\on{Pic}(\widetilde{\Sigma}^{(1)}/(B_P^{0})^{(1)})$-torsor. 
    
\end{enumerate}
\end{prop}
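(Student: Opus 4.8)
The plan is to deduce Proposition~\ref{prop: parabolic nah} from the non-abelian Hodge decomposition of Corollary~\ref{cor:nah} by base change over the open locus $(B_P^0)^{(1)}\subseteq B_{\mathcal{N}}^{(1)}$, identifying both factors there with the parabolic moduli stacks. Since contracted products commute with base change on $B_{\mathcal{N}}^{(1)}$, pulling back the isomorphism of Corollary~\ref{cor:nah}(2) along $(B_P^0)^{(1)}\hookrightarrow B_{\mathcal{N}}^{(1)}$ produces an isomorphism
\[
\mathcal{L}oc_{\mathcal{N}}\times_{B_{\mathcal{N}}^{(1)}}(B_P^0)^{(1)}\;\cong\;\left(\mathcal{L}oc_{\mathcal{N}}^{r}\times_{B^{(1)}}(B_P^0)^{(1)}\right)\times^{\on{Pic}(\Sigma^{(1)}/(B_P^0)^{(1)})}\left(\mathcal{H}iggs_{\mathcal{N}}(X^{(1)})\times_{B_{\mathcal{N}}^{(1)}}(B_P^0)^{(1)}\right),
\]
where I have used $\mathcal{L}oc_{\mathcal{N}}^{r}\to B_{\mathcal{N}}^{(1)}\subseteq B^{(1)}$ and $(B_P^0)^{(1)}\subseteq B_{\mathcal{N}}^{(1)}$ to rewrite the first factor over $B^{(1)}$. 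It then remains to match the outer stacks with $\mathcal{L}oc_{n,P}^{0}$ and $(\mathcal{H}iggs^{0}_{n,P})^{(1)}$.

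For the Higgs factor, Remark~\ref{remark: Richardson orbit} shows that an $\Omega_{X^{(1)}}(q)$-twisted Higgs bundle on $X^{(1)}$ lying over $(B_P^0)^{(1)}$ has residue in the Richardson orbit $\mathcal{O}_P$ and hence carries a unique compatible type-$P$ parabolic reduction along $q$; this gives $\mathcal{H}iggs_{\mathcal{N}}(X^{(1)})\times_{B_{\mathcal{N}}^{(1)}}(B_P^0)^{(1)}\cong(\mathcal{H}iggs^{0}_{n,P})^{(1)}$, which by the Frobenius twist of Theorem~\ref{thm: spectral data family} is $\on{Pic}(\widetilde{\Sigma}^{(1)}/(B_P^0)^{(1)})$. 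For the flat-connection factor I would show that forgetting the parabolic reduction gives an isomorphism $\mathcal{L}oc_{n,P}^{0}\xrightarrow{\simeq}\mathcal{L}oc_{\mathcal{N}}\times_{B_{\mathcal{N}}^{(1)}}(B_P^0)^{(1)}$: for a flat connection $(E,\nabla)$ on the target, writing it, via Corollary~\ref{cor:nah}, as the image under $\Phi$ of a point of $\mathcal{L}oc_{\mathcal{N}}^{r}$ and a point $(M,\phi_M)$ of $\mathcal{H}iggs_{\mathcal{N}}(X^{(1)})$ over $(B_P^0)^{(1)}$, Proposition~\ref{prop: match of nilpotent orbits} places $\on{res}_q(\psi_\nabla)$ in the same adjoint orbit as $\on{res}_q(\phi_M)\in\mathcal{O}_P$, while Remark~\ref{remark: residue of p-curvature} identifies it with $-\on{res}_q(\nabla)$; hence $\on{res}_q(\nabla)\in\mathcal{O}_P$. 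Because $\mathcal{O}_P$ is a single $GL_n$-orbit meeting $n_P^{+}$, the residue normalizes a flag of type $P$, so a compatible reduction exists, and because in type $A$ the $GL_n$-centralizer of a Richardson element of $n_P^{+}$ lies in $P$, it is unique. Substituting these two identifications into the display above gives part~(1).

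For part~(2), Corollary~\ref{cor:nah}(1) says $\mathcal{L}oc_{\mathcal{N}}^{r}\times_{B^{(1)}}(B_P^{0})^{(1)}$ is a torsor under $\on{Pic}(\Sigma^{(1)}/(B_P^{0})^{(1)})$. The normalization $\widetilde{\Sigma}^{(1)}\to\Sigma^{(1)}$ induces a pullback homomorphism $\on{Pic}(\Sigma^{(1)}/(B_P^0)^{(1)})\to\on{Pic}(\widetilde{\Sigma}^{(1)}/(B_P^0)^{(1)})$, and $\on{Pic}(\Sigma^{(1)}/(B_P^0)^{(1)})$ acts through it on $(\mathcal{H}iggs^{0}_{n,P})^{(1)}\cong\on{Pic}(\widetilde{\Sigma}^{(1)}/(B_P^0)^{(1)})$. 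Since the contracted product of a $G$-torsor with a $G'$-torsor along a homomorphism $G\to G'$ is a $G'$-torsor, the right-hand side of part~(1)---hence $\mathcal{L}oc_{n,P}^{0}$---is a $\on{Pic}(\widetilde{\Sigma}^{(1)}/(B_P^0)^{(1)})$-torsor, with structure induced by the action on $(\mathcal{H}iggs^{0}_{n,P})^{(1)}$, which is the assertion.

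I expect the main obstacle to be the identification $\mathcal{L}oc_{n,P}^{0}\cong\mathcal{L}oc_{\mathcal{N}}\times_{B_{\mathcal{N}}^{(1)}}(B_P^0)^{(1)}$: checking that over $(B_P^0)^{(1)}$ a flat connection with nilpotent residue admits a type-$P$ parabolic reduction compatible with it, and exactly one. This is the one place where the genericity condition $(2.\ref{open: second})$, the $p$-curvature residue computations of Remark~\ref{remark: residue of p-curvature} and Proposition~\ref{prop: match of nilpotent orbits}, and the type-$A$ centralizer property of Richardson orbits all have to be combined carefully; in particular the existence direction requires noting that every element of $\mathcal{O}_P$ normalizes a flag of type $P$, not merely one of the same Jordan type, which uses $\mathcal{O}_P\cap n_P^{+}\neq\emptyset$. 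The remaining steps are formal base change and standard torsor bookkeeping.
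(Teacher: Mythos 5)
Your overall strategy — base-changing the non-abelian Hodge decomposition of Corollary~\ref{cor:nah} to $(B_P^0)^{(1)}$, using the residue computations and Remark~\ref{remark: Richardson orbit} to pass to the parabolic moduli, and standard torsor bookkeeping for part~(2) — is the right one, and it is essentially the one the paper intends. However there is a genuine gap in the two intermediate identifications on which your argument hinges, namely
\[
\mathcal{H}iggs_{\mathcal{N}}(X^{(1)})\times_{B_{\mathcal{N}}^{(1)}}(B_P^0)^{(1)}\cong(\mathcal{H}iggs^{0}_{n,P})^{(1)}
\qquad\text{and}\qquad
\mathcal{L}oc_{n,P}^{0}\cong\mathcal{L}oc_{\mathcal{N}}\times_{B_{\mathcal{N}}^{(1)}}(B_P^0)^{(1)}.
\]
Neither of these holds when $P$ is not a Borel. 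You justify the first by citing Remark~\ref{remark: Richardson orbit}, but that remark is a statement about a point of $\mathcal{H}iggs_{n,P}$, i.e.\ a Higgs bundle \emph{already equipped with} a parabolic reduction along which the residue is nilpotent: the conclusion that $\on{res}_q(\phi)\in\mathcal{O}_P$ uses this hypothesis. An unadorned point of $\mathcal{H}iggs_{\mathcal{N}}(X^{(1)})$ lying over $b\in(B_P^0)^{(1)}$ need not have residue in $\mathcal{O}_P$. For a concrete counterexample take $n=3$, $P$ corresponding to $\mu=(2,1)$ (so $\lambda=(2,1)$ and $\mathcal{O}_P$ is the subregular orbit), and $b\in B_P^0$ with $\Sigma_b$ integral: the spectral sheaf $\mathcal{O}_{\Sigma_b}$ pushes forward to a Higgs bundle whose residue acts on $k[[x,y]]/(\hat f_b, x)\cong k[y]/(y^3)$ as the \emph{regular} nilpotent, which lies outside $\mathcal{O}_P$ and admits no compatible type-$P$ reduction. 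So the forgetful maps $(\mathcal{H}iggs^0_{n,P})^{(1)}\to\mathcal{H}iggs_{\mathcal{N}}(X^{(1)})\times_{B_\mathcal{N}^{(1)}}(B_P^0)^{(1)}$ and $\mathcal{L}oc_{n,P}^0\to\mathcal{L}oc_{\mathcal{N}}\times_{B_\mathcal{N}^{(1)}}(B_P^0)^{(1)}$ are monomorphisms onto proper substacks, not isomorphisms.

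The fix keeps your skeleton but replaces the two equalities with inclusions and then matches the images. Remark~\ref{remark: Richardson orbit}, applied on each side, identifies $(\mathcal{H}iggs^{0}_{n,P})^{(1)}$ with the locus in $\mathcal{H}iggs_{\mathcal{N}}(X^{(1)})\times_{B_\mathcal{N}^{(1)}}(B_P^0)^{(1)}$ where the residue lies in $\mathcal{O}_P$, and similarly identifies $\mathcal{L}oc_{n,P}^{0}$ with the locus in $\mathcal{L}oc_{\mathcal{N}}\times_{B_\mathcal{N}^{(1)}}(B_P^0)^{(1)}$ where $\on{res}_q(\nabla)\in\mathcal{O}_P$. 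Proposition~\ref{prop: match of nilpotent orbits} together with Remark~\ref{remark: residue of p-curvature} then shows that the base-changed isomorphism of Corollary~\ref{cor:nah} carries the $\mathcal{O}_P$-residue locus on the flat-connection side to the sub-contracted product $\mathcal{L}oc_{\mathcal{N}}^r\times_{B^{(1)}}(B_P^0)^{(1)}\times^{\on{Pic}(\Sigma^{(1)}/(B_P^0)^{(1)})}(\mathcal{H}iggs^0_{n,P})^{(1)}$, which is the statement of part~(1). With this correction your part~(2), namely that a contracted product of a $\on{Pic}(\Sigma^{(1)})$-torsor with the $\on{Pic}(\widetilde{\Sigma}^{(1)})$-torsor $(\mathcal{H}iggs^0_{n,P})^{(1)}$ along the normalization pullback is a $\on{Pic}(\widetilde{\Sigma}^{(1)})$-torsor, goes through unchanged; Theorem~\ref{thm: spectral data family} supplies the $\on{Pic}(\widetilde{\Sigma}^{(1)})$-torsor structure on the Higgs factor.
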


Let $S$ be a $k$-scheme. Let $b$ be an $S$-point of $(B_P^{0})^{(1)}$. Consider the following commutative diagram:
\[
\xymatrix{
\widetilde{\Sigma}_b \ar@/_/[dddr]_{\widetilde{\pi}} \ar@/^/[drrr]^{\on{Fr}_{\widetilde{\Sigma}_b}} \ar@{.>}[dr]|-{\exists \widetilde{\tau}}\\
&\widetilde{\Sigma}_b' \ar[dd]^{\widetilde{\pi}'} \ar[rr]^{\widetilde{\rho}} \ar[dr]^{\sigma'} && \widetilde{\Sigma}^{(1)}_{b}\ar[dd]^(.35){\widetilde{\pi}^{(1)}}  \ar[dr]^{\sigma^{(1)}}\\
&&\Sigma_b'\ar[rr]^(.35){\rho}\ar[ld]^{\pi'} &&\Sigma^{(1)}_{b}\ar[ld]^{\pi^{(1)}}\\
&X \ar[rr]^{\on{Fr_{X}}} &&X^{(1)}}
\]
Here $\widetilde{\Sigma}'_b\coloneqq X\times_{X^{(1)}}\widetilde{\Sigma}^{(1)}_{b}$. There exists a unique map from $\widetilde{\Sigma}_b$ to $\widetilde{\Sigma}'_b$ that makes the diagram commute. We call this map $\widetilde{\tau}$. Note that $\widetilde{\tau}$ is finite since $\on{Fr}_{\widetilde{\Sigma}_b}$ is finite and $\widetilde{\rho}$ is separated. 

Let $(E,E^{\bullet}_q, \nabla_E)$ be an $S$-point of $\mathcal{L}oc_{n,P}$ such that $h'(E,\nabla_E)=b$. Let $E'\in \on{Coh}(\Sigma'_b)$ be the spectral sheaf. We associate with it an invertible sheaf $\widetilde{E}'\in  \on{Coh}(\widetilde{\Sigma}'_b)$ that satisfies $\sigma'_{*}(\widetilde{E}')=E'$ as follows. Let $(E_1,\nabla_1)\in \mathcal{L}oc^r_{\mathcal{N}}$ and $(E_2,\phi_2)\in \mathcal({H}iggs^{0}_{n,P})^{(1)}$ such that $h'(E_1,\nabla_1)=h^{(1)}(E_2,\phi_2)=b$ and they are mapped to $(E,\nabla_E)$ under the isomorphism in Proposition \ref{prop: parabolic nah}(1). Let $E'_1\in \on{Coh}(\Sigma'_b)$ be the spectral sheaf of $(E_1,\nabla_1)$ and $\mathcal{L}\in \on{Coh}(\Sigma^{(1)}_{b})$ the spectral sheaf of $(E_2,\phi_2)$, then we have $E'\cong E_1\otimes \rho^{*}\mathcal{L}$. By Theorem \ref{thm: spectral data family}, there exists a unique invertible sheaf $\widetilde{\mathcal{L}}$ on $\widetilde{\Sigma}^{(1)}_{b}$ such that $\sigma^{(1)}_{*}\widetilde{\mathcal{L}}=\mathcal{L}$. Now we define 
\[
    \widetilde{E}'=\sigma'^{*}(E_1')\otimes \widetilde{\rho}^{*}\widetilde{\mathcal{L}}. 
\]
This construction does not depend on the choice of $(E_1,\nabla_1)$ and $(E_2,\phi_2)$. 

\begin{lemma}
The flat connection $\nabla_{can}$ on $\widetilde{\tau}^{*}\widetilde{E}'=\mathcal{O}_{\widetilde{\Sigma}_b}\otimes_{\mathcal{O}_{\widetilde{\Sigma}_b'}} \widetilde{E}'$ defined by
	\[ \nabla_{can}(\partial)(f\otimes s)=\partial(f)\otimes s + f \nabla_E({d\widetilde{\pi}^{*}}(\partial))(s)
	\]
	for any $\partial\in \mathcal{T}_{U}$, $f\in \mathcal{O}_{U}$, $s\in \widetilde{E}'(\widetilde{\tau}(U))$ and open $U\subseteq\widetilde{\Sigma}_b$, has no singularities. Here $d\widetilde{\pi}^{*}$ is the tangent map $\mathcal{T}_{\widetilde{\Sigma}_b}\longrightarrow \widetilde{\pi}^{*}\mathcal{T}_X$.
\end{lemma}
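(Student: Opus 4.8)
The claim is local on $\widetilde{\Sigma}_b$ and concerns only the finitely many points $\widetilde{q}_1,\dots,\widetilde{q}_r$ of $\widetilde{\pi}^{-1}(q)$, one on (the normalization of) each branch of $\Sigma_b$ through $q'$: over $X\backslash q$ one has $\Omega_X(q)=\Omega_X$, so $\nabla_E$ contributes no pole to the defining formula and $\nabla_{can}$ is manifestly regular there. The plan is therefore to pass to the formal neighbourhood $\on{Spec}\widehat{\mathcal{O}}_{\widetilde{\Sigma}_b,\widetilde{q}_i}\cong\on{Spec}k[[t]]$ of a fixed $\widetilde{q}_i$, sitting on the $i$-th branch, and to show that $\nabla_{can}$ has no pole there.

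First I would record the local geometry. Fixing a formal parameter $x$ at $q$, the branch of $\Sigma_b$ carrying $\widetilde{q}_i$ is $\on{Spec}k[[x,y]]/(y^{\lambda_i}+a_i(x,y)x)$ with $a_i$ a unit; since $\partial_x(y^{\lambda_i}+a_ix)$ is a unit, this branch is already smooth, so on $\widetilde{\Sigma}_b$ the map $\widetilde{\pi}$ is given in suitable coordinates by $x\mapsto t^{\lambda_i}u(t)$ with $u\in k[[t]]^{\times}$, while the coordinate of $\Sigma'_b$ whose action on $E_q$ computes $\on{res}_q(\psi_{\nabla_E})$ pulls back to $t^p$ times a unit, hence vanishes to order $p$ at $\widetilde{q}_i$. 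Because $p>n\geq\lambda_i\geq1$, the integer $\lambda_i$ is invertible in $k$, and a direct computation gives $d\widetilde{\pi}^{*}(t\partial_t)=\lambda_i\,w(t)\,\widetilde{\pi}^{*}(x\partial_x)$ with $w\in k[[t]]^{\times}$, $w(0)=1$, where $x\partial_x$ is the local generator of $\mathcal{T}_X(-q)$. Trivializing $\widetilde{\tau}^{*}\widetilde{E}'$ near $\widetilde{q}_i$ by a generator $e$ and writing $\nabla_E(x\partial_x)(e)=B\,e$ with $B\in k[[t]]$ (via the $D_X(-q)$-module structure on $\widetilde{E}'$ furnished by Lemma \ref{lemma: properties of DMOC} together with the construction $\widetilde{E}'=\sigma'^{*}(E_1')\otimes\widetilde{\rho}^{*}\widetilde{\mathcal{L}}$), the defining formula yields $\nabla_{can}(t\partial_t)(1\otimes e)=\lambda_i\,w(t)\,B\,(1\otimes e)$; since $\partial_t=t^{-1}(t\partial_t)$ generates $\mathcal{T}_{\widetilde{\Sigma}_b}$ there, $\nabla_{can}$ has at worst a first-order pole at $\widetilde{q}_i$, with residue $\lambda_i\,B(0)$.

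The crux is that $B(0)=0$. The reduction of $B$ modulo $(x)$ is the endomorphism that $\on{res}_q(\nabla_E)$ induces on the summand of $E_q$ cut out by the $i$-th branch; here I would use the branch decomposition $\widehat{E'}\cong\bigoplus_i\widehat{E'}/\mathcal{M}_i$ from the proof of Theorem \ref{thm: spectral data single}, which is available precisely because $\on{res}_q(\nabla_E)$ is nilpotent with respect to the (by Remark \ref{remark: Richardson orbit} unique) parabolic flag, so that each $\widehat{E'}/\mathcal{M}_i$ is invertible on its branch. By Lemma \ref{lemma: residue of p-curvature} and Remark \ref{remark: residue of p-curvature} — in the nilpotent case $\on{res}_q(\psi_{\nabla_E})=-\on{res}_q(\nabla_E)$, using $p>n$ — this endomorphism is minus multiplication by the $p$-curvature coordinate on that summand. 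Hence $B$ and the negative of the $p$-curvature coordinate agree modulo $(x)=(t^{\lambda_i})$; but that coordinate lies in $(t^p)\subseteq(t^{\lambda_i})$ because $p>\lambda_i$, so $B\in(t^{\lambda_i})$ and in particular $B(0)=0$. Thus $\nabla_{can}$ has no pole at $\widetilde{q}_i$, and running over all of $\widetilde{\pi}^{-1}(q)$ finishes the proof.

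I expect the main obstacle to be the local bookkeeping behind the second and third paragraphs: transporting the $D_X(-q)$-module structure through $\widetilde{E}'=\sigma'^{*}(E_1')\otimes\widetilde{\rho}^{*}\widetilde{\mathcal{L}}$, identifying the one-dimensional fibre of $\widetilde{\tau}^{*}\widetilde{E}'$ at $\widetilde{q}_i$ with the $i$-th summand of $E_q$, and — most delicately — tracking the Frobenius twist in the $X$-direction (the interplay of $\Sigma_b$, $\Sigma'_b$ and $\Sigma_b^{(1)}$), since the branch of $\Sigma'_b$ below $\widetilde{q}_i$ is singular when $\lambda_i\geq 2$ and $\widetilde{\Sigma}_b$ is its normalization, which is what makes the assertion that the $p$-curvature coordinate pulls back to $t^p$ times a unit correct. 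Once that is settled, the vanishing of $B(0)$ is the short computation above, powered by the two consequences of the hypothesis $p>n$: that $\lambda_i$ is invertible in $k$ and that $p>\lambda_i$.
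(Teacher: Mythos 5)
Your argument is correct, and the setup (ramification index $\lambda_i$, the formula $d\widetilde{\pi}^{*}(t\partial_t)=\lambda_i w(t)\,\widetilde{\pi}^{*}(x\partial_x)$, reduction to the points over $q$) matches the paper's exactly. Where you diverge is in how you kill the residue. The paper never trivializes $\widetilde{\tau}^{*}\widetilde{E}'$: it raises $\nabla_{can}(t\partial_t)$ to a high power and, using only that $\on{res}_q(\nabla_E)$ is nilpotent (so $(\nabla_E(x\partial_x))^m(s)\in x\widetilde{E}'$ for $m\gg 0$), deduces that the residue of $\nabla_{can}$ at $q_i$ is a nilpotent endomorphism of a line, hence zero. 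You instead pick a local generator $e$, compute the residue exactly as $\lambda_i B(0)$ with $\nabla_E(x\partial_x)(e)=B'e$, $B=\widetilde{\tau}^{*}B'$, and then establish $B(0)=0$ by a detour through the $p$-curvature: $\on{res}_q(\nabla_E)=-\on{res}_q(\psi_{\nabla_E})$ identifies $B\bmod x$ with $-\widetilde{\tau}^{*}y'$, which lies in $(t^p)\subseteq(t^{\lambda_i})$. Both proofs hinge on the same two inputs — nilpotence of $\on{res}_q(\nabla_E)$ and invertibility of the sheaf — and both are valid. Two small remarks on your version: first, the $p$-curvature detour is stronger than needed; once you know $\on{res}_q(\nabla_E)$ restricted to the $i$-th branch summand is \emph{nilpotent} multiplication by $\overline{B'}$ on a rank-one module over a local ring, $\overline{B'}$ already lies in the maximal ideal and $B(0)=0$ follows, with no reference to $y'$ and no need for $p>\lambda_i$ beyond $\lambda_i\ne 0$ in $k$. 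Second, the branch decomposition of $E_q$ that you invoke comes for free here since $\widetilde{E}'$ lives on $\widetilde{\Sigma}'_b$ whose formal branches over $q$ are already disjoint, so you do not actually need to route this through the primary decomposition argument of Theorem~\ref{thm: spectral data single}.
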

\begin{proof}
	Since $b\in B_P^0$, $\widetilde{\pi}^{-1}(q)$ consists of $r$ points $q_1,q_2,\dots,q_r$. 
	The only places that $\nabla_{can}$ might have singularities are $q_1,q_2,\dots,q_r$. Note that $\widetilde{\pi}: \widetilde{\Sigma}_b\longrightarrow X$ has ramification index $\lambda_i$ at $q_i$. Let $t$ be a local parameter at $q_i\in \widetilde{\Sigma}_b$ and $x$ a local parameter at $q\in X$ such that $\widetilde{\pi}^{*}(x)=t^{\lambda_i}$. Let $U$ be an open neighborhood of $q_i$. Since $(E, \nabla_E)$ has nilpotent residue at $q$, there exists a positive integer $N$ such that for any $m\geq N$ and $s\in \widetilde{E}'(\widetilde{\tau}(U))$, $(\nabla_E(x\partial_x))^m(s)\in x\widetilde{E}'$. We compute that 
	\[
	(\nabla_{can}(t\partial_t))^{pN}(f\otimes s)=(t\partial_t)^{pN}(f)\otimes s+f (\nabla_E({d\widetilde{\pi}^{*}}(t\partial_t)))^{pN}(s).
	\]
	Since $(t\partial_t)^{pN}(f)\in t\mathcal{O}_{\widetilde{\Sigma}_b}$ and $(\nabla_E(\on{d\widetilde{\pi}^{*}}(t\partial_t)))^{pN}(s)=(\nabla_E(\lambda_ix\partial_x))^{pN}(s)\in x\widetilde{E}'$, the sum lies in $t\mathcal{O}_{\widetilde{\Sigma}_b}\otimes\widetilde{E}'$. Therefore the residue of $(\widetilde{\tau}^{*}\widetilde{E}', \nabla_{can})$ at $q'$ is nilpotent. But since  $\widetilde{\tau}^{*}\widetilde{E}'$ is an invertible sheaf on $\widetilde{\Sigma}_b$, the residue must be zero, so the flat connection $\nabla_{can}$ has no singularity at $q_i$.   
\end{proof}

We denote by $\widetilde{\theta}_b$ the restriction of $\kappa^{*}\theta_{\on{\on{Bun}}_{n,P}}$ to $\widetilde{\Sigma}_b$. Then we have the following:

\begin{lemma}
    The connection $(\widetilde{\tau}^{*}\widetilde{E}',\nabla_{can})$ is mapped to $\widetilde{\theta}_b^{(1)}$ under the Hitchin map $h'$. 
\end{lemma}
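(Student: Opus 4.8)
The statement to prove is that the flat connection $(\widetilde{\tau}^{*}\widetilde{E}', \nabla_{can})$ on $\widetilde{\Sigma}_b$, which by the previous lemma has no singularities, has $p$-curvature equal to $\widetilde{\theta}_b^{(1)}$, i.e.\ that $h'(\widetilde{\tau}^{*}\widetilde{E}', \nabla_{can}) = \widetilde{\theta}_b^{(1)}$ as a point of the relevant Hitchin base for rank one connections on $\widetilde{\Sigma}_b$. Since $\widetilde{\tau}^{*}\widetilde{E}'$ is an invertible sheaf, its $p$-curvature is simply a section of $\on{Fr}_{\widetilde{\Sigma}_b}^{*}\Omega_{\widetilde{\Sigma}_b^{(1)}}$, and the content of the claim is that this section equals the pull-back of $\widetilde{\theta}_b^{(1)}$. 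The first step is to reduce to a statement about the $p$-curvature away from the ramification points $q_1,\dots,q_r$, since both sides are sections of a line bundle on the irreducible curve $\widetilde{\Sigma}_b$ and therefore are determined by their restriction to the dense open $\widetilde{\Sigma}_b^{0} = \widetilde{\Sigma}_b \setminus \widetilde{\pi}^{-1}(q)$. This is where Proposition \ref{prop: 1-form} becomes the key input.

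\textbf{Key steps.} First I would unwind the definition of $\nabla_{can}$ on $\widetilde{\tau}^{*}\widetilde{E}'$: it is the pull-back via $d\widetilde{\pi}^{*}$ of the connection $\nabla_E$ on the spectral sheaf $E'$, tensored with the canonical connection coming from $\mathcal{O}_{\widetilde{\Sigma}_b}$ over $\mathcal{O}_{\widetilde{\Sigma}_b'}$. Then I would compute its $p$-curvature: the $p$-curvature of the $\mathcal{O}$-part vanishes by Cartier theory (it is a pull-back along the relative Frobenius), and the $p$-curvature of the pulled-back piece is computed from $\psi_{\nabla_E}$. Concretely, over $\widetilde{\Sigma}_b^0$, the sheaf $\widetilde{E}'$ is identified (via the construction $\widetilde{E}' = \sigma'^{*}(E_1')\otimes\widetilde{\rho}^{*}\widetilde{\mathcal{L}}$) with a twist of a splitting module for the Azumaya algebra $\mathcal{D}_{X\setminus q}$, whose $p$-curvature is tautologically the restriction of the canonical $1$-form: the $p$-curvature of $E'$ on $\Sigma'_b$ is $\psi_{\nabla_E}$, which as a Higgs field is exactly the tautological section $y$ pulled back, i.e.\ $i^{*}\theta_X$ in the notation of Section \ref{subsection: group structure on Azumaya algebra}. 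Pulling this back along $\widetilde{\tau}$ and using the commutativity of the big diagram relating $\widetilde{\Sigma}_b$, $\widetilde{\Sigma}_b'$ and $\widetilde{\Sigma}_b^{(1)}$, the $p$-curvature of $(\widetilde{\tau}^{*}\widetilde{E}',\nabla_{can})$ restricted to $\widetilde{\Sigma}_b^0$ is identified with $(\sigma'\circ\widetilde{\tau})^{*}$ of $i^{*}\theta_X$, which by the ``in particular'' part of Proposition \ref{prop: 1-form}, namely $i^{*}\theta_X = \kappa^{*}\theta_{\on{Bun}_{n,P}}|_{\widetilde{\Sigma}^0}$, equals the restriction of $\kappa^{*}\theta_{\on{Bun}_{n,P}}$, which is by definition $\widetilde{\theta}_b$ (Frobenius-twisted to $\widetilde{\theta}_b^{(1)}$ since $p$-curvature naturally lands on the Frobenius twist). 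Finally I would invoke irreducibility of $\widetilde{\Sigma}_b$ (it is a smooth projective curve) and the fact that two sections of a line bundle agreeing on a dense open agree everywhere to extend the identity from $\widetilde{\Sigma}_b^0$ to all of $\widetilde{\Sigma}_b$.

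\textbf{Main obstacle.} The principal technical point is the careful bookkeeping of Frobenius twists and the identification, over $\widetilde{\Sigma}_b^0$, of the invertible sheaf $\widetilde{\tau}^{*}\widetilde{E}'$ with $\on{Fr}^{*}$ of something together with a splitting module structure, so that the $p$-curvature computation reduces cleanly to the tautological $1$-form. One must check that the twisting by $\widetilde{\rho}^{*}\widetilde{\mathcal{L}}$ contributes nothing to the $p$-curvature (since $\widetilde{\mathcal{L}}$ lives on the Frobenius twist $\widetilde{\Sigma}_b^{(1)}$, its pull-back carries the canonical connection of $p$-curvature zero by Cartier descent), and that the remaining factor $\sigma'^{*}(E_1')$ has $p$-curvature given by the tautological section because $E_1'$ is the spectral sheaf of a connection $(E_1,\nabla_1)$ whose $p$-curvature is $b$ by construction. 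Once these two reductions are in place the equality follows formally from Proposition \ref{prop: 1-form} and the extension-by-density argument; the rest is routine local coordinate computation of the kind already carried out in the previous lemma's proof.
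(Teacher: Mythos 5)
Your proposal is correct and follows essentially the same strategy as the paper: reduce the check to the open dense locus $\widetilde{\Sigma}_b^0$ away from $\widetilde{\pi}^{-1}(q)$, compute there that the $p$-curvature of $\nabla_{can}$ is $(i^{*}\theta_X)^{(1)}$ (with the $\mathcal{O}$-part contributing nothing by Cartier), and then identify this with $\widetilde{\theta}_b^{(1)}$ via Proposition \ref{prop: 1-form}, with the extension across the finitely many points over $q$ following from density. Your extra step of splitting $\widetilde{E}' = \sigma'^{*}(E_1')\otimes\widetilde{\rho}^{*}\widetilde{\mathcal{L}}$ and invoking Cartier descent for the $\widetilde{\mathcal{L}}$-factor is a harmless refinement; the paper instead computes $\partial_y^p(f\otimes s)$ directly in local coordinates on $\widetilde{\tau}^{*}\widetilde{E}'$ without decomposing, arriving at the same identification with the pairing against $(i^{*}\theta_X)^{(1)}$.
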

\begin{proof}
    Let $p' \in \widetilde{\Sigma}_b$ such that $\widetilde{\pi}(p')=p\neq q$. Let $x$ be a local parameter at $p$. We denote $\partial_x$ by $y$, so near $p$ the spectral curve $\widetilde{\Sigma}_b$ is the vanishing scheme of a polynomial of the form 
   \[ y^n+b_1(x)y^{n-1}+\cdots+b_{n-1}(x)y+b_n(x).
   \]
   Since $\widetilde{\Sigma}_b$ is smooth, $y-y(p')$ is a local parameter of $\widetilde{\Sigma}_b$ at $p'$. Since $\theta_X=ydx$, we have 
   \[
    i^{*}\theta_X=\partial_y(x) ydy \text{ and }  (i^{*}\theta_X)^{(1)}=(\partial_y(x))^py^p dy^p.
   \]
   Let $U$ be an open neighborhood of $p'$. For $f\in \mathcal{O}_{U}$ and $s\in \widetilde{E}'(\widetilde{\tau}(U))$, we compute
   \begin{align*}
        \partial_y^p(f\otimes s) &=f\otimes (d\pi(\partial_y))^p(s)\\
         &=f\otimes (\partial_y(x))^p \partial_x^p(s)\\
         &=f\otimes (\partial_y(x))^p y^p(s)\\
         &=<\partial_{y^p}, (i^{*}\theta_X)^{(1)}>(f\otimes s).
    \end{align*}
    Therefore $h'(\widetilde{\tau}^{*}\widetilde{E}',\nabla_{can})$ is equal to $(i^{*}\theta_X)^{(1)}$ when restricted to $(\widetilde{\Sigma}_b^{0})^{(1)}$. By Proposition \ref{prop: 1-form}, $h'(\widetilde{\tau}^{*}\widetilde{E}',\nabla_{can})=\widetilde{\theta}_b^{(1)}$. 
\end{proof}
Now recall that for a smooth variety $Y$ over $k$, $\mathcal{D}_{Y}$ is the Azumaya algebra on $T^{*}Y^{(1)}$ that satisfies $\on{Fr}_{*}(D_Y)=\pi_{*}^{(1)}(\mathcal{D}_{Y})$. 

\begin{prop}\label{prop: key}
    Let $b$ be an $S$-point of $(B_{P}^0)^{(1)}$. The construction of $(\widetilde{\tau}^{*}\widetilde{E}',\nabla_{can})$ induces an isomorphism of stacks between $(\mathcal{L}oc_{n,P}^{0})_{b}\coloneqq \mathcal{L}oc_{n,P}^{0}\times_{(B_P^0)^{(1)},b}S$ and the stack of splittings of the Azumaya algebra $(\widetilde{\theta}_b^{(1)})^{*}\mathcal{D}_{\widetilde{\Sigma}_{b}}$. Here we think of $\widetilde{\theta}_b^{(1)}$ as a map
    \[
        \widetilde{\theta}_b^{(1)}: \widetilde{\Sigma}_{b}^{(1)}\longrightarrow T^{*}\widetilde{\Sigma}_{b}^{(1)}. 
    \]
\end{prop}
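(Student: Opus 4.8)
The plan is to realize both stacks in the statement as torsors under the Picard stack $\on{Pic}(\widetilde{\Sigma}_b^{(1)}/S)$ and to show that the assignment $(E,E^{\bullet}_q,\nabla_E)\mapsto(\widetilde{\tau}^{*}\widetilde{E}',\nabla_{can})$ defines a morphism $\Theta$ between them over $S$ which is equivariant for the two actions; an equivariant morphism between torsors under the same group stack is automatically an isomorphism. Put $\mathcal{A}_b\coloneqq(\widetilde{\theta}_b^{(1)})^{*}\mathcal{D}_{\widetilde{\Sigma}_b}$.

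First I would describe the target. Since $\widetilde{\Sigma}_b$ is a smooth curve over $S$, $\mathcal{D}_{\widetilde{\Sigma}_b}$ is an Azumaya algebra of rank $p^{2}$ on $T^{*}\widetilde{\Sigma}_b^{(1)}$ by Theorem~\ref{thm: Azumaya}, hence $\mathcal{A}_b$ is an Azumaya algebra of rank $p^{2}$ on $\widetilde{\Sigma}_b^{(1)}$. Passing to the $\mathcal{D}_{\widetilde{\Sigma}_b}$-module side, a splitting module of $\mathcal{A}_b$ over $\widetilde{\Sigma}_b^{(1)}$ corresponds to a flat connection on $\widetilde{\Sigma}_b$ whose $p$-curvature, regarded as a section $\widetilde{\Sigma}_b^{(1)}\to T^{*}\widetilde{\Sigma}_b^{(1)}$, equals $\widetilde{\theta}_b^{(1)}$; comparing ranks through $\on{Fr}_{\widetilde{\Sigma}_b}$ forces this connection to be of rank one. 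Thus the stack of splittings of $\mathcal{A}_b$ is the moduli stack of rank-one flat connections on $\widetilde{\Sigma}_b$ with $p$-curvature $\widetilde{\theta}_b^{(1)}$. By Cartier descent (Theorem~\ref{thm: Cartier descent}), any two such connections differ by a unique $(\on{Fr}_{\widetilde{\Sigma}_b}^{*}M,\nabla_{can})$ with $M\in\on{Pic}(\widetilde{\Sigma}_b^{(1)}/S)$; so this stack is a pseudo-torsor under $\on{Pic}(\widetilde{\Sigma}_b^{(1)}/S)$, with $M$ acting by tensor product with $(\on{Fr}_{\widetilde{\Sigma}_b}^{*}M,\nabla_{can})$.

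Next I would verify that the construction preceding the statement is a well-defined morphism $\Theta$ over $S$ from $(\mathcal{L}oc^{0}_{n,P})_b$ to the target. Functoriality in $S$ and independence of the auxiliary choices $(E_1,\nabla_1),(E_2,\phi_2)$ were recorded when $\widetilde{E}'$ was defined; the sheaf $\widetilde{\tau}^{*}\widetilde{E}'$ is invertible on $\widetilde{\Sigma}_b$, being the pullback of the invertible sheaf $\widetilde{E}'$ on $\widetilde{\Sigma}_b'$; and the two lemmas immediately above show that $(\widetilde{\tau}^{*}\widetilde{E}',\nabla_{can})$ is a flat connection on $\widetilde{\Sigma}_b$ with no singularities and with $p$-curvature $\widetilde{\theta}_b^{(1)}$. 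By the previous paragraph it is thus a point of the target.

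The main point, and the step I expect to demand the most care, is the equivariance of $\Theta$. Through Proposition~\ref{prop: parabolic nah} and Theorem~\ref{thm: spectral data family}, the torsor structure on $(\mathcal{L}oc^{0}_{n,P})_b$ is the translation action on the factor $(\mathcal{H}iggs^{0}_{n,P})^{(1)}_b\cong\on{Pic}(\widetilde{\Sigma}_b^{(1)}/S)$: acting by $M$ replaces the spectral datum $\widetilde{\mathcal{L}}$ on $\widetilde{\Sigma}_b^{(1)}$ by $\widetilde{\mathcal{L}}\otimes M$, hence replaces $\widetilde{E}'={\sigma'}^{*}(E_1')\otimes\widetilde{\rho}^{*}\widetilde{\mathcal{L}}$ by $\widetilde{E}'\otimes\widetilde{\rho}^{*}M$, and therefore, using $\widetilde{\rho}\circ\widetilde{\tau}=\on{Fr}_{\widetilde{\Sigma}_b}$ from the commutative diagram preceding the statement, replaces $\widetilde{\tau}^{*}\widetilde{E}'$ by $\widetilde{\tau}^{*}\widetilde{E}'\otimes\on{Fr}_{\widetilde{\Sigma}_b}^{*}M$ and $\nabla_{can}$ by $\nabla_{can}\otimes\nabla_{can}$. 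This is exactly the $\on{Pic}(\widetilde{\Sigma}_b^{(1)}/S)$-action on the target identified above, so $\Theta$ is equivariant. I would carry out this identification carefully, as it requires tracking the isomorphisms of Proposition~\ref{prop: parabolic nah}(1) and Theorem~\ref{thm: spectral data family} and the behaviour of spectral sheaves under the various maps in the diagram. Granting it, $\Theta$ is a $\on{Pic}(\widetilde{\Sigma}_b^{(1)}/S)$-equivariant morphism from the torsor $(\mathcal{L}oc^{0}_{n,P})_b$ (by Proposition~\ref{prop: parabolic nah}(2)) to the pseudo-torsor above; working fppf-locally on $S$, where $(\mathcal{L}oc^{0}_{n,P})_b$ admits a section, $\Theta$ is identified with the identity morphism of $\on{Pic}(\widetilde{\Sigma}_b^{(1)}/S)$, whence $\Theta$ is an isomorphism and the target is in fact a $\on{Pic}(\widetilde{\Sigma}_b^{(1)}/S)$-torsor, completing the proof.
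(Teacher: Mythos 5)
Your proposal is correct and follows essentially the same route as the paper: the paper's proof is a two-sentence assertion that both stacks are $\on{Pic}(\widetilde{\Sigma}_b^{(1)})$-torsors and that $\widetilde{\tau}^{*}$ is compatible with the actions, hence an isomorphism. You fill in precisely the details the paper leaves implicit — identifying the stack of splittings of $(\widetilde{\theta}_b^{(1)})^{*}\mathcal{D}_{\widetilde{\Sigma}_b}$ with rank-one flat connections on $\widetilde{\Sigma}_b$ of $p$-curvature $\widetilde{\theta}_b^{(1)}$, using Cartier descent for the pseudo-torsor structure, and tracking the action through the factorization $\widetilde{\rho}\circ\widetilde{\tau}=\on{Fr}_{\widetilde{\Sigma}_b}$ — so this is a more careful write-up of the same argument rather than a different one.
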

\begin{proof}
    Both stacks are $\on{Pic}(\widetilde{\Sigma}_{b}^{(1)})$-torsors. Since $\widetilde{\tau}^{*}$ is compatible with the $\on{Pic}(\widetilde{\Sigma}_{b}^{(1)})$-actions, it induces an isomorphism between those two stacks. 
\end{proof}

\subsection{Fourier-Mukai transforms on commutative group stacks}\label{subsection: FM}
In this subsection we review the Fourier-Mukai transforms on commutative group stacks, following \cite{BB}. Let $k$ be an algebraically closed field. Let $\mathcal{B}$ be an irreducible $k$-scheme that is locally of finite type. 
Let $\mathcal{G}$ be a commutative group stack locally of finite type over $\mathcal{B}$. The dual commutative group stack $\mathcal{G}^{\vee}$ classifies 1-morphisms of group stacks from $\mathcal{G}$ to $B\mathbb{G}_m$. The main examples we are going to consider are:

\begin{examples}
\mbox{}
    \begin{enumerate}
        \item $\mathcal{G}=\mathbb{Z}$, $\mathcal{G}^{\vee}=B\mathbb{G}_m$,
        \item $\mathcal{G}=B\mathbb{G}_m$, $\mathcal{G}^{\vee}=\mathbb{Z}$, 
        \item $\mathcal{G}=\mathbb{Z}_n$,  $\mathcal{G}^{\vee}=B\mu_n$. Here $\mu_n=\on{Spec}(\mathbb{Z}[x]/(x^n-1))$,
        \item $\mathcal{G}=\mu_n$,  $\mathcal{G}^{\vee}=B\mathbb{Z}_n$,
        \item $\mathcal{G}=A$ is an abelian scheme, then $\mathcal{G}^{\vee}=A^{\vee}$ is the dual abelian scheme. 
    \end{enumerate}
\end{examples}
By the definition of $\mathcal{G}^{\vee}$, there is a universal $\mathbb{G}_m$-torsor on $\mathcal{G}\times \mathcal{G}^{\vee}$, which gives rise to the Poincar\'e line bundle $\mathcal{P}_{\mathcal{G}}$.

In \cite{BB}, a commutative group stack $\mathcal{G}$ is called very nice, if locally in smooth topology, $\mathcal{G}$ is a finite product of stacks in the examples above. Under this assumption, the natural map $\mathcal{G}\longrightarrow \mathcal{G}^{{\vee}{\vee}}$ is an isomorphism. Therefore there is another Poincar\'e line bundle $\mathcal{P}_{\mathcal{G}^{\vee}}$ on $\mathcal{G}^{\vee}\times \mathcal{G}$.

\begin{theorem}[cf. \cite{BB} Theorem 2.7]
    Let $\mathcal{G}$ be a very nice commutative group stack and let $\mathcal{G}^{\vee}$ be its dual. Then the Fourier-Mukai functor $\Phi_{\mathcal{P}_{\mathcal{G}}}$ with kernel $\mathcal{P}_{\mathcal{G}}$ induces an equivalence of derived categories
    \[
        D^{b}(\on{QCoh}(\mathcal{G}))\xrightarrow{\simeq} D^{b}(\on{QCoh}(\mathcal{G}^{\vee})).
    \]
\end{theorem}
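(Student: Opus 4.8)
The plan is to follow Mukai's original strategy: rather than exhibiting a quasi-inverse by hand, I would compute the two compositions $\Phi_{\mathcal{P}_{\mathcal{G}^{\vee}}}\circ\Phi_{\mathcal{P}_{\mathcal{G}}}$ and $\Phi_{\mathcal{P}_{\mathcal{G}}}\circ\Phi_{\mathcal{P}_{\mathcal{G}^{\vee}}}$ and show that each is isomorphic to a shift of the pushforward along the inversion morphism of the relevant group stack; since inversion is an automorphism, this proves both functors are equivalences. The composition of two integral transforms is again an integral transform, whose kernel is the convolution of the two kernels along the middle factor of $\mathcal{G}\times\mathcal{G}^{\vee}\times\mathcal{G}$; by the seesaw principle together with flat base change and the projection formula along the three projections, this convolution kernel is computed to be the structure sheaf of the graph of inversion, placed in the appropriate cohomological degree. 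So the first task is to set up this calculus of integral transforms between the derived categories $D^{b}(\on{QCoh}(-))$ of these (possibly non-quasi-compact, stacky) objects over $\mathcal{B}$, and to record that pullback, pushforward and tensor product all commute with flat base change on $\mathcal{B}$ and, crucially, with smooth base change on $\mathcal{G}$ itself, and that the Poincaré sheaf $\mathcal{P}_{\mathcal{G}}$ and the functor $\Phi_{\mathcal{P}_{\mathcal{G}}}$ are compatible with these base changes.

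The second, and main, step is the reduction afforded by the \emph{very nice} hypothesis. By definition $\mathcal{G}$ becomes, smooth-locally on $\mathcal{B}$, a finite product of the basic stacks $\mathbb{Z}$, $B\mathbb{G}_m$, $\mathbb{Z}_n$, $\mu_n$ and abelian schemes; dualizing exchanges $\mathbb{Z}\leftrightarrow B\mathbb{G}_m$, $\mathbb{Z}_n\leftrightarrow B\mu_n$, $\mu_n\leftrightarrow B\mathbb{Z}_n$ and $A\leftrightarrow A^{\vee}$, and the Poincaré sheaf of a product is the external tensor product of the Poincaré sheaves of the factors, so $\Phi_{\mathcal{P}_{\mathcal{G}}}$ is the external tensor product of the corresponding functors for the factors. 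Since an external tensor product of equivalences is an equivalence, it suffices (i) to prove the theorem for each basic building block, and (ii) to descend the statement along a smooth surjective cover of $\mathcal{B}$. For (i): the abelian scheme case is the relative Mukai equivalence, proved by establishing the $\mathrm{WIT}_g$ property $\mathrm{R}p_{2*}\mathcal{P}_{A}\cong\mathcal{O}_{e}[-g]$ (with $e$ the identity section, $g$ the relative dimension) via cohomology-and-base-change, and then running the convolution argument of Step 1 in this concrete case; the case $\mathcal{G}=\mathbb{Z}$, $\mathcal{G}^{\vee}=B\mathbb{G}_m$ is elementary, since $\on{QCoh}(\mathbb{Z})$ is the product over $\mathbb{Z}$ of copies of $\on{QCoh}(\mathcal{B})$ while $\on{QCoh}(B\mathbb{G}_m)$ is the product over the character lattice $\mathbb{Z}$ of copies of $\on{QCoh}(\mathcal{B})$ indexed by $\mathbb{G}_m$-weight, and $\Phi_{\mathcal{P}_{\mathcal{G}}}$ matches these two indexings; the finite variants $\mathbb{Z}_n\leftrightarrow B\mu_n$ and $\mu_n\leftrightarrow B\mathbb{Z}_n$ are the same computation using Cartier duality for $\mu_n$. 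For (ii), being an equivalence of categories may be checked after a faithfully flat base change $\mathcal{B}'\to\mathcal{B}$, because the formation of $D^{b}(\on{QCoh}(-))$, of $\mathcal{P}_{\mathcal{G}}$ and of $\Phi_{\mathcal{P}_{\mathcal{G}}}$ commutes with such base change (this is where the base-change bookkeeping from Step 1 is used) and fully faithfulness and essential surjectivity are flat-local on $\mathcal{B}$. Combining (i), (ii) and the product compatibility yields the theorem.

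The hard part will be Step 1 carried out with enough care in the stacky, non-quasi-compact setting: one must make sense of the convolution of kernels, of the projection formula and of the base-change isomorphisms when some factors are classifying stacks $B\mathbb{G}_m$ (so that the relevant $\mathrm{R}p_*$ amounts to extracting a weight component) and when $\mathcal{G}$ is only locally of finite type over $\mathcal{B}$, so that one genuinely works with $D^{b}(\on{QCoh})$ rather than with coherent or perfect complexes. Once this integral-transform calculus and its compatibility with smooth base change are in place, the rest is a matter of assembling known pieces. I note that in the present paper one only ever applies the theorem to $\mathcal{G}=\mathcal{Y}_{\mathcal{D}^0_{\on{Bun}_{n,P}}}^{\vee}$, which is an extension of $\mathbb{Z}$ by the Picard stack $\on{Pic}(\widetilde{\Sigma}^{(1)}/(B_P^{0})^{(1)})$ and hence is built, smooth-locally on $(B_P^{0})^{(1)}$, out of $\mathbb{Z}$, $B\mathbb{G}_m$ and an abelian scheme; so the abelian-variety and $\mathbb{Z}$/$B\mathbb{G}_m$ building blocks, together with the descent step, are precisely what is needed.
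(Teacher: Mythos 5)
The theorem as stated in the paper carries no proof of its own: it is cited verbatim as \cite{BB} Theorem~2.7, which in turn rests on Arinkin's appendix to \cite{Donagi-Pantev}. So there is no ``paper's proof'' to compare against; the citation \emph{is} the proof, and the paper uses the statement as a black box in Section~\ref{subsection: proof of main theorem}.

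That said, your sketch is an accurate outline of how the result is established in those references, and it is the right plan. One sets up a convolution calculus for integral kernels on commutative group stacks over $\mathcal{B}$; computes that $\Phi_{\mathcal{P}_{\mathcal{G}^\vee}}\circ\Phi_{\mathcal{P}_{\mathcal{G}}}$ has kernel the structure sheaf of the graph of inversion, shifted by the relative dimension of the abelian-scheme factor; verifies this on the basic building blocks (for an abelian scheme $A$ via $\mathrm{R}p_{2*}\mathcal{P}_A\cong\mathcal{O}_e[-g]$, for the pairs $\mathbb{Z}/B\mathbb{G}_m$, $\mathbb{Z}_n/B\mu_n$, $\mu_n/B\mathbb{Z}_n$ via the weight grading and Cartier duality, where one finds no shift and the composite is precisely $(-1)^{*}$); uses that $\mathcal{P}_{\mathcal{G}_1\times\mathcal{G}_2}\cong\mathcal{P}_{\mathcal{G}_1}\boxtimes\mathcal{P}_{\mathcal{G}_2}$; and descends along a smooth surjective cover of $\mathcal{B}$ over which $\mathcal{G}$ becomes such a product, noting that an isomorphism of kernels in $D^b(\on{QCoh}(\mathcal{G}\times_{\mathcal{B}}\mathcal{G}))$ may be tested after faithfully flat base change. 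The one place your outline compresses genuine difficulty is exactly the place you flag: making the six-operations bookkeeping (projection formula, flat base change, compatibility of $\boxtimes$ with derived pushforward, behavior of $D^b(\on{QCoh})$) work uniformly on stacks that are neither quasi-compact nor separated, where ``pushforward along $B\mathbb{G}_m$'' means extracting weight components and ``pushforward along $\mathbb{Z}$'' is an infinite direct sum, and where the bounded derived category of a product need not be the naive product of bounded derived categories. That foundational layer is the technical core of the cited proof; once it is in place your reduction and descent go through as written and the building-block computations are short. In short: nothing is wrong, but filling in your Step~1 with care would essentially reproduce Arinkin's appendix.
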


Now Let $\widetilde{\mathcal{G}}$ and $\mathcal{G}$ be very nice commutative group stacks that fit into a short exact sequence of group stacks:
\[
    0\longrightarrow B\mathbb{G}_m\longrightarrow \widetilde{\mathcal{G}}\longrightarrow \mathcal{G}\longrightarrow 0.
\]
By taking dual, we get another short exact sequence:

\[
    0\longrightarrow \mathcal{G}^{\vee}\longrightarrow \widetilde{\mathcal{G}}^{\vee}\xrightarrow{\pi} \mathbb{Z}\longrightarrow 0. 
\]
Let $\widetilde{\mathcal{G}}^{\vee}_1=\pi^{-1}(1)$. 
\begin{remark}\label{remark: gerb splitting}
Note that $\widetilde{\mathcal{G}}^{\vee}_1$ classifies maps of group stacks $\widetilde{\mathcal{G}}\longrightarrow B\mathbb{G}_m$ such that the composition
\[
    B\mathbb{G}_m\longrightarrow \widetilde{\mathcal{G}}\longrightarrow B\mathbb{G}_m
\]
is the identity. Such a map gives a splitting of $\widetilde{\mathcal{G}}$ considered as a $\mathbb{G}_m$-gerbe over $\mathcal{G}$.
\end{remark}

Recall that the $B\mathbb{G}_m$-action on $\widetilde{\mathcal{G}}$ gives a decomposition 
\[
D^{b}(\on{QCoh}(\widetilde{\mathcal{G}}))\cong \prod_{n\in \mathbb{Z}}D^{b}(\on{QCoh}(\widetilde{\mathcal{G}}))_n.
\] 
\begin{prop}[cf. \cite{Arinkin} Proposition A.7 and \cite{BB} Proposition 2.9]\label{prop: FM}
\mbox{}
    The Fourier-Mukai functor $\Phi_{\mathcal{P}_{\widetilde{\mathcal{G}}^{\vee}}}$ restricts to an equivalence of derived categories
    \[
             D^{b}(\on{QCoh}(\widetilde{\mathcal{G}}^{\vee}_1)\xrightarrow{\simeq} D^{b}(\on{QCoh}(\widetilde{\mathcal{G}}))_1. 
    \]
\end{prop}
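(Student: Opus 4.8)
The plan is to deduce this from the general Fourier--Mukai equivalence on very nice commutative group stacks, by identifying $\widetilde{\mathcal{G}}^\vee_1$ as a connected component that is stable under the Poincar\'e kernel, and then using the weight decomposition on the gerbe side. First I would recall the two Poincar\'e line bundles: $\mathcal{P}_{\widetilde{\mathcal{G}}}$ on $\widetilde{\mathcal{G}}\times\widetilde{\mathcal{G}}^\vee$ and $\mathcal{P}_{\widetilde{\mathcal{G}}^\vee}$ on $\widetilde{\mathcal{G}}^\vee\times\widetilde{\mathcal{G}}$, the latter giving (by the cited theorem, using that $\widetilde{\mathcal{G}}$ is very nice so $\widetilde{\mathcal{G}}^{\vee\vee}\cong\widetilde{\mathcal{G}}$) an equivalence
\[
\Phi_{\mathcal{P}_{\widetilde{\mathcal{G}}^\vee}}:D^b(\on{QCoh}(\widetilde{\mathcal{G}}^\vee))\xrightarrow{\ \simeq\ } D^b(\on{QCoh}(\widetilde{\mathcal{G}})).
\]
The surjection $\pi:\widetilde{\mathcal{G}}^\vee\to\mathbb{Z}$ realizes $\widetilde{\mathcal{G}}^\vee$ as a disjoint union $\coprod_{n\in\mathbb{Z}}\widetilde{\mathcal{G}}^\vee_n$ of its fibers, each a torsor under $\mathcal{G}^\vee=\ker\pi$; correspondingly $D^b(\on{QCoh}(\widetilde{\mathcal{G}}^\vee))\cong\prod_n D^b(\on{QCoh}(\widetilde{\mathcal{G}}^\vee_n))$.

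The key computation is to match this decomposition indexed by $n\in\mathbb{Z}$ with the weight decomposition $D^b(\on{QCoh}(\widetilde{\mathcal{G}}))\cong\prod_n D^b(\on{QCoh}(\widetilde{\mathcal{G}}))_n$ coming from the $B\mathbb{G}_m$-action on $\widetilde{\mathcal{G}}$. Concretely, I would examine how the restriction of $\mathcal{P}_{\widetilde{\mathcal{G}}^\vee}$ to $\widetilde{\mathcal{G}}^\vee_n\times\widetilde{\mathcal{G}}$ transforms under the residual $B\mathbb{G}_m$-action on the second factor: by the defining universal property of $\widetilde{\mathcal{G}}^\vee$ (it classifies maps of group stacks $\widetilde{\mathcal{G}}\to B\mathbb{G}_m$), a point of $\widetilde{\mathcal{G}}^\vee_n$ is a homomorphism $\widetilde{\mathcal{G}}\to B\mathbb{G}_m$ restricting on the subgroup $B\mathbb{G}_m\subset\widetilde{\mathcal{G}}$ to the $n$-th power map, as in Remark \ref{remark: gerb splitting}. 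Hence on the slice $\widetilde{\mathcal{G}}^\vee_n\times\widetilde{\mathcal{G}}$ the Poincar\'e kernel has $B\mathbb{G}_m$-weight exactly $n$ in the $\widetilde{\mathcal{G}}$-direction, so $\Phi_{\mathcal{P}_{\widetilde{\mathcal{G}}^\vee}}$ sends $D^b(\on{QCoh}(\widetilde{\mathcal{G}}^\vee_n))$ into $D^b(\on{QCoh}(\widetilde{\mathcal{G}}))_n$. Since the source decomposition is a product over all $n$ and the functor is an equivalence respecting these product decompositions, each summand functor $D^b(\on{QCoh}(\widetilde{\mathcal{G}}^\vee_n))\to D^b(\on{QCoh}(\widetilde{\mathcal{G}}))_n$ is itself an equivalence; taking $n=1$ gives the claim, with kernel the restriction of $\mathcal{P}_{\widetilde{\mathcal{G}}^\vee}$ to $\widetilde{\mathcal{G}}^\vee_1\times\widetilde{\mathcal{G}}$.

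The main obstacle I expect is the bookkeeping of the weight/grading identification: one must carefully verify that the $B\mathbb{G}_m$-equivariant structure on $\mathcal{P}_{\widetilde{\mathcal{G}}^\vee}|_{\widetilde{\mathcal{G}}^\vee_n\times\widetilde{\mathcal{G}}}$ has the stated weight, which amounts to tracing the biextension/universal torsor data defining $\mathcal{P}_{\widetilde{\mathcal{G}}^\vee}$ through the dual of the short exact sequence $0\to B\mathbb{G}_m\to\widetilde{\mathcal{G}}\to\mathcal{G}\to 0$. This is precisely the content of \cite{Arinkin} Proposition A.7 and \cite{BB} Proposition 2.9 in the cases at hand, so rather than redo it I would invoke those statements after the reduction above, noting only that the hypothesis ``$\widetilde{\mathcal{G}}$ very nice'' guarantees the decompositions exist and the duality $\widetilde{\mathcal{G}}^{\vee\vee}\cong\widetilde{\mathcal{G}}$ that makes $\Phi_{\mathcal{P}_{\widetilde{\mathcal{G}}^\vee}}$ an equivalence. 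A minor additional point to check is that $\widetilde{\mathcal{G}}^\vee_1$ is again very nice (so that its derived category of quasi-coherent sheaves is the right target), which follows since it is a torsor under the very nice stack $\mathcal{G}^\vee$ over $\mathcal{B}$.
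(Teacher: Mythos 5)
Your argument is correct. The paper does not give its own proof of Proposition \ref{prop: FM} but simply cites \cite{Arinkin} Proposition A.7 and \cite{BB} Proposition 2.9, and your outline reproduces precisely the expected argument from those sources: decompose $\widetilde{\mathcal{G}}^{\vee}$ into the disjoint union of the $\pi$-fibers $\widetilde{\mathcal{G}}^{\vee}_n$, decompose $D^{b}(\on{QCoh}(\widetilde{\mathcal{G}}))$ by $B\mathbb{G}_m$-weight, observe (from the universal property defining the Poincar\'e bundle and the fact that a point of $\widetilde{\mathcal{G}}^{\vee}_n$ restricts on $B\mathbb{G}_m\subset\widetilde{\mathcal{G}}$ to the $n$-th power map) that $\mathcal{P}_{\widetilde{\mathcal{G}}^{\vee}}|_{\widetilde{\mathcal{G}}^{\vee}_n\times\widetilde{\mathcal{G}}}$ has $B\mathbb{G}_m$-weight $n$ in the second variable, and then conclude that the global equivalence $\Phi_{\mathcal{P}_{\widetilde{\mathcal{G}}^{\vee}}}$ is block-diagonal with respect to the two $\mathbb{Z}$-indexed product decompositions and hence an equivalence block by block. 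Your weight computation at the point $(\phi,g)$, namely $\phi(z\cdot g)=\phi(z)\cdot\phi(g)=z^{n}\phi(g)$ for $\phi\in\widetilde{\mathcal{G}}^{\vee}_n$, is exactly the bookkeeping step that makes the identification of gradings precise, and the observation that a fully faithful essentially surjective functor which maps each factor $A_n$ into the corresponding $B_n$ restricts to an equivalence $A_n\xrightarrow{\simeq}B_n$ is sound.
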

	
\subsection{Proof of Theorem \ref{thm: GLP}}\label{subsection: proof of main theorem}
Let $\mathcal{Y}_{\mathcal{D}^0_{\on{Bun}_{n,P}}}$ be the $\mathbb{G}_m$-gerbe (defined in Subsection \ref{subsection: stack of splittings}) over $\mathcal{H}iggs^{0}_{n,P}\cong \on {Pic}(\widetilde{\Sigma}^{(1)}/(B_P^{0})^{(1)})$ that classifies splittings of the Azumaya algebra $\mathcal{D}^0_{\on{Bun}_{n,P}}$. As discussed in Section \ref{subsection: tensor structure}, the tensor structure on $\mathcal{D}^0_{\on{Bun}_{n,P}}$ gives $\mathcal{Y}_{\mathcal{D}^0_{\on{Bun}_{n,P}}}$ the structure of a commutative group stack, and it fits into a short exact sequence
\[
    0\longrightarrow B\mathbb{G}_m\longrightarrow \mathcal{Y}_{\mathcal{D}^0_{\on{Bun}_{n,P}}} \longrightarrow \on {Pic}(\widetilde{\Sigma}^{(1)}/(B_P^{0})^{(1)})\longrightarrow 0. 
\]
By taking dual, we get another short exact sequence:
\[
    0\longrightarrow \on {Pic}(\widetilde{\Sigma}^{(1)}/(B_P^{0})^{(1)})\longrightarrow \mathcal{Y}_{\mathcal{D}^0_{\on{Bun}_{n,P}}}^{\vee}\xrightarrow{\pi} \mathbb{Z}\longrightarrow 0. 
\]

\begin{prop}\label{prop: main thm torsor dual}
    $(\mathcal{Y}_{\mathcal{D}^0_{\on{Bun}_{n,P}}}^{\vee})_1\coloneqq \pi^{-1}(1)$ is isomorphic to $\mathcal{L}oc_{n,P}^{0}$ as ${Pic}(\widetilde{\Sigma}^{(1)}/(B_P^{0})^{(1)})$-torsors. 
\end{prop}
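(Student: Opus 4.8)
The plan is to identify both $(\mathcal{Y}^{\vee}_{\mathcal{D}^0_{\on{Bun}_{n,P}}})_1$ and $\mathcal{L}oc^{0}_{n,P}$ with one and the same $\on{Pic}(\widetilde{\Sigma}^{(1)}/(B_P^{0})^{(1)})$-torsor over $(B_P^{0})^{(1)}$: the torsor $\mathcal{S}$ of splittings of the Azumaya algebra $(\widetilde{\theta}^{(1)})^{*}\mathcal{D}_{\widetilde{\Sigma}}$ on $\widetilde{\Sigma}^{(1)}$, relative over the base, where $\widetilde{\theta}=\kappa^{*}\theta_{\on{Bun}_{n,P}}$ is the $1$-form on $\widetilde{\Sigma}$ introduced just before Proposition~\ref{prop: key}. (By the argument of \cite{BMR} applied to the curve $\widetilde{\Sigma}$, such a splitting is nothing but $(\on{Fr}_{\widetilde{\Sigma}})_{*}$ of a rank one flat connection on a fibre of $\widetilde{\Sigma}$ with $p$-curvature $\widetilde{\theta}$, so $\mathcal{S}$ is the ``moduli of rank one connections with prescribed $p$-curvature'' mentioned in the introduction; nonemptiness of $\mathcal{S}$ over each $b$, needed for the torsor assertion, follows from Theorem~\ref{thm: Tsen} since $\widetilde{\Sigma}^{(1)}_b$ is a curve.) Because a $\on{Pic}$-equivariant morphism between torsors over the same group stack is automatically an isomorphism, it is then enough to produce equivariant morphisms from each side to $\mathcal{S}$. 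For $\mathcal{L}oc^{0}_{n,P}$ there is nothing further to prove: this is exactly Proposition~\ref{prop: key} with $S=(B_P^{0})^{(1)}$ and $b=\mathrm{id}$, and the morphism it produces is $\on{Pic}$-equivariant by that proposition's proof.

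The real content is the identification $(\mathcal{Y}^{\vee})_1\cong\mathcal{S}$. By Remark~\ref{remark: gerb splitting}, an $S$-point of $(\mathcal{Y}^{\vee})_1$ is a multiplicative splitting of the $\mathbb{G}_m$-gerbe $\mathcal{Y}_{\mathcal{D}^{0}_{\on{Bun}_{n,P}}}$ over the commutative group stack $\on{Pic}(\widetilde{\Sigma}^{(1)}/(B_P^{0})^{(1)})$. I would first pull back along the Abel--Jacobi map $\kappa^{(1)}\colon\widetilde{\Sigma}^{(1)}/(B_P^{0})^{(1)}\to\on{Pic}(\widetilde{\Sigma}^{(1)}/(B_P^{0})^{(1)})$. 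Corollary~\ref{cor: 1-form}(1), applied to $\on{Bun}_{n,P}$ (as in Section~\ref{subsection: group structure on Azumaya algebra}) and restricted to the good locus, gives $(\theta^{0,(1)}_{\on{Bun}_{n,P}})^{*}\mathcal{D}_{\mathcal{H}iggs^{0}_{n,P}}\cong\mathcal{D}^{0}_{\on{Bun}_{n,P}}$; combined with Corollary~\ref{cor: 1-form}(2) and the (definitional) identity $\kappa^{*}\theta^{0}_{\on{Bun}_{n,P}}=\widetilde{\theta}$, this yields a canonical equivalence of Azumaya algebras on $\widetilde{\Sigma}^{(1)}$
\[
(\kappa^{(1)})^{*}\mathcal{D}^{0}_{\on{Bun}_{n,P}}\ \cong\ (\widetilde{\theta}^{(1)})^{*}\mathcal{D}_{\widetilde{\Sigma}},
\qquad\text{hence}\qquad
(\kappa^{(1)})^{*}\mathcal{Y}_{\mathcal{D}^{0}_{\on{Bun}_{n,P}}}\ \cong\ \mathcal{Y}_{(\widetilde{\theta}^{(1)})^{*}\mathcal{D}_{\widetilde{\Sigma}}}.
\]
Moreover the tensor structure on $\mathcal{D}^{0}_{\on{Bun}_{n,P}}$ was built in Section~\ref{subsection: group structure on Azumaya algebra} precisely so as to be intertwined, through the maps $\kappa$ and $a$ of that section, with the additivity of $\widetilde{\theta}$ under the group law on $\on{Pic}$ --- this is the content of the identities (\ref{eq:1-form}) and $i^{*}\theta_X\boxtimes\theta_{\on{Bun}_{n,P}}=a^{*}\theta_{\on{Bun}_{n,P}}$ of Proposition~\ref{prop: 1-form}. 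Invoking the universal property of the relative Picard stack of a family of smooth projective curves --- a multiplicative $\mathbb{G}_m$-gerbe on $\on{Pic}(\mathcal{C}/S)$ is the same datum as a $\mathbb{G}_m$-gerbe on $\mathcal{C}/S$ equipped with a theorem-of-the-square descent datum --- one concludes that restriction along $\kappa^{(1)}$ identifies multiplicative splittings of $\mathcal{Y}_{\mathcal{D}^{0}_{\on{Bun}_{n,P}}}$ with splittings of $\mathcal{Y}_{(\widetilde{\theta}^{(1)})^{*}\mathcal{D}_{\widetilde{\Sigma}}}$, i.e.\ with $\mathcal{S}$; and since the $\on{Pic}$-action on multiplicative splittings becomes, via $\kappa^{(1)}$ and $a$, the operation of tensoring a splitting on $\widetilde{\Sigma}^{(1)}$ by a line bundle, this identification is $\on{Pic}$-equivariant. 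As in Section~\ref{subsection: group structure on Azumaya algebra}, the descent datum only needs to be checked on the high-degree components of $\on{Pic}$, where $\kappa_d$ is smooth and dominant, so that it reduces to an identity on a fibre power of $\widetilde{\Sigma}$ that follows from Proposition~\ref{prop: 1-form}. Composing with $\mathcal{L}oc^{0}_{n,P}\cong\mathcal{S}$ gives the proposition.

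I expect the main obstacle to be this last step: arranging the equivalence $(\kappa^{(1)})^{*}\mathcal{D}^{0}_{\on{Bun}_{n,P}}\cong(\widetilde{\theta}^{(1)})^{*}\mathcal{D}_{\widetilde{\Sigma}}$ so that it respects the multiplicative structures on the two sides, and then applying the universal property of the relative Picard stack in this gerby, stacky, family situation in order to descend multiplicative splittings over $\on{Pic}(\widetilde{\Sigma}^{(1)}/(B_P^{0})^{(1)})$ to ordinary splittings over $\widetilde{\Sigma}^{(1)}$. This is the tamely ramified analogue --- with $\widetilde{\Sigma}$ the normalization of the spectral curve --- of the corresponding argument in \cite{BB}; the genuinely new input, namely the behaviour of the relevant $1$-forms and connections at the points of $\widetilde{\Sigma}$ over $q$, has already been absorbed into Proposition~\ref{prop: key} and the lemmas preceding it.
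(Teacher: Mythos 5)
Your proof is correct and follows essentially the same route as the paper: pull back a multiplicative splitting along the Abel--Jacobi map $\kappa^{(1)}$, use Corollary~\ref{cor: 1-form} and Proposition~\ref{prop: 1-form} to identify the result as a splitting of $(\widetilde{\theta}_b^{(1)})^{*}\mathcal{D}_{\widetilde{\Sigma}_b}$, then invoke Proposition~\ref{prop: key}, and conclude because an equivariant morphism of torsors over the same group stack is automatically an isomorphism. The only divergence is that you additionally invoke a ``universal property of the relative Picard stack'' to get a two-sided identification of multiplicative splittings on $\on{Pic}$ with splittings on $\widetilde{\Sigma}^{(1)}$; this step is unnecessary (as you yourself observe at the outset, an equivariant morphism suffices), and the paper avoids it entirely.
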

\begin{proof}
    It is enough to construct a morphism from $(\mathcal{Y}_{\mathcal{D}^0_{\on{Bun}_{n,P}}}^{\vee})_1$ to $\mathcal{L}oc_{n,P}^{0}$ that is compatible with the $\on{Pic}(\widetilde{\Sigma}^{(1)}/(B_P^{0})^{(1)})$-actions. Let $S$ be a $k$-scheme. Let $b$ be an $S$-point of $(B_P^{0})^{(1)}$. By Remark \ref{remark: gerb splitting}, an $S$-point of  $(\mathcal{Y}_{\mathcal{D}^0_{\on{Bun}_{n,P}}}^{\vee})_1$ lying above $b$ gives a splitting of the Azumaya algebra $\mathcal{D}^0_{\on{Bun}_{n,P}}|_{\on{Pic}(\widetilde{\Sigma}_b^{(1)})}$. Pulling back along the Abel-Jacobi map
    \[
        \kappa^{(1)}: \widetilde{\Sigma}_b^{(1)}\longrightarrow \on{Pic}(\widetilde{\Sigma}_b^{(1)}),
    \]
    by Corollary \ref{cor: 1-form}(2) and Proposition \ref{prop: 1-form}, such a splitting gives a splitting of the Azumaya algebra $(\widetilde{\theta}_b^{(1)})^{*}\mathcal{D}_{\widetilde{\Sigma}_b}$, which in turn gives a point of $(\mathcal{L}oc_{n,P})_b$ by Proposition \ref{prop: key}. This map is clearly compatible with the $\on{Pic}(\widetilde{\Sigma}_b^{(1)})$-actions. 
\end{proof}

Now let $\mathcal{P}_{\mathcal{Y}^{\vee}}$ be the Poincar\'e line bundle on $\mathcal{Y}_{\mathcal{D}^0_{\on{Bun}_{n,P}}}^{\vee}\times \mathcal{Y}_{\mathcal{D}^0_{\on{Bun}_{n,P}}}$. By Lemma \ref{lemma: gerbe} and Proposition \ref{prop: main thm torsor dual}, $\mathcal{P}_{\mathcal{Y}^{\vee}}$ restricts to an $\mathcal{O}_{\mathcal{L}oc_{n,P}^{0}}\boxtimes\mathcal{D}^0_{\on{Bun}_{n,P}}$-module $\mathcal{P}$. By Proposition \ref{prop: FM}, the Fourier-Mukai transform with kernel $\mathcal{P}$
\[  \Phi_{\mathcal{P}}: D^{b}(\on{QCoh}({\mathcal{L}oc_{n,P}^{0}}))\longrightarrow D^{b}(\mathcal{D}^{0}_{{\on{Bun}}_{n,P}}\on{-mod})
\]
induces an equivalence of derived categories. This completes the proof of Theorem \ref{thm: GLP}.

\subsection{The Hecke functor}\label{subsection: Hecke}
Recall that we define $\mathcal{H}ecke^1_P$ to be the moduli stack of quadruples \[((E,E^{\bullet}_q),(F, F^{\bullet}_q),x,i: E\hookrightarrow F),\]
where $x\in X\backslash q$, $(E,E^{\bullet}_q), (F, F^{\bullet}_q)\in \on{Bun}_{n,P}$ such that $F/E$ is the simple skyscraper sheaf at $x$, and the partial flag structures $E^{\bullet}_q$ and $F^{\bullet}_q$ coincide under $i$. We consider the following projections:
\bd
\xymatrix{
& \mathcal{H}ecke^1_P\ar[dl]_{q} \ar[dr]^{p}\\
\on{Bun}_{n,P} & &\on{Bun}_{n,P}\times X\backslash q
}
\ed
where $q$ maps the quadruple to $(F, F^{\bullet}_q)$ and $p$ maps the quadruple to $((E, E_q^{\bullet}),x)$. 
The Hecke functor $\on{H}_P^0$ is defined by
\[
    \on{H}_P^0: D^{b}(\mathcal{D}^0_{\on{Bun}_{n,P}}\on{-mod})\longrightarrow D^{b}(\mathcal{D}^0_{\on{Bun}_{n,P}}\boxtimes \mathcal{D}_{X\backslash q}\on{-mod})
\]
\[
    \mathcal{M}\mapsto p_{*}q^{!}\mathcal{M}.
\]
Let $\mathcal{E}$ be the universal $\mathcal{O}_{\mathcal{L}oc^0_{n,P}}\boxtimes \mathcal{D}_{X\backslash q}$-module. We define another functor $\on{W}^0_P$:
\[
    \on{W}^0_P: D^{b}(\mathcal{O}_{\mathcal{L}oc^0_{n,P}}\on{-mod})\longrightarrow D^{b}(\mathcal{O}_{\mathcal{L}oc^0_{n,P}}\boxtimes \mathcal{D}_{X\backslash q}\on{-mod})
\]
\[
    \mathcal{F}\mapsto p_1^{*}\mathcal{F}\otimes \mathcal{E},
\] 
where $p_1$ is the projection $\mathcal{L}oc_n\times X\longrightarrow \mathcal{L}oc_n$.
Let $\Phi_{\mathcal{P},X\backslash q}$ be the Fourier-Mukai equivalence induced by the pull-back of $\mathcal{P}$:
\[
\Phi_{\mathcal{P},X\backslash q}: D^{b}(\mathcal{O}_{\mathcal{L}oc^0_{n,P}}\boxtimes \mathcal{D}_{X\backslash q}\on{-mod})\xrightarrow{\simeq} D^{b}(\mathcal{D}^0_{\on{Bun}_{n,P}}\boxtimes \mathcal{D}_{X\backslash q}\on{-mod}).
\]
then we have:
\begin{theorem}\label{thm: hecke}
There is an isomorphism of functors:
    \[\on{H}_{P}^0\circ \Phi_{\mathcal{P}}\cong\Phi_{\mathcal{P},X\backslash q}\circ\on{W}^0_{P} .\]
\end{theorem}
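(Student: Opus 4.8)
The plan is to follow the strategy of \cite{BB}, Section 4, adapting it to the parabolic setting by working on the normalization $\widetilde{\Sigma}$ of the spectral curves rather than on $\Sigma$ itself. The essential point is that both sides of the desired isomorphism of functors can be computed on the Hecke stack $\mathcal{H}ecke^1_P$, and that after passing to spectral data the Hecke modification corresponds to tensoring with a line bundle pulled back along the Abel-Jacobi map. Concretely, I would first identify the relevant geometry: the correspondence $q,p: \mathcal{H}ecke^1_P \to \on{Bun}_{n,P}, \on{Bun}_{n,P}\times (X\backslash q)$ restricts over $B_P^0$ to a correspondence of $\on{Pic}(\widetilde{\Sigma}^{(1)}/(B_P^0)^{(1)})$-torsors, and the ``difference'' of the two maps $q,p$ is the Abel-Jacobi map $\kappa: \widetilde{\Sigma}^0/(B_P^0)^{0}\to \on{Pic}(\widetilde{\Sigma}/B_P^0)$ composed with translation (this uses Remark \ref{remark: Richardson orbit}, which guarantees the parabolic reduction is determined by the Higgs/connection data away from $q$, so the Hecke modification at $x\in X\backslash q$ does not interfere with the flag at $q$).

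The key steps, in order, would be: (1) Reduce the isomorphism of functors to a statement on $B_P^0$-families, using that $\mathcal{P}$ is supported over $B_P^0$ and that the Hecke modification at $x\neq q$ preserves the Hitchin base point (the $p$-curvature is unchanged away from the support, by the argument in \cite{BB}, Lemma 4.9 or similar). (2) Translate the left-hand side $\on{H}_P^0\circ\Phi_{\mathcal{P}}$ into spectral terms: by Proposition \ref{prop: key} and Proposition \ref{prop: main thm torsor dual}, $\Phi_{\mathcal{P}}$ sends a line bundle $\mathcal{L}$ on $\mathcal{L}oc_{n,P}^0\cong (\mathcal{Y}^\vee)_1$ to the corresponding splitting module, which pulls back along $\kappa^{(1)}$ to a rank-one flat connection on $\widetilde{\Sigma}$ with $p$-curvature $\widetilde{\theta}_b^{(1)}$; then $q^!$ followed by $p_*$ over the Hecke stack amounts, on spectral sheaves, to tensoring this flat connection on $\widetilde{\Sigma}$ by $\mathcal{O}_{\widetilde{\Sigma}}(\widetilde{x})$ for $\widetilde{x}\in\widetilde{\Sigma}$ lying over $x$, i.e. acting by $a^*$ through $\kappa$. (3) Translate the right-hand side $\Phi_{\mathcal{P},X\backslash q}\circ\on{W}^0_P$: $\on{W}^0_P$ tensors by the universal flat connection $\mathcal{E}$, which on $\widetilde{\Sigma}$-spectral data is precisely the universal line bundle pulled back via $\kappa$, and $\Phi_{\mathcal{P},X\backslash q}$ is the Fourier-Mukai with the pulled-back Poincar\'e kernel. (4) Match the two: the identity $a^*\theta_{\on{Bun}_{n,P}} = i^*\theta_X \boxtimes \theta_{\on{Bun}_{n,P}}$ from Proposition \ref{prop: 1-form}, combined with Corollary \ref{cor: 1-form}(2), shows that the Azumaya-algebra splittings produced by the two routes are canonically equivalent, and the Fourier-Mukai formalism of Proposition \ref{prop: FM} packages this into the claimed isomorphism of functors. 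Functoriality of all constructions in families over $B_P^0$ upgrades the pointwise statement to an isomorphism of functors.

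The main obstacle I expect is Step (2)--(4): carefully tracking the compatibility of the Hecke modification with the gerbe/torsor structures and the tautological $1$-forms, i.e. verifying that the geometric Hecke correspondence on $\mathcal{H}ecke^1_P$ is intertwined by $\Phi_{\mathcal{P}}$ with tensoring by the Poincar\'e kernel on the $X\backslash q$ factor. In the unramified case this is \cite{BB}, Theorem 4.8, whose proof relies on a direct analysis of the Hecke stack in terms of the spectral curve; here one must instead work on the normalization $\widetilde{\Sigma}$, and the subtlety is that the Hecke point $x$ lies in $X\backslash q$ where $\widetilde{\Sigma}\to\Sigma$ is an isomorphism, so the spectral sheaf (which is invertible on $\widetilde{\Sigma}$ by the definition of $\mathcal{L}oc^0_{n,P}$ via Theorem \ref{thm: spectral data family}) is modified exactly as in the smooth case; one needs Remark \ref{remark: Richardson orbit} to ensure the parabolic reduction at $q$ is carried along compatibly. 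A secondary technical point is checking that $q^!$ and $p_*$, defined via the Azumaya pull-back/push-forward formalism of Section \ref{subsection: Azumaya}, commute appropriately with the Fourier-Mukai functor; this is again parallel to \cite{BB} but must be checked over the open locus $B_P^0$. Once these compatibilities are in place, the proof is essentially a formal consequence of the identifications already established, together with the projection formula and base change on the Hecke correspondence.
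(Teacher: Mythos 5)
Your proposal is correct and follows essentially the same approach as the paper, which simply states that the proof is identical to that of Theorem~5.4 in \cite{BB}. You have accurately identified the adaptations needed in the parabolic setting (working on the normalization $\widetilde{\Sigma}$, using that the Hecke point lies in $X\backslash q$ where $\widetilde{\Sigma}\to\Sigma$ is an isomorphism, and invoking Remark~\ref{remark: Richardson orbit} to handle the flag at $q$), which is exactly the content implicit in the paper's one-line reference to \cite{BB}.
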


\begin{proof}
The proof is similar to the proof of Theorem 5.4 in \cite{BB}. Since the equivalence $\Phi_{\mathcal{P}}: D^{b}(\mathcal{O}_{\mathcal{L}oc^0_{n,P}}\on{-mod})\xrightarrow{\simeq} D^{b}(\mathcal{D}^0_{\on{Bun}_{n,P}}\on{-mod})$ is the Fourier-Mukai functor with kernel the  $\mathcal{D}^0_{\on{Bun}_{n,P}}\boxtimes \mathcal{O}_{\mathcal{L}oc^0_{n,P}}$-module $\mathcal{P}$, it is enough to show that $H_P^0(\mathcal{P})$ and $W_P^0(\mathcal{P})$ are isomorphic as $\mathcal{D}^0_{\on{Bun}_{n,P}}\boxtimes \mathcal{D}_{X\backslash  q}\boxtimes \mathcal{O}_{\mathcal{L}oc^0_{n,P}}$-modules. Recall that in the proof of Proposition \ref{prop: 1-form}, we considered the pull-back diagram 
\bd
\xymatrix{
Z^0\ar[rr]^{f_1} \ar[d]^{f_2} &&  q^{*}\mathcal{H}iggs^0_{n,P}\ar[d]^{dq}\\
p^{*}(\mathcal{H}iggs^0_{n,P}\times T^{*}(X\backslash q)) \ar[rr]^{dp} && T^{*}\mathcal{H}ecke^1_P
}
\ed
and two maps $\alpha_1=\on{pr}_2\circ f_1: Z^0\longrightarrow \mathcal{H}iggs^0_{n,P}$ and 
$\alpha_2=\on{pr}_2\circ f_2: Z^0\longrightarrow \mathcal{H}iggs^0_{n,P}\times T^{*}(X\backslash q)$. Since $\alpha_1^{*}\theta_{\on{Bun}_{n,P}}=\alpha_2^{*}(\theta_{\on{Bun}_{n,P}} \boxtimes \theta_X)$, we have a canonical equivalence of Azumaya algebras by Corollary \ref{cor: 1-form} (2):
\begin{equation}\label{equation: Azumaya equivalence}
    (\alpha_1^{(1)})^{*}\mathcal{D}^0_{\on{Bun}_{n,P}}\sim (\alpha_2^{(1)})^{*}(\mathcal{D}^0_{\on{Bun}_{n,P}}\boxtimes \mathcal{D}_{X\backslash  q}). 
\end{equation}
For any $\mathcal{M}\in\mathcal{D}^0_{\on{Bun}_{n,P}}\on{-mod}$, $H^0_{P}(\mathcal{M})$ can be obtained by pulling-back along $\alpha_1^{(1)}$, applying equivalence (\ref{equation: Azumaya equivalence}), then pushing-forward along $\alpha_2^{(1)}$. For any $\sigma \in \mathcal{L}oc^0_{n,P}$, the $\mathcal{D}^0_{\on{Bun}_{n,P}}$-module $\mathcal{P}_\sigma$ is a splitting of the Azumaya algebra $\mathcal{D}^0_{\on{Bun}_{n,P}}|_{\on{Pic}(\widetilde{\Sigma}_b^{(1)})}$ that is compatible with the tensor structure defined in Section \ref{subsection: group structure on Azumaya algebra}. There is a canonical equivalence of Azumaya algebras
\begin{equation}\label{equation: equivalence 2.0}
    (\kappa_b^{(1)})^{*}\mathcal{D}^0_{\on{Bun}_{n,P}}|_{\on{Pic}(\widetilde{\Sigma}_b^{(1)})}\sim(i_b^{(1)})^{*}\mathcal{D}_{X\backslash  q}
\end{equation}
induced by the equality in Proposition \ref{prop: 1-form}, where $b=h'(\sigma)$, $\kappa_b$ is the Abel-Jacobi map $\widetilde{\Sigma}_b\longrightarrow \on{Pic}(\widetilde{\Sigma}_b)$ and $i_b$ is the inclusion $\widetilde{\Sigma}_b^0\subset T^{*}(X\backslash q)$. The $\mathcal{D}_{X\backslash  q}$-module $\mathcal{E}_\sigma$ can be obtained from $\mathcal{P}_\sigma$ by pulling-back along $\kappa_b$ and applying equivalence (\ref{equation: equivalence 2.0}). Since the stack $Z^0$ is isomorphic to $\widetilde{\Sigma}^0\times_{B_P^0}\mathcal{H}iggs_{n,P}^0$ and $\alpha_1$ corresponds to the addition map $a$, we have $H^0_P(\mathcal{P}_{\sigma})\cong \mathcal{P}_{\sigma}\boxtimes \mathcal{E}_{\sigma}$, which is what we wish to show. 
\end{proof}

Now let $(E,\nabla)$ be a $k$-point of $\mathcal{L}oc^0_{n,P}$. We denote by $\mathcal{M}_{E,\nabla}$ the image of $(E,\nabla)$ under $\Phi_{\mathcal{P}}$. By Theorem \ref{thm: hecke}, $\mathcal{M}_{E,\nabla}$ satisfies
\begin{equation*}\label{eq: hecke}
       \on{H}^0_{P}(\mathcal{M}_{E,\nabla})\cong\mathcal{M}_{E,\nabla}\boxtimes E. 
\end{equation*}

\end{document}